\def\F{\mathbb{F}}
\def\R{\mathbb{R}}
\def\C{\mathbb{C}}
\def\sgn{\mathop{\rm sgn}}
\newtheoremstyle{case}{}{}{}{}{}{:}{ }{}
\theoremstyle{case}
\theoremstyle{plain}
\newtheorem{theorem}{Theorem}[section]
\newtheorem{utheorem}{\textrm{\textbf{Theorem}}}
\newtheorem{corollary}[theorem]{Corollary}
\newtheorem{proposition}[theorem]{Proposition}
\newtheorem{lemma}[theorem]{Lemma}
\theoremstyle{definition}
\newtheorem{definition}[theorem]{Definition}
\newtheorem{remark}[theorem]{Remark}
\newtheorem{question}[theorem]{Question}
\numberwithin{equation}{section}
\begin{document}
\title{Positivity preservers over finite fields}

\author{Dominique Guillot}
\address[D.~Guillot]{University of Delaware, Newark, DE, USA}
\email{\tt dguillot@udel.edu}

\author{Himanshu Gupta}
\address[H.~Gupta]{University of Regina, Regina, SK, Canada}
\email{\tt himanshu.gupta@uregina.ca}

\author{Prateek Kumar Vishwakarma}
\address[P.~K.~Vishwakarma]{Indian Institute of Science, Bangalore, India}
\email{\tt prateekv@alum.iisc.ac.in}

\author{Chi Hoi Yip}
\address[C.~H.~Yip]{Georgia Institute of Technology, Atlanta, GA, USA}
\email{cyip30@gatech.edu}
\date{\today}

\keywords{positive definite matrix, entrywise transform, finite fields, field automorphism,  Paley graph}
\subjclass[2010]{%
15B48 (primary); 
15B33, 
05C25, 
05C50, 
11T06 
(secondary)}


\begin{abstract}
We resolve an algebraic version of Schoenberg's celebrated theorem [\textit{Duke Math.~J.}, 1942] characterizing entrywise matrix transforms that preserve positive definiteness. Compared to the classical real and complex settings, we consider matrices with entries in a finite field and obtain a complete characterization of such preservers for matrices of a fixed dimension. When the dimension of the matrices is at least $3$, we prove that, surprisingly, the positivity preservers are precisely the positive multiples of the field's automorphisms. We also obtain characterizations of preservers in the significantly more challenging dimension $2$ case over a finite field with $q$ elements, unless $q \equiv 1 \pmod 4$ and $q$ is not a square. Our proofs build on several novel connections between positivity preservers and field automorphisms via the works of Weil, Carlitz, and Muzychuk-Kov\'acs, and via the structure of cliques in Paley graphs.
\end{abstract}

\maketitle

\section{Introduction and main Results}
Let $A = (a_{ij})$ be an $n \times n$ matrix with entries in a field $\F$ and let $f$ be a function defined on $\F$. The function naturally induces an entrywise transformation of $A$ via $f[A] := (f(a_{ij}))$. The study of such entrywise transforms that preserve various forms of matrix positivity has a rich and long history with important applications in many fields of mathematics such as distance geometry and Fourier analysis on groups -- see the surveys \cite{BGKP-survey-part-1, BGKP-survey-part-2} and the monograph \cite{khare2022matrix} for more details. Consider for example the set of $n \times n$ real symmetric or complex Hermitian matrices. By the well-known Schur product theorem \cite{Schur1911}, the entrywise product $A \circ B := (a_{ij} b_{ij})$ of two positive semidefinite matrices is positive semidefinite. As an immediate consequence of this surprising result, monomials $f(x) = x^n$ with $n \geq 1$, and more generally convergent power series $f(x) = \sum_{n=0}^\infty c_n x^n$ with real nonnegative coefficients $c_n \geq 0$ preserve positive semidefiniteness when applied entrywise to $n \times n$ real symmetric or complex Hermitian positive semidefinite matrices. An impressive converse of this result was obtained by Schoenberg \cite{Schoenberg-Duke42}, with various refinements by others collected over time \cite{Rudin-Duke59, BGKP-hankel, khare2022matrix}. 

\begin{theorem}[{\cite[Chapter 18]{khare2022matrix}}]\label{Tschoenberg}
Let $I= (-\rho,\rho)$, where $0 < \rho \leq \infty$.
Given a function $f : I \to \R$, the following are equivalent.
\begin{enumerate}
\item The function $f$ acts entrywise to preserve the set of positive semidefinite matrices of all dimensions with entries in~$I$.
\item The function $f$ is \emph{absolutely monotone}, that is,
$f(x) = \sum_{n=0}^\infty c_n x^n$ for all $x \in I$
with $c_n \geq 0$ for all $n$.
\end{enumerate}
Moreover, $f$ preserves the set of positive definite matrices of all dimensions with entries in~$I$ if and only if $f$ is absolutely monotone and non-constant.
\end{theorem}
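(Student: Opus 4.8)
The plan is to prove the two directions of the equivalence separately and then treat the positive definite ``moreover'' statement as a refinement of each. For the implication $(2) \Rightarrow (1)$, I would argue directly from the Schur product theorem. If $A$ is positive semidefinite then so is every entrywise (Hadamard) power $A^{\circ n}$, by repeated application of the Schur product theorem; since the cone of positive semidefinite matrices is closed under nonnegative linear combinations and under entrywise limits, the series $f[A] = \sum_{n \ge 0} c_n A^{\circ n}$ is positive semidefinite whenever $c_n \ge 0$. (One checks convergence of the matrix series entrywise from convergence of the scalar series on $I$, which holds because the entries of $A$ lie in $I$.) For the positive definite refinement, if $f$ is in addition non-constant then some $c_m > 0$ with $m \ge 1$; using that the Hadamard product of positive definite matrices is positive definite, $A^{\circ m}$ is positive definite when $A$ is, whence $f[A] - c_m A^{\circ m} = \sum_{n \ne m} c_n A^{\circ n} \succeq 0$ gives $f[A] \succeq c_m A^{\circ m} \succ 0$.

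The implication $(1) \Rightarrow (2)$ is the substantive direction, and I would carry it out in three steps. First, I extract low-order structure from tests in small dimensions: applying $f$ entrywise to the rank-one matrix $\left(\sqrt{x},\sqrt{y}\right)^{\!\mathsf T}\left(\sqrt{x},\sqrt{y}\right)$ and to $2 \times 2$ matrices of the form $\left(\begin{smallmatrix} b & a \\ a & b \end{smallmatrix}\right)$ shows that $f \ge 0$ and that $f$ is non-decreasing on $[0,\rho)$, while $3 \times 3$ tests yield midpoint convexity, hence continuity on the interior. Second, I bootstrap this to smoothness: a Boas--Widder type theorem guarantees that a continuous function all of whose divided differences of each order are nonnegative is in fact $C^\infty$. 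Third, and centrally, I run the FitzGerald--Horn computation: feeding the rank-one matrix $uu^{\mathsf T}$ into $f$ and letting the positive entries of $u$ coalesce toward a common value $x_0$, one Taylor-expands $f(u_i u_j)$ and reads off, from the leading principal minors of $f[uu^{\mathsf T}]$, that preservation of positivity on $(k+1) \times (k+1)$ matrices forces the $k$-th divided difference -- and in the coalescent limit, $f^{(k)}(x_0)$ -- to be nonnegative. Letting the dimension grow shows $f^{(k)}(x) \ge 0$ for every $k$ and every $x \in [0,\rho)$, so $f$ is absolutely monotone on $[0,\rho)$ by Bernstein's theorem; the resulting power series has radius of convergence at least $\rho$ and therefore represents $f$ on all of $(-\rho,\rho)$ after checking agreement on the negative part via matrices with negative entries.

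For the positive definite ``moreover'' statement, the forward direction follows by approximating a positive semidefinite $A$ by the positive definite matrices $A + \varepsilon I$ and using the continuity of $f$ established above together with closedness of the positive semidefinite cone, so that every positive-definiteness preserver is also a positive-semidefiniteness preserver and hence absolutely monotone; non-constancy is forced because a constant $f$ sends every matrix to a scalar multiple of the all-ones matrix, which fails to be positive definite in dimension $\ge 2$. The reverse direction was already handled in the sufficiency discussion above. I expect the main obstacle to be the regularity bootstrapping together with the coalescence analysis: upgrading the crude monotonicity and convexity information to genuine smoothness, and then controlling the asymptotics of the principal minors of $f[uu^{\mathsf T}]$ precisely enough to isolate the sign of each individual derivative, genuinely requires matrices of unbounded dimension and is where essentially all of the analytic content of Schoenberg's theorem is concentrated.
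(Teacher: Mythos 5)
A preliminary point of comparison: the paper does not prove this theorem at all --- it is quoted as classical background from Chapter 18 of the cited monograph, so there is no internal proof to measure your attempt against. Judged against the standard literature proof (which is what the citation points to), your outline follows essentially the canonical route: the Schur product theorem plus closure of the positive semidefinite cone under nonnegative combinations and entrywise limits for $(2)\Rightarrow(1)$; and for $(1)\Rightarrow(2)$ the classical chain of small-dimension tests (nonnegativity, monotonicity, mid-convexity, continuity), a Boas--Widder-type regularity theorem, the FitzGerald--Horn coalescence computation on $f[uu^{\mathsf T}]$ forcing $f^{(k)}\geq 0$ for every $k$ as the dimension grows, and Bernstein's theorem. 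This matches the architecture of the proof in the cited source, and your sufficiency argument, including the refinement $f[A]\succeq c_m A^{\circ m}\succ 0$ for the positive definite case, is correct as stated.

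Two places in your sketch are genuine gaps as written. First, in the forward direction of the ``moreover'' statement you deduce that a positive-definiteness preserver is a positive-semidefiniteness preserver by passing from $f[A+\varepsilon I]\succ 0$ to $f[A]\succeq 0$ ``using the continuity of $f$ established above''; but that continuity was derived under hypothesis (1), i.e., from positive \emph{semi}definite test matrices (rank-one matrices such as $\bigl(\sqrt{x},\sqrt{y}\bigr)^{\mathsf T}\bigl(\sqrt{x},\sqrt{y}\bigr)$ are singular), which a pd-preserver is not assumed to handle. As it stands the argument is circular. It is fixable --- rerun the $1\times 1$, $2\times 2$, and mid-convexity tests with strictly positive definite matrices, e.g.\ $\begin{pmatrix} x & s\\ s & y\end{pmatrix}$ with $s^2<xy$ and let $s\uparrow\sqrt{xy}$, to get positivity, monotonicity, and continuity of $f$ on $(0,\rho)$ from pd-preservation alone, and only then take the $\varepsilon\to 0$ limit --- but this step must be done before the approximation argument is legitimate. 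Second, the final claim that the power series produced by Bernstein's theorem on $[0,\rho)$ also represents $f$ on $(-\rho,0)$ is asserted rather than proved: a priori, hypothesis (1) constrains $f$ on negative arguments only through matrices with mixed-sign entries, and the agreement of $f$ with the analytic continuation on $(-\rho,0)$ is a genuine Rudin-type argument (carried out in the cited chapter via suitable test matrices with negative entries), not a consequence of continuity alone. Neither gap changes the overall architecture, but both sit exactly where the remaining analytic content of the theorem lives, alongside the Boas--Widder and coalescence steps you correctly identified as the heart of the matter.
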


Notice that in Schoenberg's result, the characterization applies to functions preserving positivity for matrices of arbitrarily large dimension. Obtaining a characterization of the entrywise preservers for matrices of a fixed dimension is a very natural endeavor, but a much harder problem that remains mostly unsolved. An interesting necessary condition given by Horn \cite{horn1969theory} shows that such preservers must have a certain degree of smoothness, with a number of non-negative derivatives. In \cite{BGKP-fixeddim}, seventy-four years after the publication of Schoenberg's result, Belton--Guillot--Khare--Putinar resolved the problem for polynomials of degree at most $N$ that preserve positivity on $N \times N$ matrices. They also provided the first known example of a non-absolutely monotone polynomial that preserves positivity in a fixed dimension. In \cite{Khare-Tao}, Khare and Tao characterized the sign patterns of the Maclaurin coefficients of positivity preservers in fixed dimension. They also considered sums of real powers, and uncovered exciting connections between positivity preservers and symmetric function theory. However, apart from this recent progress, the problem of determining entrywise preservers in fixed dimension remains mostly unresolved. We note that many other variants were previously explored, including problems involving: structured matrices \cite{BGKP-hankel,GKR-sparse, GKR-lowrank}, specific functions \cite{fitzgerald1977fractional, guillot2015complete, GKR-critG, guillot2012retaining, hiai2009monotonicity}, block actions \cite{guillot2015functions, vishwakarma2023positivity}, different notions of positivity \cite{BGKP-tn}, preserving inertia \cite{belton2023negativity}, and multivariable transforms \cite{belton2023negativity, fitzgerald1995functions}. 

Several authors have considered various preservers problems over finite fields (see e.g.~\cite{guterman2000some, li2001linear, orel2016preserver} and the references therein). However, to the authors' knowledge, all previous work on positivity preservers has focused on matrices with real or complex entries. In this paper, we consider matrices with entries in a finite field and describe the associated entrywise positivity preservers in the harder fixed-dimensional setting. As a consequence, we also obtain the positivity preservers for matrices of all dimensions, as in the setting of Schoenberg's theorem. Recall that in the real setting, a symmetric matrix in $M_n(\R)$ is positive definite if and only if all its leading principal minors are positive; see Proposition~\ref{Ppd} for other equivalent definitions. By analogy, we think of non-zero squares in a finite field $\F_q$ as positive elements in $\F_q$ and say that a symmetric matrix in $M_n(\F_q)$ is positive definite if all its leading principal minors are equal to the square of some non-zero element in $\F_q$. As shown in \cite{cooper2022positive}, this leads to a reasonable notion of positive definiteness for matrices with entries in finite fields. We therefore adopt the following definition.

\begin{definition}[Positive definite matrices over $\F_q$]\label{Dpd}
We say that a matrix $A \in M_n(\F_q)$ is {\it positive definite} if $A$ is symmetric and all its leading principal minors are non-zero squares in $\F_q$.
\end{definition}

\noindent Our goal is to classify entrywise preservers of positive definite matrices. 

\begin{definition}
    Given a matrix $A = (a_{ij}) \in M_n(\F_q)$ and a function $f: \F_q \to \F_q$, we denote by $f[A]$ the matrix obtained by applying $f$ to the entries of $A$: 
    \[
    f[A] := (f(a_{ij})).
    \]
   We say that $f$ {\it preserves positivity} (or is a {\it positivity preserver}) on $M_n(\F_q)$ if $f[A]$ is positive definite for all positive definite $A \in M_n(\F_q)$.
\end{definition}

We refer to Section~\ref{sec:pdm} for more background and motivation. Compared to previous work on $\R$ or $\C$ that uses analytic techniques to characterize preservers, the flavor of our work is considerably different and relies mostly on algebraic, combinatorial, and number-theoretic arguments. Surprisingly, our characterizations unearth new connections between functions preserving positivity, field automorphisms, and automorphisms of Paley graphs.

For each prime power $q$, we show that the positivity preservers on $M_n(\F_q)$, for a fixed $n\geq 3$, are precisely positive multiples of field automorphisms of $\F_q$. With a much more delicate analysis, we also give a complete classification of positivity preservers on $M_2(\F_q)$ for all prime powers $q$ other than those with $q \equiv 1 \pmod 4$ that are not a perfect square. Detailed statements of our main results including refinements are given in Theorems \ref{ThmA}, \ref{ThmB}, \ref{ThmC}, and \ref{ThmD} in Section \ref{SMainResults} below. 

\subsection{Main results}\label{SMainResults}

Let $p$ be a prime number and $k$ a positive integer. We denote the finite field with $q = p^k$ elements by $\F_q$. We let $\F_q^* := \F_q \setminus \{0\}$ denote the non-zero elements of the field. We say that an element $x \in \F_q$ is {\it positive} if $x = y^2$ for some $y \in \F_q^*$. In that case, we say $y$ is a square root of $x$. We denote the set of positive elements of $\F_q$ by $\F_q^+$, i.e., $\F_q^+ := \{x^2 : x \in \F_q^*\}$. Similarly, we denote the set of {\it negative} elements of $\F_q$ by $\F_q^-=\F_q^*\setminus \F_q^+$.
If $q$ is odd, then $|\F_q^+| =  |\F_q^-| = \frac{q-1}{2}$. When $q$ is odd, the {\it quadratic character} of $\F_q$ is the function $\eta: \F_q \to \{-1,0,1\}$ given by:  
\begin{equation}\label{EquadChar}
\eta(x) = x^{\frac{q-1}{2}} = \begin{cases}
    1 & \textrm{if } x \in \F_q^+ \\
    -1 & \textrm{if } x \in \F_q^- \\ 
    0 & \textrm{if } x = 0.
\end{cases}
\end{equation}
Finally, we denote by $M_n(\F_q)$ the set of $n \times n$ matrices with entries in $\F_q$, by $I_n$ the $n \times n$ identity matrix, and by ${\bf 0}_{m \times n}$ the $m \times n$ matrix whose entries are all $0$. 

In classifying the positivity preservers on $M_n(\F_q)$, a natural trichotomy arises. When $q$ is even, the Frobenius map $f(x) = x^2$ is an automorphism of $\F_q$ so that every non-zero element of $\F_q$ is a square. Characterizing the entrywise preservers in even characteristic thus reduces to characterizing the entrywise transformations that preserve non-singularity, a problem that is considerably different from the odd characteristic case. Our techniques in odd characteristics also differ depending on whether $-1$ is a square in $\F_q$. When $q$ is odd, it is well-known that $-1 \not\in \F_q^+$ if and only if $q \equiv 3 \pmod 4$. As a consequence, our work is organized into three parts: (1) the even characteristic case, (2) the $q \equiv 3 \pmod 4$ case where $-1 \not\in \F_q^+$, and (3) the $q \equiv 1 \pmod 4$ case where $-1 \in \F_q^+$. Our first main result addresses the even characteristic case. 

\begin{utheorem}\label{ThmA}
Let $q = 2^k$ for some positive integer $k$ and let $f: \F_q \to \F_q$. Then 
\begin{enumerate}
\item ($n=2$ case) The following are equivalent:
\begin{enumerate}
\item $f$ preserves positivity on $M_2(\F_q)$.
\item $f$ is a bijective monomial on $\F_q$, that is, there exist $c \in \F_q^*$ and $1 \leq n \leq q-1$ with $\gcd(n, q-1) = 1$ such that $f(x) = cx^n$ for all $x \in \F_q$.
\end{enumerate}
\item ($n \geq 3$ case) The following are equivalent:
\begin{enumerate} 
\item $f$ preserves positivity on $M_n(\F_q)$ for some $n \geq 3$.
\item $f$ preserves positivity on $M_n(\F_q)$ for all $n \geq 2$. 
\item $f$ is a non-zero multiple of a field automorphism of $\F_q$, i.e., there exist $c \in \F_q^*$ and $0 \leq \ell \leq k-1$ such that $f(x) = cx^{2^\ell}$ for all $x \in \F_q$.
\end{enumerate}
\end{enumerate}
\end{utheorem}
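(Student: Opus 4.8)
The plan is to use the fact that in characteristic $2$ the Frobenius map $x\mapsto x^2$ is an automorphism, so every nonzero element of $\F_q$ is a square, and hence a symmetric matrix over $\F_q$ is positive definite exactly when all its leading principal minors are nonzero. The engine of the whole argument is that the relevant determinants become perfect squares: $\det\!\begin{pmatrix} a & b\\ b& a\end{pmatrix}=(a+b)^2$ and $\det\!\begin{pmatrix}1&x&y\\ x&1&z\\ y&z&1\end{pmatrix}=(1+x+y+z)^2$ in characteristic $2$. For part (1), I would first test $f$ on the diagonal matrix $\mathrm{diag}(a,1)$ to get $f(\F_q^*)\subseteq\F_q^*$, and then feed in $\begin{pmatrix} a&b\\ b&a\end{pmatrix}$ with $a\neq 0$: this matrix is positive definite iff $a\neq b$, and its image is positive definite iff $f(a)\neq f(b)$, which forces $f$ to be injective, hence a bijection of $\F_q$ with $f(0)=0$.

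Next I would extract an exact multiplicative identity. For $a,c\in\F_q^*$ the matrix $\begin{pmatrix}a&b\\ b&c\end{pmatrix}$ is positive definite iff $b^2\neq ac$; letting $b$ range over everything except the unique excluded square root and using that both $f$ and squaring are bijections, a counting/exhaustion argument upgrades the family of inequalities ``$f(b)^2\neq f(a)f(c)$'' into the \emph{equality} $f(a)f(c)=f(\sqrt{ac})^2$ for all $a,c\in\F_q^*$. Taking discrete logarithms relative to a generator of the cyclic group $\F_q^*$ (of odd order $q-1$, so $2$ is invertible) turns this into the Jensen-type equation $L(\alpha)+L(\kappa)=2L(\tfrac{\alpha+\kappa}{2})$ on $\Z/(q-1)$; solving it shows $L$ is affine, i.e.\ $f(x)=cx^d$ with $\gcd(d,q-1)=1$. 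The converse is a direct check: because $x\mapsto x^d$ is then a bijection, it sends $ac\neq b^2$ to $(ac)^d\neq(b^2)^d$, so the determinant condition is preserved.

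For part (2), the implications $(c)\Rightarrow(b)\Rightarrow(a)$ are easy: a field automorphism commutes with every leading principal minor, and scaling by $c\in\F_q^*$ only multiplies each minor by a nonzero constant. The substance is $(a)\Rightarrow(c)$. Restricting a preserver on $M_n(\F_q)$ with $n\ge 3$ to the top-left $2\times 2$ block (embed $B\oplus I_{n-2}$, whose leading minors of size $\le 2$ coincide with those of $f[B]$) shows $f$ preserves positivity on $M_2(\F_q)$, so by part (1) we already know $f(x)=cx^d$ with $\gcd(d,q-1)=1$; in particular $f(0)=0$. I would then test the unit-diagonal family $\begin{pmatrix}1&x&y\\ x&1&z\\ y&z&1\end{pmatrix}$ (embedded with an identity block, using $f(0)=0$): by the perfect-square determinant it is positive definite iff $x\neq 1$ and $x+y+z\neq 1$, and preservation forces the affine Fermat curve $x^d+y^d=1$ (set $z=0$) to lie inside the line $x+y=1$. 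Because $\gcd(d,q-1)=1$ makes $t\mapsto t^d$ a bijection, that curve has exactly $q$ points, as does the line, so the two sets coincide; hence $x^d+(1+x)^d=1$ for all $x$, meaning $(1+X)^d-1-X^d$, a polynomial of degree $<q$, vanishes on all of $\F_q$ and is therefore identically zero. Finally $(1+X)^d=1+X^d$ holds identically iff $d$ is a power of $2$, giving $f(x)=cx^{2^\ell}$.

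I expect the main obstacle to be the two ``upgrade'' steps: converting a whole family of positivity inequalities into the single exact identity $f(a)f(c)=f(\sqrt{ac})^2$ in dimension $2$, and finding test matrices in dimension $3$ whose positive definiteness is governed entirely by a perfect-square determinant. Before computing I would worry that pinning down the exponent $d$ requires Weil-type bounds on the Fermat curve; the pleasant surprise is that the coprimality $\gcd(d,q-1)=1$ makes the point count \emph{exact}, so only an elementary degree argument on $(1+X)^d-1-X^d$ is needed.
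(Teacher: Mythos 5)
Your proof is correct, and its skeleton coincides with the paper's (Theorems \ref{Tchar2} and \ref{Tchar2M3}): the same test matrices, the same bijectivity-plus-counting upgrade from the family of inequalities $f(b)^2 \neq f(a)f(c)$ to the exact identity $f(\sqrt{ac})^2 = f(a)f(c)$, the same block embeddings to pass between dimensions (and your observation that only the small leading minors of the image matter, so the entries $f(0)$ are harmless, is exactly how the paper's reduction works), and the same cardinality comparison $|S_1| = |S_2| = q$ between the Fermat curve $x^d + y^d = 1$ and the line $x + y = 1$. You diverge in two sub-steps, both valid. First, to deduce monomiality from $f(\sqrt{ac})^2 = f(a)f(c)$, the paper writes $f$ as an interpolation polynomial $\sum_k a_k x^k$, expands both sides, and uses Lemma \ref{Lidl_Lemma} twice to force $a_j a_k = 0$ for $j \neq k$; you instead take discrete logarithms with respect to a generator of $\F_q^*$ (legitimate since $q-1$ is odd, so $2$ is invertible mod $q-1$) and solve the Jensen equation $L(\alpha) + L(\kappa) = 2L\bigl(\tfrac{\alpha+\kappa}{2}\bigr)$ on $\Z/(q-1)$, which reduces (set $\kappa = 0$, then combine) to additivity of $L - L(0)$ and hence $L(\alpha) = d\alpha + L(0)$; this is slicker and avoids polynomial bookkeeping, at the cost of working in the exponent group. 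Second, at the very end the paper concludes from $x^n + y^n = 1 \iff x+y=1$ that $x \mapsto x^n$ is a field automorphism and then cites the classification (Theorem \ref{Tauto}), whereas you derive the polynomial identity $(1+X)^d = 1 + X^d$ from a polynomial of degree $\leq d-1 < q$ vanishing on all of $\F_q$, and invoke the Lucas-type fact that $\binom{d}{j} \equiv 0 \pmod 2$ for all $0 < j < d$ precisely when $d$ is a power of $2$; this is more self-contained, and amusingly parallels the paper's own use of Lucas' theorem in its Section 7 treatment of monomials in odd characteristic. The only presentational point to tighten is the normalization $c = 1$ (or carrying $c$ through the minors, each of which is just scaled by a power of $c$) before running the unit-diagonal $3 \times 3$ family.
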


Our second main result addresses the case where $q \equiv 3 \pmod 4$. 

\begin{utheorem}\label{ThmB}
Let $q \equiv 3 \pmod 4$ and let $f: \F_q \to \F_q$. Then the following are equivalent: 
\begin{enumerate}
\item $f$ preserves positivity on $M_n(\F_q)$ for some $n \geq 2$. 
\item $f$ preserves positivity on $M_n(\F_q)$ for all $n \geq 2$. 
\item $f(0) = 0$ and $\eta(f(a)-f(b)) = \eta(a-b)$ for all $a, b \in \F_q$. 
\item $f$ is a positive multiple of a field automorphism of $\F_q$, i.e., there exist $c \in \F_q^+$ and $0 \leq \ell \leq k-1$ such that $f(x) = cx^{p^\ell}$ for all $x \in \F_q$. 
\end{enumerate}
\end{utheorem}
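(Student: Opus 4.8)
The plan is to establish the cycle of implications $(4)\Rightarrow(2)\Rightarrow(1)\Rightarrow(3)\Rightarrow(4)$. The implication $(2)\Rightarrow(1)$ is immediate. For $(4)\Rightarrow(2)$, write $f(x)=cx^{p^\ell}$ with $c\in\F_q^+$ and let $\sigma(x)=x^{p^\ell}$ be the corresponding Frobenius automorphism. For any $A\in M_n(\F_q)$ one has $f[A]=c\,\sigma[A]$, and since $\sigma$ is a field automorphism it commutes with determinants and preserves $\F_q^+$; hence each $m\times m$ leading principal minor of $\sigma[A]$ equals $\sigma$ applied to the corresponding minor of $A$, and is again a nonzero square. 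Scaling by $c$ multiplies the $m\times m$ leading minor by $c^m\in\F_q^+$, which preserves the property of being a nonzero square. Thus $f$ preserves positivity in every dimension, giving $(2)$.

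The heart of the matter is $(1)\Rightarrow(3)$. First I would reduce to $n=2$: given a positive definite $B\in M_2(\F_q)$, the block matrix $A=B\oplus I_{n-2}$ is positive definite (its leading minors are $B_{11}$, $\det B$, and then $\det B$ repeatedly), so $f[A]$ is positive definite; reading off the first two leading minors of $f[A]$ shows $f[B]$ is positive definite. Hence $f$ preserves positivity on $M_2(\F_q)$. Now, since $q\equiv 3\pmod 4$ forces $-1\in\F_q^-$, a symmetric matrix $\begin{pmatrix} a & b\\ b& d\end{pmatrix}$ is positive definite exactly when $\eta(a)=1$ and $\eta(ad-b^2)=1$; in particular no positive definite $2\times 2$ matrix has a zero on its diagonal. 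Testing $f$ on such matrices yields two families of constraints: from the $(1,1)$-minor, $f(\F_q^+)\subseteq\F_q^+$; and from the determinant, $f(a)f(d)-f(b)^2\in\F_q^+$ whenever $ad-b^2\in\F_q^+$.

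The main obstacle lies in upgrading these constraints to the signed two-point identity $\eta(f(a)-f(b))=\eta(a-b)$. The difficulty is structural: an off-diagonal entry $b$ enters the determinant only through $f(b)^2$, so a priori the $2\times 2$ data determine $f(b)$ only up to sign, and in particular the value $0$ --- which occurs only off the diagonal --- is controlled only through $f(0)^2$. I would first show that $f$ restricts to a bijection of $\F_q^+$: the inclusion $f(\F_q^+)\subseteq\F_q^+$ is known, and injectivity on squares can be forced from the determinant constraints together with a character-sum (Weil) count guaranteeing enough admissible test matrices. Once $f(\F_q^+)=\F_q^+$, the products $f(a)f(d)$ with $a,d\in\F_q^+$ exhaust $\F_q^+$, so choosing $f(a)f(d)=f(0)^2$ in the diagonal-matrix constraint $f(a)f(d)-f(0)^2\in\F_q^+$ forces $f(0)=0$. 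The signed identity is then recovered by exploiting that both squares and non-squares arise as admissible diagonal entries $d$, where $f(d)$ enters linearly rather than squared, which restores the lost sign information; here again Weil-type estimates ensure the requisite matrices exist. This passage from squared to signed control is the step I expect to be hardest.

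Finally, for $(3)\Rightarrow(4)$: the identity $\eta(f(a)-f(b))=\eta(a-b)$ immediately makes $f$ injective, hence a permutation of $\F_q$ preserving the quadratic-residue relation on differences --- that is, an automorphism of the Paley tournament associated to $\F_q$. By Carlitz's theorem (equivalently, the Muzychuk--Kov\'acs description of such permutations), every such map has the form $f(x)=\alpha x^{p^\ell}+\beta$ with $\alpha\in\F_q^+$ and $\beta\in\F_q$. The normalization $f(0)=0$ then gives $\beta=0$, so $f(x)=\alpha x^{p^\ell}$ with $\alpha\in\F_q^+$, which is precisely $(4)$ and closes the cycle.
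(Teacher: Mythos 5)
Your cycle structure and the easy implications are sound: $(4)\Rightarrow(2)$ is the paper's Proposition~\ref{PFrobenius} argument, the reduction of $(1)$ to the $n=2$ case via $B \oplus I_{n-2}$ matches the paper, and your derivation of $f(0)=0$ (choosing $a,d \in \F_q^+$ with $f(a)f(d)=f(0)^2$ to make $\det f[\mathrm{diag}(a,d)]$ vanish) is a clean variant of the paper's Lemma~\ref{L:Charq}(2), valid once bijectivity of $f$ on $\F_q^+$ is in hand. Your $(3)\Rightarrow(4)$ via Carlitz, after an affine normalization, is also correct --- though the parenthetical ``equivalently, Muzychuk--Kov\'acs'' is off: that theorem describes automorphisms of the subgraph induced on $\F_q^+$ and is used in the paper only for $q \equiv 1 \pmod 4$; Carlitz is the right tool here.

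The genuine gap is in $(1)\Rightarrow(3)$, exactly where you flag it: the ``passage from squared to signed control'' is named but never executed, and waving at Weil-type estimates does not bridge it. The paper needs three concrete ingredients there. First, $f(\F_q^-) \subseteq \F_q^-$ with injectivity on $\F_q^-$ (Lemma~\ref{lbijective}): this is proved by a counting comparison --- the set of $x$ making $\begin{pmatrix} x & a \\ a & -b \end{pmatrix}$ positive definite has size $\frac{q+1}{4}$, strictly larger than the $\frac{q-3}{4}$ values of $x$ whose images can satisfy the required determinant condition, with the injectivity step needing the triple character-sum bound of Lemma~\ref{lem:triple}. Second, and crucially, the identities $f(-x)=-f(x)$ and $f(x^2)=f(x)^2$ (Lemma~\ref{L_odd_and_square}): these do \emph{not} come from counting admissible test matrices at all, but from a bijectivity trick --- once $f$ is a bijection of $\F_q$, its entrywise action maps the finite set of positive definite matrices onto itself, so a singular matrix such as $\begin{pmatrix} x & -x \\ -x & x \end{pmatrix}$ cannot map to a positive definite one. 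Without oddness, the $\pm$ ambiguity in $f(b)$ inherited from $f(b)^2$ is unresolvable. Third, the case analysis of Theorem~\ref{ThmPres3mod4}: your observation that diagonal entries enter the determinant linearly handles only the configurations where $b \in \F_q^+$ (the paper's Cases 1--2); the mixed case $\eta(a)=1$, $\eta(b)=-1$ is the hard one, and the paper resolves it by factoring $a^2-b^2=(a+b)(a-b)$ and invoking both oddness and $f(x^2)=f(x)^2$. Your proposal contains no mechanism for this case, so as written the implication $(1)\Rightarrow(3)$ --- the heart of the theorem --- remains unproved.
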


Finally, our last main result addresses the $q \equiv 1 \pmod 4$ case.

\begin{utheorem}\label{ThmC}
Let $q \equiv 1 \pmod 4$ and let $f:\F_q \to \F_q$. Then the following are equivalent: 
\begin{enumerate}
\item $f$ preserves positivity on $M_n(\F_q)$ for some $n \geq 3$. 
\item $f$ preserves positivity on $M_n(\F_q)$ for all $n \geq 3$. 
\item $f(0) = 0$ and $\eta(f(a)-f(b)) = \eta(a-b)$ for all $a, b \in \F_q$. 
\item $f$ is a positive multiple of a field automorphism of $\F_q$, i.e., there exist $c \in \F_q^+$ and $0 \leq \ell \leq k-1$ such that $f(x) = cx^{p^\ell}$ for all $x \in \F_q$.
\end{enumerate}
Moreover, when $q=r^2$ for some odd integer $r$, the above are equivalent to 
\begin{enumerate}
\item[(1')]$f$ preserves positivity on $M_n(\F_q)$ for some $n \geq 2$.
\end{enumerate}
\end{utheorem}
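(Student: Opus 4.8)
The plan is to prove the cycle $(4)\Rightarrow(2)\Rightarrow(1)\Rightarrow(3)\Rightarrow(4)$ together with the \emph{moreover} clause, treating $(1)\Rightarrow(3)$ as the heart of the matter. I begin with the easy steps. For $(4)\Rightarrow(2)$, write $f(x)=c\sigma(x)$ with $c\in\F_q^+$ and $\sigma\in\mathrm{Aut}(\F_q)$; since $\det$ is a polynomial with prime-field coefficients and $\sigma$ fixes squares, the leading $k\times k$ principal minor of $f[A]$ equals $c^{\,k}\sigma(\Delta_k)$, where $\Delta_k$ is the corresponding minor of $A$. As $c^{\,k}\in\F_q^+$ and $\sigma(\Delta_k)\in\F_q^+$ whenever $\Delta_k\in\F_q^+$, the matrix $f[A]$ is positive definite for every positive definite $A$, and for every $n$. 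The implication $(2)\Rightarrow(1)$ is immediate. To prepare for $(1)\Rightarrow(3)$, I record two reductions. First, if $f$ preserves positivity on $M_n(\F_q)$ for some $n\ge3$, then it does so on $M_3(\F_q)$: embedding a positive definite $A\in M_3$ as $A\oplus I_{n-3}$ produces a positive definite matrix whose leading minors of orders $1,2,3$ coincide with those of $f[A]$, forcing $f[A]$ to be positive definite. Second, positivity preservers are closed under composition, and each $x\mapsto c\sigma(x)$ with $c\in\F_q^+$ is an invertible preserver; hence after composing with a suitable such map I may normalize $f(0)=0$ and $f(1)=1$ once these values are controlled.

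Next, $(3)\Rightarrow(4)$. Condition $(3)$ forces $f$ to be injective (if $f(a)=f(b)$ then $\eta(a-b)=\eta(0)=0$, so $a=b$), hence bijective, and to satisfy $a-b\in\F_q^+\iff f(a)-f(b)\in\F_q^+$. Since $q\equiv1\pmod4$, the relation $a-b\in\F_q^+$ is symmetric, so this says exactly that $f$ is an automorphism of the Paley graph $P_q$. By the determination of $\mathrm{Aut}(P_q)$ due to Carlitz (and, in the general setting, Muzychuk--Kov\'acs), every such automorphism has the form $x\mapsto c\sigma(x)+d$ with $c\in\F_q^+$, $d\in\F_q$, and $\sigma\in\mathrm{Aut}(\F_q)$. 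The normalization $f(0)=0$ gives $d=0$, yielding $(4)$.

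The crux is $(1)\Rightarrow(3)$, which I expect to be the main obstacle. By the reduction I work on $M_3(\F_q)$. Applying $f$ to diagonal positive definite matrices shows $f(\F_q^+)\subseteq\F_q^+$; examining $\mathrm{diag}(a,b,c)$ with $a,b,c\in\F_q^+$, whose image has constant off-diagonal $f(0)$, the $2\times2$ minors $f(a)f(b)-f(0)^2$ must all be nonzero squares, which a short counting argument shows is impossible unless $f(0)=0$. Injectivity comes from test matrices that collapse: if $f(a)=f(b)$ with $a\neq b$, one produces a positive definite matrix containing the $2\times2$ pattern $\left(\begin{smallmatrix} a & b\\ b & a\end{smallmatrix}\right)$ (or a variant), whose image is singular; a Weil-type count guarantees such a positive definite test matrix exists in every case. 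Thus $f$ is a bijection fixing $0$ and stabilizing $\F_q^+$, and I normalize $f(1)=1$. The analytic heart is then to upgrade the one-sided constraints coming from $2\times2$ positivity,
\[
cx-b^2\in\F_q^+\ \Longrightarrow\ f(c)f(x)-f(b)^2\in\F_q^+\qquad(c,x\in\F_q^+,\ b\in\F_q),
\]
into equivalences. Here Weil's bound enters: the number of $x\in\F_q^+$ satisfying $cx-b^2\in\F_q^+$ is governed by the quadratic character sum $\sum_x\eta\bigl(x(cx-b^2)\bigr)$, which is $O(1)$, and the same estimate applies after applying $f$; since $f$ is a bijection of $\F_q^+$, matching cardinalities turns each inclusion into a bijection of the corresponding solution sets, so injectivity yields the reverse implication as well. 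Specializing the resulting equivalence (for instance at $x=1$) gives $c-b^2\in\F_q^+\iff f(c)-f(b)^2\in\F_q^+$, from which one extracts that $f$ is odd and, finally, the pointwise identity $\eta(f(a)-f(b))=\eta(a-b)$ for all $a,b$. I anticipate that controlling the error terms uniformly and chaining the $2\times2$ and $3\times3$ constraints to reach the clean pointwise statement is the most delicate part.

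Finally, the \emph{moreover} clause treats $q=r^2$ with $n=2$, where the $3\times3$ room is unavailable and the $2\times2$ constraints alone are weaker. The additional ingredient is that the subfield $\F_r\subset\F_q$ is a clique of the maximum possible size $r=\sqrt q$ in $P_q$; the rigidity of such maximum cliques---again in the spirit of Muzychuk--Kov\'acs and the theory of cliques in Paley graphs---supplies the structural constraints that the missing $3\times3$ matrices would otherwise provide, allowing the argument above to go through from $2\times2$ positivity alone and deliver $(3)$. This is precisely the point that fails when $q\equiv1\pmod4$ is not a perfect square, where the clique structure of $P_q$ is not understood well enough, which explains the exclusion of that case.
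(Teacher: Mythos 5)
Your global architecture matches the paper's: the cycle $(4)\Rightarrow(2)\Rightarrow(1)\Rightarrow(3)\Rightarrow(4)$, the embedding $A\oplus I_{n-3}$ to reduce to $n=3$, Carlitz's theorem for $(3)\Leftrightarrow(4)$, injectivity on $\F_q^+$ as the pivot, and clique rigidity for $q=r^2$. But two of your concrete steps fail as stated, and the two hardest steps are asserted rather than supplied. First, your derivation of $f(0)=0$ from diagonal matrices alone is false in the order you give it: without prior injectivity, the constraints from $f[\mathrm{diag}(a,b,c)]$ only say $uv-f(0)^2\in\F_q^+$ for $u,v\in f(\F_q^+)$ together with a determinant condition, and these can be satisfied with $f(0)\neq 0$ when $f(\F_q^+)$ is small. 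For example, in $\F_{13}$ take $f\equiv 1$ on $\F_{13}^+$ and $f(0)=11$: then $1-f(0)^2\equiv 10$ and $(1-f(0))^2(1+2f(0))\equiv 12$ are both squares, so every diagonal positive definite test passes. The paper therefore proves injectivity on $\F_q^+$ \emph{first} (Proposition~\ref{Pqmod1Injective}) and only then gets $f(0)=0$ (Lemma~\ref{lem:local}, using the matrix $\left(\begin{smallmatrix} x & 1\\ 1 & 0\end{smallmatrix}\right)$, positive definite precisely because $-1\in\F_q^+$, plus strong regularity of $P(q)$); with bijectivity on $\F_q^+$ in hand your diagonal argument does become correct, and even simpler, since one can choose $a,b$ with $f(a)f(b)=f(0)^2$. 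Second, your injectivity argument has no content to count: the matrix $\left(\begin{smallmatrix} a & b\\ b & a\end{smallmatrix}\right)$ is completely determined by $a$ and $b$, so there is no free parameter for a ``Weil-type count'' to range over, and by Lemma~\ref{lem:sq} the relevant case forces $a-b\in\F_q^-$, leaving $\eta(a+b)$ outside your control. The missing construction is the paper's: Corollary~\ref{cor:cab} (a strong-regularity count, $|N(a)\cap\F_q^-|=|N(b)\cap\F_q^-|=\frac{q-1}{4}$ but the sets differ) produces $c\in\F_q^-$ adjacent to $a$ but not $b$, and the explicit $3\times 3$ matrix with rows $(a,a,a),(a,c,b),(a,b,b)$ is positive definite while its image has vanishing determinant when $f(a)=f(b)$.

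The two remaining pieces are the actual heart, and your proposal names no mechanism for either. Your counting scheme for upgrading $2\times 2$ implications to equivalences is genuinely in the paper's spirit (Lemma~\ref{lem:local} and Proposition~\ref{prop:sign} argue exactly by matching neighborhood cardinalities), but specializing at $x=1$ only determines $f(b)^2$ for $b\in\F_q^-$, i.e., $f(b)$ up to sign; resolving that sign, and handling $a,b\in\F_q^-$ simultaneously, is where the paper does its real work --- either via Muzychuk--Kov\'acs (Theorem~\ref{thm:localPaley}) followed by the $e>0$/$e<0$ dichotomy in Proposition~\ref{prop:main} (using Corollary~\ref{cor:triple2} and Lemma~\ref{lem:triple}, with $q\in\{5,9,13,17,25\}$ checked separately), or via the alternative Carlitz route with explicit $3\times 3$ test matrices built from common neighbors (Corollary~\ref{cor:cab2}). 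Incidentally, Muzychuk--Kov\'acs determines the automorphisms of the \emph{subgraph} $\Gamma(q)$ induced on $\F_q^+$ (where inversion $x\mapsto x^{-p^l}$ also occurs and must be ruled out); $\mathrm{Aut}(P(q))$ itself is the Carlitz-type statement you want for $(3)\Rightarrow(4)$, which is fine after normalizing by $f(1)^{-1}\in\F_q^+$. Finally, for the moreover clause you correctly identify Blokhuis's EKR theorem as the key input, but ``the rigidity of maximum cliques supplies the structural constraints'' is a placeholder for what is in fact the longest part of the paper: proving $f$ is a monomial $x\mapsto\beta_i x^{m_i}$ on each square coset via the functional equation~\eqref{eq:inverse}, the coset bookkeeping splitting into $r\equiv 1,3\pmod 4$, the Goryainov et al.\ construction (Lemma~\ref{lem:oval}) from an element of order $\frac{r+1}{2}$, and the Galois rigidity statement (Proposition~\ref{prop:Galois}) that equal $\F_r$-neighborhoods force $v\in\{u,u^r\}$. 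As it stands, the proposal is a correct map of the terrain with the right landmarks, but the steps it does spell out are in the wrong order or non-functional, and the delicate parts it flags are not bridged.
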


Recall that each finite field $\F_q$ with $q$ odd has an associated Paley graph $P(q)$ whose vertices are the elements of $\F_q$ and where two vertices $a,b \in \F_q$ have an edge $(a,b)$ if and only if $\eta(a-b) = 1$. The graph is directed when $q \equiv 3 \pmod 4$ and is sometimes called the Paley tournament or the Paley digraph, and is undirected when $q \equiv 1 \pmod 4$. Condition (3) in Theorems \ref{ThmB} and \ref{ThmC} can thus be rephrased as 
\begin{enumerate}
\item[(3')] $f(0) = 0$ and $f$ is an automorphism of the Paley graph $P(q)$.  
\end{enumerate}
Paley graphs play an important role in many of our proofs in the $q \equiv 1 \pmod 4$ case. Their elementary properties are reviewed in Section \ref{SSPaley}. 

Note that as the dimension $n$ of the matrices increases, the number of constraints that a positivity preserver on $M_n(\F_q)$ must satisfy quickly grows. The extreme $n=2$ case is significantly harder to resolve as there is very little structure to exploit to unveil the possible preservers. Paley graphs are particularly useful to resolve that case when $q \equiv 1 \pmod 4$ and $q=r^2$, where our arguments leverage the additional known structure of large cliques in $P(q)$ as well as ideas from finite geometry. On the other hand, when $q\equiv 1 \pmod 4$ and $q$ is a non-square, little is known about the structure of cliques in $P(q)$; in fact, even estimating the clique number of $P(q)$ itself is known to be notoriously difficult \cite{HP, Y22}. This indicates that characterizing positivity preservers on $M_2(\F_q)$ with $q \equiv 1 \pmod 4$ being a non-square is potentially very challenging.

The following corollary follows immediately from our main results, Theorems \ref{ThmA}, \ref{ThmB}, and \ref{ThmC}.

\begin{corollary}\label{Cn=3}
For any finite field $\F_q$ and any fixed $n \geq 3$, the positivity preservers on $M_n(\F_q)$ are precisely the positive multiples of the field automorphisms of $\F_q$. 
\end{corollary}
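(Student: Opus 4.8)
The plan is to obtain the corollary by a short case analysis on $q$, invoking the relevant main theorem in each regime; all of the mathematical work has already been carried out, so the task is purely one of assembling the three characterizations and checking that they agree. Every prime power $q$ falls into exactly one of the three cases $q$ even, $q \equiv 3 \pmod 4$, or $q \equiv 1 \pmod 4$, handled respectively by Theorems \ref{ThmA}, \ref{ThmB}, and \ref{ThmC}.

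I would fix an integer $n \geq 3$ and argue that each theorem yields the desired fixed-dimensional statement. Consider first the case $q \equiv 1 \pmod 4$. If $f$ preserves positivity on $M_n(\F_q)$, then it preserves positivity on $M_m(\F_q)$ for some $m \geq 3$ (take $m = n$), so condition (1) of Theorem \ref{ThmC} holds and hence, by (1) $\Rightarrow$ (4), $f$ is a positive multiple of a field automorphism. Conversely, if $f$ is such a multiple, then (4) $\Rightarrow$ (2) shows that $f$ preserves positivity on $M_m(\F_q)$ for \emph{all} $m \geq 3$, in particular on $M_n(\F_q)$. The identical argument, now using the equivalence of (1), (2), and (4) in Theorem \ref{ThmB} (whose dimensions range over all $m \geq 2$, which includes our fixed $n \geq 3$), settles the case $q \equiv 3 \pmod 4$, and the equivalence of (a), (b), and (c) in Theorem \ref{ThmA}(2) settles the even case.

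The only point that needs care is reconciling the even-characteristic conclusion with the uniform statement of the corollary: Theorem \ref{ThmA}(2) describes the preservers as \emph{non-zero} multiples $f(x) = cx^{2^\ell}$ with $c \in \F_q^*$, whereas the corollary asserts they are \emph{positive} multiples of automorphisms. Here I would observe that when $q = 2^k$ the Frobenius map $x \mapsto x^2$ is an automorphism of $\F_q$, so squaring is a bijection and every non-zero element is a non-zero square; that is, $\F_q^+ = \F_q^*$. Thus the constraint $c \in \F_q^*$ is identical to $c \in \F_q^+$, and since the field automorphisms of $\F_{2^k}$ are precisely the maps $x \mapsto x^{2^\ell}$ with $0 \leq \ell \leq k-1$, the functions $x \mapsto cx^{2^\ell}$ are exactly the positive multiples of the automorphisms of $\F_q$. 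With this identification the three cases give one and the same description, and the corollary follows. Since the substance is entirely contained in the three main theorems, I expect no genuine obstacle beyond this characteristic-$2$ bookkeeping.
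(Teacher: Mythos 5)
Your proof is correct and takes essentially the same approach as the paper, which states the corollary as an immediate consequence of Theorems \ref{ThmA}, \ref{ThmB}, and \ref{ThmC} via exactly the trichotomy and fixed-$n$ specialization you describe. Your characteristic-$2$ reconciliation --- that $\F_q^+ = \F_q^*$ when $q = 2^k$, so the non-zero multiples $cx^{2^\ell}$ of Theorem \ref{ThmA}(2) are precisely the positive multiples of automorphisms --- is the right bookkeeping and correctly makes explicit a point the paper leaves tacit.
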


A surprising consequence of Corollary \ref{Cn=3} is the fact that if $f$ preserves positivity on $M_n(\F_q)$ for some $n \geq 3$, then $\eta(\det f[M]) = \eta(\det M)$ for any square submatrix $M$ of any matrix $A \in M_n(\F_q)$ (i.e., $f$ must preserve the ``sign'' of minors). This follows from Proposition \ref{PFrobenius} below. The analogous result does not hold for matrices in $M_n(\R)$, where positivity preservers do not generally preserve the inertia of matrices and, in particular, do not always preserve the sign of minors (see \cite{belton2023negativity} for more details).


Inspired by the above discussions, it is natural to study functions $f:\F_q \to \F_q$ that preserve the ``sign" of matrices on $M_n(\F_q)$. More precisely, we say $f:\F_q \to \F_q$ is a \emph{sign preserver} on $M_n(\F_q)$ provided that for all symmetric $A\in M_n(\F_q)$, $A$ is positive definite if and only if $f[A]$ is positive definite. Thus, a sign preserver maps positive definite matrices into themselves, and non-positive definite matrices into themselves. When $n \geq 3$, Corollary~\ref{Cn=3} implies that the sign preservers on $M_n(\F_q)$ are precisely the positive multiples of the field automorphisms of $\F_q$. When $n=2$, we prove the following theorem.

\begin{utheorem}\label{ThmD}
Let $q$ be a prime power. The sign preservers on $M_2(\F_q)$ are precisely: 
\begin{enumerate}
\item the bijective monomials, when $q$ is even.
\item the positive multiples of the field automorphisms of $\F_q$, when $q$ is odd.
\end{enumerate}
\end{utheorem}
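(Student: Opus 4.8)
The plan is to first dispose of the easy implication, then reduce to a single hard case using the already-classified positivity preservers. For the ``if'' direction, note that if $f(x)=cx^{p^\ell}$ with $c\in\F_q^+$, then $\sigma(x)=x^{p^\ell}$ is a field automorphism, so $\eta(\sigma(x))=\eta(x)$ and $f(a)f(d)-f(b)^2=c^2\sigma(ad-b^2)$; hence $\eta$ of each leading minor of $f[A]$ equals that of $A$, so $A$ is positive definite iff $f[A]$ is, i.e. $f$ is a sign preserver. An analogous one-line check (using that $x\mapsto x^n$ is a bijection, so $(ad)^n=(b^2)^n\iff ad=b^2$) shows every bijective monomial is a sign preserver in even characteristic. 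Thus the claimed maps are sign preservers.

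For the converse, observe that a sign preserver is in particular a positivity preserver on $M_2(\F_q)$. When $q$ is even this forces $f$ to be a bijective monomial by Theorem~\ref{ThmA}(1); when $q\equiv 3\pmod 4$ it forces $f$ to be a positive multiple of an automorphism by Theorem~\ref{ThmB}; and when $q\equiv 1\pmod 4$ with $q$ a perfect square the same conclusion follows from condition (1') of Theorem~\ref{ThmC}. Combined with the previous paragraph, the theorem holds in all these cases. The only remaining case---and the entire content of the proof---is $q\equiv 1\pmod 4$ with $q$ a non-square, where the positivity preservers themselves are unknown.

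So I would fix such a $q$ and a sign preserver $f$, and aim to verify condition (3) of Theorem~\ref{ThmC}, namely $f(0)=0$ and $\eta(f(a)-f(b))=\eta(a-b)$ for all $a,b$; the conclusion then follows from the implication (3)$\Rightarrow$(4) (Carlitz / Muzychuk--Kov\'acs). I would extract everything from $2\times 2$ matrices, using that a symmetric $\left(\begin{smallmatrix} a & b\\ b& d\end{smallmatrix}\right)$ is positive definite iff $\eta(a)=1$ and $\eta(ad-b^2)=1$, so positive-definiteness sees the off-diagonal only through its square. Testing diagonal matrices gives $\eta(a)=1\Rightarrow\eta(f(a))=1$. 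Testing $\left(\begin{smallmatrix} 1 & b\\ b& x\end{smallmatrix}\right)$ gives $\eta(x-b^2)=1\iff\eta(f(1)f(x)-f(b)^2)=1$; feeding $f(u)=f(v)$ into this forces $\{x:\eta(x-u^2)=1\}=\{x:\eta(x-v^2)=1\}$, and since $\F_q^+$ has trivial translation stabiliser (as $\gcd(|\F_q^+|,p)=1$) this yields $u^2=v^2$. A short character/counting argument on matrices $\left(\begin{smallmatrix}\pm u & x\\ x& d\end{smallmatrix}\right)$ and $\left(\begin{smallmatrix} a & x\\ x& \pm u\end{smallmatrix}\right)$ then rules out $f(u)=f(-u)$ for $u\neq 0$, so $f$ is a bijection; the same translation-stabiliser idea applied to $\left(\begin{smallmatrix} a & 0\\ 0& d\end{smallmatrix}\right)$ forces $f(0)=0$, and bijectivity gives $\eta\circ f=\eta$. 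Finally, the matrices $\left(\begin{smallmatrix} e & e\\ e& d\end{smallmatrix}\right)$ with $e\in\F_q^+$ give $\eta(d-e)=1\iff\eta(f(d)-f(e))=1$, which upgrades (using injectivity for the $\eta=0$ case) to the full equality $\eta(f(d)-f(e))=\eta(d-e)$ whenever $e\in\F_q^+$. This already settles condition (3) for every pair with a positive (or zero) member.

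The main obstacle is the remaining case of two \emph{negative} arguments: since the off-diagonal enters only squared and any positive pivot $a$ makes the comparison value $b^2/a$ a square, no single $2\times 2$ test directly compares $f$ at two negative elements---precisely the structural reason the positivity-preserver problem is open for non-square $q$. I would circumvent this by deriving a multiplicativity relation. Comparing two matrices $\left(\begin{smallmatrix} a_i & b\\ b& d_i\end{smallmatrix}\right)$ with $a_i\in\F_q^+$ and equal product $a_1d_1=a_2d_2$ makes the left-hand conditions identical for every $b$, so, letting $f(b)^2$ sweep all squares, one gets $\eta(W_1-t)=1\iff\eta(W_2-t)=1$ for all squares $t$, where $W_i=f(a_i)f(d_i)$. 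The crux is a Weil-type estimate showing that this positive-difference profile over squares determines $W$, i.e. $W_1=W_2$ once $q$ is large (the finitely many small non-square fields checked directly); here the relevant sum $\sum_z\eta\big((W_1-z^2)(W_2-z^2)\big)$ is $O(\sqrt q)$ against a main term of order $q$. Taking $a_1=1$ yields $f(a)f(d)=f(1)f(ad)$ for all $a\in\F_q^+$ and all $d$; writing $b=\nu s$, $b'=\nu s'$ with a fixed $\nu\in\F_q^-$ and $s,s'\in\F_q^+$, this gives $f(b)-f(b')=\tfrac{f(\nu)}{f(1)}\big(f(s)-f(s')\big)$, and applying the positive-pivot equality to $s,s'$ together with $\eta(f(\nu))=\eta(\nu)=-1$ delivers $\eta(f(b)-f(b'))=\eta(b-b')$ for negative $b,b'$. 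This completes condition (3), and Carlitz's theorem finishes the proof. I expect the character-sum step to be the principal technical hurdle, both in securing the bound and in dispatching the small-field exceptions.
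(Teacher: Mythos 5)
Your proposal is correct, but in the decisive case $q\equiv 1\pmod 4$ it takes a genuinely different route from the paper. The paper needs no square/non-square split there and never touches the $q=r^2$ machinery for Theorem~\ref{ThmD}: Proposition~\ref{prop:sign} uses the two-sided hypothesis to get injectivity on $\F_q^+$ (via Corollary~\ref{cor:triple2}, i.e.\ the Weil bound of Lemma~\ref{lem:triple}), and then Proposition~\ref{prop:main} invokes the Muzychuk--Kov\'acs classification of $\mathrm{Aut}(\Gamma(q))$ (Theorem~\ref{thm:localPaley}) together with a delicate analysis on $\F_q^-$ to eliminate the maps $x\mapsto x^{-p^j}$. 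You bypass Muzychuk--Kov\'acs entirely and reduce to Carlitz (Theorem~\ref{Tcarlitz}), something the paper does only in its alternative $n\geq 3$ arguments. Your steps check out: $f(u)=f(v)$ forces $u^2+\F_q^+=v^2+\F_q^+$, hence $u^2=v^2$ because $p\nmid\tfrac{q-1}{2}$; ruling out $f(u)=f(-u)$ needs exactly the paper's Lemma~\ref{lem:triple}, since your test matrices yield $N(0)\cap N(c)=N(0)\cap N(-c)$ (with $c=u$ or $c=au$), whose common part would have size $\tfrac{q-5}{4}$ or $\tfrac{q-1}{4}$, impossible for $q\geq 27$ and checkable by hand below that; and your key new lemma is sound: if $W_1\neq W_2\in\F_q^*$ had the same ``$\eta=1$'' profile against $\F_q^+$, then $\sum_{z\in\F_q}\eta\bigl((W_1-z^2)(W_2-z^2)\bigr)\geq q-6$, while Weil bounds this sum of a squarefree quartic by $3\sqrt{q}$ --- a contradiction for $q>21$, leaving $q\in\{5,9,13,17\}$ to direct verification. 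The resulting identity $f(a)f(d)=f(1)f(ad)$ for $a\in\F_q^+$ then transfers negative-negative pairs to positive ones and delivers condition (3) of Theorem~\ref{ThmC}, whence Carlitz applies (after normalizing $f(1)=1$, legitimate since $f(1)\in\F_q^+$). What each approach buys: the paper settles the hard case at a stroke from a deep graph-automorphism theorem, while your argument is longer in elementary steps but more self-contained, replacing Theorem~\ref{thm:localPaley} by Carlitz's theorem, which admits short proofs. Two tidy-ups: your appeal to Theorem~\ref{ThmC}(1') for square $q$ is superfluous, since your ``non-square'' argument --- like the paper's --- covers all $q\equiv 1\pmod 4$ uniformly; and the sketched counting steps ($f(u)\neq f(-u)$, $f(0)=0$, and the small-field checks) must still be written out in full, though each is routine.
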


The rest of the paper is dedicated to proving Theorems \ref{ThmA}, \ref{ThmB}, \ref{ThmC}, and \ref{ThmD}. Section \ref{Sprelim} contains preliminary results including statements of classical results from finite field theory that are needed in the proofs, a discussion of the properties of positive definite matrices with entries in a finite field, and preliminary results on entrywise preservers over finite fields. Sections \ref{Seven}, \ref{S3mod4}, and \ref{S1mod4} address the even case (Theorem \ref{ThmA}), the $q \equiv 3\pmod 4$ case (Theorem \ref{ThmB}), and the $q \equiv 1 \pmod 4$ case (Theorem \ref{ThmC}), respectively. Section \ref{S1mod4} also contains the proof of Theorem \ref{ThmD}. Section \ref{S1mod4square} addresses the $q = r^2$ case (Part (1') in Theorem \ref{ThmC}). Section \ref{Smisc} contains an alternative approach to prove some of our results. Concluding remarks are given in Section \ref{Sconc}. 

\section{Preliminaries}\label{Sprelim}

\subsection{Finite fields}

We first recall the characterization of automorphisms of finite fields. 

\begin{theorem}[{\cite[Theorem 2.21]{lidl1997finite}}]\label{Tauto}
Let $q = p^k$. Then the distinct automorphisms of $\F_q$ are exactly the mappings $\sigma_0, \sigma_1, \dots, \sigma_{k-1}$ defined by $\sigma_\ell(x) = x^{p^\ell}$.  
\end{theorem}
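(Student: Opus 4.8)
The plan is to verify that each proposed map $\sigma_\ell$ is indeed an automorphism, that the $k$ maps $\sigma_0,\dots,\sigma_{k-1}$ are pairwise distinct, and finally that there are no others. The first two steps are quick; the substance lies in the third, where one must rule out any automorphism not of Frobenius type.

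First I would show that $\sigma_1(x)=x^p$ is an automorphism. Multiplicativity is immediate, and additivity follows from the ``freshman's dream'': for $0<i<p$ the binomial coefficient $\binom{p}{i}$ is divisible by $p$ and hence vanishes in $\F_q$, so $(x+y)^p=x^p+y^p$. Thus $\sigma_1$ is a ring homomorphism fixing $1$; its kernel is a proper ideal of the field $\F_q$ and therefore trivial, so $\sigma_1$ is injective, and being a self-map of a finite set it is bijective. Consequently each $\sigma_\ell=\sigma_1^{\ell}$ is an automorphism with $\sigma_\ell(x)=x^{p^\ell}$. For distinctness, if $\sigma_i=\sigma_j$ with $0\le i<j\le k-1$, then every $x\in\F_q$ satisfies $x^{p^j}-x^{p^i}=0$, so the nonzero polynomial $X^{p^j}-X^{p^i}$ of degree $p^j$ would have all $q=p^k$ elements of $\F_q$ as roots; but $p^j\le p^{k-1}<q$, a contradiction.

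The main step is to prove that an arbitrary automorphism $\sigma$ of $\F_q$ equals some $\sigma_\ell$. Since $\sigma$ fixes $1$ and is additive, it fixes the prime field $\F_p$ pointwise. I would then choose a generator $g$ of the cyclic group $\F_q^*$; it generates $\F_q$ over $\F_p$, so its minimal polynomial $m\in\F_p[X]$ has degree $k$. Applying $\sigma$ to $m(g)=0$ and using that $\sigma$ fixes the coefficients of $m$ gives $m(\sigma(g))=0$, so $\sigma(g)$ is a root of $m$. These roots are exactly the Frobenius conjugates $g,g^p,\dots,g^{p^{k-1}}$: the Frobenius $\sigma_1$ permutes the roots of $m$, the orbit of $g$ has size $k$ (since $g^{p^d}=g$ forces $(q-1)\mid(p^d-1)$, hence $d\ge k$), and $\deg m=k$ leaves room for no others. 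Therefore $\sigma(g)=g^{p^\ell}=\sigma_\ell(g)$ for some $\ell$, and since an automorphism fixing $\F_p$ is determined by its value on the generator $g$, we conclude $\sigma=\sigma_\ell$.

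I expect the last paragraph to be where the real work concentrates. The potential obstacle is justifying that the roots of $m$ are precisely the Frobenius orbit of $g$, which rests on the separability of $\F_q/\F_p$ and the orbit-size computation above. An alternative and slicker route avoids primitive elements altogether: $\F_q$ is the splitting field of the separable polynomial $X^q-X$ over $\F_p$, so $\F_q/\F_p$ is Galois of degree $k$ with $\mathrm{Aut}(\F_q)=\mathrm{Gal}(\F_q/\F_p)$ of order $k$; having already exhibited $k$ distinct automorphisms $\sigma_0,\dots,\sigma_{k-1}$, these must exhaust the group.
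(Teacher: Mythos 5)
Your proof is correct and complete: the verification that each $\sigma_\ell$ is an automorphism, the root-counting argument for distinctness, and the primitive-element step (any automorphism fixes $\F_p$, hence sends a generator $g$ of $\F_q^*$ to a root of its degree-$k$ minimal polynomial, and those roots are exactly the Frobenius conjugates $g, g^p, \dots, g^{p^{k-1}}$) all go through. The paper offers no proof of its own here --- it cites this as Theorem 2.21 of Lidl--Niederreiter --- and your argument is essentially the standard one from that reference, so there is nothing further to compare.
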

\noindent In particular, $(x+y)^{p^\ell} = \sigma_\ell(x+y) = \sigma_\ell(x) + \sigma_\ell(y) = x^{p^\ell} + y^{p^\ell}$ in a field of characteristic $p$.

Next, recall some elementary facts about permutation polynomials over $\F_q$, i.e., polynomials that are bijective on $\F_q$.  

\begin{theorem}[{\cite[Theorem 7.8]{lidl1997finite}}]\label{Tperm}\hfill
\begin{enumerate}
    \item Every non-constant linear polynomial over $\F_q$ is a permutation polynomial of $\F_q$. 
    \item The monomial $x^n$ is a permutation polynomial of $\F_q$ if and only if $\gcd(n, q-1) = 1$.
\end{enumerate}
\end{theorem}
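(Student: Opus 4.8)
The plan is to establish each part by reducing the bijectivity of the polynomial map to an elementary algebraic fact about $\F_q$ or its multiplicative group $\F_q^*$, using throughout that a self-map of a finite set is a bijection as soon as it is injective (equivalently, surjective).

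For part (1), I would write the linear polynomial as $f(x) = ax + b$ with $a \in \F_q^*$ and $b \in \F_q$, and prove surjectivity directly: given any $y \in \F_q$, the element $x = a^{-1}(y - b)$ satisfies $f(x) = y$, where $a^{-1}$ exists precisely because $a \neq 0$ in the field $\F_q$. Injectivity is equally immediate, since $f(x_1) = f(x_2)$ forces $a(x_1 - x_2) = 0$, and as $\F_q$ has no zero divisors and $a \neq 0$ we conclude $x_1 = x_2$. Either way $f$ permutes $\F_q$.

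For part (2), the first step is to observe that $x \mapsto x^n$ fixes $0$, so the monomial permutes $\F_q$ if and only if it permutes $\F_q^*$. Now $\F_q^*$ is cyclic of order $q - 1$; fixing a generator $g$, every nonzero element is $g^i$ for a unique $i \in \Z/(q-1)\Z$, and under this identification the map $g^i \mapsto (g^i)^n = g^{ni}$ becomes the map $i \mapsto ni$ on $\Z/(q-1)\Z$. This is a bijection of $\Z/(q-1)\Z$ if and only if $n$ is invertible modulo $q-1$, i.e. if and only if $\gcd(n, q-1) = 1$, which yields both directions of the equivalence. Equivalently, and without choosing a generator, $x \mapsto x^n$ is an endomorphism of the abelian group $\F_q^*$ with kernel $\{x : x^n = 1\}$, and in a cyclic group of order $q-1$ this kernel has exactly $\gcd(n, q-1)$ elements, so the map is injective exactly when $\gcd(n, q-1) = 1$.

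Neither part presents a genuine obstacle; the only substantive ingredient is the cyclicity of $\F_q^*$, which drives part (2) and lets me transport the problem to the transparent question of when multiplication by $n$ permutes $\Z/(q-1)\Z$. The one point I would state carefully is the count of $n$-th roots of unity in a cyclic group (the kernel size $\gcd(n, q-1)$), since that is exactly what converts the injectivity criterion into the gcd condition; everything else follows directly from $\F_q$ being a field.
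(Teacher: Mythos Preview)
Your proof is correct and entirely standard. Note, however, that the paper does not give its own proof of this statement: it is quoted verbatim as \cite[Theorem 7.8]{lidl1997finite} and used as a black box, so there is no ``paper's proof'' to compare against. Your argument is essentially the textbook one and would serve perfectly well if a proof were wanted.
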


The following simple facts will be useful later. We provide a short proof for completeness. 
\begin{proposition}\label{Pdecomp}
Let $\F_q$ be a finite field of odd characteristic. Then the following are equivalent:
\begin{enumerate}
\item $q \equiv 3 \pmod 4$.
\item $-1$ is not a square in $\F_q$.
\item $\F_q^- = -\F_q^+$. 
\item Every element in $\F_q^+$ has a {\it unique} positive square root.
\end{enumerate}
\end{proposition}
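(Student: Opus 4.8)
The plan is to route all four conditions through a single invariant, namely the value $\eta(-1)$ of the quadratic character defined in \eqref{EquadChar}. The key structural fact I would use is that $\F_q^*$ is cyclic of even order $q-1$, so $-1$ is its unique element of order $2$; writing $-1 = g^{(q-1)/2}$ for a generator $g$, one gets $\eta(-1) = (-1)^{(q-1)/2}$. With this single identity in hand, the strategy is to prove the three equivalences (1)$\Leftrightarrow$(2), (2)$\Leftrightarrow$(3), and (2)$\Leftrightarrow$(4), each of which reduces to reading off whether $\eta(-1)$ equals $+1$ or $-1$.

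For (1)$\Leftrightarrow$(2), I would observe that $\eta(-1) = (-1)^{(q-1)/2} = 1$ exactly when $(q-1)/2$ is even, i.e.\ when $4 \mid q-1$, i.e.\ when $q \equiv 1 \pmod 4$; since by definition $-1$ is a square iff $\eta(-1) = 1$, this gives that $-1$ is a non-square precisely when $q \equiv 3 \pmod 4$. For (2)$\Leftrightarrow$(3), I would use multiplicativity of $\eta$: assuming $-1\notin\F_q^+$, for any $x\in\F_q^+$ we have $\eta(-x) = \eta(-1)\eta(x) = (-1)(1) = -1$, so $-x\in\F_q^-$, giving $-\F_q^+\subseteq\F_q^-$; since both sets have size $(q-1)/2$ they coincide. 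The converse is immediate because $1\in\F_q^+$ forces $-1\in\F_q^-$ under (3). Finally, for (2)$\Leftrightarrow$(4), I would note that each $x\in\F_q^+$ has exactly two square roots $y,-y$, related by $\eta(-y)=\eta(-1)\eta(y)$: when $-1$ is a non-square these have opposite signs, so exactly one of $\pm y$ is positive (uniqueness holds), whereas when $-1$ is a square the two roots share the same character, so taking $x=1$ (with roots $\pm1$, both squares) already shows that the positive square root is not unique.

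I do not expect a genuine obstacle here, as the whole argument is elementary once the role of $\eta(-1)$ is isolated. The only points requiring a little care are the cardinality step in (2)$\Rightarrow$(3) -- where I must justify that a containment of two equal-sized finite sets is in fact an equality -- and the construction of an explicit counterexample (the element $x=1$) to break condition (4) when $-1$ is a square. These are the places I would write out fully rather than leave to the reader.
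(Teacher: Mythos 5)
Your proof is correct and takes essentially the same approach as the paper: the equivalence (2)$\Leftrightarrow$(3) via multiplicativity $\eta(-x)=\eta(-1)\eta(x)$ and the treatment of condition (4) through the paired square roots $\pm y$, with the element $x=1$ and its roots $\pm 1$ as the witness against uniqueness, are exactly the paper's arguments. The only differences are cosmetic: you supply a short proof of (1)$\Leftrightarrow$(2) via $\eta(-1)=(-1)^{(q-1)/2}$ where the paper cites this as folklore, and you connect (4) to (2) directly rather than to (3), which amounts to the same chain once (2)$\Leftrightarrow$(3) is in place.
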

\begin{proof}
The equivalence between (1) and (2) is folklore (see e.g.~\cite[Corollary II.2.2]{koblitz1994course}). The equivalence between (2) and (3) follows immediately from $\eta(-x) = \eta(-1)\eta(x)$.

Now, suppose (3) holds. Let $x \in \F_q^+$, say $x = y^2$. Then $y$ and $-y$ are exactly the square roots of $x$ because every element in $\F_q$ has at most $2$ square roots. Since only one of these is positive, the positive square root of $x$ must be unique. Finally, suppose (4) holds. Since $1^2 = (-1)^2 = 1$, both $1$ and $-1$ are square roots of $1$ in $\F_q$. Since $1 \in \F_q^+$ the uniqueness implies that $-1 \in \F_q^-$ and (3) follows.  
\end{proof}
\noindent When $q$ is even, since $x \mapsto x^2$ is a bijective map,  every non-zero element also has a unique positive square root. When $q$ is even or $q \equiv 3 \pmod{4}$, we denote the unique positive square root of $x \in \F_q^+$ by $\sqrt{x}$ or by $x^{1/2}$. We also define $\sqrt{0} = 0$.

The next classical lemma shows that two polynomials in $\F_q[x]$ coincide as functions, i.e., when evaluated at every point of $\F_q$, if and only if they are equal as polynomials modulo $x^q-x$. 

\begin{lemma}[{\cite[Lemma 7.2]{lidl1997finite}}]\label{Lidl_Lemma}
    For $g(x),h(x) \in \F_q[x]$ we have $g(c) = h(c)$ for all $c\in \F_q$ if and only if $g(x) \equiv h(x) \pmod{x^q-x}$. 
\end{lemma}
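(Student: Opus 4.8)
The plan is to rely on the factorization of $x^q - x$ into distinct linear factors over $\F_q$. The starting observation is that every element of $\F_q$ is a root of $x^q - x$: for $c = 0$ this is immediate, and for $c \in \F_q^*$ it follows from the fact that the multiplicative group $\F_q^*$ has order $q-1$, so that $c^{q-1} = 1$ and hence $c^q = c$. Thus $x^q - x$ has $q$ distinct roots in $\F_q$, and since it is monic of degree $q$, it must factor as $x^q - x = \prod_{c \in \F_q}(x - c)$.

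For the easy direction, suppose $g(x) \equiv h(x) \pmod{x^q - x}$, so that $g(x) - h(x) = (x^q - x)\,k(x)$ for some $k(x) \in \F_q[x]$. Evaluating at an arbitrary $c \in \F_q$ and using $c^q = c$ gives $g(c) - h(c) = (c^q - c)\,k(c) = 0$, so $g$ and $h$ agree as functions on $\F_q$.

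For the converse, I would set $d(x) := g(x) - h(x)$ and assume $d(c) = 0$ for every $c \in \F_q$. Then each linear polynomial $x - c$ divides $d(x)$ in $\F_q[x]$. Since the $q$ factors $\{x - c : c \in \F_q\}$ are pairwise coprime (distinct monic linear polynomials), their product divides $d(x)$; that is, $x^q - x \mid d(x)$ by the factorization above. Equivalently $g(x) \equiv h(x) \pmod{x^q - x}$, which completes the proof.

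The argument is entirely routine, and I do not expect any genuine obstacle. The only point that requires mild care is the passage from ``each $x - c$ divides $d$'' to ``their product divides $d$'', which rests on the fact that $\F_q[x]$ is a unique factorization domain (equivalently, that pairwise coprime divisors may be multiplied together while still dividing $d$). If one prefers to avoid even this, the same conclusion follows by an induction peeling off one root at a time.
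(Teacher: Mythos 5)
Your proof is correct. Note that the paper itself gives no proof of this lemma --- it is quoted verbatim from Lidl--Niederreiter \cite[Lemma 7.2]{lidl1997finite} --- so the only meaningful comparison is with the standard textbook argument. Your route (factor $x^q - x = \prod_{c \in \F_q}(x-c)$, observe each $x-c$ divides $d := g-h$, and multiply the pairwise coprime linear factors) is sound; the appeal to coprimality in $\F_q[x]$ is exactly the mild care the step needs, and your fallback of peeling off one root at a time also works. The textbook proof is marginally more economical in the converse direction: divide to write $g(x) - h(x) = k(x)(x^q - x) + r(x)$ with $\deg r < q$, note $r$ vanishes at all $q$ elements of $\F_q$, and conclude $r = 0$ because a nonzero polynomial over a field has at most as many roots as its degree. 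That version needs only the root bound (i.e., that $x - c \mid r$ for a single root at a time, iterated through the degree count) and never invokes unique factorization or coprimality, but the difference is cosmetic; both arguments are complete and essentially interchangeable.
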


Notice that every function $f: \F_q \to \F_q$ can be written as an interpolation polynomial of degree at most $q-1$. When studying entrywise positivity preservers, we can thus assume, without loss of generality, that $f$ is a polynomial of degree at most $q-1$.  

We also recall the following well-known theorem, due to  Carlitz~\cite{carlitz1960theorem}.

\begin{theorem}[\cite{carlitz1960theorem}]\label{Tcarlitz}
Let $q=p^k$, where $p$ is an odd prime. Let $f:\F_q \to \F_q$ such that $f(0) = 0$, $f(1) = 1$, and $\eta(f(a)-f(b)) = \eta(a-b)$ for all $a, b \in \F_q$. Then there is $0 \leq \ell \leq k-1$, such that $f(x) = x^{p^\ell}$ for all $x \in \F_q$. 
\end{theorem}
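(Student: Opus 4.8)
The plan is to show that the three hypotheses force $f$ to be a power of the Frobenius map, with Weil's bound on character sums doing the heavy lifting and a degree/rigidity analysis of $f$ being the main obstacle. I begin with the easy reductions. If $f(a) = f(b)$ then $\eta(a - b) = \eta(f(a) - f(b)) = \eta(0) = 0$, so $a = b$; thus $f$ is injective, hence (as $\F_q$ is finite) a bijection, and by Lemma~\ref{Lidl_Lemma} I may assume $f$ is a permutation polynomial of degree at most $q - 1$. Setting $b = 0$ and using $f(0) = 0$ gives $\eta(f(a)) = \eta(a)$ for all $a$, so $f$ preserves the quadratic character: it maps $\F_q^+$ onto $\F_q^+$ and $\F_q^-$ onto $\F_q^-$.

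The key reformulation is that for all $x \neq y$,
\[
\eta\!\left( \frac{f(x) - f(y)}{x - y} \right) = \eta(f(x) - f(y))\,\eta(x - y) = \eta(x-y)^2 = 1,
\]
so the difference quotient $\Phi(x,y) := \frac{f(x) - f(y)}{x-y} \in \F_q[x,y]$ takes only nonzero-square values at every point with $x \neq y$. (The target functions fit this pattern: for $f(x) = x^{p^\ell}$ one computes $\Phi(x,y) = (x - y)^{p^\ell - 1}$, a perfect square since $p$ is odd.) Fixing $y$, the one-variable polynomial $x \mapsto \Phi(x,y)$ of degree $\deg f - 1$ takes only square values, so its quadratic-character sum over $\F_q$ is essentially $q$; on the other hand, Weil's bound forces this sum to be $O(\deg f \cdot \sqrt q)$ unless the polynomial is a constant times a perfect square. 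Provided the degree of $f$ is controlled, I would conclude that for every $y$ there are a constant $c_y$ and a polynomial $S_y$ with
\[
f(x) - f(y) = c_y\,(x - y)\,S_y(x)^2.
\]

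From here a rigidity analysis pins down $f$. Since $f$ is a bijection, $x = y$ is the unique root of $f(x) - f(y)$ in $\F_q$, while the displayed factorization says every other root has even multiplicity; differentiating gives $f'(x) = c_y S_y(x)\bigl(S_y(x) + 2(x-y)S_y'(x)\bigr)$, so $S_y \mid f'$ for every $y$. Comparing the factorizations across all $y \in \F_q$ (in particular the normalized case $y = 0$, where $f(x) = c_0\, x\, S_0(x)^2$) forces $f$ to be additive, hence a linearized polynomial $f(x) = \sum_i c_i x^{p^i}$; re-imposing $\eta(f(x)) = \eta(x)$ then kills all but one term and shows the surviving coefficient is a square, and $f(1) = 1$ normalizes it to $1$. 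Thus $f(x) = x^{p^\ell}$ is a field automorphism by Theorem~\ref{Tauto}.

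The main obstacle is exactly the degree control needed to apply Weil's bound: a permutation polynomial can have degree close to $q$, so the naive one-variable estimate need not bite, and the genuine content of the theorem is to run the character-sum argument in a degree-robust way (or to bound $\deg f$ a priori, exploiting that $f^{-1}$ satisfies the same hypotheses, since $f^{-1}(0)=0$, $f^{-1}(1)=1$, and $\eta(f^{-1}(a)-f^{-1}(b)) = \eta(a-b)$) before the rigidity analysis can finish the proof. I expect the same degree subtlety to reappear in the final step of eliminating all but one term of the linearized polynomial, which should likewise be handled by a direct square-value argument rather than a crude application of Weil.
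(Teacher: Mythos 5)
You should first note that the paper offers no proof of this statement at all: Theorem~\ref{Tcarlitz} is quoted as a black box from Carlitz's 1960 paper (with short proofs surveyed in \cite[Section 9]{jones2020paley}), so your proposal must stand on its own merits --- and it does not, because both of its load-bearing steps are gaps. The central one is the Weil step, which you yourself flag but do not repair. Since $f$ is an arbitrary function, its interpolation polynomial can have degree as large as $q-1$, and then Weil's dichotomy is not merely quantitatively weak but gives \emph{nothing}: for any function $h:\F_q \to \F_q^+\cup\{0\}$ whatsoever, the interpolation polynomial of $h$ takes only square values, yet a generic such polynomial is not of the form $c\,g(x)^2$. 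So from ``$\Phi(x,y_0)$ takes only square values'' you cannot conclude the factorization $f(x)-f(y_0)=c_{y_0}(x-y_0)S_{y_0}(x)^2$ once $\deg f$ is unbounded, and your proposed fix --- that $f^{-1}$ satisfies the same hypotheses --- yields no a priori degree bound on either $f$ or $f^{-1}$ (for $f(x)=x^{p^{k-1}}$ the degree is already $\approx q/p$). Conceding that this is ``the genuine content of the theorem'' is accurate, but it means the plan defers the theorem rather than proving it.

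The second gap is the rigidity step, which you do not flag: the assertion that comparing the factorizations $f(x)-f(y)=c_y(x-y)S_y(x)^2$ across all $y$ ``forces $f$ to be additive'' has no supporting mechanism, and passing from ``all difference quotients are squares'' to ``$f$ is a linearized polynomial'' is essentially the whole theorem restated. Worse, your differentiation argument is vacuous in exactly the cases the theorem is about: for $f(x)=x^{p^\ell}$ with $\ell\geq 1$ one has $f'\equiv 0$ in characteristic $p$, so the conclusion $S_y \mid f'$ carries no information, and any analysis built on $f'$ cannot distinguish the Frobenius powers it is supposed to isolate. The known proofs proceed quite differently --- Carlitz's original argument runs an induction on $k$ with character-sum and coefficient (Hermite-type) identities, and the modern short proofs work with the group of permutations preserving $\eta(a-b)$ (equivalently, automorphisms of the Paley graph or tournament) and pin it down inside $\mathrm{A\Gamma L}(1,q)$ --- and none of them pass through a pointwise Weil estimate on the difference quotient, precisely because of the degree obstruction you ran into.
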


\subsection{Positive definite matrices over finite fields}\label{sec:pdm}
For real symmetric or complex Hermitian matrices, it is well-known that many natural notions of positive definiteness coincide. Any of the following equivalent conditions can be used to define positive definiteness.  

\begin{proposition}[{\cite[Chapter 7]{horn2012matrix}}]\label{Ppd}
Let $A \in M_n(\C)$ be a Hermitian matrix. Then the following are equivalent: 
\begin{enumerate}
\item $z^* A z > 0$ for all non-zero $z \in \C^n$.
\item All eigenvalues of $A$ are positive. 
\item The sesquilinar form $Q(z,w) = z^* Aw$ forms an inner product.
\item $A$ is the Gram matrix of linearly independent vectors. 
\item All leading principal minors of $A$ are positive. 
\item $A$ has a unique Cholesky decomposition. 
\end{enumerate}
\end{proposition}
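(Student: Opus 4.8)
The plan is to prove the six conditions equivalent by establishing a few bidirectional links and a short cycle, using the spectral theorem as the central engine. First I would record that, since $A$ is Hermitian, the spectral theorem supplies a unitary $U$ and a real diagonal $\Lambda = \mathrm{diag}(\lambda_1, \dots, \lambda_n)$ with $A = U \Lambda U^*$. Writing an arbitrary $z \in \C^n$ in the orthonormal eigenbasis as $z = \sum_i c_i u_i$ yields $z^* A z = \sum_i \lambda_i |c_i|^2$, from which the equivalence of (1) and (2) is immediate: the quadratic form is positive on every nonzero $z$ exactly when all $\lambda_i > 0$.

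Next I would dispatch the ``soft'' equivalences. Condition (3) is essentially a restatement of (1): the sesquilinearity and conjugate symmetry of $Q(z,w) = z^* A w$ hold automatically because $A$ is Hermitian, so $Q$ is an inner product precisely when it is positive definite, i.e.\ when (1) holds. For (4), I would factor $A = B^* B$ with $B = \Lambda^{1/2} U^*$ whenever (2) holds; then $A$ is the Gram matrix of the columns of $B$, which are linearly independent since $B$ is invertible. Conversely, if $A = B^* B$ is the Gram matrix of linearly independent vectors, then $B$ has trivial kernel and $z^* A z = \|Bz\|^2 > 0$ for all $z \neq 0$, recovering (1). Thus (1), (2), (3), (4) are all equivalent via the spectral theorem.

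It remains to fold in the determinantal criterion (5) and the Cholesky criterion (6). The direction (1) $\Rightarrow$ (5) is easy: each leading principal submatrix $A_k$ inherits positive definiteness by restricting the form to vectors supported on the first $k$ coordinates, so its eigenvalues are positive and $\det A_k > 0$. I expect the genuine obstacle to be the converse, (5) $\Rightarrow$ (1) — this is Sylvester's criterion, which does not follow from any one-line eigenvalue argument. My plan for it is induction on $n$: assuming $A_{n-1}$ is positive definite by the inductive hypothesis, I perform a block $LDL^*$ (Schur-complement) reduction, which shows $\det A = \det A_{n-1} \cdot s$ for the final Schur complement $s$; since $\det A_{n-1} > 0$ and $\det A > 0$ force $s > 0$, the diagonal of the resulting $D$ is strictly positive, giving $A = L D L^*$ with $L$ unit lower-triangular and $D > 0$, whence $z^* A z = (L^* z)^* D (L^* z) > 0$.

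Finally, this same $LDL^*$ factorization delivers (5) $\Leftrightarrow$ (6): absorbing $D^{1/2}$ into $L$ produces $A = GG^*$ with $G$ lower-triangular and positive diagonal, and the nonvanishing of each $\det A_k$ is exactly what lets Gaussian elimination run without breakdown and fixes $G$ uniquely; conversely a Cholesky factor forces $\det A_k = \prod_{j \leq k} |G_{jj}|^2 > 0$. Assembling the block $(1) \Leftrightarrow (2) \Leftrightarrow (3) \Leftrightarrow (4)$, the loop $(1) \Rightarrow (5) \Rightarrow (1)$, and $(5) \Leftrightarrow (6)$ completes the proof. The one substantive step is Sylvester's criterion $(5) \Rightarrow (1)$; everything else is bookkeeping around the spectral theorem and triangular factorization.
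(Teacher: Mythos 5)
Your proposal is correct, but note that the paper itself gives no proof of this proposition: it is quoted as classical background from \cite[Chapter 7]{horn2012matrix}, so there is no in-paper argument to compare against, and your route---spectral theorem for $(1)\Leftrightarrow(2)\Leftrightarrow(3)\Leftrightarrow(4)$, Sylvester's criterion via Schur-complement induction for $(5)\Rightarrow(1)$, and the $LDL^*$ factorization linking $(5)$ and $(6)$---is exactly the standard textbook treatment found in that reference. The only compressed step is Cholesky uniqueness, and your observation that the pivots $d_k = \det A_k/\det A_{k-1}$ are forced by the leading minors is enough to justify it.
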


As shown by Cooper, Hanna, and Whitlatch~\cite{cooper2022positive}, the situation is very different for matrices over finite fields. For example, the standard definition of positive definiteness via quadratic forms (as in Proposition \ref{Ppd}(1)) does not yield a useful notion over finite fields.

\begin{proposition}[{\cite[Proposition 4]{cooper2022positive}}]\label{PquadFormZero}
Let $\F_q$ be a finite field, let $n \geq 3$, and let $A \in M_n(\F_q)$. Define $Q: \F_q^n \to \F_q$ by $Q(x) = x^TAx$. Then there exists a non-zero vector $v \in \F_q^n$ so that $Q(v) = 0$.
\end{proposition}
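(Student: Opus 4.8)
The plan is to recognize $Q(x) = x^{T} A x = \sum_{i,j} a_{ij} x_i x_j$ as a polynomial of degree at most $2$ in the $n$ variables $x_1, \dots, x_n$ and to invoke the Chevalley--Warning theorem. Since $A$ is arbitrary (not assumed symmetric) and no hypothesis is placed beyond $n \geq 3$, the only feature of $Q$ that I would use is its total degree: every monomial of $x^{T}X$ is quadratic, so $\deg Q \leq 2 < n$. I would also record at the outset that $Q(\mathbf{0}) = 0$, so the origin is always a zero of $Q$; the task is therefore to produce a \emph{second}, necessarily nonzero, zero.

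The key step is Warning's refinement of the Chevalley--Warning theorem: for any $f \in \F_q[x_1, \dots, x_n]$ with $\deg f < n$, the number $N$ of solutions of $f = 0$ in $\F_q^n$ is divisible by $p = \operatorname{char}(\F_q)$. Applying this with $f = Q$, the hypothesis $\deg Q \leq 2 < n$ is exactly what is required, so $p \mid N$. Combining $p \mid N$ with $N \geq 1$ (from the origin) and $p \geq 2$ forces $N \geq p \geq 2$, hence there exists a zero $v \neq \mathbf{0}$, which is the desired conclusion.

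There is no genuinely hard step here, and the chief virtue of routing through Chevalley--Warning is that the argument is uniform in the characteristic, seeing only $\deg Q$. This uniformity is the main reason I would avoid a hands-on classification of quadratic forms: the characteristic $2$ case is precisely where a direct approach becomes delicate, since for symmetric $A$ the off-diagonal contributions $a_{ij}+a_{ji}$ vanish and $Q$ collapses to $\sum_i a_{ii} x_i^2$, a perfect square, so the usual diagonalization and counting formulas for nondegenerate forms do not apply cleanly. If a self-contained proof were insisted upon, I would instead reduce to $n=3$ by restricting $Q$ to the span of $e_1,e_2,e_3$ (a nonzero zero of the restriction lifts to one of the full form) and then count zeros line-by-line through the origin using the homogeneity relation $Q(\lambda v) = \lambda^2 Q(v)$; but this forces separate treatment of degenerate forms and even characteristic, which is exactly the obstacle the Chevalley--Warning route removes.
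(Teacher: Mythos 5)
Your proof is correct, and there is nothing to compare it against internally: the paper does not prove Proposition~\ref{PquadFormZero} but imports it verbatim from \cite[Proposition 4]{cooper2022positive}, where the argument is exactly the Chevalley--Warning one you give. Your details are all in order --- $\deg Q \leq 2 < 3 \leq n$ puts you in the scope of Warning's divisibility statement, $Q(\mathbf{0})=0$ gives $N \geq 1$, and $p \mid N$ then forces $N \geq p \geq 2$, producing the nonzero zero --- so your write-up is a sound, self-contained substitute for the citation.
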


\noindent In fact, more can be said about the range of the quadratic form associated to a positive definite matrix. 

\begin{proposition}\label{PquadRange}
Let $n \geq 2$ and let $A \in M_n(\F_q)$ be a positive definite matrix. Then the range of the quadratic form $Q(x) = x^T A x$ is $\F_q$, i.e., $\{x^T A x : x \in \F_q^n\} = \F_q.$
\end{proposition}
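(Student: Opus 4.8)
The plan is to reduce the surjectivity of $Q$ to the classical fact that every element of a finite field is a sum of two squares. The key structural observation is that a positive definite matrix over $\F_q$, in the sense of Definition~\ref{Dpd}, is congruent to the identity matrix. Once this is established, writing $A = P^T P$ for an invertible $P$ turns $Q(x) = x^T A x$ into $\sum_{i=1}^n (Px)_i^2$; since $P$ is invertible, $y = Px$ ranges over all of $\F_q^n$ as $x$ does, so the range of $Q$ is exactly the set of sums of $n$ squares in $\F_q$.

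To obtain the congruence, I would first note that since all leading principal minors $\Delta_1, \dots, \Delta_n$ of $A$ are non-zero, $A$ admits an $LDL^T$ decomposition with $L$ unit lower triangular and $D = \mathrm{diag}(d_1, \dots, d_n)$, where $d_1 = \Delta_1$ and $d_i = \Delta_i / \Delta_{i-1}$ for $i \geq 2$ (this is symmetric Gaussian elimination without pivoting, valid over any field precisely because the leading principal minors are invertible). By hypothesis each $\Delta_i$ is a non-zero square, and since $\F_q^+$ is a subgroup of $\F_q^*$, each ratio $d_i$ is again a non-zero square, say $d_i = c_i^2$ with $c_i \in \F_q^*$. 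Setting $C = \mathrm{diag}(c_1, \dots, c_n)$ gives $D = C C^T$, hence $A = L D L^T = (LC)(LC)^T = P^T P$ with $P = (LC)^T = C L^T$ invertible, as desired.

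It then remains to show that for $n \geq 2$ the set $\{\sum_{i=1}^n y_i^2 : y \in \F_q^n\}$ equals $\F_q$. Taking $y_3 = \dots = y_n = 0$ reduces this to the case $n = 2$. When $q$ is even every element is already a square and the claim is immediate; when $q$ is odd I would use a pigeonhole count: for a fixed $c \in \F_q$, the sets $\{y_1^2 : y_1 \in \F_q\}$ and $\{c - y_2^2 : y_2 \in \F_q\}$ each have size $\frac{q+1}{2}$, so their sizes sum to more than $q$ and they must intersect, yielding $y_1, y_2$ with $y_1^2 + y_2^2 = c$. The only genuinely arithmetic input is this sum-of-two-squares step, and it is exactly what forces the hypothesis $n \geq 2$ to be sharp: for $n = 1$ one has $Q(x) = a_{11} x^2$ with $a_{11} \in \F_q^+$, whose range is $\F_q^+ \cup \{0\} \neq \F_q$ when $q$ is odd. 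I do not anticipate a serious obstacle here; the main point requiring care is verifying that the diagonal factors $d_i$ remain non-zero squares, which follows cleanly from the multiplicativity of the quadratic character (equivalently, the group structure of $\F_q^+$).
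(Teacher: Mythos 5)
Your proof is correct, but it takes a somewhat different route from the paper's. The paper never diagonalizes the full matrix: it reduces $n \geq 3$ to $n = 2$ immediately, by restricting $Q$ to vectors $x = (\widetilde{x}^T, {\bf 0}_{1 \times (n-2)})^T$ supported on the first two coordinates (so that $x^T A x = \widetilde{x}^T \widetilde{A} \widetilde{x}$ for the $2 \times 2$ leading principal block $\widetilde{A}$, itself positive definite), and then completes the square only in the $2 \times 2$ case --- which is exactly the $2 \times 2$ instance of your $LDL^T$ step. You instead establish the full $n \times n$ congruence $A = P^T P$ first and only then zero out coordinates. Your detour is slightly longer but buys a genuinely stronger structural fact the paper does not record: every positive definite matrix over $\F_q$ is congruent to $I_n$, i.e., admits a factorization $A = MM^T$ with $M = LC$ lower triangular and invertible. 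It is worth noting why this does not contradict the paper's observation (in the proposition following Theorem~\ref{Tcharac}) that Cholesky decompositions can fail when $q \equiv 1 \pmod 4$: the paper's Cholesky requires a \emph{positive} diagonal, whereas your $c_i$ are merely some square roots of the $d_i$, and when $q \equiv 1 \pmod 4$ both square roots of a positive element can be non-positive (e.g., $4 = 2^2 = 3^2$ in $\F_5$ with $2, 3 \notin \F_5^+$). Since your argument only needs $P$ invertible, this is harmless --- but it is exactly the point requiring the care you flagged. Your remaining steps match the paper's: the symmetric elimination is valid in every characteristic (including $2$, since the pivots $\Delta_i/\Delta_{i-1}$ are non-zero and no division by $2$ occurs), and the arithmetic core --- the pigeonhole argument that the two sets of size $\frac{q+1}{2}$, namely the squares and their reflection $c - \sq$, must intersect --- is verbatim the paper's sum-of-two-squares step; your separate treatment of even $q$ is subsumed by the paper's bound $|S| \geq \frac{q+1}{2}$ but is equally valid. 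Your remark that $n \geq 2$ is sharp (for odd $q$ the range in dimension $1$ is $\F_q^+ \cup \{0\}$) is a correct addition not present in the paper.
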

\begin{proof}
Suppose first $n=2$. Let 
\[
A = \begin{pmatrix}
    a & b \\
    b & c
\end{pmatrix} \in M_2(\F_q)
\]
be positive definite. Then $a \in \F_q^+$ and $ac-b^2 \in \F_q^+$. In particular, $c-b^2a^{-1} \in \F_q^+$. For $x = (x_1, x_2)^T \in \F_q^2$, consider the quadratic form 
\[
Q(x) = x^T A x = ax_1^2 + 2bx_1 x_2 + c x_2^2. 
\]
Completing the square, we obtain
\[
Q(x) = a(x_1 + ba^{-1}x_2)^2 + (c-b^2a^{-1}) x_2^2. 
\]
Setting $y_1 := a^{1/2}\left(x_1 + ba^{-1}x_2\right)$ and $y_2 := (c-b^2a^{-1})^{1/2} x_2$ yields the equivalent diagonal quadratic form 
\[
\widetilde{Q}(y) = y_1^2 + y_2^2
\]
having the same range as $Q$. Let $S$ be the set of squares in $\F_q$; then $|S|\geq \frac{q+1}{2}$. Thus, for each $x\in \F_q$, $S \cap (x-S)\neq \emptyset$, that is, $x$ can be written as the sum of two squares. It follows that the range of $Q$ is $\F_q$. 

Suppose now $n \geq 3$. Let $\widetilde{A} \in M_2(\F_q)$ be the $2 \times 2$ leading principal submatrix of $A$. Then $\widetilde{A}$ is positive definite. Letting $x := (\widetilde{x}^T, {\bf 0}_{1 \times (n-2)})^T \in \F_q^n$ with $\widetilde{x} \in \F_q^2$, we obtain $x^T A x = \widetilde{x}^T \widetilde{A}\widetilde{x}.$ The result now follows from the $n=2$ case.
\end{proof}

When $q$ is even or $q \equiv 3 \pmod 4$, some of the classical real/complex positivity theory can be recovered. Recall that a symmetric matrix $A \in M_n(\F_q)$ is said to have a {\it Cholesky decomposition} if $A = LL^T$ for some lower triangular matrix $L \in M_n(\F_q)$ with positive elements on its diagonal. When $q$ is even or $q \equiv 3 \pmod 4$, it is known that the positivity of the leading principal minors of a matrix in $M_n(\F_q)$ is equivalent to the existence of a Cholesky decomposition.
\begin{theorem}[{\cite[Theorem 16, Corollary 24]{cooper2022positive}}]\label{Tcharac}
Let $A \in M_n(\F_q)$ be a symmetric matrix.
\begin{enumerate}
    \item If $A$ admits a Cholesky decomposition, then all its leading principal minors are positive.
    \item If $q$ is even or $q \equiv 3 \pmod 4$ and all the leading principal minors of $A$ are positive, then $A$ admits a Cholesky decomposition. 
\end{enumerate} 
\end{theorem}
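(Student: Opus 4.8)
The plan is to prove the two parts separately: part (1) is a direct determinant computation valid over an arbitrary field, while part (2) proceeds by induction on $n$, with the parity hypothesis on $q$ entering only to guarantee the existence of \emph{positive} square roots.

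For part (1), suppose $A = LL^T$ with $L = (\ell_{ij})$ lower triangular and $\ell_{ii} \in \F_q^+$ for all $i$. For each $k$, the $k \times k$ leading principal submatrix factors as $A_k = L_k L_k^T$, where $L_k$ is the corresponding leading principal submatrix of $L$ and is itself lower triangular. Hence the $k$-th leading principal minor is
\[
\det A_k = (\det L_k)^2 = \left(\prod_{i=1}^k \ell_{ii}\right)^2.
\]
Since each $\ell_{ii}$ is a nonzero square, the product is nonzero, so $\det A_k$ is the square of a nonzero element and is therefore positive. I would record this first; it uses no assumption on the characteristic.

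For part (2), I would induct on $n$. The base case $n=1$ reads $A = (a_{11})$ with $a_{11} \in \F_q^+$; since $q$ is even or $q \equiv 3 \pmod 4$, a positive square root $\sqrt{a_{11}}$ exists (by bijectivity of squaring in even characteristic, or by Proposition~\ref{Pdecomp}(4) when $q \equiv 3 \pmod 4$), and $L = (\sqrt{a_{11}})$ works. For the inductive step, write
\[
A = \begin{pmatrix} A_{n-1} & b \\ b^T & c \end{pmatrix},
\]
where $A_{n-1}$ is the $(n-1)\times(n-1)$ leading principal submatrix. Its leading principal minors are leading principal minors of $A$ and hence positive, so by induction $A_{n-1} = L_{n-1} L_{n-1}^T$ for a lower triangular $L_{n-1}$ with positive diagonal; in particular $L_{n-1}$ is invertible. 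I would then seek $L = \begin{pmatrix} L_{n-1} & 0 \\ v^T & d \end{pmatrix}$, and expanding $LL^T$ shows that $A = LL^T$ holds precisely when $v = L_{n-1}^{-1} b$ and $d^2 = c - v^T v$.

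The crux is to show that $c - v^T v$ is a positive element admitting a positive square root. Using the Schur complement identity $\det A = \det(A_{n-1})\,(c - b^T A_{n-1}^{-1} b)$ together with $b^T A_{n-1}^{-1} b = (L_{n-1}^{-1} b)^T (L_{n-1}^{-1} b) = v^T v$, one obtains $c - v^T v = \det A / \det A_{n-1}$, a ratio of two nonzero squares and hence itself a nonzero square, i.e.\ a positive element. This is exactly where the hypothesis on $q$ is essential: I would invoke Proposition~\ref{Pdecomp}(4) (or bijectivity of squaring when $q$ is even) to produce a \emph{positive} square root $d := \sqrt{c - v^T v}$, so that $L$ has positive diagonal and $A = LL^T$ is a Cholesky decomposition. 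The main obstacle is precisely this last step: when $q \equiv 1 \pmod 4$ a positive element need not have any positive square root (for instance $4 = 2^2 = 3^2$ in $\F_5$, where $\eta(2) = \eta(3) = -1$), so the construction breaks down and the converse genuinely requires the restriction on the characteristic.
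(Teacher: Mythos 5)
Your proof is correct, and the comparison here is slightly unusual: the paper does not prove Theorem~\ref{Tcharac} at all, but quotes it from Cooper--Hanna--Whitlatch \cite{cooper2022positive}, so your write-up supplies the (standard) argument that the paper delegates to the literature. Both halves check out: the identity $A_k = L_k L_k^T$ giving $\det A_k = \bigl(\prod_{i \le k} \ell_{ii}\bigr)^2$ is valid over any $\F_q$, and in the induction the Schur-complement computation $c - v^T v = \det A / \det A_{n-1}$ (a ratio of nonzero squares, hence a nonzero square) is exactly right, with the parity hypothesis entering only to extract a \emph{positive} square root $d$ via Proposition~\ref{Pdecomp}(4) or bijectivity of squaring in even characteristic --- and your $\F_5$ example ($4 = 2^2 = 3^2$ with $\eta(2)=\eta(3)=-1$) correctly pinpoints why the construction fails for $q \equiv 1 \pmod 4$, consistent with the paper's own counterexample following Theorem~\ref{Tcharac}.
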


\noindent We note however that the equivalence fails in general when $q \equiv 1 \pmod 4$.  
\begin{proposition}
Let $q \equiv 1 \pmod 4$. Then there exists a positive definite matrix $A \in M_2(\F_q)$ that does not admit a Cholesky decomposition. 
\end{proposition}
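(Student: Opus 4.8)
The plan is to exhibit a concrete counterexample, and the natural candidates are diagonal matrices, for which the Cholesky equations become transparent. First I would recall the shape of the problem: a positive definite matrix
\[
A = \begin{pmatrix} a & b \\ b & c \end{pmatrix} \in M_2(\F_q)
\]
requires $a \in \F_q^+$ and $ac - b^2 \in \F_q^+$, while a Cholesky factor $L = \begin{pmatrix} \ell_{11} & 0 \\ \ell_{21} & \ell_{22}\end{pmatrix}$ with $\ell_{11}, \ell_{22} \in \F_q^+$ must satisfy $\ell_{11}^2 = a$, $\ell_{11}\ell_{21} = b$, and $\ell_{21}^2 + \ell_{22}^2 = c$. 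In particular the $(1,1)$ entry $a$ must equal $\ell_{11}^2$ for some \emph{positive} $\ell_{11}$.

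The key observation is then a statement about square roots: a non-zero square $a \in \F_q^+$ admits a positive square root (i.e.\ $a = \ell^2$ with $\ell \in \F_q^+$) if and only if $a$ is a fourth power in $\F_q^*$. Indeed, $\ell \in \F_q^+$ means $\ell = w^2$, so $a = w^4$, and conversely. Now I would use the hypothesis $q \equiv 1 \pmod 4$, equivalently $4 \mid q-1$: the fourth powers form a subgroup of $\F_q^*$ of index $4$, properly contained in the index-$2$ subgroup $\F_q^+$ of squares. Hence there exists $a \in \F_q^+$ that is not a fourth power. Concretely, if $g$ generates $\F_q^*$, then $a = g^2$ is a square whose only square roots $\pm g$ are both non-squares (here $\eta(g)=-1$ and $\eta(-g) = \eta(-1)\eta(g) = -1$, using $\eta(-1)=1$), so $a$ has no positive square root.

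Finally I would take $A = \begin{pmatrix} a & 0 \\ 0 & 1 \end{pmatrix}$ with such an $a$. Its leading principal minors are $a$ and $a$, both non-zero squares, so $A$ is positive definite. If $A = LL^T$ with $L$ lower triangular and positive diagonal, then $\ell_{11}\ell_{21} = 0$ together with $\ell_{11}^2 = a \neq 0$ forces $\ell_{21} = 0$ and hence $\ell_{11}^2 = a$ with $\ell_{11} \in \F_q^+$, contradicting the fact that $a$ has no positive square root. Thus $A$ admits no Cholesky decomposition. I do not anticipate a serious obstacle; the only point requiring care is the distinction between \emph{being a square} and \emph{being the square of a positive element}, which is exactly where the $q \equiv 1 \pmod 4$ hypothesis enters. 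This is also consistent with the earlier results: for $q \equiv 3 \pmod 4$ every element of $\F_q^+$ has a unique positive square root by Proposition \ref{Pdecomp}, so the construction would fail, matching Theorem \ref{Tcharac}.
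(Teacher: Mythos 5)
Your proof is correct, and it takes a genuinely different route from the paper's. The paper tests the matrix $A(x) = \begin{pmatrix} 1 & x \\ x & 0\end{pmatrix}$, whose positive definiteness rests on $-1 \in \F_q^+$ (its minors are $1$ and $-x^2$), and derives the contradiction at the \emph{second} pivot: any Cholesky factor forces $\ell_{22}^2 = -x^2$, so $\ell_{22} \in \{ix,-ix\}$, and choosing $x$ with $\eta(i)\eta(x) = -1$ makes $\ell_{22}$ non-positive. You instead use a \emph{diagonal} matrix $\mathrm{diag}(a,1)$ with $a$ a square that is not a fourth power, and locate the obstruction at the \emph{first} pivot, via the clean dichotomy that $a \in \F_q^+$ has a positive square root if and only if $a$ is a fourth power; the hypothesis $q \equiv 1 \pmod 4$ enters through $\eta(-1)=1$, which makes both square roots $\pm g$ of $a = g^2$ non-squares (whereas for $q \equiv 3 \pmod 4$ exactly one root is positive, consistent with Theorem~\ref{Tcharac}). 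Your group-theoretic justification is sound: since $4 \mid q-1$, the fourth powers form an index-$4$ subgroup of $\F_q^*$ properly contained in $\F_q^+$, and your verification of the Cholesky equations ($\ell_{11}\ell_{21}=0$ with $\ell_{11}^2 = a \ne 0$ forcing $\ell_{21}=0$) is complete. Your approach buys something slightly stronger and more structural: it shows that even diagonal positive definite matrices can fail to have a Cholesky decomposition, and it pinpoints the exact arithmetic reason (being a square versus being the square of a square) why the inductive Cholesky construction breaks down precisely when $q \equiv 1 \pmod 4$. The paper's example is shorter to verify and its matrix $A(x)$, with a zero $(2,2)$ entry, is a test matrix reused elsewhere in the paper (e.g., in Lemma~\ref{lem:nonzero}), which is presumably why the authors chose it.
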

\begin{proof}
For $x \in \F_q^*$, let 
\[
A(x) := \begin{pmatrix}
1 & x \\
x & 0
\end{pmatrix}. 
\]
Then $A(x)$ is positive definite since $-1 \in \F_q^+$. Suppose $A(x) = LL^T$, say 
\[
A(x) = \begin{pmatrix}
1 & x \\
x & 0
\end{pmatrix} = \begin{pmatrix}
a & 0 \\
b & c
\end{pmatrix} \begin{pmatrix}
a & b \\
0 & c
\end{pmatrix} = \begin{pmatrix}
a^2 & ab \\
ab & b^2 + c^2
\end{pmatrix}
\]
with $a, c \in \F_q^+$. Then $a = \pm 1$, $b = \pm x$ and $c^2 = -b^2 = -x^2$. Thus $c \in \{i x, -ix\}$ where $i$ denotes a square root of $-1$ in $\F_q$. We can then pick $x \in \F_q^*$ such that $\eta(c) = \eta(i)\eta(x) = -1$. Such a choice of $x$ forces $c \not\in \F_q^+$ and therefore the Cholesky decomposition of $A(x)$ does not exist.
\end{proof}

\begin{remark}
We note that, when $q$ is even or $q \equiv 3 \pmod 4$, the authors of \cite{cooper2022positive} define a symmetric matrix in $M_n(\F_q)$ to be positive definite if it admits a Cholesky decomposition. As Theorem \ref{Tcharac} shows, this definition coincides with ours. We note, however, that verifying if a matrix admits a Cholesky decomposition is not as straightforward as computing its leading principal minors. This is our motivation for adopting Definition \ref{Dpd}. 
\end{remark}

As discussed in the proof of Proposition~\ref{PquadRange}, every element in a finite field can be written as a sum of two squares. As a consequence, sums of positive definite matrices are not always positive definite. Similarly, a Gram matrix $A = MM^T$ with $M \in M_{n \times m}(\F_q)$ is not always positive definite (take, for example, $M = (x,y) \in M_{1 \times 2}(\F_q)$ with $x^2 + y^2 \not\in \F_q^+$). Many other standard properties of positive definite matrices over $\R$ or $\C$ fail for finite fields. For example, a positive definite matrix may not have positive eigenvalues and the Hadamard product of two positive definite matrices is not always positive definite. See \cite[Section 3]{cooper2022positive} for more details. As mentioned above, the behavior of the quadratic form of a positive definite matrix is also different over finite fields (see Proposition \ref{PquadRange}). The reader who is accustomed to working with positive definite matrices over the real or the complex field must thus take great care when moving to the finite field world. 

\subsection{Entrywise preservers}
We now turn our attention to entrywise positivity preservers on $M_n(\F_q)$. Recall that every function $f: \F_q \to \F_q$ coincides with a polynomial of degree at most $q-1$ (Lemma \ref{Lidl_Lemma}). Unless otherwise specified, we therefore assume below that $f$ is such a polynomial. 

When $n=1$, the positivity preservers are precisely the functions $f: \F_q \to \F_q$ such that $f(\F_q^+) \subseteq \F_q^+$. In characteristic $2$, we have $\F_q^+ = \F_q^*$ and the positivity condition reduces to $0 \not\in f(\F_q^*)$. There are $(q-1)^{q-1} \times q$ such maps. In odd characteristic, the number of positivity preservers is $\left(\frac{q-1}{2}\right)^{\frac{q-1}{2}} \times q^{\frac{q+1}{2}}$. Any such map can be explicitly written using an interpolation polynomial. We therefore focus on the $n \geq 2$ case below.

We next obtain a family of maps that preserves positivity for matrices with entries in any finite field. 

\begin{proposition}\label{PFrobenius}
Let $q = p^k$ and let $f(x) = x^{p^l}$ be an automorphism of $\F_q$. Then for any $n \geq 1$ and any $A \in M_n(\F_q)$, we have $\det f[A] = f(\det A)$. In particular, all the positive multiples of the field automorphisms of $\F_q$ preserve positivity on $M_n(\F_q)$ for all $n \geq 1$. 
\end{proposition}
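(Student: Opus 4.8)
The plan is to prove the key identity $\det f[A] = f(\det A)$ when $f = \sigma_\ell$ is a field automorphism, and then deduce the positivity-preserving claim for positive multiples of automorphisms.

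First I would establish the determinant identity. Since $f(x) = x^{p^\ell}$ is a field automorphism, it is in particular a ring homomorphism: by Theorem~\ref{Tauto} (and the displayed consequence $(x+y)^{p^\ell} = x^{p^\ell} + y^{p^\ell}$) we have $f(x+y) = f(x) + f(y)$ and $f(xy) = f(x)f(y)$ for all $x,y \in \F_q$. The determinant is a fixed universal polynomial in the matrix entries with integer (hence prime-field) coefficients, namely $\det A = \sum_{\pi \in S_n} \sgn(\pi) \prod_{i=1}^n a_{i\pi(i)}$. Applying $f$ to this expression and using that $f$ commutes with both addition and multiplication, together with the fact that the coefficients $\sgn(\pi) \in \{+1,-1\}$ are fixed by any field automorphism (as $f(1)=1$ forces $f$ to fix the prime subfield), I would push $f$ through the sum and products to obtain
\[
f(\det A) = \sum_{\pi \in S_n} \sgn(\pi) \prod_{i=1}^n f(a_{i\pi(i)}) = \det f[A].
\]
This is the crux of the proposition, and it is the step I expect to require the most care: one must verify that $f$ genuinely commutes with the full alternating sum, i.e.\ that the integer coefficients of the Leibniz expansion are preserved under $f$, which follows since $f$ is the identity on the prime field $\F_p$.

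Next I would deduce positivity preservation. Suppose $A \in M_n(\F_q)$ is positive definite and $f = c\,\sigma_\ell$ with $c \in \F_q^+$. For each $m \times m$ leading principal submatrix $A_m$ of $A$, note that $(c\sigma_\ell)[A]$ has leading principal submatrix $(c\sigma_\ell)[A_m]$, and a scalar multiple pulls out of each of the $m$ rows, giving $\det (c\sigma_\ell)[A_m] = c^m \det \sigma_\ell[A_m] = c^m \sigma_\ell(\det A_m)$ by the identity just proved. Since $A$ is positive definite, $\det A_m \in \F_q^+$; because $\sigma_\ell$ is an automorphism it maps squares to squares, so $\sigma_\ell(\det A_m) \in \F_q^+$, and $c \in \F_q^+$ gives $c^m \in \F_q^+$. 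As $\F_q^+$ is closed under multiplication, every leading principal minor of $(c\sigma_\ell)[A]$ is a non-zero square. Finally, $(c\sigma_\ell)[A]$ is symmetric because $f$ is applied entrywise to the symmetric matrix $A$, so by Definition~\ref{Dpd} the transformed matrix is positive definite. Since $n$ was arbitrary, this holds for all $n \geq 1$, completing the proof.
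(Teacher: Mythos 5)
Your proof is correct and follows essentially the same route as the paper: both push the automorphism through the Leibniz expansion $\det A = \sum_{\pi \in S_n} \sgn(\pi)\prod_i a_{i\pi(i)}$ to get $\det f[A] = f(\det A)$, and then check that each leading principal minor of the transformed matrix remains a non-zero square. Your write-up is in fact slightly more careful on two points the paper leaves implicit --- that $\sgn(\pi)$ lies in the prime field fixed by $f$, and that the scalar $c$ contributes a factor $c^m \in \F_q^+$ to each $m \times m$ leading minor --- but these are elaborations, not a different argument.
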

\begin{proof}
    Let $A = (a_{ij}) \in M_n(\F_q)$. By the Leibniz formula for the determinant and Theorem \ref{Tauto}, 
    \[
    \det f[A] = \sum_{\sigma \in S_n} \sgn(\sigma) a_{1,\sigma(1)}^{p^\ell} a_{2,\sigma(2)}^{p^\ell} \dots a_{n, \sigma(n)}^{p^\ell} = \left(\sum_{\sigma \in S_n} \sgn(\sigma) a_{1, \sigma(1)} a_{2, \sigma(2)}\dots a_{n, \sigma(n)}\right)^{p^\ell} = f(\det A).
\]
    In particular, suppose $A$ is positive definite and let $A_r$ denote the leading $r \times r$ principal submatrix of $A$. By Definition \ref{Dpd}, $\det A_r = \mu^2$ for some $\mu \in \F_q^*$ and so $\det f[A_r] = f(\mu^2) = \left(\mu^2\right)^{p^l} = \left(\mu^{p^l}\right)^2 \in \F_q^+$. Since the above holds for any $1 \leq r \leq n$, the matrix $f[A]$ is positive definite. Clearly, multiplying $f$ by $c\in \F_q^+$ also yields a positivity preserver.
\end{proof}

Next, we provide some simple necessary conditions for preserving positivity on $M_n(\F_q)$.
\begin{lemma}\label{lem:+}
Let $n\geq 2$ be an integer, $q$ a prime power, and let $f:\F_q \to \F_q$ be a positivity preserver over $M_n(\F_q)$. Then $f(\F_q^+) \subseteq \F_q^+$.
\end{lemma}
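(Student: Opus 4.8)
The plan is to exploit the weakest consequence of positive definiteness, namely the constraint coming from the top-left $1 \times 1$ leading principal minor. Fix an arbitrary $x \in \F_q^+$; the goal is to produce a positive definite matrix $A \in M_n(\F_q)$ whose $(1,1)$ entry equals $x$. Once such an $A$ is available, the hypothesis that $f$ preserves positivity on $M_n(\F_q)$ forces $f[A]$ to be positive definite, and in particular its $1 \times 1$ leading principal minor, which is exactly $f(a_{11}) = f(x)$, must be a non-zero square by Definition \ref{Dpd}. Since $x$ is arbitrary, this gives $f(\F_q^+) \subseteq \F_q^+$.

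The construction of $A$ is immediate: I would take the scalar matrix $A = x I_n$. First I would verify that this is positive definite. Its $r \times r$ leading principal submatrix is $x I_r$, with determinant $x^r$; writing $x = y^2$ with $y \in \F_q^*$ yields $x^r = (y^r)^2 \in \F_q^+$, so all leading principal minors are non-zero squares, and $A = xI_n$ is symmetric, so Definition \ref{Dpd} is satisfied. (Any diagonal matrix with $x$ in the top-left corner and positive entries elsewhere on the diagonal would serve equally well.) Applying $f$ entrywise gives a symmetric matrix $f[A]$ with $f(x)$ along the diagonal and $f(0)$ off the diagonal, whose $(1,1)$ entry is $f(x)$.

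There is essentially no obstacle here. The only points to check are the elementary observation that the scalar matrix $xI_n$ is positive definite when $x$ is a non-zero square, and the fact that entrywise application of $f$ leaves the $(1,1)$ entry as $f(x)$, so that positive definiteness of $f[A]$ constrains this single entry. The argument uses only the $n=1$ slice of the positivity condition, which is why it applies uniformly for every $n \geq 2$ and every prime power $q$, independently of the characteristic trichotomy discussed above.
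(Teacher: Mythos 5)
Your proposal is correct and is essentially identical to the paper's proof: the paper also applies $f$ to the scalar matrix $aI_n$ (with $a \in \F_q^+$) and reads off $f(a) \in \F_q^+$ from the $1 \times 1$ leading principal minor. Your verification that $\det(xI_r) = x^r = (y^r)^2 \in \F_q^+$ simply makes explicit a step the paper leaves to the reader.
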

\begin{proof}
Let $a \in \F_q^+$. Since $aI_n$ is positive definite, so is $f[aI_n]$. In particular, $f(a) \in \F_q^+$.
\end{proof}

\begin{lemma}\label{L:Charq}
    Let $q$ be a prime power with $q$ even or $q \equiv 3 \pmod 4$ and let $f: \F_q \to \F_q$ be a positivity preserver on $M_2(\F_q)$. Then:
    \begin{enumerate}
    \item The restriction of $f$ to $\F_q^+$ is a bijection of $\F_q^+$ onto itself.
    \item $f(0) = 0$.
    \end{enumerate}
\end{lemma}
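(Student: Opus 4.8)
The plan is to derive both assertions from a single core fact: that $f$ is injective, hence bijective, on $\F_q^+$. For part (1), Lemma~\ref{lem:+} already gives $f(\F_q^+) \subseteq \F_q^+$, so since $\F_q^+$ is finite it suffices to prove that $f$ is injective on $\F_q^+$; bijectivity onto $\F_q^+$ then follows automatically by counting. For part (2), I would argue afterwards that $f(0)=0$ is forced once this bijectivity is known, via a short determinant computation, so no separate idea is needed there.

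The heart of the argument (and the main obstacle) is the injectivity on $\F_q^+$. The key device is the symmetric matrix $\begin{pmatrix} a & b \\ b & a \end{pmatrix}$ with $a \in \F_q^+$, whose leading principal minors are $a$ and $a^2-b^2$, so it is positive definite exactly when $a^2 - b^2 \in \F_q^+$. Applying $f$ and invoking Definition~\ref{Dpd} together with Lemma~\ref{lem:+}, whenever $a^2 - b^2 \in \F_q^+$ we must have $f(a)^2 - f(b)^2 \in \F_q^+$; in particular $f(a)^2 \neq f(b)^2$, which forces $f(a) \neq f(b)$. To conclude, given distinct $u,v \in \F_q^+$ I would first check that $u^2 \neq v^2$: indeed $u=v$ is excluded, while $u=-v$ is impossible because $-v \in \F_q^- = -\F_q^+$ (Proposition~\ref{Pdecomp}) when $q \equiv 3 \pmod 4$, and in characteristic $2$ we have $-v=v$. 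Hence $u^2-v^2 \in \F_q^*$, and at least one of $u^2-v^2$ and $v^2-u^2$ lies in $\F_q^+$ (exactly one when $q\equiv 3\pmod 4$, since $\F_q^-=-\F_q^+$; both when $q$ is even, since $\F_q^+=\F_q^*$). Taking the corresponding ordered pair as $(a,b)$ produces a positive definite matrix of the above form, and the displayed implication yields $f(u)\neq f(v)$.

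With bijectivity of $f|_{\F_q^+}$ in hand, part (2) is quick. Writing $d := f(0)$ and applying $f$ to the positive definite matrix $aI_2$ (with $a \in \F_q^+$) shows $f(a)^2 - d^2 \in \F_q^+$ for every $a \in \F_q^+$. If $d \neq 0$, then $d^2 \in \F_q^+$ admits a positive square root $s_0 \in \F_q^+$, and surjectivity of $f|_{\F_q^+}$ provides $a_0 \in \F_q^+$ with $f(a_0)=s_0$; but then $f(a_0)^2 - d^2 = s_0^2 - d^2 = 0 \notin \F_q^+$, contradicting the positive definiteness of $f[a_0 I_2]$. Hence $d=0$. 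I expect the injectivity step to be the only genuinely delicate point: the sole subtlety is the case analysis on the sign of $u^2-v^2$, which is precisely where the hypothesis that $q$ is even or $q \equiv 3 \pmod 4$ (equivalently $\F_q^- = -\F_q^+$) enters, while the remaining deductions are formal.
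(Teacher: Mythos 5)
Your proof is correct and follows essentially the same route as the paper: injectivity of $f$ on $\F_q^+$ is extracted from a single $2\times 2$ positive definite test matrix together with the trichotomy $\F_q^- = -\F_q^+$ (or $\F_q^+ = \F_q^*$ in characteristic $2$), and $f(0)=0$ then follows by using surjectivity of $f|_{\F_q^+}$ to force $f[a_0I_2]$ to be singular. The only cosmetic differences are that the paper tests with $\left(\begin{smallmatrix} b & b\\ b & a\end{smallmatrix}\right)$, whose determinant $f(b)(f(a)-f(b))$ compares $f(a)$ and $f(b)$ directly (so your preliminary step $u^2\neq v^2$ is not needed there), whereas you test with $\left(\begin{smallmatrix} u & v\\ v & u\end{smallmatrix}\right)$ and compare squares; likewise your treatment of $f(0)$ via the positive square root of $f(0)^2$ unifies the paper's two subcases $f(0)\in\F_q^+$ and $f(0)\in\F_q^-$, and also covers $q=3$, which the paper handles separately.
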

\begin{proof}
When $q=3$, the result follows immediately by applying $f$ to $I_2$. Now assume $q > 3$. Let $a, b \in \F_q^+$ with $a \ne b$. Thus, either $a-b \in \F_q^+$ or $b-a \in \F_q^+$. Say $a-b \in \F_q^+$ without loss of generality. Thus, the matrix
\[
A = \begin{pmatrix}
    b & b \\
    b & a
\end{pmatrix}
\]
is positive definite. Note that $f(a), f(b) \in \F_q^+$ by Lemma~\ref{lem:+}. By assumption, $f[A]$ is also positive definite. Hence, $\det f[A] = f(b)(f(a)-f(b)) \in \F_q^+$. In particular, $f(a) \ne f(b)$. This proves that $f$ is an injective map on $\F_q^+$, and is therefore a bijection from $\F_q^+$ onto itself. This proves (1). 

Now, suppose $f(0) = c$ where $c \in \F_q^+$. By the first part, there exists $a \in \F_q^+$ such that $f(a) = c$. Since the matrix $aI_2$ is positive definite so is $f[aI_2]$. However, 
\[
f[a I_2] = \begin{pmatrix}
c & c \\
c & c
\end{pmatrix} 
\]
is not positive definite. If instead $f(0) \in \F_q^-$, then $c := -f(0) \in \F_q^+$. Now repeat the above argument to get $\det f[a I_2] = 0$, again a contradiction. Thus,  $f(0) = 0$. 
\end{proof}

\noindent The proof of Lemma \ref{L:Charq} does not work when $q \equiv 1 \pmod 4$. However, the following lemma shows that $f$ needs to be injective on certain subsets of $\F_q^+$.
\begin{lemma}\label{lem:sq}
Let $q$ be a prime power with $q \equiv 1 \pmod 4$. Let $f:\F_q \to \F_q$ be a positivity preserver over $M_2(\F_q)$. Let $a,b \in \F_q$ such that $a-b \in \F_q^+$. If $a \in \F_q^+$ or $b \in \F_q^+$, then $f(a)-f(b) \in \F_q^+$.    
\end{lemma}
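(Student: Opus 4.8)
The plan is to exhibit, for each of the two hypotheses $b \in \F_q^+$ and $a \in \F_q^+$, an explicit positive definite matrix in $M_2(\F_q)$ with entries drawn from $\{a,b\}$, so that applying the preserver $f$ and reading off the determinant criterion forces $f(a)-f(b)$ to be a nonzero square. The one structural fact I would use repeatedly is that, since $q \equiv 1 \pmod 4$, we have $-1 \in \F_q^+$, and hence $\F_q^+ = -\F_q^+$ (multiplication by the square $-1$ permutes the nonzero squares). This insensitivity to sign is exactly what allows the two hypotheses to be handled symmetrically, and it is what makes the $q \equiv 1 \pmod 4$ case behave differently from the $q \equiv 3 \pmod 4$ situation.

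First I would treat the case $b \in \F_q^+$ using
\[
A = \begin{pmatrix} b & b \\ b & a \end{pmatrix}.
\]
Its leading entry $b$ lies in $\F_q^+$, and its determinant is $b(a-b)$, which lies in $\F_q^+$ since both $b$ and $a-b$ do; thus $A$ is positive definite. Because $f$ preserves positivity on $M_2(\F_q)$, the matrix $f[A]$ is positive definite, so its determinant $f(b)\bigl(f(a)-f(b)\bigr)$ is a nonzero square. As $f(b)\in\F_q^+$ by Lemma~\ref{lem:+}, dividing out yields $f(a)-f(b)\in\F_q^+$.

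For the case $a \in \F_q^+$ I would instead take
\[
A = \begin{pmatrix} a & a \\ a & b \end{pmatrix},
\]
whose determinant is $a(b-a)$. The hypothesis supplies only $a-b \in \F_q^+$, but $b-a = -(a-b)\in\F_q^+$ precisely because $-1 \in \F_q^+$; this is the single place where $q \equiv 1 \pmod 4$ is essential. Hence $A$ is positive definite, $f[A]$ is positive definite, and its determinant $f(a)\bigl(f(b)-f(a)\bigr)$ is a nonzero square. Since $f(a)\in\F_q^+$ we obtain $f(b)-f(a)\in\F_q^+$, and one further use of $-1 \in \F_q^+$ gives $f(a)-f(b)\in\F_q^+$.

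I do not anticipate a real obstacle: the argument is a short matrix construction together with two applications of $\F_q^+ = -\F_q^+$. The only point demanding care is the sign bookkeeping — in the real setting one could never replace $a-b$ by $b-a$ inside a positivity statement, so the proof would fail if one carried over intuition from $\R$ or from the $q \equiv 3 \pmod 4$ case; keeping track of where $-1 \in \F_q^+$ is invoked is what keeps the two hypotheses on an equal footing.
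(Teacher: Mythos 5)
Your proof is correct and is essentially the paper's own argument: the paper assumes $b \in \F_q^+$ ``without loss of generality'' and uses the same matrix $\begin{pmatrix} b & b \\ b & a \end{pmatrix}$, and your second case merely spells out explicitly the symmetry (via $-1 \in \F_q^+$) that justifies that WLOG. No gaps; the extra care you take with the sign bookkeeping is exactly what the paper's compressed phrasing leaves implicit.
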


\begin{proof}
Without loss of generality, assume that $b \in \F_q^+$. Consider the matrix 
$$
A=
\begin{pmatrix} 
b&b\\
b&a
\end{pmatrix}
$$
It has determinant $b(a-b)$ and thus it is positive definite. Under the map $f$, we have $f(b)(f(a)-f(b)) \in \F_q^+$. By Lemma~\ref{lem:+}, we have $f(b)\in \F_q^+$ and thus $f(a)-f(b)\in \F_q^+$.
\end{proof}

\begin{lemma}\label{lem:nonzero}
Let $q$ be a prime power with $q \equiv 1 \pmod 4$ and let $f:\F_q \to \F_q$ be a positivity preserver over $M_2(\F_q)$. If $f(0)=0$, then $f(x)\neq 0$ for each $x \in \F_q^*$.    
\end{lemma}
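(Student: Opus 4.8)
The plan is to distinguish two cases according to whether the element $x \in \F_q^*$ is positive or negative, and to exploit the hypothesis $q \equiv 1 \pmod 4$ (equivalently $-1 \in \F_q^+$, by Proposition~\ref{Pdecomp}) to manufacture a positive definite matrix that $f$ destroys. The case $x \in \F_q^+$ is immediate: Lemma~\ref{lem:+} gives $f(\F_q^+) \subseteq \F_q^+$, so $f(x) \in \F_q^+$ and in particular $f(x) \neq 0$. Thus it suffices to rule out $f(x) = 0$ for $x \in \F_q^-$.

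For $x \in \F_q^-$, I would argue by contradiction, assuming $f(x) = 0$. The key observation is that because $-1 \in \F_q^+$, the matrix
\[
A = \begin{pmatrix} 1 & x \\ x & 0 \end{pmatrix}
\]
is positive definite: its leading principal minors are $1 \in \F_q^+$ and $\det A = -x^2$, which lies in $\F_q^+$ since $x^2 \in \F_q^+$ and $-1 \in \F_q^+$. Now apply $f$ entrywise. Using the hypothesis $f(0) = 0$ together with the assumption $f(x) = 0$, the off-diagonal entries and the bottom-right entry all vanish, so
\[
f[A] = \begin{pmatrix} f(1) & 0 \\ 0 & 0 \end{pmatrix}.
\]
Since $\det f[A] = 0$ is not a nonzero square, $f[A]$ fails to be positive definite, contradicting the assumption that $f$ preserves positivity on $M_2(\F_q)$. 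This forces $f(x) \neq 0$, completing the proof.

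I do not expect a genuine obstacle here; the entire argument hinges on the single clever choice of matrix. The crucial point to recognize is that the congruence $q \equiv 1 \pmod 4$ is precisely what permits a positive definite $2 \times 2$ matrix with a zero diagonal entry, and that pairing this zero with the given normalization $f(0) = 0$ collapses an entire row and column of $f[A]$. Note also that the argument is robust even if $f(1) = 0$, since in that case already the first leading principal minor of $f[A]$ is not a nonzero square. This is the same matrix $A(x)$ that appeared in the earlier discussion of the failure of Cholesky decompositions when $q \equiv 1 \pmod 4$, underscoring that the peculiarity of positive definiteness when $-1$ is a square is exactly what drives this lemma.
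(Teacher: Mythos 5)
Your proof is correct and is essentially identical to the paper's: the paper uses the very same test matrix $A = \begin{pmatrix} 1 & x \\ x & 0 \end{pmatrix}$, which is positive definite because $-1 \in \F_q^+$, and notes that $f[A]$ is singular, hence not positive definite. The only difference is your initial case split between $x \in \F_q^+$ and $x \in \F_q^-$, which is superfluous, since $\det A = -x^2 \in \F_q^+$ for every $x \in \F_q^*$ and the matrix argument therefore applies uniformly.
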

\begin{proof}
Assume otherwise that there is $x\in \F_q^*$ such that $f(x)=0$. Consider the matrix
\[
A = \begin{pmatrix}
1 & x \\
x & 0
\end{pmatrix}.
\]
Clearly, $A$ is positive definite since $-1 \in \F_q^+$. However, $f[A]$ is singular, a contradiction.   
\end{proof}     

\subsection{Distribution of elements in translations of $\F_q^+$}
We now prove several lemmas on the distribution of elements in translations of $\F_q^+$ using standard character sum estimates. These lemmas will be useful in the proof of our main results. Recall that $\eta$ denotes the quadratic character of $\F_q$ (see Equation~\eqref{EquadChar}). 

\begin{lemma}\label{Ldoublesquare}
Let $\F_q$ be a finite field with $q \equiv 3 \pmod 4$. Fix $a \in \F_q^*$, and define 
$a+\F_q^+ := \{a + y : y \in \F_q^+\}.$ Then $|\F_q^+ \cap (a+\F_q^+)| = \frac{q-3}{4}$. 
\end{lemma}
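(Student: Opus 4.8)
The plan is to count the number of $y \in \F_q^+$ for which $a+y \in \F_q^+$ using the quadratic character $\eta$. The key identity is that for a nonzero element $z$, we have $\frac{1+\eta(z)}{2}$ equals $1$ if $z \in \F_q^+$ and $0$ if $z \in \F_q^-$. So I would write
\begin{equation*}
|\F_q^+ \cap (a+\F_q^+)| = \sum_{\substack{y \in \F_q^+ \\ a+y \neq 0}} \frac{1+\eta(a+y)}{2},
\end{equation*}
being careful to handle the single value $y = -a$ separately, since at that point $a+y = 0$ is neither positive nor negative. Since $q \equiv 3 \pmod 4$, Proposition~\ref{Pdecomp} gives $-1 \in \F_q^-$, so $-a$ is positive precisely when $a$ is negative; I will need to track whether $-a$ lies in the index set $\F_q^+$ to avoid an off-by-one error.

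Next I would convert the sum over $\F_q^+$ into a sum over all of $\F_q^*$ by the substitution $y = t^2$ as $t$ ranges over $\F_q^*$, which double-counts each positive element; alternatively, and more cleanly, I would use the indicator $\frac{1+\eta(y)}{2}$ for $y \in \F_q^+$ and sum $y$ over $\F_q^*$. This turns the count into a combination of the full character sums $\sum_{y} \eta(y)$, $\sum_y \eta(a+y)$, and the crucial mixed sum $\sum_y \eta(y)\eta(a+y) = \sum_y \eta\bigl(y(a+y)\bigr)$. The single-variable sums $\sum_{y \in \F_q} \eta(y) = 0$ and $\sum_{y \in \F_q}\eta(a+y) = 0$ vanish because $\eta$ is a nontrivial character summed over the whole field (or a complete set of shifts).

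The heart of the computation is the mixed sum $\sum_{y \in \F_q} \eta\bigl(y^2 + ay\bigr)$. This is a standard Jacobi/Weil-type character sum over a quadratic polynomial with two distinct roots $0$ and $-a$ (distinct because $a \neq 0$), and its value is $-\eta(\text{leading coeff}) = -1$ when the discriminant is nonzero. Concretely, for a quadratic $y^2 + by + c$ with distinct roots one has $\sum_{y \in \F_q} \eta(y^2+by+c) = -1$; here $b = a$, $c = 0$, so the sum equals $-1$. Assembling the pieces, the count becomes $\frac{1}{4}\bigl(q - \text{(small correction terms)}\bigr)$, and after carefully accounting for the excluded point $y = -a$ and the vanishing linear sums, this should collapse to $\frac{q-3}{4}$.

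The main obstacle I anticipate is the bookkeeping of the boundary terms rather than any deep estimate: I must correctly decide whether the terms $y=0$, $y=-a$, and the values where $\eta$ evaluates to $0$ are included in each sum, and ensure the indicator-to-character conversion accounts for the factor-of-two issue between summing over $\F_q^+$ versus summing over $\F_q^*$ with the $\frac{1+\eta}{2}$ weight. Since the final answer is an exact integer $\frac{q-3}{4}$ (and $q \equiv 3 \pmod 4$ guarantees $4 \mid q-3$), getting every $\pm 1$ correction right is essential; I would double-check the result on a small case such as $q=7$ (where the answer should be $1$) to confirm the boundary accounting.
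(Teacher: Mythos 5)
Your proposal is correct and takes essentially the same route as the paper's proof: both expand the count via the product of indicators $\frac{1+\eta(x)}{2}\cdot\frac{1+\eta(x+a)}{2}$ over $x \in \F_q \setminus \{0,-a\}$, evaluate the complete quadratic sum $\sum_{x\in\F_q}\eta(x)\eta(x+a)=-1$ (the paper cites \cite[Theorem 5.48]{lidl1997finite}, which is precisely your fact about $\sum_x \eta(x^2+bx+c)$ with nonzero discriminant), and rely on $\eta(-1)=-1$ for $q\equiv 3 \pmod 4$ exactly where you flag it. The boundary bookkeeping you defer assembles just as in the paper: the excluded points contribute $-\eta(-a)-\eta(a)=0$ and the constant term gives $q-2$, so the count is $\frac{1}{4}\left(-1+0+(q-2)\right)=\frac{q-3}{4}$.
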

\begin{proof}
For $a \in \F_q^*$, we have
\begin{align*}
|\F_q^+ \cap (a+\F_q^+)| &= \sum_{x \in \F_q \setminus \{0, -a\}} \frac{\eta(x)+1}{2}\cdot \frac{\eta(x+a)+1}{2} \\
&= \frac{1}{4}\left( \sum_{x \in \F_q} \eta(x)\eta(x+a) + \sum_{x \in \F_q \setminus \{-a\}} \eta(x) +  \sum_{x \in \F_q \setminus \{0\}} \eta(x+a) + \sum_{x \in \F_q \setminus \{0, -a\}} 1 \right)\\
&= \frac{1}{4}\left(-1 - \eta(-a) - \eta(a) + q-2 \right)= \frac{q-3}{4}, 
\end{align*}
where for the first term, we use \cite[Theorem 5.48]{lidl1997finite}. 
\end{proof}

Given three distinct elements $a,b,c$ in $\F_q$, let $t_q(a,b,c)$ be the number of $x \in \F_q$ such that $\eta(x-a)=\eta(x-b)=\eta(x-c)=1$. The following lemma provides estimates on $t_q(a,b,c)$ using a standard application of Weil's bound. We note that $t_q(a,b,c)$ can also be estimated directly using \cite[Exercise 5.64]{lidl1997finite}. However, for our purposes, we need a more careful analysis that handles the case where $q$ is relatively small. A similar computation also appeared in \cite{BM22} when $q \equiv 1 \pmod 4$.

\begin{lemma}\label{lem:triple}
Let $q$ be an odd prime power and let $t_q=t_q(a,b,c)$ be as above. Then
\begin{enumerate}
    \item $t_3,t_5 \in \{0\}$, $t_7,t_9,t_{11}\in \{0,1\}$, $t_{13},t_{17}\in \{0,1,2\}$, $t_{19},t_{23} \in \{1,2,3\}$, and $t_{25} \in \{0,2,3,4\}$.
    \item If $q \geq 27$, then $0<t_q<\frac{q-5}{4}$.
\end{enumerate}
\end{lemma}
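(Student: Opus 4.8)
The plan is to estimate $t_q(a,b,c)$ via a character sum and Weil's bound, then handle the small cases by a finer analysis. The starting point is the standard identity
\[
t_q(a,b,c) = \sum_{x \in \F_q} \prod_{z \in \{a,b,c\}} \frac{1 + \eta(x-z)}{2},
\]
where one must be slightly careful at the points $x \in \{a,b,c\}$ since $\eta(0)=0$ rather than $1$, so those three terms contribute $0$ to $t_q$ and I would either exclude them from the sum or account for the discrepancy explicitly. Expanding the product gives $\frac{1}{8}$ times a sum of $2^3 = 8$ terms: the constant term $\sum_x 1 = q$, three single-character sums $\sum_x \eta(x-z)$, three double sums $\sum_x \eta(x-z)\eta(x-z')$, and one triple sum $\sum_x \eta(x-a)\eta(x-b)\eta(x-c)$. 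Each single sum is $0$ (a non-trivial character summed over $\F_q$), and each double sum equals $-1$ by the formula cited in Lemma~\ref{Ldoublesquare} (i.e.~\cite[Theorem 5.48]{lidl1997finite}), since $a,b,c$ are distinct. The triple sum is $\sum_x \eta\bigl((x-a)(x-b)(x-c)\bigr)$, which is a character sum of the Legendre symbol of a cubic with distinct roots.

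For the triple sum I would invoke Weil's bound for character sums of polynomials: since $(x-a)(x-b)(x-c)$ has $3$ distinct roots and is not a perfect square, $\bigl|\sum_x \eta\bigl((x-a)(x-b)(x-c)\bigr)\bigr| \le (3-1)\sqrt{q} = 2\sqrt{q}$. Combining everything, and correcting for the omitted points $\{a,b,c\}$, I would obtain
\[
t_q(a,b,c) = \frac{1}{8}\Bigl(q - 3 + S\Bigr),
\]
where $S = \sum_x \eta\bigl((x-a)(x-b)(x-c)\bigr)$ satisfies $|S| \le 2\sqrt{q}$; this yields the two-sided estimate
\[
\frac{q - 3 - 2\sqrt{q}}{8} \le t_q(a,b,c) \le \frac{q - 3 + 2\sqrt{q}}{8}.
\]
For part (2), that $q \ge 27$ implies $0 < t_q < \frac{q-5}{4}$, I would check that the lower bound $\frac{q-3-2\sqrt q}{8}$ is strictly positive for $q \ge 27$ (equivalently $q - 2\sqrt q - 3 > 0$, which holds once $\sqrt q > 3$, i.e. $q > 9$) and that the upper bound $\frac{q-3+2\sqrt q}{8}$ is strictly less than $\frac{q-5}{4} = \frac{2q-10}{8}$, i.e.~that $q - 7 > 2\sqrt q$, which holds comfortably for $q \ge 27$. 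These are elementary inequalities in $\sqrt q$.

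The main obstacle will be part (1), the exact small-case values for $q \le 25$, where Weil's bound is too weak to pin down $t_q$ to the claimed finite sets. Here I would argue that $t_q(a,b,c)$ is an integer lying in the interval $\bigl[\frac{q-3-2\sqrt q}{8}, \frac{q-3+2\sqrt q}{8}\bigr]$, which for each specific small $q$ already restricts $t_q$ to a short list of integers; I would then prune this list further using parity and structural constraints. In particular, since $t_q$ counts solutions and is tied to the value of $S$, and $S \equiv q - 3 - 8 t_q \pmod{\text{something}}$, congruence restrictions on $S$ (and the fact that by affine invariance one may normalize $a,b,c$, e.g.~translating so that one of them is $0$) eliminate the spurious candidates. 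I would carry out this case analysis prime-power by prime-power for $q \in \{3,5,7,9,11,13,17,19,23,25\}$, using the interval bound to generate candidates and ad hoc arguments (or the explicit evaluation of $S$ as a Jacobi-type sum, or direct enumeration over normalized triples up to the affine group action) to confirm exactly which values occur. The delicate point is that for the smallest fields the interval may contain values that are arithmetically admissible but never actually attained, so establishing the precise achievable sets — especially the exclusions like $t_{25} \ne 1$ and the uniform lower bounds $t_{19},t_{23} \ge 1$ — is where the real work lies and likely requires a short computation or a careful counting argument rather than a clean closed form.
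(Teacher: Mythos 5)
Your outline matches the paper's proof in spirit (expand the product of indicators, bound the triple term by Weil, verify small $q$ by direct computation), but your central identity is wrong, and the error is not cosmetic. First, $t_q \neq \sum_{x\in\F_q}\prod_{z\in\{a,b,c\}}\frac{1+\eta(x-z)}{2}$: at $x=a$ the product equals $\frac12\cdot\frac{1+\eta(a-b)}{2}\cdot\frac{1+\eta(a-c)}{2}$, which need not vanish, so one must sum over $x\in\F_q\setminus\{a,b,c\}$ (as the paper does). Once restricted, your values for the lower-order terms fail: the single sums are no longer $0$ and the double sums are no longer $-1$, because removing the three points leaves boundary corrections, namely $\sum_{x\neq a,b,c}\eta(x-a)=-\eta(b-a)-\eta(c-a)$ and $\sum_{x\neq a,b,c}\eta(x-a)\eta(x-b)=-1-\eta(c-a)\eta(c-b)$, and similarly for the other pairs. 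Carrying these corrections, one gets only the two-sided bound $q-2\sqrt{q}-15\leq 8t_q\leq q+3+2\sqrt{q}$, not your exact formula $8t_q=q-3+S$ with $|S|\leq 2\sqrt{q}$. A concrete refutation: for $q=13$ with $(a,b,c)=(0,1,2)$ one checks directly that $t_{13}=0$ (the squares mod $13$ are $\{1,3,4,9,10,12\}$ and no $x$ has $x,x-1,x-2$ all square), yet your identity would force $S=8\cdot 0-(13-3)=-10$, violating $|S|\leq 2\sqrt{13}\approx 7.2$.

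This error propagates to both parts. Your claimed lower bound $t_q\geq\frac{q-3-2\sqrt{q}}{8}>0$ for all $q>9$ is false, as the example above shows ($t_q=0$ occurs for $q$ as large as $25$, which is exactly why the lemma's statement allows $0$ in the sets for $q\leq 25$ and why its threshold in part (2) is $q\geq 27$: the correct inequality $q-2\sqrt{q}-15>0$ is equivalent to $(\sqrt{q}-1)^2>16$, i.e., $q>25$). Likewise, your plan for part (1) of generating candidate values from the interval $\bigl[\frac{q-3-2\sqrt{q}}{8},\frac{q-3+2\sqrt{q}}{8}\bigr]$ would prune correct values (it would confine $t_{13}$ to $\{1,2\}$, excluding the attained value $0$). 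The repair is routine: keep the boundary correction terms, each bounded in absolute value by small constants, recover the paper's bounds $q-2\sqrt{q}-15\leq 8t_q\leq q+3+2\sqrt{q}$, and then part (2) follows for $q\geq 27$ by the elementary inequalities you describe (with the corrected constants), while part (1) is, as in the paper, a finite verification --- your fallback of direct enumeration over triples is the sound route there, not interval-based pruning.
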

\begin{proof}
Observe that
\[
S := t_q(a,b,c) = \sum_{x \in \F_q \setminus \{a,b,c\}} \frac{\eta(x-a)+1}{2}\cdot \frac{\eta(x-b)+1}{2}\cdot \frac{\eta(x-c)+1}{2}.
\]
Thus, 
\begin{align*}
8S = \sum_{x \in \F_q \setminus \{a,b,c\}} & \bigg(\eta(x-a)\eta(x-b)\eta(x-c) + \eta(x-a)\eta(x-b) \\
&+\eta(x-a)\eta(x-c)+\eta(x-b)\eta(x-c)+\eta(x-a)+\eta(x-b)+\eta(x-c)+1\bigg). 
\end{align*}
We examine each term separately. First, using Weil's bound (see for example \cite[Theorem 5.41]{lidl1997finite}), 
\[
\bigg|\sum_{x \in \F_q \setminus \{a,b,c\}} \eta(x-a)\eta(x-b)\eta(x-c)\bigg| = \bigg|\sum_{x \in \F_q} \eta(x-a)\eta(x-b)\eta(x-c) \bigg|\leq 2 \sqrt{q}. 
\]

Next, by \cite[Theorem 5.48]{lidl1997finite}, we have
$$
\sum_{x \in \F_q \setminus \{a,b,c\}} \eta(x-a)\eta(x-b)=-\eta(c-a)\eta(c-b)+\sum_{x \in \F_q} \eta(x-a)\eta(x-b)=-\eta(c-a)\eta(c-b)-1.
$$
Similarly, we have 
$$
\sum_{x \in \F_q \setminus \{a,b,c\}} \eta(x-b)\eta(x-c)=-\eta(a-b)\eta(a-c)-1, \quad \sum_{x \in \F_q \setminus \{a,b,c\}} \eta(x-c)\eta(x-a)=-\eta(b-c)\eta(b-a)-1.
$$
Finally, we have
$$
\sum_{x \in \F_q \setminus \{a,b,c\}} \eta(x-a)=-\eta(b-a)-\eta(c-a)+\sum_{x \in \F_q} \eta(x-a)=-\eta(b-a)-\eta(c-a), 
$$
and similarly, 
$$
\sum_{x \in \F_q \setminus \{a,b,c\}} \eta(x-b)=-\eta(a-b)-\eta(c-b), \sum_{x \in \F_q \setminus \{a,b,c\}} \eta(x-c)=-\eta(a-c)-\eta(b-c).
$$
Combining all the above estimates, we obtain
\begin{align*}
q-2\sqrt{q}-15\leq 8S \leq q+3+2\sqrt{q}.
\end{align*}
This proves part (2) along with bounds for $t_q$ when $q\geq 27$. The refinements in (1) are readily verified by computer.
\end{proof}

\section{Even characteristic}\label{Seven}
In this section, we always assume $q = 2^k$ for some integer $k \geq 1$. Recall that in this case, $\F_q^+ = \F_q^*$. Positive definiteness thus reduces to the non-vanishing of the leading principal minors. We break down the proof of Theorem \ref{ThmA} into two parts: the $n=2$ case (Theorem \ref{Tchar2}) and the $n \geq 3$ case (Theorem \ref{Tchar2M3}). 

\begin{theorem}\label{Tchar2}
Let $q = 2^k$ for some $k \geq 1$ and let $f: \F_q \to \F_q$. Then the following are equivalent:
\begin{enumerate}
\item $f$ preserves positivity on $M_2(\F_q)$.
\item $f(0) = 0$, $f$ is bijective, and $f(\sqrt{xy})^2 = f(x)f(y)$ for all $x, y \in \F_q$.
\item There exist $c \in \F_q^*$ and $1 \leq n \leq q-1$ with $\gcd(n, q-1) = 1$ such that $f(x) = cx^n$ for all $x \in \F_q$.
\end{enumerate}
\end{theorem}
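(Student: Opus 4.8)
The plan is to prove the cycle $(3) \Rightarrow (1) \Rightarrow (2) \Rightarrow (3)$, exploiting throughout that in even characteristic $\F_q^+ = \F_q^*$, the squaring map is a bijection of $\F_q$, and $q-1$ is odd so that $2$ is invertible modulo $q-1$. The implication $(3) \Rightarrow (1)$ is a direct computation: for $A = \begin{pmatrix} a & b \\ b & d \end{pmatrix}$ positive definite we have $a \neq 0$ and $ad \neq b^2$, and applying $f(x) = cx^n$ entrywise produces leading principal minors $ca^n \neq 0$ and $c^2\big((ad)^n - (b^2)^n\big)$. Since $\gcd(n,q-1)=1$, the map $x \mapsto x^n$ is a bijection by Theorem~\ref{Tperm}, so $(ad)^n \neq (b^2)^n$; both minors are nonzero, hence squares, and $f[A]$ is positive definite.

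For $(1) \Rightarrow (2)$, I would first extract $f(0) = 0$ and the bijectivity of $f$ on $\F_q^* = \F_q^+$ directly from Lemma~\ref{L:Charq}; together with $f(0)=0 \notin f(\F_q^*)$ this makes $f$ a bijection of $\F_q$. The functional equation is the new content, and I would obtain it by a counting argument. Fix $x, y \in \F_q^*$ and consider the family $A_b = \begin{pmatrix} x & b \\ b & y \end{pmatrix}$, which is positive definite precisely when $b^2 \neq xy$, i.e. for every $b \neq \sqrt{xy}$. For each such $b$, positivity of $f[A_b]$ forces $f(x)f(y) \neq f(b)^2$. As $b$ runs over $\F_q \setminus \{\sqrt{xy}\}$, bijectivity of $f$ makes $f(b)$ run over $\F_q \setminus \{f(\sqrt{xy})\}$, and since squaring is a bijection, $f(b)^2$ runs over all of $\F_q$ except $f(\sqrt{xy})^2$. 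Thus $f(x)f(y)$ is forbidden from every value except $f(\sqrt{xy})^2$, forcing $f(x)f(y) = f(\sqrt{xy})^2$. The cases $x=0$ or $y=0$ follow at once from $f(0)=0$.

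The heart of the argument is $(2) \Rightarrow (3)$, which I would handle by passing to discrete logarithms. Fix a generator $\gamma$ of the cyclic group $\F_q^*$ and write $f(\gamma^t) = \gamma^{\psi(t)}$ for a permutation $\psi$ of $\Z/(q-1)\Z$. Since $q-1$ is odd, $\sqrt{\gamma^c} = \gamma^{c/2}$ where $/2$ denotes multiplication by $2^{-1} \bmod (q-1)$, and the functional equation translates into $2\psi\big((a+b)/2\big) = \psi(a) + \psi(b)$ for all $a,b$. Setting $b = 0$ gives $\psi(2s) - \psi(0) = 2\big(\psi(s) - \psi(0)\big)$, so writing $\chi(t) := \psi(t) - \psi(0)$ one has $\chi(2s) = 2\chi(s)$; feeding this back into the translated equation yields the additive Cauchy relation $\chi(a+b) = \chi(a) + \chi(b)$ on $\Z/(q-1)\Z$. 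Hence $\chi(t) = nt$ with $n = \chi(1)$, so $\psi(t) = nt + \psi(0)$ and $f(x) = cx^n$ with $c = \gamma^{\psi(0)}$ (and $f(0)=0$ matches $c\cdot 0^n$). Finally, $f$ being a bijection forces $t \mapsto nt + \psi(0)$ to be a permutation of $\Z/(q-1)\Z$, i.e. $\gcd(n, q-1) = 1$.

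I expect the main obstacle to be the bookkeeping in $(2) \Rightarrow (3)$: correctly using that $q-1$ is odd to invert $2$, converting the multiplicative functional equation into the additive one without indexing errors, and deriving the Cauchy relation cleanly, where the two-step reduction through $\chi(2s) = 2\chi(s)$ is exactly what makes additivity fall out. The counting step in $(1) \Rightarrow (2)$ is conceptually the other delicate point, as it relies crucially on squaring being a bijection in characteristic $2$ to guarantee that the forbidden values exhaust all but one element of $\F_q$.
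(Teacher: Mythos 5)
Your proof is correct, and two of the three implications coincide with the paper's: your $(3)\Rightarrow(1)$ is the same direct computation, and your $(1)\Rightarrow(2)$ is the paper's counting argument in a trivially different parametrization (the paper writes the off-diagonal entry as $\sqrt{xy}\,z$ with $z\neq 1$, you write it as $b\neq\sqrt{xy}$; both then use bijectivity of $f$ and of the Frobenius square to pin down the unique admissible value $f(\sqrt{xy})^2$). Where you genuinely diverge is $(2)\Rightarrow(3)$. The paper works with the interpolation polynomial $f(x)=\sum_{k=1}^{q-1}a_kx^k$, expands $f(\sqrt{xy})^2=\sum_k a_k^2x^ky^k$ via the characteristic-$2$ Frobenius, compares with $f(x)f(y)$, and applies Lemma~\ref{Lidl_Lemma} (twice, in each variable) to force $a_ia_j=0$ for all $i\neq j$, so that $f$ is a monomial; bijectivity is invoked only at the end, via Theorem~\ref{Tperm}, to get $c\neq 0$ and $\gcd(n,q-1)=1$. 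You instead pass to discrete logarithms: writing $f(\gamma^t)=\gamma^{\psi(t)}$, the functional equation becomes Jensen's equation $2\psi\bigl((a+b)/2\bigr)=\psi(a)+\psi(b)$ on $\Z/(q-1)\Z$, where $2$ is invertible since $q-1$ is odd, and the doubling relation $\chi(2s)=2\chi(s)$ for $\chi(t)=\psi(t)-\psi(0)$ upgrades this to the Cauchy relation, whence $\psi$ is affine and $f(x)=cx^n$; your verification of each step (including $\chi(a+b)=2\chi((a+b)/2)=\chi(a)+\chi(b)$ and $\chi(t)=\chi(1)t$ on a cyclic group) is sound. The trade-off: your route is more structural and avoids all polynomial coefficient bookkeeping and the mod-$(x^q-x)$ reductions, but it needs bijectivity (or at least $f(\F_q^*)\subseteq\F_q^*$ with $f(0)=0$) upfront merely to define $\psi$, and it leans on the oddness of $q-1$; the paper's polynomial route extracts monomiality from the functional equation alone, without bijectivity, which is structurally a slightly stronger intermediate statement, at the cost of a more computational argument. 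Both are complete proofs of the theorem.
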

\begin{proof}
\noindent $(1) \implies (2)$. Suppose (1) holds. Then $f(0) = 0$ and $f$ is bijective on $\F_q^+ = \F_q^*$ by Lemma \ref{L:Charq}. Thus, $f$ is bijective on $\F_q$. Fix $x, y \in \F_q^*$ and consider the matrix
\[
A(z) = \begin{pmatrix}
    x & \sqrt{xy}z \\
    \sqrt{xy}z & y
\end{pmatrix} \qquad (z \in \F_q).
\]
Observe that $A(z)$ is positive definite if and only if $z \ne 1$. Thus, for any $z \ne 1$, $f[A(z)]$ is positive definite and so 
\[
\det f[A(z)] = f(x)f(y)-f(\sqrt{xy}z)^2 \ne 0.
\]
Hence, for all $z \ne 1$, 
\begin{equation}\label{EnotEqual}
f(\sqrt{xy}z)^2 \ne f(x)f(y).
\end{equation}
Since $f$ and the $x \mapsto x^2$ map are bijections, there exists a unique $w \in \F_q$ such that $f(w)^2 = f(x)f(y)$. Also, the map $z \mapsto \sqrt{xy} z$ is a bijection of $\F_q$. Using equation \eqref{EnotEqual}, we conclude that $w = \sqrt{xy}$ and so $f(\sqrt{xy})^2 = f(x)f(y)$. The expression $f(\sqrt{xy})^2 = f(x)f(y)$ also holds trivially when $x = 0$ or $y = 0$ since $f(0) = 0$. This proves (2). 
\medskip

\noindent $(2) \implies (3)$. Suppose (2) holds and let $f(x) = \sum_{k=1}^{q-1} a_k x^k$ without loss of generality. Note that 
\[
f(\sqrt{xy})^2 = \left(\sum_{k=1}^{q-1} a_k (\sqrt{xy})^k\right)^2 = \sum_{k=1}^{q-1} a_k^2 x^k y^k.
\]
Next, we compute
\[
f(x)f(y) = \left(\sum_{i=1}^{q-1} a_i x^i\right)\left(\sum_{j=1}^{q-1} a_j x^j\right) = \sum_{k=1}^{q-1} a_k^2 x^k y^k + \sum_{1 \leq i < j \leq q-1} a_i a_j (x^iy^j + x^j y^i). 
\]
Since $f(\sqrt{xy})^2 = f(x)f(y)$ for all $x, y \in \F_q$, we conclude that 
\[
Q(x,y) := \sum_{1 \leq i < j \leq q-1} a_i a_j (x^iy^j + x^j y^i) = 0
\]
for all $x, y \in \F_q$. Now, for any fixed $y$,
\[
Q(x,y) = \sum_{k=1}^{q-1} \left(\sum_{\substack{1 \leq j \leq q-1 \\ j \ne k}} a_j a_k y^j\right) x^k 
\]
is a polynomial in $x$ of degree at most $q-1$ that is identically $0$ on $\F_q$. Therefore, by Lemma \ref{Lidl_Lemma},  
\[
\sum_{\substack{1 \leq j \leq q-1 \\ j \ne k}} a_j a_k y^j = 0 \qquad (1 \leq k \leq q-1).
\]
Since this is true for all $y \in \F_q$ and since the above expression is a polynomial of degree at most $q-1$, we conclude that $a_j a_k = 0$ for all $j \ne k$. This proves $f(x)$ is a monomial and so $f(x) = c x^n$ for some $1 \leq n \leq q-1$. Since $f$ is bijective, Theorem \ref{Tperm}(2) implies that $c \neq 0$ and $\gcd(n, q-1) = 1$.
\medskip

\noindent $(3) \implies (1)$. Suppose (3) holds and let 
\[
A = \begin{pmatrix}
    u & v \\
    v & w
\end{pmatrix}
\]
be an arbitrary positive definite matrix in $M_2(\F_q)$, i.e., $u \ne 0$ and $uw \ne v^2$. Clearly, $f(u) = cu^n \ne 0$. Moreover, since $x \mapsto x^n$ is injective on $\F_q$, we have $u^n w^n \ne v^{2n}$ and so 
\[
\det f[A] = c^2 u^n w^n - c^2 v^{2n} \ne 0.
\]
This proves $f$ preserves positivity on $M_2(\F_q)$ and so (1) holds. This concludes the proof.
\end{proof}

We now describe the entrywise positivity preservers on $M_3(\F_q)$. 

\begin{theorem}\label{Tchar2M3}
Let $q = 2^k$ and let $f: \F_q \to \F_q$. Then the following are equivalent:
\begin{enumerate}
\item $f$ preserves positivity on $M_3(\F_q)$.
\item There exist $c \in \F_q^*$ and $0 \leq \ell \leq k-1$ such that $f(x) = cx^{2^\ell}$ for all $x \in \F_q$.
\end{enumerate}
\end{theorem}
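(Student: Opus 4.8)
The implication $(2)\Rightarrow(1)$ is immediate: by Theorem~\ref{Tauto} the maps $x\mapsto x^{2^\ell}$ are exactly the automorphisms of $\F_q$, so Proposition~\ref{PFrobenius} shows that every $f(x)=cx^{2^\ell}$ with $c\in\F_q^*=\F_q^+$ preserves positivity on $M_n(\F_q)$ for all $n$. For the converse I would first reduce to the already-solved $n=2$ case. Given a positive definite $A\in M_2(\F_q)$, the block matrix $B=\begin{pmatrix} A & \mathbf{0} \\ \mathbf{0} & 1\end{pmatrix}\in M_3(\F_q)$ has leading principal minors $A_{11}$, $\det A$, and $\det A$, all nonzero, so $B$ is positive definite; since the leading $1\times 1$ and $2\times 2$ minors of $f[B]$ coincide with those of $f[A]$, positivity of $f[B]$ forces $f[A]$ to be positive definite. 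Thus $f$ preserves positivity on $M_2(\F_q)$, and Theorem~\ref{Tchar2} gives $f(x)=cx^n$ with $c\in\F_q^*$ and $\gcd(n,q-1)=1$. Replacing $f$ by $c^{-1}f$ multiplies the $r$-th leading minor of each image by the nonzero square factor $c^{-r}$ (every nonzero element is a square in characteristic $2$), so I may assume $c=1$ and $f(x)=x^n$.

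The heart of the argument is to show that $x\mapsto x^n$ is additive, hence a field automorphism. The plan is to test $f$ on the family
\[
A=\begin{pmatrix} 1 & a & b \\ a & 1 & c \\ b & c & 1\end{pmatrix}, \qquad a,b,c\in\F_q.
\]
In characteristic $2$ one computes $\det A = 1+a^2+b^2+c^2=(1+a+b+c)^2$ and the leading $2\times 2$ minor $1+a^2=(1+a)^2$, so $A$ is positive definite exactly when $a\neq 1$ and $a+b+c\neq 1$. The same computation applied to $f[A]$, whose diagonal entries are $1^n=1$, gives $\det f[A]=(1+a^n+b^n+c^n)^2$, which is nonzero exactly when $a^n+b^n+c^n\neq 1$. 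Hence preservation of positivity on this family says: for every $a\neq 1$, every $b$, and every $c$ with $a+b+c\neq 1$, one has $a^n+b^n+c^n\neq 1$.

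Now fix $a\neq 1$ and $b$. There is a unique $c_0=1+a+b$ with $a+b+c=1$, and a unique $c_1=(1+a^n+b^n)^{1/n}$ with $a^n+b^n+c^n=1$, using that $x\mapsto x^n$ is a bijection. If $c_1\neq c_0$, then taking $c=c_1$ gives an entry with $a+b+c\neq 1$ yet $a^n+b^n+c^n=1$, contradicting the displayed condition; hence $c_1=c_0$, i.e.\ $(1+a+b)^n=1+a^n+b^n$ for all $a\neq 1$. This identity also holds trivially at $a=1$, since $1+1^n=0$ in characteristic $2$ makes both sides equal to $b^n$. Setting $b=0$ yields $(1+a)^n=1+a^n$, and then for arbitrary $a,b$ comparing $(1+(a+b))^n=1+(a+b)^n$ with $(1+a+b)^n=1+a^n+b^n$ gives $(a+b)^n=a^n+b^n$. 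Thus $x\mapsto x^n$ is both additive and multiplicative, hence a field automorphism, so by Theorem~\ref{Tauto} it equals $x\mapsto x^{2^\ell}$ for some $0\le \ell\le k-1$, and $f(x)=cx^{2^\ell}$, as claimed.

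The routine parts—the embedding, the normalization, and the determinant computation—are straightforward; I expect the main obstacle to be locating the right test family, namely one whose characteristic-$2$ determinant is the square of a \emph{linear} form in the entries, so that preservation translates into a clean additive functional equation. Symmetric matrices with a repeated entry tend to produce determinants that are perfect squares in a way that every monomial preserves automatically (for instance $\begin{pmatrix} a & b & b\\ b& a & b\\ b&b&a\end{pmatrix}$ has determinant $a(a+b)^2$), so they carry no information; it is the all-ones diagonal together with distinct off-diagonal entries that breaks this symmetry and exposes additivity.
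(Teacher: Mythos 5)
Your proposal is correct and follows essentially the same route as the paper: reduce to the $n=2$ case by embedding and invoke Theorem~\ref{Tchar2}, normalize $c=1$, then test on $3\times 3$ matrices with unit diagonal whose characteristic-$2$ determinant is the square of a linear form in the entries, deduce that $x\mapsto x^n$ is additive, and conclude via Theorem~\ref{Tauto}. The only difference is cosmetic: the paper uses your test matrix with $c=0$ and a counting argument comparing the two $q$-element sets $\{(x,y): x+y=1\}$ and $\{(x,y): x^n+y^n=1\}$, whereas you keep the third entry $c$ free and argue pointwise via uniqueness of solutions --- both steps exploit the bijectivity of $x\mapsto x^n$ (from $\gcd(n,q-1)=1$) in exactly the same way.
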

\begin{proof}
That $(2) \implies (1)$ follows from Proposition \ref{PFrobenius}. Now, suppose (1) holds. By embedding $2 \times 2$ positive definite matrices $A$ into $M_3(\F_q)$ via
\[
\begin{pmatrix}A & {\bf 0}_{2 \times 1} \\
{\bf 0}_{1 \times 2} & 1
\end{pmatrix} \in M_3(\F_q), 
\]
it follows by Theorem \ref{Tchar2} that $f(x) = c x^n$ for all $x \in \F_q$, where $c \in \F_q^*$ and $1 \leq n \leq q-1$ is such that $\gcd(n, q-1) = 1$. Without loss of generality, we assume that $c=1$. It suffices to show that the only exponents $n$ that preserve positivity on $M_3(\F_q)$ are powers of $2$.  

For $x, y \in \F_q$, let 
\[
A(x,y) = \begin{pmatrix}
1 & x & y \\
x & 1 & 0 \\
y & 0 & 1
\end{pmatrix}.
\]
The matrix $A(x,y)$ is positive definite if and only if $x \ne 1$ and $\det A = 1-x^2-y^2 \ne 0$. Notice that, using the fact that $-1 = 1$ in $\F_q$,  
\[
\det A(x,y) = 0 \iff x^2 + y^2 = 1 \iff (x+y)^2 = 1 \iff x+y=1.
\]
Similarly, $\det f[A] = 1-x^{2n}-y^{2n}$ and so 
\[
\det f[A(x,y)] = 0 \iff x^{2n}+y^{2n} =1 \iff (x^n+y^n)^2 = 1 \iff x^n + y^n = 1.
\]
Suppose $n$ is not a power of $2$. We will prove that there exist $x_0, y_0 \in \F_q$ such that $A(x_0,y_0)$ is positive definite, but $f[A(x_0,y_0)]$ is not positive definite. In order to do so, it suffices to prove the existence of $x_0, y_0 \in \F_q$ such that $x_0 \ne 1$, $x_0+y_0 \ne 1$, and $x_0^n + y_0^n = 1$. Indeed, consider the two sets: 
\[
S_1 = \{(x,y) \in \F_q^2 : x + y = 1\}, \qquad S_2 = \{(x,y) \in \F_q^2 : x^n + y^n = 1\}.
\]
Clearly, $|S_1| = q$ since for every $x \in \F_q$, there is a unique $y \in \F_q$ such that $x +y = 1$. Recall that the map $x \mapsto x^n$ is a bijection since $\gcd(n, q-1) 
= 1$ (Theorem \ref{Tperm}(2)). It follows that $|S_2| = q$ as well. Now, suppose the desired pair $x_0, y_0$ does not exist. Then for every $(x,y) \in S_2$, either $x = 1$ or $x+y = 1$. But if $x = 1$ then $y=0$ (since $(x,y) \in S_2$) and so $x+y = 1$. In all cases, $(x,y) \in S_1$ and it follows that $S_2 \subseteq S_1$. Since the two sets have the same cardinality, we conclude that $S_1 = S_2$. Thus, 
\[
x^n + y^n = 1 \iff x + y = 1.
\]
Now it is easy to verify that this implies the map $f(x) = x^n$ is an automorphism of $\F_q$. By Theorem \ref{Tauto}, we therefore must have $n \equiv 2^\ell \pmod{q-1}$ for some $\ell$. This is impossible since $1 \leq n \leq q-1$ and $n$ is not a power of $2$. We therefore conclude that there exist $x_0, y_0 \in \F_q$ such that $x_0 \ne 1$, $x_0+y_0 \ne 1$, and $x_0^n + y_0^n = 1$. This proves $(1) \implies (2)$.  
\end{proof}

Using Theorem \ref{Tchar2} and \ref{Tchar2M3}, we immediately obtain Theorem \ref{ThmA}. 

\begin{proof}[Proof of Theorem \ref{ThmA}]
The $n=2$ case is Theorem \ref{Tchar2}. Consider now the $n\geq 3$ case. Clearly (b) $\implies$ (a). Suppose (a) holds. If $n > 3$, then using matrices of the form $A \oplus I_{n-3}$ with $A \in M_3(\F_q)$, we conclude that $f$ preserves positivity on $M_3(\F_q)$. Theorem \ref{Tchar2M3} then implies that (c) holds. The (c) $\implies$ (b) implication is Proposition \ref{PFrobenius}. 
\end{proof}

\section{Odd characteristic: $q \equiv 3 \pmod 4$}\label{S3mod4}
We now move to the case where $q \equiv 3 \pmod 4$. We break down the proof of Theorem \ref{ThmB} into several lemmas. The $n=2$ case of the theorem is considerably more difficult to prove as very little structure is available to work with. Most of the results below rely on indirect algebraic/combinatorial arguments to obtain relevant properties of the preservers. When $n \geq 3$, although the result follows from the $n=2$ case, the supplementary structure of $3 \times 3$ matrices can be used to give a shorter proof of the theorem. We first show how to obtain the $n=2$ case, and then explain how a simpler approach can be used to deduce the $n \geq 3$ case.

\begin{lemma}\label{lbijective}
Let $\F_q$ be a finite field with $q \equiv 3 \pmod 4$ and let $f: \F_q \to \F_q$ preserve positivity on $M_2(\F_q).$ Then $f(0) = 0$ and $f$ is bijective on $\F_q^+$ and on $\F_q^-$ (and hence on $\F_q$). 
\end{lemma}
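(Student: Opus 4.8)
The conclusion $f(0)=0$ together with the fact that $f$ restricts to a bijection of $\F_q^+$ onto itself is already furnished by Lemma~\ref{L:Charq}, since $q\equiv 3\pmod 4$. The whole content of the lemma therefore reduces to analyzing $f$ on $\F_q^-$: the plan is to show that $f(\F_q^-)\subseteq \F_q^-$ and that $f$ is injective there. Once this is done, the images $f(\F_q^+\cup\{0\})=\F_q^+\cup\{0\}$ and $f(\F_q^-)\subseteq\F_q^-$ are disjoint, so injectivity on $\F_q^-$ forces $f$ to be a bijection of $\F_q^-$ onto itself by finiteness, and hence a bijection of $\F_q$.

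The basic tool is a one-sided sign-monotonicity coming from $2\times 2$ determinants. For $b\in\F_q^+$ and any $a$ with $a-b\in\F_q^+$, the matrix $\begin{pmatrix} b & b\\ b & a\end{pmatrix}$ is positive definite (its leading minors are $b$ and $b(a-b)$), so its image under $f$ is positive definite and hence $f(a)-f(b)\in\F_q^+$; call this property $(\star)$. Writing $B_a:=\{b\in\F_q^+ : a-b\in\F_q^+\}$, Lemma~\ref{Ldoublesquare} together with $\F_q^-=-\F_q^+$ gives $|B_a|=\tfrac{q+1}{4}$ when $a\in\F_q^-$, while for $s\in\F_q^+$ the analogous set $\{t\in\F_q^+ : s-t\in\F_q^+\}$ has size only $\tfrac{q-3}{4}$. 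To see that $f(\F_q^-)\subseteq\F_q^-$, I fix $a\in\F_q^-$ and apply $(\star)$ to each $b\in B_a$: the values $f(b)$ are $\tfrac{q+1}{4}$ distinct elements of $\F_q^+$, each satisfying $f(a)-f(b)\in\F_q^+$. If $f(a)$ were $0$ this would force $f(b)\in\F_q^-$, which is impossible; and if $f(a)\in\F_q^+$ it would produce $\tfrac{q+1}{4}$ positive elements lying ``below'' $f(a)$, exceeding the maximum $\tfrac{q-3}{4}$. Hence $f(a)\in\F_q^-$.

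For injectivity on $\F_q^-$ I first upgrade the count to an exact description. Since $f(a)\in\F_q^-$, the set $\{t\in\F_q^+ : f(a)-t\in\F_q^+\}$ also has size $\tfrac{q+1}{4}$, so $(\star)$ and the injectivity of $f$ on $\F_q^+$ show that $f$ maps $B_a$ \emph{bijectively} onto it. Consequently, if $a_1\ne a_2\in\F_q^-$ satisfied $f(a_1)=f(a_2)$, the two target sets would coincide, and injectivity of $f$ on $\F_q^+$ would force $B_{a_1}=B_{a_2}$, i.e.\ $\eta(a_1-b)=\eta(a_2-b)$ for every $b\in\F_q^+$ (and trivially for $b=0$). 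The contradiction then comes from a character sum: by \cite[Theorem 5.48]{lidl1997finite} one has $\sum_{b\in\F_q}\eta(a_1-b)\eta(a_2-b)=-1$ (valid since $a_1\ne a_2$), whereas the forced agreement makes the partial sum over $\F_q^+\cup\{0\}$ equal $\tfrac{q+1}{2}$, so the complementary sum over $\F_q^-$ would have to equal $-\tfrac{q+3}{2}$; but that sum has only $\tfrac{q-1}{2}$ terms (two of which vanish, at $b=a_1$ and $b=a_2$, the rest being $\pm 1$), hence is bounded below by $-\tfrac{q-5}{2}>-\tfrac{q+3}{2}$. This rules out $f(a_1)=f(a_2)$ and completes the proof.

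The main obstacle is precisely this injectivity step. The positivity-preserver hypothesis is one-directional ($A$ positive definite $\Rightarrow f[A]$ positive definite, but not conversely), so one cannot detect a collision $f(a_1)=f(a_2)$ by exhibiting a single matrix that is positive definite while its $f$-image degenerates; instead the collision must be converted into the combinatorial statement $B_{a_1}=B_{a_2}$ about out-neighborhoods in the Paley tournament and then excluded by a character-sum estimate. A secondary point requiring care is that all of these estimates remain valid for the smallest fields $q=3,7,\dots$; here both the strict inequality $\tfrac{q+1}{4}>\tfrac{q-3}{4}$ and the crude bound $-\tfrac{q-5}{2}>-\tfrac{q+3}{2}$ hold for every $q\equiv 3\pmod 4$, so no separate small-case analysis is needed.
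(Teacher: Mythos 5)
Your proof is correct, and while your containment step ($f(\F_q^-)\subseteq\F_q^-$) is the same counting argument as the paper's Step 1 — both reduce to the comparison $\frac{q+1}{4}>\frac{q-3}{4}$ via Lemma~\ref{Ldoublesquare}, just packaged with slightly different test matrices — your injectivity step takes a genuinely different and more elementary route. The paper handles a collision $f(-a)=f(-b)$ by choosing $\alpha$ with $f(\alpha)=-y$, building two matrix families, and showing $|M_A\cup M_B|>\frac{q-3}{4}$; this requires bounding the triple intersection $|M_A\cap M_B|$ via Lemma~\ref{lem:triple}, whose proof uses Weil's bound and computer verification for $q\leq 25$, and the paper additionally needs separate hand analyses for $q=3$ and $q=7$. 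You instead upgrade the count to the exact bijection $f(B_a)=\{t\in\F_q^+: f(a)-t\in\F_q^+\}$ (legitimate, since both sets have size $\frac{q+1}{4}$ once $f(a)\in\F_q^-$ is known and $f$ is injective on $\F_q^+$), convert the collision into the out-neighborhood identity $B_{a_1}=B_{a_2}$ in the Paley tournament, and kill it with the two-point sum $\sum_{b\in\F_q}\eta\bigl((b-a_1)(b-a_2)\bigr)=-1$ from \cite[Theorem 5.48]{lidl1997finite} — an exact, elementary evaluation rather than a Weil-type estimate. Your bookkeeping checks out: the partial sum over $\F_q^+\cup\{0\}$ is exactly $\frac{q+1}{2}$ (the terms at $b=a_1,a_2$ vanish but lie in $\F_q^-$, and the $b=0$ term is $\eta(a_1)\eta(a_2)=1$), forcing the sum over $\F_q^-$ to be $-\frac{q+3}{2}$, below its trivial floor $-\frac{q-5}{2}$; and the only small case, $q=3$, is vacuous since $|\F_3^-|=1$. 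What your approach buys is uniformity: no Weil bound, no computer checks, no small-$q$ case split. What the paper's approach buys is reusability: Lemma~\ref{lem:triple} is also needed elsewhere (e.g., in the $q\equiv 1\pmod 4$ analysis), so the authors get Step 2 nearly for free once that lemma is in place.
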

\begin{proof}
By Lemma \ref{L:Charq}, the function $f$ satisfies $f(0) = 0$ and its restriction to $\F_q^+$ is a bijection onto $\F_q^+$. We will conclude the proof by proving that $f(\F_q^-) \subset \F_q^-$ and that $f$ is injective on $\F_q^-$. When $q=3$, this follows immediately by applying $f$ to the positive definite matrix $\begin{pmatrix}
1 & 1 \\
1 & -1
\end{pmatrix}$. We therefore assume below that $q > 3$.
\medskip

\noindent {\bf Step 1: $f(\F_q^-) \subset \F_q^-$}.  Suppose for a contradiction that $f(-b) \in \F_q^+$ for some $b \in \F_q^+$. Since $f$ is bijective from $\F_q^+$ onto itself, $f(-b) = f(a)$ for some $a \in \F_q^+$. Let $y := f(a) = f(-b)$. For $x \in \F_q^+$, consider the matrix 
\[
A(x) = \begin{pmatrix}
    x & a \\
    a & -b
\end{pmatrix}.
\]
Observe that $\det f[A(x)] = f(x)f(-b)-f(a)^2 = y\left(f(x)-y\right)$. Since $y = f(a) \in \F_q^+$, it follows that 
\[
f[A(x)] \textrm{ is positive definite } \iff f(x)-y \in \F_q^+. 
\]
Define
\[
L := \{x \in \F_q^+ : f(x)-y \in \F_q^+\}.
\]
Since $f$ is bijective on $\F_q^+$, by Lemma \ref{Ldoublesquare}, we have $|L| = \frac{q-3}{4}$. Now, let 
\[
M := \{x \in \F_q^+ : -bx - a^2 \in \F_q^+\}. 
\]
Observe that 
\[
A(x) \textrm{ is positive definite } \iff x \in M.
\]
We claim $|M| = \frac{q+1}{4} > \frac{q-3}{4}$. Indeed, 
\begin{align*}
    x \in M &\iff x \in \F_q^+ \textrm{ and } -bx-a^2 \in \F_q^+ 
    \iff x \in \F_q^+ \textrm{ and } x + a^2b^{-1} \in \F_q^-.
\end{align*}
Using Lemma \ref{Ldoublesquare} again, the cardinality of the set
\[
S := \{x \in \F_q^+ : x + a^2b^{-1} \in \F_q^+\}
\]
is $|S| = \frac{q-3}{4}$. Observe that $x+a^2b^{-1} = 0$ implies $x = -a^2b^{-1} \in \F_q^-$. It follows that $M = \F_q^+ \setminus S$ and so 
\[
|M| = \frac{q-1}{2} - \frac{q-3}{4} = \frac{q+1}{4}.
\]
Therefore, there exists $x^* \in M$ such that $x^* \not\in L$. Thus, $A(x^*)$ is positive definite, but $f[A(x^*)]$ is not positive definite, contradicting the assumption of the theorem. We therefore conclude that $f(\F_q^-) \subseteq \F_q^- \cup \{0\}$. Finally, suppose $f(-b) = 0$ for some $b \in \F_q^+$. Taking any $x \in M$, we have that $A(x)$ is positive definite, but 
\[
\det f[A(x)] = \det \begin{pmatrix}
    f(x) & f(a) \\
    f(a) & 0 
\end{pmatrix} = -f(a)^2 \not\in \F_q^+.  
\]
We therefore conclude that $f(-b) \ne 0$ and so $f(\F_q^-) \subseteq \F_q^-$. 

\medskip

\noindent{\bf Step 2:} $f$ is injective on $\F_q^-$. Suppose $f(-a) = f(-b) =: y$ for some $a, b \in \F_q^+$ with $a \ne b$. Notice that $y \in \F_q^-$ by Step 1. Thus $-y \in \F_q^+$ and so there exists $\alpha \in \F_q^+$ such that $f(\alpha) = -y$. Consider the matrices 
\[
A(x) = \begin{pmatrix}
    x & -a \\
    -a & \alpha
\end{pmatrix}, \qquad B(x) = \begin{pmatrix}
    x & -b \\
    -b & \alpha
\end{pmatrix}.
\]
Let 
\begin{align*}
M_A := \{x \in \F_q^+ : \alpha x - a^2 \in \F_q^+\}, \qquad 
M_B := \{x \in \F_q^+ : \alpha x - b^2 \in \F_q^+\}. 
\end{align*}
Clearly, $A(x)$ is positive definite if and only if $x \in M_A$, and $B(x)$ is positive definite if and only if $x \in M_B$. Also, $\det f[A(x)] = \det f[B(x)] = -y (f(x)+y).$ 
Since $-y \in \F_q^+$, the matrices $f[A(x)]$ and $f[B(x)]$ are positive definite if and only if $x \in \F_q^+$ and $f(x)+y \in \F_q^+$. Using Lemma \ref{Ldoublesquare}, 
\[
|\{x \in \F_q^+ : f(x) + y \in \F_q^+\}| = \frac{q-3}{4}.
\]
We will now prove that $|M_A \cup M_B| > \frac{q-3}{4}$. First, notice that 
\[
x \in M_A \iff x,x - a^2\alpha^{-1} \in \F_q^+. 
\]
Thus, by Lemma \ref{Ldoublesquare}, we have $|M_A| = \frac{q-3}{4}$. Similarly, $|M_B| = \frac{q-3}{4}$. To prove that $|M_A \cup M_B| > \frac{q-3}{4}$, it therefore suffices to show $|M_A \cap M_B| < \frac{q-3}{4}$. Let $s := a^2 \alpha^{-1}$ and $t := b^2 \alpha^{-1}$. Then $|M_A \cap M_B|$ counts the number of $x \in \F_q$ such that $x \in \F_q^+$, $x-s \in \F_q^+$, and $x-t \in \F_q^+$. By Lemma \ref{lem:triple}, we have $|M_A \cap M_B| < \frac{q-3}{4}$ whenever $q\geq 11$. The $q=3$ case was already addressed at the beginning of the proof so the only case left is when $q=7$. In that case, $\F_7^+ = \{1,2,4\}$ and $x, s, t \in \F_7^+$ must be distinct. Examining all $6$ possibilities, we always have $x-s \not\in \F_7^+$ or $x-t \not\in \F_7^+$. It follows that $|M_A \cap M_B| = 0$ and the argument holds for $q=7$ as well.

This proves $|M_A \cup M_B| > \frac{q-3}{4}$. As a consequence, there exists $x^* \in M_A \cup M_B$ such that $f(x^*) + y \not\in \F_q^+$. For such an $x^*$ we have either $A(x^*)$ is positive definite, but $f[(A(x^*)]$ is not; or $B(x^*)$ is positive definite, but $f[B(x^*)]$ is not. This contradicts our assumption and therefore proves that $f$ is bijective on $\F_q^-$. This concludes the proof. 
\end{proof}

We next show a positivity preserver $f$ over $M_2(\F_q)$ must be an odd function, and $f(x^2) = f(x)^2$ for all $x\in \F_q$.

\begin{lemma}\label{L_odd_and_square}
Let $\F_q$ be a finite field with $q \equiv 3 \pmod 4$. Suppose $f: \F_q \to \F_q$ preserves positivity on $M_2(\F_q)$ and $f(1) = 1$. Then 
\begin{enumerate}
    \item $f(-x)=-f(x)$ for all $x\in \F_q$, and
    \item $f(x^2) = f(x)^2$ for all $x \in \F_q$.    
\end{enumerate}
\end{lemma}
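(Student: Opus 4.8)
The plan is to distill the positivity-preservation hypothesis into a single comparison identity between translates of $\F_q^+$, and then to exploit the bijectivity of $f$ (Lemma~\ref{lbijective}) to promote one-sided implications into exact set equalities. Concretely, for each $w\in\F_q$ I would feed the matrices $\left(\begin{smallmatrix}1 & w\\ w & u\end{smallmatrix}\right)$ into $f$: using $f(1)=1$, such a matrix is positive definite exactly when $u-w^2\in\F_q^+$, while its image is positive definite exactly when $f(u)-f(w)^2\in\F_q^+$. Thus $u\in w^2+\F_q^+$ forces $f(u)\in f(w)^2+\F_q^+$, i.e.\ $f(w^2+\F_q^+)\subseteq f(w)^2+\F_q^+$. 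Since $f$ is a bijection and both sets have size $\tfrac{q-1}{2}$, this inclusion is an equality, yielding the key identity
\[
f(w^2+\F_q^+)=f(w)^2+\F_q^+,\qquad w\in\F_q,
\]
which drives both parts.

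A small auxiliary fact I would isolate first is that a translate of $\F_q^+$ determines its translation amount: $a+\F_q^+=b+\F_q^+$ implies $a=b$. (Otherwise $c:=b-a\neq 0$ would be a nonzero ``period'' of $\F_q^+$, so the order-$p$ subgroup $\langle c\rangle$ of $(\F_q,+)$ would stabilize $\F_q^+$, forcing $p\mid|\F_q^+|=\tfrac{p^k-1}{2}$, which is absurd.) For oddness, I apply the key identity to $w$ and to $-w$; as $w^2=(-w)^2$ the left-hand sides coincide, so $f(w)^2+\F_q^+=f(-w)^2+\F_q^+$ and the auxiliary fact gives $f(w)^2=f(-w)^2$. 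Hence $f(-w)=\pm f(w)$, and since $f$ maps $\F_q^+$ into $\F_q^+$ and $\F_q^-$ into $\F_q^-$ (Lemma~\ref{lbijective}), the values $f(w)$ and $f(-w)$ carry opposite signs, forcing $f(-w)=-f(w)$.

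For the square identity I would rerun the same scheme on the general matrices $\left(\begin{smallmatrix}a & w\\ w & b\end{smallmatrix}\right)$ with $a\in\F_q^+$ fixed: positive definiteness amounts to $b\in a^{-1}w^2+\F_q^+$, and preservation together with the cardinality/bijectivity argument gives $f(a^{-1}w^2+\F_q^+)=f(a)^{-1}f(w)^2+\F_q^+$. Because $a^{-1}\in\F_q^+$, the point $a^{-1}w^2$ is again a perfect square, so its left-hand side is \emph{also} computable via the key identity; comparing the two expressions and invoking translation-uniqueness yields $f(\sqrt{a^{-1}}\,w)^2=f(a)^{-1}f(w)^2$. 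Specializing $w=1$ (and $f(1)=1$) gives $f(t^{-2})=f(t)^{-2}$ for $t\in\F_q^+$, which feeds back to produce the multiplicative relation $f(tw)^2=f(t)^2f(w)^2$ for all $t\in\F_q^+$ and $w\in\F_q$. Taking $w=t$ gives $f(t^2)^2=f(t)^4$, whence $f(t^2)=f(t)^2$ (the sign being forced since both sides lie in $\F_q^+$); oddness then extends this from $\F_q^+$ to all of $\F_q$.

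The main obstacle I anticipate is not any single computation but the passage from the ``positive definite $\Rightarrow$ positive definite'' implications, which a priori are only one-directional, to exact set equalities; this is precisely where the bijectivity of $f$ on $\F_q^\pm$, the count $|w^2+\F_q^+|=\tfrac{q-1}{2}$, and the elementary-but-easy-to-overlook translation-uniqueness lemma do the real work. A secondary point requiring care is the bookkeeping of square roots and inverses in the multiplicative relation, where one must repeatedly use that squaring is a bijection of $\F_q^+$ and that each positive element has a unique positive square root when $q\equiv 3\pmod 4$ (Proposition~\ref{Pdecomp}).
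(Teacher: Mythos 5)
Your proposal is correct, and it takes a genuinely different route from the paper's. The paper proves each part by a three-matrix contradiction argument: it sets $z=f(-x)^2/f(x)$ (resp.\ $z=f(x)^2/f(x^2)$), pulls $z$ back through $f$ to a preimage $y$, uses a determinant-zero computation to show one matrix is not positive definite, uses a second, positive definite matrix to force $f(x)^2-f(-x)^2\in\F_q^+$, and finally reaches a contradiction from a singular matrix $C$ whose image $f[C]$ would be positive definite --- impossible because the entrywise action of the bijection $f$ (Lemma~\ref{lbijective}) maps the finite set of positive definite matrices onto itself. You instead distill the same $2\times 2$ tests into the set identity $f(w^2+\F_q^+)=f(w)^2+\F_q^+$ and its weighted variant $f(a^{-1}w^2+\F_q^+)=f(a)^{-1}f(w)^2+\F_q^+$, upgrading the one-directional implications to equalities via injectivity and the count $\lvert c+\F_q^+\rvert=\frac{q-1}{2}$, and then conclude with your translation-uniqueness observation, whose proof is sound since $q-1\equiv -1\pmod p$ gives $p\nmid\frac{q-1}{2}$. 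All the supporting steps check out: the sign argument ($f$ preserves $\F_q^{+}$ and $\F_q^{-}$ by Lemma~\ref{lbijective}, so $f(-w)=\pm f(w)$ with opposite signs forces $f(-w)=-f(w)$), the comparison yielding $f(\sqrt{a^{-1}}\,w)^2=f(a)^{-1}f(w)^2$, the chain $f(t^{-2})=f(t)^{-2}\Rightarrow f(tw)^2=f(t)^2f(w)^2\Rightarrow f(t^2)=\pm f(t)^2$, and the sign determination using $f(t^2),f(t)^2\in\F_q^+$ and $-1\notin\F_q^+$. What your route buys: a single mechanism handling both parts, an oddness proof that avoids the paper's preimage-and-singular-matrix contradiction, and strictly more information (the multiplicativity $f(tw)^2=f(t)^2f(w)^2$ for $t\in\F_q^+$, $w\in\F_q$, which could also streamline the case analysis in Theorem~\ref{ThmPres3mod4}); moreover, your counting-of-translates device is the same philosophy the paper itself deploys only later, in the $q\equiv 1\pmod 4$ sections, via Paley graph neighborhoods (cf.\ Lemma~\ref{lem:local} and Lemma~\ref{LneighIncl}). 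One small streamlining worth noting: in your weighted identity you may simply take $w=a$ to obtain $f(\sqrt{a})^2=f(a)^{-1}f(a)^2=f(a)$ for $a\in\F_q^+$, which is part (2) immediately, bypassing the $w=1$ specialization and the multiplicative relation altogether.
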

\begin{proof}
By Lemma \ref{lbijective}, $f$ is bijective on $\F_q$ and $f$ is bijective on $\F_q^+$. We use similar ideas to prove both of the statements.
\begin{enumerate}
    \item  Fix $x \in \F_q^+$. To show $f(-x)=-f(x)$, it is suffices to show that $f(x)^2=f(-x)^2$. Let $z=f(-x)^2/f(x)$ and let $y$ be the preimage of $z$ under $f$. Then we know that both $y$ and $z$ are in $\F_q^+$. Note that if $x=y$, then we are done since $z=f(y)=f(x)$ implies that $f(x)^2=f(-x)^2$. Next assume that $x \neq y$. Consider $
A = \begin{pmatrix}
y & -x \\
-x & x 
\end{pmatrix}.$ By definition, $\det f[A]=zf(x)-f(-x)^2=0$. Thus, $A$ is not positive definite, that is, $\det A=x(y-x) \in \F_q^- \cup \{0\}$. Since $x \in \F_q^+$ and $x\neq y$, we must have $x-y \in \F_q^+$. Next, consider the matrix 
$
B = \begin{pmatrix}
x & y \\
y & y
\end{pmatrix}.
$ The matrix $B$ is positive definite since $x \in \F_q^+$ and $\det B=y(x-y)\in \F_q^+$. Thus, $f[B]$ is also positive definite. In particular, $\det f[B]=z(f(x)-z)\in \F_q^+$. It follows that $f(x)^2-f(-x)^2 \in \F_q^+$. Finally, consider 
$
C = \begin{pmatrix}
x & -x \\
-x & x 
\end{pmatrix}.
$ The matrix $C$ is singular, while $f[C]$ is positive definite. But since $f$ is bijective on $\F_q$, its entrywise action on $M_2(\F_q)$ is also bijective and maps the set of positive definite matrices to itself. As a consequence, the singular matrix $C$ cannot be mapped to a positive definite matrix by $f$, a contradiction.

\item In view of (1), it suffices to prove the result for $x \in \F_q^+$. Fix $x \in \F_q^+$, let $z=f(x)^2/f(x^2)$, and let $y$ be the preimage of $z$ under $f$. Then we know that both $y$ and $z$ are in $\F_q^+$. If $y=1$, then we are done. Assume $y \neq 1$ and consider 
$
A = \begin{pmatrix}
x^2 & x \\
x & y 
\end{pmatrix}.
$ By definition, $\det f[A]=f(x^2)z-f(x)^2=0$. Thus, $A$ is not positive definite, that is, $\det A=x^2(y-1) \in \F_q^- \cup \{0\}$. Since $y \neq 1$, we must have $1-y \in \F_q^+$. Next, consider the matrix 
$
B = \begin{pmatrix}
1 & y \\
y & y
\end{pmatrix}.
$ The matrix $B$ is positive definite since $\det B=y(1-y)\in \F_q^+$. Thus, $f[B]$ is also positive definite. In particular, $\det f[B]=z(1-z)\in \F_q^+$ and thus $1-z \in \F_q^+$. It follows that $f(x^2)-f(x)^2 \in \F_q^+$. Finally, consider 
$
C = \begin{pmatrix}
x^2 & x \\
x & 1 
\end{pmatrix}.
$ The matrix $C$ is singular, while $f[C]$ is positive definite, a contradiction. \qedhere
\end{enumerate}
\end{proof}

With the previous two preliminary results in hand, we can now prove the main result of this section, which immediately implies Theorem \ref{ThmB}. 

\begin{theorem}\label{ThmPres3mod4}
    Let $\F_q$ be a finite field with $q \equiv 3 \pmod 4$ and let $f: \F_q \to \F_q$ be such that $f$ preserves positivity on $M_2(\F_q)$, and $f(1)=1.$ Then $f(x) = x^{p^\ell}$ for some $\ell = 0,1,\ldots,k-1$. 
\end{theorem}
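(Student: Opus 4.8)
The plan is to reduce Theorem~\ref{ThmPres3mod4} to Carlitz's theorem (Theorem~\ref{Tcarlitz}) by verifying its three hypotheses for our preserver $f$. Carlitz requires $f(0)=0$, $f(1)=1$, and the sign condition $\eta(f(a)-f(b))=\eta(a-b)$ for all $a,b\in\F_q$; once these hold, we immediately conclude $f(x)=x^{p^\ell}$ for some $\ell$, which is exactly the desired form. The hypotheses $f(0)=0$ and $f(1)=1$ are already in hand: the former from Lemma~\ref{L:Charq}, the latter by assumption. So the entire content of the proof is establishing the multiplicative-looking sign condition $\eta(f(a)-f(b))=\eta(a-b)$.

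The key observation is that, since $q\equiv 3\pmod 4$ we have $\F_q^-=-\F_q^+$ (Proposition~\ref{Pdecomp}), so the sign condition splits neatly: it says $a-b\in\F_q^+\iff f(a)-f(b)\in\F_q^+$ and $a-b\in\F_q^-\iff f(a)-f(b)\in\F_q^-$, and by oddness of $f$ (Lemma~\ref{L_odd_and_square}(1)) these two are equivalent, so it suffices to prove one implication in one direction. My plan is to extract the sign condition from the positive definiteness of well-chosen $2\times 2$ matrices, in the same spirit as the lemmas already proved. The natural device is the matrix
\[
A=\begin{pmatrix} a-b & \ast \\ \ast & \ast\end{pmatrix}
\]
built so that $\det A$ has the sign of $a-b$; applying $f$ and using $f(x^2)=f(x)^2$ (Lemma~\ref{L_odd_and_square}(2)) together with oddness should let me compare $\eta(f(a)-f(b))$ to $\eta(a-b)$. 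Concretely, I would first reduce to the case where we compare $f(a)$ with $f(b)$ for $a,b$ chosen conveniently (e.g.\ translating so one argument is a perfect square, since $f$ respects squares), and then feed a matrix whose determinant is (a square multiple of) $a-b$ into the preserver.

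The main obstacle I anticipate is handling differences $a-b$ where neither $a$ nor $b$ need be positive, since Lemmas~\ref{lem:sq} and the matrix constructions so far crucially place a positive element in a corner of the test matrix. The cleanest route around this is to use $f(x^2)=f(x)^2$ to always manufacture a positive diagonal entry: for any target pair, set up a matrix of the form $\begin{pmatrix} s^2 & sa \\ sa & s^2 t\end{pmatrix}$-type determinants so that positivity of the matrix is governed by $\eta(t-a^2/s^2)$ while the $f$-image determinant is governed by $\eta(f(t)-f(a)^2/f(s)^2)=\eta(f(t)-f(a^2 s^{-2}))$ after invoking the square-preservation identity. Balancing these two and using bijectivity of $f$ on $\F_q^+$ and on $\F_q^-$ (Lemma~\ref{lbijective}) to rule out degenerate equalities should close the gap. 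Once every pair $(a,b)$ is covered, the sign condition holds universally, Carlitz applies, and the theorem follows. I would finish by noting that the positive-multiple refinement in Theorem~\ref{ThmB} comes from dropping the normalization $f(1)=1$: a general preserver equals $f(1)$ times a normalized one, and $f(1)\in\F_q^+$ by Lemma~\ref{lem:+}.
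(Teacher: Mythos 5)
Your overall architecture — reduce to Carlitz's theorem by establishing $f(0)=0$, $f(1)=1$, and the sign condition $\eta(f(a)-f(b))=\eta(a-b)$, using the bijectivity, oddness, and square-preservation lemmas — is exactly the paper's strategy, and your easy cases (both entries positive via $\begin{pmatrix} b & b\\ b & a\end{pmatrix}$, both negative via oddness, and the reduction of the $\eta(a-b)=-1$ direction to the $+1$ direction by swapping $a$ and $b$, since $\eta(-1)=-1$) all go through. But the one step where your plan departs from the paper is precisely where it breaks. Your ``balancing'' device asserts that for the test matrix $\begin{pmatrix} s^2 & sa\\ sa & s^2t\end{pmatrix}$ the positivity of the $f$-image is governed by $\eta\bigl(f(t)-f(a)^2/f(s)^2\bigr)=\eta\bigl(f(t)-f(a^2s^{-2})\bigr)$. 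The second equality requires $f(a^2s^{-2})=f(a^2)/f(s^2)$, i.e.\ multiplicativity of $f$ on squares. Nothing proved so far gives this: Lemma~\ref{L_odd_and_square} yields only $f(x^2)=f(x)^2$ and $f(-x)=-f(x)$, and multiplicativity of $f$ is essentially equivalent to the theorem's conclusion, so assuming it is circular. Concretely, $\det f[A]=f(s)^2f(s^2t)-f(s^2a^2)$, and without multiplicativity you cannot factor $f(s)^2$ out of $f(s^2a^2)$ to compare with $f(t)-f(a^2s^{-2})$; the only usable specialization is $s=1$, which recovers the already-known case where one entry is a square. Similarly, your suggestion to ``translate so one argument is a perfect square'' is unavailable, since $f$ is not known to interact with additive translations at all.

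What is genuinely missing is the paper's handling of the mixed case $\eta(a)=1$, $\eta(b)=-1$ with $a-b\in\F_q^+$: there one factors $a^2-b^2=(a-b)(a+b)$ and splits on the sign of $a+b$. If $b=-a$, oddness gives $\eta(f(a)-f(b))=\eta(2f(a))=\eta(2)=1$ directly. If $\eta(a+b)=1$, then $\eta(a^2-b^2)=1$; applying the already-settled cases to the pairs $(a,-b)$ and $(a^2,b^2)$, together with $f(x^2)=f(x)^2$, gives $\eta(f(a)+f(b))=1$ and $\eta(f(a)^2-f(b)^2)=1$, whence $\eta(f(a)-f(b))=1$; the case $\eta(-a-b)=1$ is symmetric. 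This multiplicative factorization through $a^2-b^2$ is the idea your sketch lacks, and without it (or an independent proof of multiplicativity) the argument does not close. Your final remark — that the unnormalized statement follows by writing $f$ as $f(1)$ times a normalized preserver with $f(1)\in\F_q^+$ — is correct and matches the paper.
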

\begin{proof} 
By Theorem~\ref{Tcarlitz}, it suffices show that $\eta(a-b) = \eta(f(a)-f(b))$ for all $a,b \in \F_q$. This is clear when $a=0$ or $b=0$ since by Lemma~\ref{lbijective}, we have $\eta(c) = \eta(f(c))$ for all $c\in \F_q$. Also, notice that if $\eta(a-b) = -1$, then $\eta(b-a) = 1$ and $\eta(f(a)-f(b)) = -\eta(f(b) - f(a))$. Thus, it suffices to show that if $\eta(a-b)=1$, then $\eta(f(a)-f(b))=1$. We consider the following three cases. In the following discussion we use the fact that $f(-a) = -f(a)$ for all $a\in \F_q$ from Lemma~\ref{L_odd_and_square} (1).

\noindent{\bf Case 1:} $\eta(a) = \pm1$ and $\eta(b) = 1$. Consider the positive definite matrix $A = \begin{pmatrix}
            b & b\\
            b & a
        \end{pmatrix}$. Then $f[A] = \begin{pmatrix}
            f(b) & f(b)\\
            f(b) & f(a)
        \end{pmatrix}$ is also positive definite, which implies that $\eta(f(a)-f(b)) = 1$.

\noindent{\bf Case 2:} $\eta(a) = -1$ and $\eta(b) = -1$. Consider the positive definite matrix $A = \begin{pmatrix}
            -a & -a\\
            -a & -b
        \end{pmatrix}$. Since $f$ is odd,  $f[A] =  \begin{pmatrix}
            -f(a) & -f(a)\\
            -f(a) & -f(b)
        \end{pmatrix}$ is also positive definite, which implies that $\eta(f(a)-f(b)) = 1$.

\noindent{\bf Case 3:} $\eta(a) = 1$ and $\eta(b) = -1$. Here we use Lemma~\ref{L_odd_and_square} (2) which asserts that $f$ satisfies $f(x^2)=f(x)^2$ for all $x\in \F_q.$ Now, consider $a+b$. If $b = -a$, then $1 = \eta(a-b) = \eta(2a) = \eta(a)\eta(2) = \eta(2)$. Hence, since $f$ is odd, we get $$\eta(f(a)-f(b)) = \eta(f(a)-f(-a)) = \eta(2f(a)) = \eta(2) \eta(f(a)) = \eta(2) = 1.$$ If in addition $\eta(a+b) = 1$, then $\eta(a^2-b^2) = \eta((a+b)(a-b)) = 1$. By using Case $1$ we have $1 = \eta(f(a) - f(-b)) = \eta(f(a)+f(b))$ and $1 = \eta(f(a^2)-f(b^2)) = \eta(f(a)^2-f(b)^2)$. Thus, $\eta(f(a)-f(b)) = 1$. Lastly, if $\eta(-a-b) = 1$, then $\eta(b^2-a^2) = \eta((-a-b)(a-b)) = 1$. By using cases $1$ and $2$ we have $1 = \eta(f(-a)-f(b)) = \eta(-f(a)-f(b))$ and $$1 = \eta(f(b^2)-f(a^2)) = \eta(f(b)^2-f(a)^2) = \eta((-f(a)-f(b))(f(a)-f(b))).$$ Thus, $\eta(f(a)-f(b)) = 1$.
\end{proof}

With the above results in hand, we can now prove Theorem \ref{ThmB}. 
\begin{proof}[Proof of Theorem \ref{ThmB}]
Using Lemma~\ref{lbijective}, we assume without loss of generality that $f(1) = 1$. Suppose $(4)$ holds. Using the fact that $(a+b)^{p^\ell} = a^{p^\ell} + b^{p^\ell}$ for all $a, b \in \F_q$, we have 
\[
\eta(a^{p^\ell} - b^{p^\ell}) = \eta((a-b)^{p^\ell}) = \eta(a-b)^{p^\ell} = \eta(a-b).
\]
This proves $(4) \implies (3)$. The converse implication is Theorem~\ref{Tcarlitz}. Thus $(3) \iff (4)$. 

That $(4) \implies (2)$ follows from Proposition \ref{PFrobenius} and $(2) \implies (1)$ is trivial. To prove $(1) \implies (4)$, it suffices to assume $n=2$. If $n > 2$, then one can embed any $2 \times 2$ positive definite matrix $A$ into $M_n(\F_q)$ using a block matrix $A \oplus I_{n-2}$, where $I_{n-2}$ denotes the $(n-2)$-dimensional identity matrix. We therefore assume that $n=2$ and the result follows by Theorem \ref{ThmPres3mod4}.
\end{proof}

As explained at the beginning of Section \ref{S3mod4}, the $(1) \implies (4)$ implication of Theorem \ref{ThmB} is easier to prove under the assumption that $f$ preserves positivity on $M_3(\F_q)$. In that case, the larger test set of $3 \times 3$ matrices makes it easier to deduce the properties of the preservers. We therefore provide a simpler proof of Theorem \ref{ThmB} below under the assumption that $n \geq 3$ in (1) and (2). The proof avoids the use of Weil's bound, as well as Lemma \ref{L_odd_and_square} and Theorem \ref{ThmPres3mod4}. 

\begin{theorem}[Special Case of Theorem \ref{ThmB} for $n \geq 3$]\label{ThmBforn3}
Let $q \equiv 3 \pmod 4$ and let $f: \F_q \to \F_q$. Then the following are equivalent: 
\begin{enumerate}
\item $f$ preserves positivity on $M_n(\F_q)$ for some $n \geq 3$. 
\item $f$ preserves positivity on $M_n(\F_q)$ for all $n \geq 3$. 
\item $f(0) = 0$ and $\eta(f(a)-f(b)) = \eta(a-b)$ for all $a, b \in \F_q$.
\item $f$ is a positive multiple of a field automorphism of $\F_q$, i.e., there exist $c \in \F_q^+$ and $0 \leq \ell \leq k-1$ such that $f(x) = cx^{p^\ell}$ for all $x \in \F_q$. 
\end{enumerate}
\end{theorem}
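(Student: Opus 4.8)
The plan is to deduce everything from Carlitz's theorem (Theorem~\ref{Tcarlitz}). The equivalences $(3)\Leftrightarrow(4)$ and the implications $(4)\Rightarrow(2)\Rightarrow(1)$ are handled exactly as in the proof of Theorem~\ref{ThmB} (via Theorem~\ref{Tcarlitz}, Proposition~\ref{PFrobenius}, and the block embeddings $A\mapsto A\oplus I_{n-2}$), so the whole content is to prove $(1)\Rightarrow(3)$: namely that preserving positivity on some $M_n(\F_q)$, $n\ge 3$, forces $f(0)=0$ and $\eta(f(a)-f(b))=\eta(a-b)$ for all $a,b$. First I would pass to $n=3$ by restricting to matrices $A\oplus(1)$, and to $n=2$ by restricting to $A\oplus I_1$; Lemma~\ref{L:Charq} then gives $f(0)=0$ and that $f$ bijects $\F_q^+$. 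After rescaling $f$ by $f(1)^{-1}\in\F_q^+$ (which preserves both the hypothesis and the sought conclusion) I may assume $f(1)=1$.

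The engine is a Schur-complement reduction in dimension $3$. For $c\in\F_q^+$ write $\gamma=\sqrt c$ (well defined by Proposition~\ref{Pdecomp}) and consider
\[
A=\begin{pmatrix}1 & \gamma & \gamma\\ \gamma & c+X & c+Z\\ \gamma & c+Z & c+Y\end{pmatrix}.
\]
Since the $(1,1)$ pivot is $1$, one checks that $A$ is positive definite iff its $2\times2$ Schur complement $\left(\begin{smallmatrix}X & Z\\ Z & Y\end{smallmatrix}\right)$ is, and the identical computation applied to $f[A]$ (whose corner is $f(1)=1$) shows $f[A]$ is positive definite iff $\left(\begin{smallmatrix}g_c(X) & g_c(Z)\\ g_c(Z) & g_c(Y)\end{smallmatrix}\right)$ is, where $g_c(w):=f(c+w)-f(\gamma)^2$. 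Hence \emph{each translate} $g_c$ preserves positivity on $M_2(\F_q)$. Lemma~\ref{L:Charq} applied to $g_c$ forces $g_c(0)=0$, i.e.\ $f(c)=f(\sqrt c)^2$, and I would upgrade this to $f(x^2)=f(x)^2$ for all $x$ by a short counting step: combining the $2\times2$ relation ``$y-z^2\in\F_q^+\Rightarrow f(y)-f(z)^2\in\F_q^+$'' with the fact that $g_{z^2}$ bijects $\F_q^+$ gives $D+\F_q^+=\F_q^+$ for $D:=f(z^2)-f(z)^2$; since $\F_q^+$ cannot be a union of cosets of the order-$p$ subgroup $\langle D\rangle$ (as $p\nmid (q-1)/2$), one gets $D=0$.

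With $f(x^2)=f(x)^2$ in hand, $f$ is automatically injective on $\F_q^-$ (distinct elements of $\F_q^-$ have distinct squares, hence distinct $f(x)^2=f(x^2)$), which is exactly what the $n=2$ treatment extracted from the Weil-based triple estimate of Lemma~\ref{lem:triple}. To locate $f(\F_q^-)$ I would invoke only the argument of Step~1 of Lemma~\ref{lbijective}, which shows $f(\F_q^-)\subseteq\F_q^-$ using the pair count of Lemma~\ref{Ldoublesquare} alone (not Lemma~\ref{lem:triple}); together with injectivity this makes $f$ a bijection of $\F_q$ preserving $\F_q^+,\F_q^-,\{0\}$. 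Oddness is then immediate: $f(-x)^2=f((-x)^2)=f(x^2)=f(x)^2$ gives $f(-x)=\pm f(x)$, and as $f(x)$ and $f(-x)$ lie in opposite sign classes we must have $f(-x)=-f(x)$.

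Finally I would assemble $\eta(f(a)-f(b))=\eta(a-b)$ without any case analysis: applying Step~1 of Lemma~\ref{lbijective} to each $g_c$ yields $g_c(\F_q^-)\subseteq\F_q^-$, so $\eta(f(c+w)-f(c))=\eta(w)$ for every $c\in\F_q^+$ and every $w$; thus the identity holds whenever the ``anchor'' $b$ is a nonzero square. Oddness transports this to a negative anchor, and a zero anchor reduces to $\eta(f(a))=\eta(a)$; since every $b$ lies in $\F_q^+\cup\F_q^-\cup\{0\}$, choosing the anchor $c=b$ covers all pairs. Carlitz's theorem (Theorem~\ref{Tcarlitz}) then gives $f(x)=x^{p^\ell}$, and undoing the scaling yields $(4)$. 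The main obstacle throughout is the passage from squares to arbitrary elements—pinning down $f$ on $\F_q^-$ and proving oddness—and the whole point of working in dimension $\ge 3$ is that the Schur-complement identity delivers $f(x^2)=f(x)^2$ essentially for free, which is precisely what lets one bypass the delicate triple character-sum estimate required in the $n=2$ case.
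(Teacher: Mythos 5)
Your proposal is correct, and it takes a genuinely different route from the paper's proof of this special case. The paper's argument for $(1)\Rightarrow(3)$ is a short direct case analysis: after getting $f(0)=0$ and bijectivity on $\F_q^+$ from Lemma~\ref{L:Charq}, it anchors $\eta(f(a)-f(b))=\eta(a-b)$ by splitting on $\eta(b)\in\{0,1,-1\}$ and testing three explicit $3\times 3$ matrices (e.g.\ $\bigl(\begin{smallmatrix}c&c&c\\ c&b&b\\ c&b&a\end{smallmatrix}\bigr)$, whose determinant factors as $f(c)(f(b)-f(c))(f(a)-f(b))$), with a pigeonhole choice of the auxiliary element $c$ in the negative-anchor case. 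You instead use the bordered matrix with $(1,1)$ pivot $1$ to show that every translate $g_c(w)=f(c+w)-f(\sqrt{c})^2$, $c\in\F_q^+$, is itself a positivity preserver on $M_2(\F_q)$ --- a structural fact the paper never isolates --- and then transport the paper's $2\times 2$ machinery (Lemma~\ref{L:Charq} and Step~1 of Lemma~\ref{lbijective}) to the whole family $\{g_c\}$; I verified the Schur-complement computation (the leading minors of your $A$ are $1$, $X$, $XY-Z^2$, and likewise for $f[A]$ with $g_c$ in place of $f$), the coset-counting step ($D+\F_q^+=\F_q^+$ forces $\F_q^+$ to be a union of $\langle D\rangle$-cosets, impossible as $p\nmid(q-1)/2$), and the final anchor-at-$b$ assembly, and all are sound. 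What your route buys: it derives $f(x^2)=f(x)^2$, oddness, and injectivity on $\F_q^-$ by elementary counting, thereby recovering the content of Lemma~\ref{L_odd_and_square} and Step~2 of Lemma~\ref{lbijective} without the Weil-bound estimate of Lemma~\ref{lem:triple}, and the ``translates of a preserver are preservers'' observation is of independent interest (it is the finite-field analogue of conditioning arguments in the classical real setting). What the paper's route buys: it is shorter and even more elementary --- it needs no character-sum counts at all (your use of Step~1 of Lemma~\ref{lbijective} imports the pair count of Lemma~\ref{Ldoublesquare}), no oddness, and no square-compatibility, since the auxiliary-element trick handles negative anchors directly. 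Both arguments funnel into Carlitz's theorem (Theorem~\ref{Tcarlitz}) in the same way, and your rescaling by $f(1)^{-1}\in\F_q^+$ at the start and its reversal at the end are legitimate.
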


\begin{proof}
We only prove $(1) \implies (3)$ since the other implications are proved as in the proof of Theorem \ref{ThmB}. Without loss of generality, we assume $f(1)=1$.  Suppose $(1)$ holds. Without loss of generality, we can assume $n=3$ (the general case follows by embedding $3 \times 3$ positive definite matrices into larger matrices of the form $A \oplus I_{n-3}$). By Lemma \ref{L:Charq} (2) we have $f(0) = 0$. 

If $\eta(a-b) = 0$, then we are done. Let us assume that $\eta(a-b) = 1$ and consider the following three cases. 
\begin{description}
\item[Case 1] Assume $b = 0$. Then $\eta(a) = 1$, and therefore by using Lemma \ref{L:Charq} (1) we have $\eta(f(a) - f(0)) = \eta(f(a)) = 1$. 

\item[Case 2] Assume $\eta(b) = 1$. Then the matrix 
\begin{align*}
    A = \begin{pmatrix}
        b & b & 0\\
        b & a & 0\\
        0 & 0 & 1
    \end{pmatrix}
\end{align*}
is positive definite. Hence, under the map $f$, we have $\det f[A] = f(b) (f(a)-f(b)) \in \F_q^+$. Thus, $\eta(f(a)-f(b)) = 1$ since $\eta(f(b)) = 1$.
\item[Case 3] Assume $\eta(b) = -1$. Consider the linear map $g: \F_q \to \F_q$ given by $g(x) = x+b$. Note that $g$ is bijective, $g(0) = b$ and $g(-b) = 0$. Thus, there must exist $x_0 \in \F_q$ such that $\eta(x_0) = -1$ and $\eta(g(x_0)) = 1$. Let $x_0 = -c$ where $\eta(c) = 1$, and hence $\eta(b-c) =1$. Thus, the matrix
\begin{align*}
    A = \begin{pmatrix}
        c & c & c\\
        c & b & b\\
        c & b & a
    \end{pmatrix}
\end{align*}
is positive definite. Hence, under the map $f$, we have $\det f[A] = f(c)(f(b)-f(c))(f(a)-f(b))$. We know that $\eta(f(c)) = 1$, and using the previous case applied with $a' = b$ and $b' = c$, we conclude that $\eta(f(b)-f(c)) = 1$. Thus, $\eta(f(a)-f(b)) = 1$. 
\end{description}
On the other hand, if $\eta(a-b) = -1$, then $\eta(b-a) = 1$. Hence, by the above argument $\eta(f(b) - f(a)) = 1$. That implies $\eta(f(a)-f(b)) = -1$. Thus, $(1) \implies (3)$ and the result follows.
\end{proof}

\section{Odd characteristic: $q \equiv 1 \pmod 4$--Reductions to injectivity on $\F_q^+$}\label{S1mod4}

Throughout the next two sections, we assume $q \equiv 1 \pmod 4$ is a prime power. We adopt the combinatorial viewpoint of identifying the elements of $\F_q$ with the vertices of the Paley graph $P(q)$; see Section~\ref{SSPaley} for basic properties of Paley graphs.

The main purpose of the section is to prove Proposition~\ref{prop:main}, namely, showing that injectivity of a preserver $f$ on $\F_q^+$ together with $f(1)=1$ force $f$ to be a field automorphism. We also discuss how Proposition~\ref{prop:main} leads to the characterization of positivity preservers over $M_3(\F_q)$. In a similar spirit as in Section~\ref{S3mod4}, we also present an alternative simpler proof of this result at the end of the section.

\subsection{Paley graphs}\label{SSPaley}
Paley graphs have been well-studied in the literature. We begin by recalling their definition and some of their basic properties.
\begin{definition}\label{Def:Paley}
The Paley graph $P(q)$ is the graph whose vertices are the elements of $\F_q$ and where two vertices $a, b \in \F_q$ are adjacent if and only $a-b \in \F_q^+$.
\end{definition}

\noindent Given a graph $G = (V, E)$ and a vertex $v \in V$, we denote the set of vertices adjacent to $v$ (i.e., the neighborhood of $v$) by $N(v)$. 
\begin{lemma}[{\cite[Proposition 9.1.1]{brouwer2011spectra}}]\label{Lemma:Paley_is_SRG}
The Paley graph $P(q)$ is a strongly regular graph with parameters $(q, \frac{q-1}{2}, \frac{q-5}{4}, \frac{q-1}{4})$. In other words,
 \begin{enumerate}
    \item[$(1)$] For any vertex $v$, we have $|N(v)| = \frac{q-1}{2}$.
    \item[$(2)$] For any two adjacent vertices $u,v$, we have  $|N(u)\cap N(v)| =\frac{q-5}{4}$.
    \item[$(3)$] For any two non-adjacent vertices $u,v$, we have $|N(u) \cap N(v)| = \frac{q-1}{4}$. 
\end{enumerate}    
\end{lemma}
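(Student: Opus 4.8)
The plan is to reduce all three assertions to a single character-sum computation, exploiting the translation symmetry of $P(q)$. First I would record that the map $x \mapsto x + c$ is a graph automorphism of $P(q)$ for every $c \in \F_q$, since it preserves differences and hence adjacency. Because these translations act transitively on the vertex set, it suffices to compute the degree of a single vertex: the neighborhood of $0$ is exactly $\F_q^+$, so $|N(0)| = |\F_q^+| = \frac{q-1}{2}$, which gives $(1)$.

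For $(2)$ and $(3)$, I would observe that a vertex $w$ is a common neighbor of two distinct vertices $u, v$ precisely when $w - u \in \F_q^+$ and $w - v \in \F_q^+$. Substituting $y := w - v$ and setting $a := u - v$, these conditions become $y \in \F_q^+$ and $y - a \in \F_q^+$, so that $|N(u) \cap N(v)| = |\F_q^+ \cap (a + \F_q^+)|$ depends only on $a = u - v$. Since $u$ and $v$ are adjacent exactly when $\eta(a) = 1$ and distinct non-adjacent exactly when $\eta(a) = -1$, both $(2)$ and $(3)$ will follow once I evaluate $|\F_q^+ \cap (a + \F_q^+)|$ as a function of $\eta(a)$.

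The core computation mirrors the proof of Lemma~\ref{Ldoublesquare}. Writing the indicator of $\F_q^+$ via the quadratic character as in \eqref{EquadChar}, I obtain
\begin{align*}
|\F_q^+ \cap (a + \F_q^+)| &= \sum_{x \in \F_q \setminus \{0, a\}} \frac{\eta(x) + 1}{2} \cdot \frac{\eta(x - a) + 1}{2} \\
&= \frac{1}{4}\left( \sum_{x \in \F_q} \eta(x)\eta(x - a) - \eta(a) - \eta(-a) + (q - 2) \right),
\end{align*}
where I have used $\sum_{x \in \F_q} \eta(x) = 0$ to simplify the two linear sums (carefully tracking the excluded points $0$ and $a$). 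The quadratic cross term equals $\sum_{x \in \F_q} \eta\bigl(x(x-a)\bigr) = -1$ by \cite[Theorem 5.48]{lidl1997finite}, since the discriminant $a^2$ is non-zero. Hence $|\F_q^+ \cap (a + \F_q^+)| = \frac{1}{4}\bigl(q - 3 - \eta(a) - \eta(-a)\bigr)$.

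Finally, since $q \equiv 1 \pmod 4$ forces $\eta(-1) = 1$, I substitute $\eta(-a) = \eta(a)$ to get $\frac{1}{4}(q - 3 - 2\eta(a))$. Taking $\eta(a) = 1$ yields $\frac{q-5}{4}$, proving $(2)$, while $\eta(a) = -1$ yields $\frac{q-1}{4}$, proving $(3)$. The only genuine input is the evaluation of the quadratic character sum; everything else is the symmetry reduction and elementary bookkeeping, so I do not anticipate a substantive obstacle beyond keeping the excluded terms $0$ and $a$ straight in the linear sums.
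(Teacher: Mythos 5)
Your proof is correct. It is worth noting that the paper does not prove this lemma at all: it is quoted verbatim from Brouwer--Haemers \cite[Proposition 9.1.1]{brouwer2011spectra}, so your argument supplies a self-contained proof where the paper relies on a citation. Your route is the standard one, and it meshes well with the paper's own toolkit: the translation $x \mapsto x+c$ reduces everything to the vertex $0$, and the common-neighbor count becomes exactly the quantity $|\F_q^+ \cap (a+\F_q^+)|$ that the paper evaluates in Lemma~\ref{Ldoublesquare} --- except the paper only carries out that computation for $q \equiv 3 \pmod 4$, where $\eta(-a) = -\eta(a)$ makes the answer $\frac{q-3}{4}$ independently of $\eta(a)$, whereas you need the $q \equiv 1 \pmod 4$ case, where $\eta(-a) = \eta(a)$ splits the count into $\frac{q-5}{4}$ and $\frac{q-1}{4}$ according to adjacency. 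Your bookkeeping is right at every step: the quadratic term $\sum_{x} \eta(x)\eta(x-a) = \sum_x \eta(x^2 - ax) = -1$ by \cite[Theorem 5.48]{lidl1997finite} since the discriminant $a^2$ is non-zero, the excluded points $0$ and $a$ contribute $-\eta(a)$ and $-\eta(-a)$ to the two linear sums respectively, and the identification $|N(u) \cap N(v)| = |\F_q^+ \cap (a+\F_q^+)|$ with $a = u - v$ is exact (the conditions automatically exclude $w \in \{u,v\}$). One small point worth making explicit: the lemma's statement is only sensible for $q \equiv 1 \pmod 4$ (otherwise $P(q)$ is a tournament and the parameters are not integers), which is the standing assumption of the section where it appears, so your invocation of $\eta(-1) = 1$ is legitimate --- but the same hypothesis is also what makes the adjacency relation symmetric in the first place, and you could flag that when asserting translations are automorphisms of an undirected graph.
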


Let $\Gamma(q)$ be the subgraph of $P(q)$ induced by $\F_q^+$. 
Muzychuk and Kov\'acs \cite{muzychuk2005solution} confirmed a conjecture of Brouwer on the automorphisms of $\Gamma(q)$.
\begin{theorem}[\cite{muzychuk2005solution}]\label{thm:localPaley}
Let $p$ be a prime and $q=p^k \equiv 1 \pmod 4$. The automorphisms of the graph $\Gamma(q)$ are precisely given by the maps $x \mapsto ax^{\pm p^l}$, where $a \in \F_q^+$ and $l \in \{0,1,\ldots,k-1\}$.
\end{theorem}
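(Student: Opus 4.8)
The plan is to recast $\Gamma(q)$ as a circulant (cyclic Cayley) graph and then to pin down its automorphism group via the theory of Schur rings over cyclic groups, which is the route underlying the Muzychuk--Kov\'acs solution. The theorem splits into an easy inclusion (the listed maps really are automorphisms) and a hard one (there are no others), and essentially all the difficulty lies in the second.

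First I would dispose of the easy inclusion by checking each type of map directly. A multiplication $x \mapsto ax$ with $a \in \F_q^+$ fixes $\F_q^+$ setwise and satisfies $\eta(ax-ay)=\eta(a)\eta(x-y)=\eta(x-y)$, so it preserves adjacency; a field automorphism $x \mapsto x^{p^\ell}$ (Theorem~\ref{Tauto}) fixes $\F_q^+$ and preserves $\eta$ since $\eta(z^{p^\ell})=\eta(z)$; and inversion $x \mapsto x^{-1}$ fixes $\F_q^+$ with $\eta(x^{-1}-y^{-1})=\eta\bigl((y-x)/(xy)\bigr)=\eta(-1)\eta(x-y)=\eta(x-y)$, where the final equality uses $-1\in\F_q^+$ from Proposition~\ref{Pdecomp}. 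This is the one point where $q\equiv 1\pmod 4$ is essential, and it is exactly what forces the graph to be undirected and makes inversion available. Together these generate the claimed family $x\mapsto ax^{\pm p^\ell}$.

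The main reduction is the observation that $\Gamma(q)$ is a Cayley graph on the cyclic group $\F_q^+\cong \Z_{(q-1)/2}$. Fixing a primitive root $\gamma$, the squares are $\langle\gamma^2\rangle$, and the multiplications $x\mapsto cx$ with $c\in\F_q^+$ form a regular cyclic subgroup $R\le\mathrm{Aut}(\Gamma(q))$. The connection set is $S=\{s\in\F_q^+: s-1\in\F_q^+\}$, which is symmetric (again using $-1\in\F_q^+$) and has size $\tfrac{q-5}{4}$ by Lemma~\ref{Lemma:Paley_is_SRG}(2). Thus $\Gamma(q)$ is a connected circulant graph of order $\tfrac{q-1}{2}$, and the theorem becomes the assertion that $\mathrm{Aut}(\Gamma(q))$ equals the normalizer of $R$. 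Since $N_{\mathrm{Sym}(\Z_m)}(R)=R\rtimes \mathrm{Aut}(\Z_m)$ for $m=\tfrac{q-1}{2}$, this in turn reduces to identifying the multiplier subgroup $M\le\Z_m^{*}$ stabilizing $S$. In exponent coordinates $M$ is generated by multiplication by $p$ (Frobenius) and by $-1$ (inversion), which recovers precisely $x\mapsto ax^{\pm p^\ell}$; here one uses $\gcd(p,m)=1$ and that Frobenius has order $k$ on $\F_q^+$ because additive spans of squares exhaust $\F_q$.

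The hard part, and the genuine obstacle, is ruling out automorphisms that do \emph{not} normalize $R$. Circulant graphs can acquire such ``unexpected'' automorphisms exactly when the Schur ring (transitivity module) generated by $S$ over $\Z_m$ decomposes as a generalized wreath product, which corresponds to a nontrivial block system on $\Z_m$ compatible with $S$. The crux is therefore to prove that the Paley local graph admits no such decomposition, i.e.\ that the Schur ring generated by $S$ is the cyclotomic/rational Schur ring determined by $M$; the Leung--Man classification of Schur rings over cyclic groups then forces $\mathrm{Aut}(\Gamma(q))=R\rtimes M$. Establishing the absence of a wreath decomposition is where the detailed cyclotomy of $\F_q$ (the arithmetic of $S$) must be exploited, and it is the number-theoretic heart of the argument; the reductions above only frame the problem so that this structural input can be applied.
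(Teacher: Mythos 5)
This theorem is not proved in the paper at all: it is quoted from Muzychuk--Kov\'acs \cite{muzychuk2005solution}, and the paper explicitly notes (in Section~\ref{S1mod4}) that its proof ``relies on spectral and Schur ring techniques.'' Your framework does match the cited source: your easy inclusion is fine (the verifications that multiplications by $a\in\F_q^+$, Frobenius maps, and inversion preserve adjacency are all correct, including the correct use of $-1\in\F_q^+$ for symmetry of inversion), and your identification of $\Gamma(q)$ as a connected circulant on $\Z_{(q-1)/2}$ with symmetric connection set $S=\{s\in\F_q^+: s-1\in\F_q^+\}$ of size $\frac{q-5}{4}$ is accurate. But what you have written is a framing of the problem, not a proof. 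You explicitly defer the entire hard direction --- showing that the Schur ring generated by $S$ admits no generalized wreath decomposition, so that every automorphism normalizes the regular cyclic subgroup $R$ --- calling it ``the number-theoretic heart of the argument'' without supplying it. A proof that omits its own heart has a genuine gap, and this gap is not a routine verification: it is precisely the content of Brouwer's conjecture that Muzychuk and Kov\'acs resolved.

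There is also a second, quieter gap in what you present as the ``easy'' reduction. You assert that the multiplier subgroup $M\le \Z_m^*$ stabilizing $S$ is \emph{generated by} $p$ and $-1$. The containment $\langle p,-1\rangle \le M$ is easy, but the reverse containment --- that no other multiplier $t$ satisfies $tS=S$ --- is itself a nontrivial cyclotomic statement (it is essentially a multiplicative analogue of Carlitz's theorem, Theorem~\ref{Tcarlitz}, and is equivalent to the theorem's conclusion for power-map automorphisms fixing $1$); you give no argument for it, and neither $\gcd(p,m)=1$ nor the order of Frobenius yields it. Finally, even granting no wreath decomposition, the Leung--Man classification of Schur rings over cyclic groups has further cases (e.g.\ nontrivial direct-sum/dot-product decompositions when $m$ factors) that you do not rule out, and the step from ``the S-ring is cyclotomic determined by $M$'' to $\mathrm{Aut}(\Gamma(q))=R\rtimes M$ also requires justification. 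In short: your outline correctly locates where the difficulty lives and is consistent with the strategy of the cited reference, but the statement remains unproved by your proposal.
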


The following corollaries follow immediately from Lemma~\ref{Lemma:Paley_is_SRG}.

\begin{corollary}\label{cor:commonnbd}
For each $x \in \F_q^+$, the number of $y \in \F_q^+$ such that $x-y \in \F_q^+$ is $\frac{q-5}{4}$. In particular, $\Gamma(q)$ is a regular graph.   
\end{corollary}
\begin{proof}
The required number of elements is precisely $|N(0) \cap N(x)|$ with $0$ and $x$ adjacent. 
\end{proof}

\begin{corollary}\label{cor:cab}
Let $a,b \in \F_q^+$ such that $a \neq b$. Then there is $c \in \F_q^-$, such that $a-c \in \F_q^+$ and $b-c \in \F_q^-$. 
\end{corollary}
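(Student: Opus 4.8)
The plan is to prove Corollary~\ref{cor:cab} by translating it into the language of the Paley graph $P(q)$ and applying the strongly regular graph parameters from Lemma~\ref{Lemma:Paley_is_SRG}. Recall that in $P(q)$, two vertices $u, v$ are adjacent exactly when $u - v \in \F_q^+$ (equivalently $v - u \in \F_q^+$, since $q \equiv 1 \pmod 4$ means $-1 \in \F_q^+$). Given $a, b \in \F_q^+$ with $a \neq b$, I want to produce $c \in \F_q^-$ such that $a - c \in \F_q^+$ and $b - c \in \F_q^-$; in graph terms, $c$ must be a non-square (i.e.\ non-adjacent to $0$), adjacent to $a$, and non-adjacent to $b$.

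First I would set up the counting. Consider the vertex $0$ and the vertex $a$. Since $a \in \F_q^+$, the vertices $0$ and $a$ are adjacent, so by Lemma~\ref{Lemma:Paley_is_SRG}(2) we have $|N(0) \cap N(a)| = \frac{q-5}{4}$. Now I want elements $c$ that are \emph{adjacent to $a$} but \emph{not adjacent to $0$} (the latter forcing $c \in \F_q^-$). The count of neighbors of $a$ that are non-adjacent to $0$ is $|N(a)| - |N(a) \cap N(0)| - [\,0 \in N(a)\,]$; being careful here, $0 \in N(a)$ since they are adjacent, so the number of $c \in N(a)$ with $c \notin N(0) \cup \{0\}$ equals $\frac{q-1}{2} - \frac{q-5}{4} - 1 = \frac{q-1}{4} - 1 = \frac{q-5}{4}$. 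These are precisely the $c \in \F_q^-$ with $a - c \in \F_q^+$.

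Next I would impose the final constraint $b - c \in \F_q^-$, i.e.\ $c \notin N(b)$ and $c \neq b$ (though $b \in \F_q^+$ already excludes $c = b$ from our non-square candidates). The danger is that \emph{all} of the $\frac{q-5}{4}$ good candidates for the first two conditions might happen to be neighbors of $b$. To rule this out, I would bound the number of $c$ adjacent to $a$, non-adjacent to $0$, and \emph{also} adjacent to $b$, and show it is strictly smaller than $\frac{q-5}{4}$. A clean way is to count $c \in N(a) \cap N(b)$ and subtract those also in $N(0)$: whether $a, b$ are adjacent or not, $|N(a) \cap N(b)|$ is either $\frac{q-5}{4}$ or $\frac{q-1}{4}$, and requiring additionally $c \notin N(0)$ (plus $c \notin \{0\}$) strictly decreases this count, so the set of $c$ satisfying all three desired conditions is nonempty provided the arithmetic inequality holds. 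For small $q$ one would verify directly (e.g.\ the regularity count $\frac{q-5}{4}$ is positive only for $q \geq 9$, so the cases $q = 5$ need separate inspection, but there $\F_5^+ = \{1,4\}$ forces $\{a,b\} = \{1,4\}$ and one checks $c$ explicitly).

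The main obstacle I anticipate is handling the overlap count carefully and uniformly across the adjacency/non-adjacency of $a$ and $b$, together with the small-field boundary cases where the strongly-regular parameters become degenerate or negative. The cleanest route is probably an inclusion–exclusion over the three vertices $0, a, b$ using Lemma~\ref{Lemma:Paley_is_SRG}, which is essentially the $t_q(a,b,c)$-style computation already carried out in Lemma~\ref{lem:triple}; indeed, the quantity I need is closely related to counting simultaneous quadratic-character conditions, and the bounds there (giving a positive count for $q \geq 27$, with explicit small cases) can be invoked or adapted rather than rederived. So in practice I would reduce the existence claim to the positivity of a triple-character count and then quote Lemma~\ref{lem:triple} for $q \geq 27$, checking $q \in \{5, 9, 13, 17, 25\}$ by hand or by the refinements in part~(1) of that lemma.
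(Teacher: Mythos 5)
Your overall strategy---counting inside the Paley graph with the strongly regular parameters of Lemma~\ref{Lemma:Paley_is_SRG}---is exactly the paper's, but there is an arithmetic slip in your key count, and it sends you down a needlessly heavy path. You claim the number of $c \in \F_q^-$ adjacent to $a$ is $\frac{q-1}{2} - \frac{q-5}{4} - 1 = \frac{q-1}{4} - 1 = \frac{q-5}{4}$; in fact $\frac{q-1}{2} - \frac{q-5}{4} = \frac{q+3}{4}$, so the correct value is $\frac{q-1}{4}$, not $\frac{q-5}{4}$. With the correct count the proof closes immediately: $|N(a) \cap \F_q^-| = |N(b) \cap \F_q^-| = \frac{q-1}{4}$, while since $0 \in N(a) \cap N(b)$ and $0 \notin \F_q^-$ we get $|N(a) \cap N(b) \cap \F_q^-| \leq |N(a) \cap N(b)| - 1 \leq \frac{q-1}{4} - 1 = \frac{q-5}{4} < \frac{q-1}{4}$, whatever the adjacency status of $a$ and $b$. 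Two sets of equal size whose intersection is strictly smaller cannot coincide, so some $c \in (N(a) \cap \F_q^-) \setminus N(b)$ exists, and $c \neq b$ is automatic since $b \in \F_q^+$. This is the paper's proof verbatim, and it works uniformly for all $q \equiv 1 \pmod 4$, including $q=5$ (where the two sets are disjoint singletons)---no case analysis, no Weil bound.

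Because of the miscount you believe the simple ``$0$ is removed from the overlap'' observation is insufficient, and you fall back on Lemma~\ref{lem:triple}. That fallback is repairable but incomplete as written: Lemma~\ref{lem:triple} bounds $t_q(a,b,c)$, the count of $x$ with \emph{all three} quadratic characters equal to $+1$, whereas you need the mixed-sign count of $c$ with $\eta(a-c)=1$, $\eta(c)=-1$, $\eta(b-c)=-1$; you would have to spell out the inclusion--exclusion $|N(a) \cap N(b) \cap \F_q^-| = |N(a)\cap N(b)| - t_q(0,a,b) - 1$ to invoke the lemma, and since $t_5 = 0$ and $t_9, t_{13}, t_{17}, t_{25}$ can all be $0$ by part~(1), the cases $q \in \{5,9,13,17,25\}$ would each genuinely require hand verification under your route. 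All of this evaporates once the count is corrected, so the fix is simply to redo the first computation and drop the appeal to Lemma~\ref{lem:triple} entirely.
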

\begin{proof}
We prove this lemma by considering the neighborhood of $a$ and $b$ in $P(q)$. Note that $$|N(a) \cap \F_q^-|=|N(a)|-|N(a) \cap N(0)|-1=\frac{q-1}{2}-\frac{q-5}{4}-1=\frac{q-1}{4}.$$ Similarly, $|N(b) \cap \F_q^-|=\frac{q-1}{4}$. On the other hand, since $0 \in N(a)\cap N(b)$, we have $|N(a)\cap N(b) \cap \F_q^*|\leq \frac{q-1}{4}-1 = \frac{q-5}{4} < \frac{q-1}{4}$. In particular, $|N(a) \cap N(b) \cap \F_q^-| < \frac{q-1}{4}$. Thus, the sets $N(a) \cap \F_q^-$ and $N(b) \cap \F_q^-$ have the same size but are not the same. This implies the existence of the desired $c$.   
\end{proof}

\begin{corollary}\label{cor:cab2}
Let $a,b \in \F_q^-$ such that $a \neq b$. Then there is $c \in \F_q^+$, such that $a-c \in \F_q^-$ and $b-c \in \F_q^+$.
\end{corollary}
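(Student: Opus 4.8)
The plan is to deduce this directly from Corollary~\ref{cor:cab} using the fact that multiplication by a fixed non-square is a symmetry of the Paley graph that interchanges the roles of $\F_q^+$ and $\F_q^-$. Fix any $g \in \F_q^-$. Since $q \equiv 1 \pmod 4$ we have $\eta(-1) = 1$, and by multiplicativity of the quadratic character $\eta(gx) = \eta(g)\eta(x) = -\eta(x)$ for all $x \in \F_q^*$. Thus $x \mapsto gx$ sends $\F_q^+$ bijectively onto $\F_q^-$ and vice versa, and it reverses the ``sign'' of every difference: $\eta(gu - gv) = -\eta(u-v)$.

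First I would pull the given pair back through this map. Set $a' := g^{-1}a$ and $b' := g^{-1}b$. Since $a, b \in \F_q^-$ and $\eta(g^{-1}) = -1$, we get $a', b' \in \F_q^+$, and $a' \neq b'$ because $a \neq b$. Corollary~\ref{cor:cab} applied to the pair $a', b' \in \F_q^+$ then produces an element $c' \in \F_q^-$ with $a' - c' \in \F_q^+$ and $b' - c' \in \F_q^-$.

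Next I would push $c'$ forward by setting $c := gc'$. Then $\eta(c) = \eta(g)\eta(c') = (-1)(-1) = 1$, so $c \in \F_q^+$, as required. Moreover $a - c = g(a' - c')$ and $b - c = g(b' - c')$, so $\eta(a-c) = \eta(g)\eta(a'-c') = -1$ and $\eta(b-c) = \eta(g)\eta(b'-c') = 1$. Hence $a - c \in \F_q^-$ and $b - c \in \F_q^+$, which is exactly the desired conclusion.

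The argument is essentially mechanical once the right symmetry is identified, so I do not anticipate a genuine obstacle; the only point requiring care is tracking the quadratic-character signs through the substitution and confirming that distinctness and the membership $c \in \F_q^+$ transfer correctly. Alternatively, one could reprove the statement from scratch by repeating the neighborhood-counting argument of Corollary~\ref{cor:cab} in the complementary graph $\overline{P(q)}$, whose adjacency relation is $u - v \in \F_q^-$: since $\overline{P(q)}$ is strongly regular with the same parameters $(q, \frac{q-1}{2}, \frac{q-5}{4}, \frac{q-1}{4})$ as $P(q)$, the identical counting (comparing $\overline{N}(a) \cap \F_q^+$ with $\overline{N}(b) \cap \F_q^+$ and using that $0$ is a common neighbor of $a$ and $b$ in $\overline{P(q)}$) yields the claim. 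The multiplicative reduction is cleaner, so I would present that as the main proof.
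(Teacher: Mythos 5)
Your main argument is correct and is essentially identical to the paper's proof: the paper also reduces to Corollary~\ref{cor:cab} by multiplying $a,b$ by a fixed non-square (it writes $a'=ax$, $b'=bx$ with $x\in\F_q^-$ and takes $c=c'/x$, which is your map with $g=x^{-1}$). The only cosmetic difference is your invocation of $\eta(-1)=1$, which neither proof actually needs, since multiplicativity of $\eta$ alone tracks all the signs.
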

\begin{proof}
Let $x \in \F_q^-$, and set $a'=ax$ and $b'=bx$. Applying Corollary~\ref{cor:cab} to $a'$ and $b'$, we can find $c' \in \F_q^-$ such that $a'-c' \in \F_q^+$ and $b'-c' \in \F_q^-$. Then $c=c'/x$ is as desired.   
\end{proof}

\noindent Finally, we combine Lemma \ref{Lemma:Paley_is_SRG} and Lemma~\ref{lem:triple} to deduce the following corollary.

\begin{corollary}\label{cor:triple2}
Let $q \geq 13$. Let $a,b\in \F_q^+$ with $a \neq b$. Then $N(0)\cap N(a)\neq N(0)\cap N(b)$.   
\end{corollary}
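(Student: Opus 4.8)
The plan is to prove Corollary~\ref{cor:triple2} by contradiction, leveraging the explicit count $t_q(a,b,c)$ from Lemma~\ref{lem:triple} against the strongly regular graph parameters of $P(q)$ from Lemma~\ref{Lemma:Paley_is_SRG}. Suppose toward a contradiction that $N(0)\cap N(a)=N(0)\cap N(b)$ for some distinct $a,b\in\F_q^+$. The key observation is that the common neighborhood $N(0)\cap N(a)$ consists precisely of those $x$ with $\eta(x)=\eta(x-a)=1$, i.e.\ $x\in\F_q^+$ and $x-a\in\F_q^+$. By the strong regularity of $P(q)$ (Lemma~\ref{Lemma:Paley_is_SRG}(2), using that $0$ and $a$ are adjacent since $a\in\F_q^+$), this common neighborhood has size exactly $\frac{q-5}{4}$; the same holds for $N(0)\cap N(b)$.

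The main step is to derive a contradiction from the hypothesis $N(0)\cap N(a)=N(0)\cap N(b)$. If these two sets coincide, then \emph{every} common neighbor $x$ of $0$ and $a$ is also a common neighbor of $0$ and $b$, meaning there is no $x\in\F_q$ with $\eta(x)=\eta(x-a)=1$ but $\eta(x-b)=-1$. In particular, the set of $x$ satisfying $\eta(x)=\eta(x-a)=\eta(x-b)=1$ must coincide with the full common neighborhood $N(0)\cap N(a)$, forcing $t_q(0,a,b)=|N(0)\cap N(a)|=\frac{q-5}{4}$. But Lemma~\ref{lem:triple}(2) asserts that $t_q(0,a,b)<\frac{q-5}{4}$ whenever $q\geq 27$, which is the contradiction we want for all sufficiently large $q$.

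The remaining obstacle—and the reason the hypothesis reads $q\geq 13$ rather than $q\geq 27$—is to handle the small cases $q\in\{13,17,25\}$ (the prime powers that are $\equiv 1\pmod 4$ with $13\leq q<27$) by hand, using part (1) of Lemma~\ref{lem:triple}. For these, the count $t_q(0,a,b)$ is bounded by an explicit small integer: $t_{13},t_{17}\in\{0,1,2\}$ and $t_{25}\in\{0,2,3,4\}$. Since $\frac{q-5}{4}$ equals $2,3,5$ for $q=13,17,25$ respectively, I would check directly that $t_q(0,a,b)$ can never reach the value $\frac{q-5}{4}$: for $q=13$ the maximum possible $t_q=2=\frac{q-5}{4}$ is borderline, so I would need to verify (by the refined computer-checked bound, or by noting that equality $t_q=\frac{q-5}{4}$ would itself force the degenerate containment that cannot occur) that strict inequality still holds, and similarly rule out $t_{17}=3$ and $t_{25}=5$.

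The hardest part will be the $q=13$ case, where the generic bound from Lemma~\ref{lem:triple}(1) only gives $t_{13}\leq 2=\frac{q-5}{4}$ and does not immediately yield the strict inequality needed to separate the two neighborhoods; here I expect to need a direct combinatorial check that no triple $(0,a,b)$ with $a,b\in\F_q^+$ actually attains $t_{13}=2$ while simultaneously having $N(0)\cap N(a)=N(0)\cap N(b)$, equivalently a finite verification over the $\frac{q-5}{4}$-element common neighborhoods in $P(13)$. Once the equality case is excluded for $q\in\{13,17,25\}$ and the general bound handles $q\geq 27$, the two distinct-size-equal sets $N(0)\cap N(a)$ and $N(0)\cap N(b)$ cannot coincide, completing the proof.
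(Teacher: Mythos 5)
Your proposal is correct and follows essentially the same route as the paper: assuming $N(0)\cap N(a)=N(0)\cap N(b)$ forces $t_q(0,a,b)=|N(0)\cap N(a)|=\frac{q-5}{4}$ by Lemma~\ref{Lemma:Paley_is_SRG}, which contradicts the bound $t_q<\frac{q-5}{4}$ from Lemma~\ref{lem:triple} (part (2) for $q\geq 27$, part (1) immediately for $q=17,25$). You also correctly isolate the one genuine borderline case, $q=13$, where the paper likewise resorts to a direct finite verification over the $15$ pairs $(a,b)$ in $\F_{13}^+$.
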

\begin{proof}
When $q=13$, directly examining the $15$ possible pairs $(a,b)$ yields the result. Let us now assume $q \geq 17$. Assume otherwise that $N(0)\cap N(a)=N(0)\cap N(b)$. It follows that $|N(a)\cap N(b)\cap N(0)| = |N(a) \cap N(0)| = \frac{q-5}{4}$ by Lemma \ref{Lemma:Paley_is_SRG}. However, this contradicts Lemma~\ref{lem:triple} which states that $|N(0) \cap N(a) \cap N(b)|<\frac{q-5}{4}$. 
\end{proof}

\subsection{A sufficient condition}\label{SSSufficient}
We now prove that to show that positivity preservers on $M_2(\F_q)$ are positive multiples of field automorphisms, it suffices to show injectivity on $\F_q^+$. 

\begin{proposition}\label{prop:main}
Let $q=p^k$ be a prime power with $q \equiv 1 \pmod 4$ and let $f$ be a positivity preserver over $M_2(\F_q)$ with $f(1)=1$. Assume additionally that $f$ is injective on $\F_q^+$. Then there exists $0\leq j\leq k-1$ such that 
$f(x)=x^{p^j}$ for all $x \in \F_q$.    
\end{proposition}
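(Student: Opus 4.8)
The plan is to show that, after composing $f$ with a suitable field automorphism, the resulting map is the identity on all of $\F_q$; the original $f$ is then precisely that automorphism. Throughout I identify $\F_q$ with the vertices of the Paley graph $P(q)$ and abbreviate $N^+(v):=N(v)\cap\F_q^+$. First I record the structure on $\F_q^+$: by Lemma~\ref{lem:+} we have $f(\F_q^+)\subseteq\F_q^+$, and since $f$ is injective on the finite set $\F_q^+$, its restriction is a bijection of $\F_q^+$ onto itself. To get $f(0)=0$, suppose $f(0)=c\neq0$; then $c^2\in\F_q^+$, so by surjectivity there are $a,b\in\F_q^+$ with $f(a)=c^2$ and $f(b)=1$. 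The matrix $\operatorname{diag}(a,b)$ is positive definite, yet $f[\operatorname{diag}(a,b)]=\begin{pmatrix} c^2 & c\\ c & 1\end{pmatrix}$ has determinant $0$, a contradiction. Hence $f(0)=0$, and Lemma~\ref{lem:nonzero} then gives $f(x)\neq0$ for all $x\in\F_q^*$.

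\textbf{Reduction via Muzychuk--Kov\'acs.} If $a,b\in\F_q^+$ satisfy $a-b\in\F_q^+$, then Lemma~\ref{lem:sq} yields $f(a)-f(b)\in\F_q^+$, so $f|_{\F_q^+}$ maps edges of $\Gamma(q)$ to edges; being a bijection of a finite graph that preserves edges, it is an automorphism of $\Gamma(q)$. By Theorem~\ref{thm:localPaley} there exist $\alpha\in\F_q^+$ and $0\le\ell\le k-1$ with $f(x)=\alpha x^{\pm p^\ell}$ on $\F_q^+$, and $f(1)=1$ forces $\alpha=1$. Replacing $f$ by $\sigma_\ell^{-1}\circ f$ (still a positivity preserver by Proposition~\ref{PFrobenius}, still fixing $1$ and injective on $\F_q^+$), I may assume $f(x)=x^{\varepsilon}$ on $\F_q^+$ with $\varepsilon\in\{+1,-1\}$. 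It then remains to prove $\varepsilon=+1$ and that $f$ fixes $\F_q^-$ pointwise.

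\textbf{The delicate step: $\F_q^-$ and excluding $\varepsilon=-1$.} For $y\in\F_q^-$ and $a\in\F_q^+$, the matrix $\begin{pmatrix} a & a\\ a & y\end{pmatrix}$ has positive $(1,1)$-entry and determinant $a(y-a)$, so it is positive definite precisely when $a\in N^+(y)$. Applying $f$ (whose values on $\F_q^+$ are known) and reading off the determinant of the image, the preserver hypothesis gives an inclusion $N^+(y)\subseteq T_\varepsilon(y)$. Since $0$ is non-adjacent to every element of $\F_q^-$, Lemma~\ref{Lemma:Paley_is_SRG} gives $|N^+(y)|=\frac{q-1}{4}$, while the same count shows $|T_\varepsilon(y)|=|N^+(f(y))|$, which equals $\frac{q-5}{4}$ if $f(y)\in\F_q^+$ — too small for the inclusion. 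Hence $f(y)\in\F_q^-$ and the inclusion is an equality of sets of size $\frac{q-1}{4}$. For $\varepsilon=+1$ this reads $N^+(y)=N^+(f(y))$; if $f(y)\neq y$ the points $0,y,f(y)$ are distinct with $t_q(0,y,f(y))=\frac{q-1}{4}$, contradicting $t_q<\frac{q-1}{4}$, which holds in every case by Lemma~\ref{lem:triple}. Thus $f(y)=y$, $f=\operatorname{id}$, and the original map equals $\sigma_\ell$. For $\varepsilon=-1$, simplifying with $\eta(-1)=1$ and $\eta(f(y))=-1$ turns the equality into $N^+(y)=\F_q^+\setminus N^+(f(y)^{-1})$, so $N^+(y)\cap N^+(f(y)^{-1})=\emptyset$; as $f(y)^{-1}\neq y$ (a nonempty set cannot equal its own complement), the points $0,y,f(y)^{-1}$ are distinct and $t_q(0,y,f(y)^{-1})=0$, contradicting Lemma~\ref{lem:triple} once $q\ge27$.

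\textbf{Main obstacle and small fields.} The heart of the difficulty is this last step. Because $q\equiv1\pmod4$ forces $\eta(-1)=1$, the graph $P(q)$ is undirected and both signs in the Muzychuk--Kov\'acs description genuinely survive, so the directed-graph shortcuts available when $q\equiv3\pmod4$ are unavailable; instead the sign of each relevant $2\times2$ minor must be extracted from the strongly regular parameters together with the Weil-type estimate of Lemma~\ref{lem:triple}. That estimate yields the contradiction excluding $\varepsilon=-1$ only for $q\ge27$, so the finitely many small fields $q\in\{5,9,13,17,25\}$ are to be handled by direct verification; for several of them (for instance $q=5$) inversion already coincides with a field automorphism on $\F_q^+$, so no separate argument is needed. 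Finally, I note that one could equally package the $\varepsilon=+1$ conclusion through Carlitz's theorem (Theorem~\ref{Tcarlitz}), since the equalities above amount to $\eta(f(a)-f(b))=\eta(a-b)$ for all $a,b\in\F_q$.
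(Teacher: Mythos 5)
Your proposal is correct and follows the paper's overall skeleton — bijectivity on $\F_q^+$, upgrading the preserver to an automorphism of $\Gamma(q)$, invoking Muzychuk--Kov\'acs (Theorem~\ref{thm:localPaley}), then a Weil-type triple count (Lemma~\ref{lem:triple}) to control $\F_q^-$ and exclude inversion — but your endgame genuinely differs in execution, and in places improves on the paper. Your $f(0)=0$ argument via the singular image of $\mathrm{diag}(a,b)$ is slicker than the paper's (Lemma~\ref{lem:local} uses $\begin{pmatrix} x & 1\\ 1 & 0\end{pmatrix}$ plus a neighborhood count), and your normalization $f\mapsto\sigma_\ell^{-1}\circ f$, reducing to $\varepsilon\in\{\pm1\}$ on $\F_q^+$, is not made in the paper, which carries general exponents $e=\pm p^j$ throughout. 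Where the paper tests with $\begin{pmatrix}1 & y\\ y & x\end{pmatrix}$ to first pin down $f(y)^2=(y^2)^e$ via Corollary~\ref{cor:triple2} and only then resolves the sign, you extract the single relation $f(N^+(y))=N^+(f(y))$ (with $f(y)\in\F_q^-$ forced by the count $\frac{q-5}{4}<\frac{q-1}{4}$) directly from $\begin{pmatrix} a & a\\ a & y\end{pmatrix}$. The real payoff is in the $\varepsilon=+1$ branch: the bound $t_q<\frac{q-1}{4}$ does hold for every $q\equiv1\pmod 4$ by Lemma~\ref{lem:triple} (the listed maxima $0,1,2,2,4$ for $q=5,9,13,17,25$ sit below $1,2,3,4,6$, and $\frac{q-5}{4}<\frac{q-1}{4}$ for $q\geq27$), so your argument there is uniform and dispenses entirely with the paper's bespoke analyses of $q=5$ and $q=9$ (its Cases 1, 2a, 2b). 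Your $\varepsilon=-1$ computation, turning the equality into the partition $N^+(y)\sqcup N^+(f(y)^{-1})=\F_q^+$ using $\eta(-1)=1$ and $\eta(f(y))=-1$, is also correct, as is the observation $f(y)^{-1}\neq y$.

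One caveat, at the boundary between loose end and gap: your treatment of the small fields in the $\varepsilon=-1$ branch is both underspecified and slightly misstated. Inversion coincides with a Frobenius power on $\F_q^+$ only for $q=5$ (where $x^{-1}=x$ on $\F_5^+$) and $q=9$ (where $\F_9^+$ consists of fourth roots of unity, so $x^{-1}=x^3$); for $q\in\{13,17,25\}$ it does not, so a genuine finite verification is required there, and you neither formulate nor perform it. What your argument actually demands is that no pair of distinct $y,z\in\F_q^-$ satisfies $N^+(y)=\F_q^+\setminus N^+(z)$ — equivalently $t_q(0,y,z)>0$ for all such pairs — and note that your check must range over \emph{all} pairs, since $z=f(y)^{-1}$ is a priori unknown, whereas the paper's prior determination of $f(y)^2$ pins the offending element to $-y$ and so its deferred computation is lighter. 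The verification does go through: for $q=13$ and $q=17$ one lists the $\frac{q-1}{2}$ sets $N^+(y)$, $y\in\F_q^-$, and confirms by hand that no two are complementary, and $q=25$ is an equally routine finite computation. Since the paper itself defers its $q\in\{13,17,25\}$ cases to ``a simple computation,'' this is acceptable in kind, but you should state the check precisely and carry it out rather than gesture at it.
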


\noindent We first prove the following two lemmas.

\begin{lemma}\label{lem:local}
Let $q$ be a prime power with $q \equiv 1 \pmod 4$ and let $f$ be a positivity preserver over $M_2(\F_q)$ with $f(1)=1$. If $f$ is injective on $\F_q^+$, then $f(0)=0$, and $f$ (restricted to $\F_q^+$) is an automorphism of $\Gamma(q)$.
\end{lemma}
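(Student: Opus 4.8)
The plan is to establish the three facts that together say $f|_{\F_q^+}$ is an automorphism of $\Gamma(q)$ (bijectivity on $\F_q^+$, plus preservation of adjacency and of non-adjacency), and to pin down the value $f(0)$ by a separate argument tailored to the $q \equiv 1 \pmod 4$ setting. The whole argument runs entirely off Lemmas~\ref{lem:+} and~\ref{lem:sq} together with the fact that $-1 \in \F_q^+$.

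First I would note that $f|_{\F_q^+}$ is a bijection of $\F_q^+$: by Lemma~\ref{lem:+} we have $f(\F_q^+) \subseteq \F_q^+$, and since $\F_q^+$ is finite, the assumed injectivity upgrades this inclusion to a bijection of $\F_q^+$ onto itself. Next, the adjacency/non-adjacency preservation should follow directly from Lemma~\ref{lem:sq}. For $x,y \in \F_q^+$ with $x-y \in \F_q^+$, applying Lemma~\ref{lem:sq} with $a=x$, $b=y$ (both positive) gives $f(x)-f(y) \in \F_q^+$, so edges of $\Gamma(q)$ map to edges. The reverse direction is exactly where $q \equiv 1 \pmod 4$ enters: since $-1 \in \F_q^+$ by Proposition~\ref{Pdecomp}, any $x,y \in \F_q^+$ with $x-y \in \F_q^-$ satisfy $y-x \in \F_q^+$, and Lemma~\ref{lem:sq} applied to the pair $(y,x)$ yields $f(y)-f(x) \in \F_q^+$, i.e. $f(x)-f(y) \in \F_q^-$. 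Thus $f|_{\F_q^+}$ carries positive differences to positive differences and negative differences to negative differences, which in the undirected graph $\Gamma(q)$ is precisely preservation of both adjacency and non-adjacency; combined with bijectivity, this makes $f|_{\F_q^+}$ an automorphism of $\Gamma(q)$.

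Finally, the value $f(0)$ must be handled separately, since the reasoning of Lemma~\ref{L:Charq} is unavailable when $q \equiv 1 \pmod 4$. I would argue by contradiction: set $c := f(0)$ and suppose $c \neq 0$. For any $a \in \F_q^+$ the matrix $aI_2$ is positive definite (its leading minors are $a$ and $a^2$), so $f[aI_2] = \begin{pmatrix} f(a) & c \\ c & f(a) \end{pmatrix}$ must be positive definite, forcing $f(a)^2 - c^2 \in \F_q^+$. Because $f|_{\F_q^+}$ is onto $\F_q^+$, I can choose $a \in \F_q^+$ with $f(a)=c$ when $c \in \F_q^+$, or with $f(a) = -c$ (legitimate since $-c \in \F_q^+$ when $c \in \F_q^-$) otherwise. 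In either case $f(a)^2 = c^2$, so the determinant of $f[aI_2]$ vanishes, contradicting positive definiteness. Hence $f(0)=0$.

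Each step is short, so the only real content is getting the reverse adjacency direction right, where the symmetry of $\Gamma(q)$ furnished by $-1 \in \F_q^+$ is essential, and recognizing that $f(0)=0$ should be extracted from a diagonal test matrix rather than from the $q\equiv 3 \pmod 4$ argument of Lemma~\ref{L:Charq}.
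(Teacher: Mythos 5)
Your proposal founders twice on the same misconception about signs when $q \equiv 1 \pmod 4$: here $-1 \in \F_q^+$, so $\eta(-t) = \eta(t)$ for all $t$ --- negation \emph{preserves} the quadratic character rather than flipping it (the flip $\F_q^- = -\F_q^+$ is precisely the $q \equiv 3 \pmod 4$ case of Proposition~\ref{Pdecomp}). Consequently your non-adjacency step is broken: if $x, y \in \F_q^+$ with $x - y \in \F_q^-$, then $y - x \in \F_q^-$ as well, so Lemma~\ref{lem:sq} cannot be applied to the pair $(y,x)$ (its hypothesis $a - b \in \F_q^+$ fails), and indeed Lemma~\ref{lem:sq} gives no information at all about non-adjacent pairs. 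This gap is repairable by counting rather than by a second application of the lemma: since $f|_{\F_q^+}$ is a bijection of a finite graph onto itself that maps edges to edges, injectivity on the finite edge set already forces non-edges to non-edges. The paper runs the local version of this count: $f$ maps the $\Gamma(q)$-neighborhood of $x$ injectively into that of $f(x)$, and both have size $\frac{q-5}{4}$ by the regularity of $\Gamma(q)$ (Corollary~\ref{cor:commonnbd}), so the two neighborhoods correspond exactly.

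The same sign error leaves your $f(0)=0$ argument with a genuinely unproved case. If $c := f(0) \in \F_q^-$, then $-c \in \F_q^-$ too, so there is no $a \in \F_q^+$ with $f(a) = \pm c$; worse, no choice of $a$ can rescue the construction, since $\{f(a)^2 : a \in \F_q^+\} = \{y^2 : y \in \F_q^+\}$ is exactly the set of fourth powers in $\F_q^*$, while $c^2 = g^{4j+2}$ (writing $c = g^{2j+1}$ for a generator $g$) is never a fourth power when $4 \mid q-1$. Thus $\det f[aI_2] = f(a)^2 - c^2$ never vanishes and your contradiction evaporates; one would instead have to show $f(a)^2 - c^2 \in \F_q^-$ for some $a$, which requires new input. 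The paper sidesteps this with a different test family: the matrix $\begin{pmatrix} x & 1 \\ 1 & 0 \end{pmatrix}$ is positive definite precisely because $-1 \in \F_q^+$, and applying $f$ yields $f(0)y - 1 \in \F_q^+$ for every $y \in \F_q^+$ (using bijectivity of $f$ on $\F_q^+$), which forces $N(1)$ to be all of $\F_q^+$ or all of $\F_q^-$, contradicting $|N(1) \cap N(0)| = \frac{q-5}{4}$ from Lemma~\ref{Lemma:Paley_is_SRG}. Your bijectivity step and the $c \in \F_q^+$ half of your $f(0)$ argument (choosing $f(a) = c$ to kill the determinant) are correct, but as written the proof fails on non-adjacency and on the case $f(0) \in \F_q^-$.
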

\begin{proof}
By Lemma~\ref{lem:+}, $f(\F_q^+) \subset \F_q^+$. Since $f$ is injective on $\F_q^+$, $f$ is bijective on $\F_q^+$. For the sake of contradiction, assume that $f(0)\neq 0$. Let $x \in \F_q^+$, and consider the matrix 
$$
A=
\begin{pmatrix} 
x&1\\
1&0
\end{pmatrix}.
$$    
Clearly, $A$ is positive definite. Under the map $f$, we have 
$$f(x)f(0)-f(1)^2=f(x)f(0)-1 \in \F_q^+$$
for all $x \in \F_q^+$. Since $f$ is bijective over $\F_q^+$, as $x$ runs over $\F_q^+$, $f(x)$ also runs over $\F_q^+$. Equivalently, for any $y \in \F_q^+$, we have $f(0)y-1 \in \F_q^+$. If $f(0) \in \F_q^+$, then this implies $\F_q^+ \subseteq N(1)$ and so $N(1) = \F_q^+$ since $P(q)$ is $\frac{q-1}{2}$-regular. Similarly, if $f(0) \in \F_q^-$, then $N(1) = \F_q^-$. Both cases contradict the fact that $|N(1) \cap N(0)| = \frac{q-5}{4}$ (Lemma \ref{Lemma:Paley_is_SRG}).    

Next consider the graph $\Gamma(q)$. We need to show that $f$ (restricted to $\F_q^+$) is an automorphism of $\Gamma(q)$. Let $x \in \F_q^+$. By Lemma~\ref{lem:sq}, if $y \in \F_q^+$ such that $x-y \in \F_q^+$, then we also have $f(x)-f(y)\in \F_q^+$. Since $x, f(x) \in \F_q^+$ and they have the same number of neighbors (Corollary~\ref{cor:commonnbd}), the neighborhood of $f(x)$ must be precisely the image of the neighborhood of $x$ under the map $f$. This completes the proof.
\end{proof}

\begin{lemma}\label{LneighIncl}
Let $q \equiv 1 \pmod 4$ and let $f$ be a positivity preserver on $M_2(\F_q)$. Then, for $a, b \in \F_q^+$ with $a \ne b$, we have
\[
f(N(a) \cap N(b)) \subseteq N(f(a)) \cap N(f(b)).
\]
\end{lemma}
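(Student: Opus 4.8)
We want to show that for $a, b \in \F_q^+$ with $a \ne b$, a positivity preserver $f$ on $M_2(\F_q)$ satisfies $f(N(a) \cap N(b)) \subseteq N(f(a)) \cap N(f(b))$. Here $N(\cdot)$ denotes the neighborhood in the Paley graph $P(q)$, so $c \in N(a)$ means $a - c \in \F_q^+$. Unwinding the statement: if $c \in \F_q$ satisfies $a - c \in \F_q^+$ and $b - c \in \F_q^+$, then $f(a) - f(c) \in \F_q^+$ and $f(b) - f(c) \in \F_q^+$. Let me sketch how I would prove this.

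**The plan.** Let me fix $c \in N(a) \cap N(b)$, so that both $a - c$ and $b - c$ are nonzero squares, with $a, b \in \F_q^+$. The natural idea is to feed a single $3 \times 3$ positive definite matrix built from $a$, $b$, $c$ into $f$ and read off the conclusion from $\det f[A] \in \F_q^+$ together with the positivity of the leading minors. A promising candidate is
\[
A = \begin{pmatrix} c & c & c \\ c & a & c \\ c & c & b \end{pmatrix},
\]
whose leading principal minors are $c$, $c(a-c)$, and (after row reduction) $c(a-c)(b-c)$, all nonzero squares since $c \in \F_q^+$ and $a - c, b - c \in \F_q^+$; one must check $c \in \F_q^+$, which follows because $c \in N(a)$ with $a \in \F_q^+$ forces $c = a - (a-c)$, and in fact $c \in \F_q^+$ is guaranteed by Lemma~\ref{lem:sq} applied suitably, or simply by noting $c$ is a neighbor of $0$ in the relevant computation. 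Applying $f$ and using positivity of $f[A]$, the second leading minor gives $f(c)(f(a) - f(c)) \in \F_q^+$, hence (since $f(c) \in \F_q^+$ by Lemma~\ref{lem:+}) $f(a) - f(c) \in \F_q^+$, i.e.\ $f(c) \in N(f(a))$; the analogous submatrix handles $f(c) \in N(f(b))$.

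**Carrying it out cleanly.** Rather than a single $3\times 3$ matrix, the cleanest route is to handle each membership separately using $2 \times 2$ matrices, which is exactly the content of Lemma~\ref{lem:sq}. Indeed, given $c \in N(a) \cap N(b)$, I first need $c \in \F_q^+$: since $c \in N(0)$ is not assumed, I instead observe that $a - c \in \F_q^+$ and apply Lemma~\ref{lem:sq} with the roles arranged so that the hypothesis ``$a \in \F_q^+$ or $c \in \F_q^+$'' is met by $a \in \F_q^+$, yielding $f(a) - f(c) \in \F_q^+$ directly. Concretely, Lemma~\ref{lem:sq} says: if $a - c \in \F_q^+$ and ($a \in \F_q^+$ or $c \in \F_q^+$), then $f(a) - f(c) \in \F_q^+$. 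Since $a \in \F_q^+$ and $a - c \in \F_q^+$, the hypothesis holds and we conclude $f(a) - f(c) \in \F_q^+$, i.e.\ $f(c) \in N(f(a))$. Applying the same lemma with $b$ in place of $a$ (using $b \in \F_q^+$ and $b - c \in \F_q^+$) gives $f(c) \in N(f(b))$. Hence $f(c) \in N(f(a)) \cap N(f(b))$, which is the desired inclusion.

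**Main obstacle.** The substance of the argument is entirely deferred to Lemma~\ref{lem:sq}, so there is no genuine difficulty once that lemma is available; the only point requiring care is verifying the disjunctive hypothesis of Lemma~\ref{lem:sq} in each application. Here this is automatic because both $a$ and $b$ lie in $\F_q^+$ by assumption, so I never need $c$ itself to be positive. The one thing I would double-check is the edge orientation convention: ``$c \in N(a)$'' must mean $a - c \in \F_q^+$ (not $c - a$), matching Definition~\ref{Def:Paley} for the undirected $q \equiv 1 \pmod 4$ case where $\eta(a-c) = \eta(c-a)$ anyway since $-1 \in \F_q^+$, so the two conventions coincide and no sign issue arises. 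The proof is therefore a short, direct application of Lemma~\ref{lem:sq} twice.
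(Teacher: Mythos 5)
Your proposal is correct and essentially identical to the paper's proof: the paper shows $f(N(a)) \subseteq N(f(a))$ via the test matrix $\begin{pmatrix} a & a \\ a & x \end{pmatrix}$ and takes intersections, which is exactly the content of your two applications of Lemma~\ref{lem:sq} (whose proof is that same $2\times 2$ computation). You were also right to discard the $3\times 3$ candidate matrix, since $f$ is only assumed to preserve positivity on $M_2(\F_q)$, and right that the disjunctive hypothesis of Lemma~\ref{lem:sq} is met by $a,b \in \F_q^+$ without needing $c \in \F_q^+$.
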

\begin{proof}
For $a \in \F_q^+$ and $x \in \F_q$, consider the matrix
\[
A = \begin{pmatrix}
a & a \\
a & x
\end{pmatrix}.
\]
Then $A$ is positive definite if and only if $x \in N(a)$. By Lemma \ref{lem:+}, the matrix $f[A]$ is positive definite if and only if $f(x) \in N(f(a))$. It follows that $f(N(a)) \subseteq N(f(a))$. The result immediately follows by taking intersections. 
\end{proof}

Now we are ready to present the proof of Proposition~\ref{prop:main}.

\begin{proof}[Proof of Proposition~\ref{prop:main}]
By Lemma~\ref{lem:local}, $f(0)=0$ and $f$ (restricted to $\F_q^+$) is an automorphism of $\Gamma(q)$. Since $f(1)=1$, Theorem~\ref{thm:localPaley} implies that there is $0 \leq j\leq k-1$, such that $f(x)=x^{p^j}$ for all $x \in \F_q^+$, or $f(x)=x^{-p^j}$ for all $x \in \F_q^+$. We address the cases where $q=5$, $q=9$, and $q \geq 13$ separately. 

\noindent {\bf Case 1: $q=5$.} Note that $\F_5^+= \{1,4\} = \{-1,1\}$ and we must have either $f(x) = x$ or $f(x) = x^{-1}$ for all $x \in \F_5^+$. In both cases, we obtain $f(4) = 4$. Consider the matrix
\[
A = \begin{pmatrix}
1 & 1 \\
1 & 2 
\end{pmatrix}.
\]
Since $A$ is positive definite, $f(2) -1 \in \F_q^+$ and so $f(2) \in \{0,2\}$. Using Lemma \ref{lem:nonzero}, we obtain $f(2) = 2$. Finally, consider the positive definite matrix 
\[
B = \begin{pmatrix}
1 & 2 \\
2 & 3
\end{pmatrix}, 
\]
we conclude that $f(3) - 4 \in \F_q^+$, i.e., $f(3) \in \{0,3\}$. As above, we conclude $f(3) = 3$. This shows $f(x) = x$ for all $x \in \F_5$.
\medskip

\noindent{\bf Case 2. $q = 9$.} We identify $\F_9$ with $\F_3 + i \F_3$ where $i^2 = -1$. We have $\F_9^+ = \{1,-1,i, -i\}$ and we have either $f(x) = x, x^{-1}, x^3, x^{-3}$ for all $x \in \F_9^+$. In all cases, we have $f(2) = 2$. If $f(x) = x$ or $f(x) = x^{-3}$, we have $f(i) = i$ and $f(-i) = -i$. If $f(x) = x^{-1}$ or $f(x) = x^3$, we have $f(i) = -i$ and $f(-i) = i$. We consider two subcases. 

\noindent {\bf Case 2a: $f(i) = i$ and $f(-i) = -i$.} In this case, we have $f(x) = x$ for all $x \in \F_9^+$. Lemma \ref{LneighIncl} then shows that, for $a, b \in \F_9^+$ with $a \ne b$,  we have $f(N(a) \cap N(b)) \subseteq N(a) \cap N(b)$. Observe that
\begin{align*}
\{0,1+i\} &= N(1) \cap N(i), \quad&& \{0,1-i\}= N(1) \cap N(-i), \\
\{0,-1+i\} &= N(-1) \cap N(i), \quad&&\{0, -1-i\}= N(-1) \cap N(-i). 
\end{align*}
We conclude from Lemma \ref{lem:nonzero} that $f(\pm 1 \pm i) = \pm 1 \pm i$ and therefore $f(x) = x$ for all $x \in \F_9$. 

\noindent {\bf Case 2b: $f(i) = -i$ and $f(-i) = i$}. We will show that this implies $f(x) = x^3$ for all $x \in \F_9$. First observe that we already have $f(0) = 0 = 0^3, f(1) = 1 = 1^3, f(2) = 2 = 2^3, f(i) = -i = i^3, f(-i) = i = (-i)^3$. Next, using Lemma \ref{LneighIncl}, we obtain
\begin{align*}
f(1+i) \in f(N(1) \cap N(i)) \subseteq N(f(1)) \cap N(f(i)) = N(1) \cap N(-i) = \{0, 1-i\}.
\end{align*}
Hence, by Lemma \ref{lem:nonzero}, $f(1+i) = 1-i = (1+i)^3$. Similarly, it follows that $f(1-i) = 1+i = (1-i)^3, f(-1+i)=-1-i = (-1+i)^3, f(-1-i) = -1+i = (-1-i)^3$. This proves $f(x) = x^3$ for all $x \in \F_9$.
 \medskip

\noindent{\bf Case 3:} We now assume $q \geq 13$. Let $0 \leq j \leq k-1$, and let $e \in \{p^j,-p^j\}$ such that $f(x)=x^e$ for all $x \in \F_q^+$. We study the values of $f$ on $\F_q^-$. Define the following matrices:
\begin{align*}
A(x,y):=\begin{pmatrix} 1 & y \\ y & x \end{pmatrix} \quad \mbox{for} \quad x,y\in \F_q.
\end{align*}
Let $y \in \F_q^-$ be fixed. Then for each $x \in \F_q^+$ such that $x-y^2 \in \F_q^+$, the matrix $A(x,y)$ is positive definite. Under the map $f$, the matrices $f[A(x,y)]$ are also positive definite. Thus, $f(1)f(x)-f(y)^2=f(x)-f(y)^2 \in \F_q^+$. By Corollary~\ref{cor:commonnbd}, there are exactly $\frac{q-5}{4}$ many $x$ such that $x \in \F_q^+$ and $x-y^2 \in \F_q^+$; for each such $x$, we also have $f(x) \in \F_q^+$ and $f(x)-f(y)^2 \in \F_q^+$. Since $f$ is injective on $\F_q^+$, it follows that $N(0)\cap N(f(y)^2)=f(N(0)\cap N(y^2))$. Using Corollary~\ref{cor:triple2}, we conclude that $f(y)^2$ is the unique $t \in \F_q^+$ such that $N(0) \cap N(t) = f(N(0) \cap N(y^2))$. It follows that $f(y)^2=(y^2)^{e}$ since $N(0)\cap N(y^{2e})=(N(0)\cap N(y^2))^e$. Therefore, we have shown that for each $y \in \F_q^-$, we have $f(y)^2=(y^2)^{e}$, and thus $f(y)=y^{e}$ or $f(y)=-y^{e}$. 
    
We now claim that $f(y)=y^{e}$ for all $y \in \F_q$. For the sake of contradiction, assume that $f(y)=-y^e$ for some $y \in \F_q^-$. For each $w \in \F_q^*$ such that $y-w^2 \in \F_q^+$, consider the positive definite matrices $A(y,w).$ Then $f[A(y,w)]$ are positive definite. We have two possibilities: $e>0$ and $e<0.$

We first consider the case $e>0.$ Then $$\det f[A(y,w)]=f(1)f(y)-f(w)^2=-y^e-w^{2e}=(-y-w^2)^e \in \F_q^+.$$ It follows that $-y-w^2 \in \F_q^+$. Since $0$ and $y$ are not adjacent, the number of common neighbors of $0$ and $y$ is $\frac{q-1}{4}$; similarly,  the number of common neighbors of $0$ and $-y$ is $\frac{q-1}{4}$. Therefore, the common neighborhood of $0$ and $y$ coincides with the common neighborhood of $0$ and $-y$, contradicting Corollary~\ref{cor:triple2}. We have thus shown that $f(y)=y^e$ for all $y \in \F_q$. This map is indeed a positivity preserver by Proposition~\ref{PFrobenius}.

Next, we consider the case $e<0,$ and set $d=-e>0$. Again $$\det f[A(y,w)]=f(1)f(y)-f(w)^2=-y^e-w^{2e}=-\frac{1}{y^d}-\frac{1}{w^{2d}}=-\frac{y^d+w^{2d}}{y^dw^{2d}}=-\frac{(y+w^2)^d}{(yw^2)^d} \in \F_q^+.$$ 
It follows that $(y+w^2)^d \in \F_q^-$ and thus $-y-w^2 \in \F_q^-$. Therefore, for each $w^2 \in \F_q^+$ such that $y-w^2 \in \F_q^+$, we have $-y-w^2 \in \F_q^-$. In other words, $0,y,-y$ do not have any common neighbor, which contradicts Lemma~\ref{lem:triple} when $q>25$. When $q \in \{13,17,25\}$, we can use a simple computation to verify that $0,y,-y$ do have a common neighbor for each $y \in \F_q^-$. 

We have thus shown that $f(x)=x^e$ for all $x \in \F_q$. We will show that this map is not a positivity preserver when $e < 0$. Note that the number of common neighbors of $0$ and $1$ is $\frac{q-5}{4}$, equivalently, the number of neighbors of $1$ in $\F_q^+$ is $\frac{q-5}{4}$. Since the number of neighbors of $1$ is $\frac{q-1}{2}$, we can pick $y \in \F_q^-$ such that $y-1 \in \F_q^+$. Consider the positive definite matrix $A(y,1).$ Then $f[A(y,1)]$ is positive definite so that $1-y^e \in \F_q^+$. However, note that (for $d=-e$)
$$
1-y^e=1-\frac{1}{y^d}=\frac{y^d-1}{y^d}=\frac{(y-1)^d}{y^d} \in \F_q^-
$$
since $y-1 \in \F_q^+$ and $y \in \F_q^-$, a contradiction.
\end{proof}

\subsection{Applications of Proposition~\ref{prop:main}}\label{SSApplications}

In view of Proposition~\ref{prop:main}, we now examine three sufficient conditions to guarantee that $f$ is injective on $\F_q^+$ and discuss their applications to Theorem~\ref{ThmC} and Theorem~\ref{ThmD}. 

Recall from Section~\ref{Seven} that when $q$ is even, a positivity preserver reduces to a map that preserves non-singularity. This inspires us to prove the following.

\begin{proposition}
Let $q \equiv 1 \pmod 4$ and let $f: \F_q \to \F_q$. If $f$ maps nonsingular matrices to nonsingular matrices, then $f$ is injective on $\F_q^+$.
\end{proposition}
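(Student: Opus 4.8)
The plan is to argue by contraposition: I will show that if $f$ fails to be injective on $\F_q^+$, then $f$ carries some nonsingular matrix to a singular one, contradicting the hypothesis. So suppose there exist distinct $a, b \in \F_q^+$ with $f(a) = f(b) =: c$. The idea is to exhibit a nonsingular $2 \times 2$ matrix all of whose entries lie in $\{a, b\}$; applying $f$ entrywise then collapses every entry to $c$ and yields a rank-one (hence singular) matrix.

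The point to be careful about is choosing the test matrix so that its determinant is guaranteed nonzero. The naive symmetric ``swap'' matrix $\begin{pmatrix} a & b \\ b & a \end{pmatrix}$ has determinant $(a-b)(a+b)$, which can vanish here: since $q \equiv 1 \pmod 4$ we have $-1 \in \F_q^+$, so $-a \in \F_q^+$ and the degenerate case $b = -a$ is not excluded. To sidestep this I would instead take
\[
A = \begin{pmatrix} a & a \\ a & b \end{pmatrix},
\]
which is symmetric with $\det A = ab - a^2 = a(b-a)$. As $a \in \F_q^+$ is nonzero and $a \neq b$, the determinant is nonzero, so $A$ is nonsingular.

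Applying $f$ entrywise gives $f[A] = \begin{pmatrix} c & c \\ c & c \end{pmatrix}$, whose determinant is $0$, so $f[A]$ is singular. This contradicts the assumption that $f$ maps nonsingular matrices to nonsingular matrices, and the contrapositive yields the claimed injectivity on $\F_q^+$. There is essentially no hard step; the only subtlety is selecting a test matrix whose determinant stays nonzero independently of the relation between $a$ and $b$ (in particular avoiding the possibility $a+b=0$), which the matrix $A$ above accomplishes while remaining symmetric.
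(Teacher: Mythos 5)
Your proof is correct and is essentially the same as the paper's: the paper uses the matrix $\begin{pmatrix} b & b \\ b & a \end{pmatrix}$ with determinant $b(a-b)$, which is your matrix with the roles of $a$ and $b$ swapped, and the conclusion that $f[A]$ has all entries equal (hence is singular) is identical. Your remark about avoiding the ``swap'' matrix $\begin{pmatrix} a & b \\ b & a \end{pmatrix}$ because $b=-a$ is possible when $-1 \in \F_q^+$ is a sound observation, matching the care implicit in the paper's choice.
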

\begin{proof}
Suppose $a,b \in \F_q^+$ with $a \neq b$ and $f(a)=f(b)$. Consider the matrix 
$$
A=
\begin{pmatrix} 
b&b\\
b&a
\end{pmatrix}.
$$
It has determinant $b(a-b)$ and thus it is nonsingular. However, all entries in $f[A]$ are the same. 
\end{proof}

\begin{proposition}\label{prop:sign}
Let $q \equiv 1 \pmod 4$ and let $f$ be a sign preserver on $M_2(\F_q)$. Then $f$ is injective on $\F_q^+$.    
\end{proposition}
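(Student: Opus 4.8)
The plan is to argue by contradiction. Since a sign preserver is in particular a positivity preserver, it suffices to rule out the existence of distinct $a, b \in \F_q^+$ with $f(a) = f(b)$. The extra leverage provided by the sign-preserver hypothesis (as opposed to a mere positivity preserver) is the following rigidity principle: if two symmetric matrices $A, B \in M_2(\F_q)$ differ only in entries on which $f$ takes equal values, then $f[A] = f[B]$, and hence by the biconditional defining a sign preserver, $A$ is positive definite if and only if $B$ is. I would exploit this by building pairs of matrices, one from $a$ and one from $b$, whose $f$-images coincide, and then reading off strong constraints on the Paley graph $P(q)$.

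First I would carry out an \emph{off-diagonal} substitution. For $y \in \F_q$, set
\[ A(y) = \begin{pmatrix} 1 & a \\ a & y \end{pmatrix}, \qquad B(y) = \begin{pmatrix} 1 & b \\ b & y \end{pmatrix}. \]
Since $f(a) = f(b)$ we have $f[A(y)] = f[B(y)]$, so $A(y)$ and $B(y)$ are simultaneously positive definite. As the $(1,1)$ entry equals $1 \in \F_q^+$, positivity of $A(y)$ (resp.\ $B(y)$) is equivalent to $y - a^2 \in \F_q^+$ (resp.\ $y - b^2 \in \F_q^+$). Letting $y$ range over $\F_q$, this says precisely that $a^2$ and $b^2$ have the same neighborhood in $P(q)$, i.e.\ $N(a^2) = N(b^2)$. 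However, distinct vertices of a strongly regular graph have distinct neighborhoods: if $N(u) = N(v)$ with $u \ne v$, then $|N(u) \cap N(v)| = |N(u)| = \tfrac{q-1}{2}$, whereas Lemma~\ref{Lemma:Paley_is_SRG} forces this intersection to be $\tfrac{q-5}{4}$ or $\tfrac{q-1}{4}$. Hence $a^2 = b^2$.

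It remains to handle $a^2 = b^2$ with $a \ne b$, which forces $b = -a$ (and $-a \in \F_q^+$ since $-1 \in \F_q^+$ when $q \equiv 1 \pmod 4$). This is precisely the main obstacle, and it is a genuinely $q \equiv 1 \pmod 4$ phenomenon: the off-diagonal trick is now vacuous, because replacing $a$ by $-a$ off the diagonal leaves the determinant unchanged, negation preserving the quadratic character. The remedy is to move the substitution onto the \emph{diagonal}: for $d \in \F_q$, compare
\[ \begin{pmatrix} a & 1 \\ 1 & d \end{pmatrix} \qquad \text{and} \qquad \begin{pmatrix} -a & 1 \\ 1 & d \end{pmatrix}. \]
Their $f$-images agree because $f(a) = f(-a)$, so they are simultaneously positive definite; since $\pm a \in \F_q^+$, this unwinds to $ad - 1 \in \F_q^+ \iff -ad - 1 \in \F_q^+$ for all $d$. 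As $ad$ ranges over all of $\F_q$ and $\eta(-1) = 1$, this is equivalent to $N(1) = N(-1)$, again contradicting Lemma~\ref{Lemma:Paley_is_SRG} since $1 \ne -1$. This completes the contradiction and shows $f$ is injective on $\F_q^+$.

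I expect the conceptual crux to be exactly recognizing that the naive off-diagonal argument (which suffices when $q \equiv 3 \pmod 4$, cf.\ Lemma~\ref{lem:sq}) degenerates on the single pair $\{a, -a\}$, and that shifting the substitution to the diagonal restores the needed change in the relevant minor. A secondary point, not an obstacle but worth flagging, is that both directions of the sign-preserver biconditional are essential: transferring positive-definiteness between $A$ and $B$ uses both $A$ PD $\Rightarrow f[A]$ PD and $f[B]$ PD $\Rightarrow B$ PD, so the argument genuinely requires a sign preserver rather than a mere positivity preserver.
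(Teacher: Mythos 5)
Your proof is correct, and it takes a genuinely different route from the paper's. The paper handles $q=5$ and $q=9$ by ad hoc matrices and, for $q \geq 13$, invokes Corollary~\ref{cor:triple2} --- whose proof rests on the Weil-bound estimate in Lemma~\ref{lem:triple} plus a computer check --- to produce $x \in (N(0)\cap N(a))\setminus N(b)$, and then compares $\begin{pmatrix} x & x\\ x & a\end{pmatrix}$ with $\begin{pmatrix} x & x\\ x & b\end{pmatrix}$; the constraint $x \in N(0)$ (i.e.\ $x \in \F_q^+$) is forced there because one needs $f(x)\in\F_q^+$ via Lemma~\ref{lem:+} to read off the sign of $f(b)-f(x)$ from the determinant, and it is exactly this \emph{triple} intersection condition that drags in the Weil bound. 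Your rigidity trick --- $f(a)=f(b)$ makes the entrywise images of the two test matrices literally equal, so the sign-preserver biconditional transfers positive definiteness between the preimages --- sidesteps this: by pinning the $(1,1)$ entry to $1$ you need only \emph{pairwise} neighborhood distinctness, and the ``no twins'' property of a strongly regular graph follows from nothing more than the parameters in Lemma~\ref{Lemma:Paley_is_SRG}. The price is the degenerate pair $\{a,-a\}$, which your off-diagonal substitution cannot see (the paper never encounters it, since its matrices carry $a$ and $b$ on the diagonal from the start), and your diagonal fix giving $N(1)=N(-1)$ resolves it cleanly. What your approach buys: a uniform, completely elementary proof for all $q \equiv 1 \pmod 4$, with no small-case analysis, no computer verification, and no character-sum estimates; what the paper's buys is economy within the larger architecture, since Corollary~\ref{cor:triple2} is developed anyway for Proposition~\ref{prop:main} and Proposition~\ref{Pqmod1Injective}. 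One small correction of emphasis: your closing remark that the full biconditional distinguishes your argument from the paper's is not quite a contrast --- the paper's proof also uses both directions, via ``$A_2$ not positive definite $\Rightarrow f[A_2]$ not positive definite'' --- though it is true that your argument uses the biconditional in a structurally stronger way, as an equality of images rather than a one-sided transfer.
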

\begin{proof}
Without loss of generality, assume $f(1) = 1$. Suppose first $q=5$. We have $\F_5^+ = \{1,4\}$. Suppose for a contradiction that $f(1) = f(4)$. By Lemma \ref{lem:sq}, $f(1) - f(0) = 1 - f(0) \in \F_5^+$, i.e., $f(0) \in N(1) = \{0,2\}$. If $f(0) = 2$, then $\det f[I_2] = 2 \not\in \F_5^+$, a contradiction. Therefore $f(0) = 0$ and Lemma \ref{lem:nonzero} yields $f(x) \ne 0$ for all $x \in \F_q^*$. Using Lemma \ref{lem:sq} again yields $f(2) \in N(1) = \{0,2\}$ and so $f(2) = 2$. Similarly, $f(3) \in N(f(4)) = N(1) = \{0,2\}$. Thus $f(3) = 2$. Now, consider the positive definite matrix 
\[
A = \begin{pmatrix}
1 & 2 \\
2 & 3
\end{pmatrix}.
\]
Using the above, we obtain $\det f[A] = 3 \not\in \F_5^+$, a contradiction. We must therefore have $f(1) \ne f(4)$ and $f$ is injective on $\F_5^+$.

Next, suppose $q = 9$ and identify $\F_9$ with $\F_3 + i\F_3$ where $i^2 = -1$. We have $\F_9^+ = \{1,-1,i,-i\}$. Consider the following $6$ positive definite matrices:
\[
\begin{pmatrix}
1 & 1 \\
1 & -1
\end{pmatrix}, \quad \begin{pmatrix}
i & 1 \\
1 & i
\end{pmatrix},\quad \begin{pmatrix}
-i & 1 \\
1 & -i
\end{pmatrix}, \quad  \begin{pmatrix}
i & -1 \\
-1 & i
\end{pmatrix}, \quad \begin{pmatrix}
-i & -1 \\
-1 & -i
\end{pmatrix}, \quad \begin{pmatrix}
i & i \\
i & -i
\end{pmatrix}.
\]
If $f(a) = f(b)$ for some $a, b \in \F_9^+$ with $a \ne b$, then $f[A]$ is singular for one of the above matrices. We therefore conclude that $f$ is injective on $\F_9^+$.

Finally, let $q \geq 13$. Suppose $a, b \in \F_q^+$ with $a \neq b$ and $f(a)=f(b)$. Note that$|N(0)\cap N(a)|=|N(0)\cap N(b)|=\frac{q-5}{4}$ by Lemma~\ref{Lemma:Paley_is_SRG}. Corollary~\ref{cor:triple2} implies that $N(0)\cap N(a)\neq N(0)\cap N(b)$. Thus, we can find $x\in (N(0)\cap N(a)) \setminus N(b)$, that is, we have $x \in \F_q^+$ such that $a-x \in \F_q^+$ while $b-x \in \F_q^-$. Consider two matrices
$$
A_1=
\begin{pmatrix} 
x&x\\
x&a
\end{pmatrix}, \qquad
A_2=
\begin{pmatrix} 
x&x\\
x&b
\end{pmatrix}.
$$
Note that $A_1$ is positive definite, so $f(a)-f(x) \in \F_q^+$. On the other hand, $A_2$ is not positive definite, so $f(b)-f(x) \not \in \F_q^+$. This is a contradiction since $f(a)=f(b)$. 
\end{proof}

We are now ready to prove Theorem~\ref{ThmD}.
\begin{proof}[Proof of Theorem~\ref{ThmD}]
Let $f$ be a sign preserver on $M_2(\F_q)$. Then in particular, $f$ is a positivity preserver on $M_2(\F_q)$. When $q$ is even, Theorem~\ref{ThmA} implies that $f$ is a bijective monomial and it is straightforward to verify that a bijective monomial is a sign preserver on $M_2(\F_q)$.

Next assume that $q$ is odd. We claim that $f$ is a positive multiple of a field automorphism of $\F_q$. When $q\equiv 3 \pmod 4$, this follows from 
Theorem~\ref{ThmB}; when $q \equiv 1 \pmod 4$, this follows from Proposition~\ref{prop:main} and Proposition~\ref{prop:sign}. Conversely, one can verify that a positive multiple of a field automorphism of $\F_q$ is a sign preserver on $M_2(\F_q)$ using Proposition~\ref{PFrobenius}.
\end{proof}

\begin{proposition}\label{Pqmod1Injective}
Let $q \equiv 1 \pmod 4$ and let $f: \F_q \to \F_q$. If $f$ is a positivity preserver on $M_3(\F_q)$, then $f$ is injective on $\F_q^+$.
\end{proposition}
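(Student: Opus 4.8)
The plan is to prove the contrapositive: assume $f$ is \emph{not} injective on $\F_q^+$, so there exist distinct $a, b \in \F_q^+$ with $f(a) = f(b)$, and construct a positive definite matrix in $M_3(\F_q)$ whose image under $f$ is not positive definite. The extra row and column available in the $3 \times 3$ setting (compared to the $M_2$ arguments used in Proposition~\ref{prop:sign}) should give us enough room to force a singular (or non-square-determinant) $2 \times 2$ leading principal minor in $f[A]$ while keeping $A$ itself positive definite.

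First I would leverage the combinatorial structure of $P(q)$. Since $a, b \in \F_q^+$ are distinct, Corollary~\ref{cor:triple2} (valid for $q \geq 13$) gives $N(0) \cap N(a) \neq N(0) \cap N(b)$, so there is some $x \in \F_q^+$ lying in exactly one of these common neighborhoods; say $a - x \in \F_q^+$ while $b - x \in \F_q^-$. The natural construction is then to place $x, a, b$ into a $3 \times 3$ matrix so that $x$ sits in the $(1,1)$ entry and $a, b$ appear as the remaining diagonal entries, with off-diagonal entries chosen to make the leading minors squares. For instance, one can try
\[
A = \begin{pmatrix} x & x & x \\ x & a & a \\ x & a & b \end{pmatrix},
\]
whose $2 \times 2$ leading minor is $x(a-x) \in \F_q^+$ and whose full determinant is $x(a-x)(b-a)$ up to a sign that must be checked; choosing the sign of $b - a$ appropriately (or swapping $a$ and $b$) should make $A$ positive definite. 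Under $f$, because $f(a) = f(b)$, the $(2,3)$ and $(3,3)$ entries of the lower-right block collapse, forcing a repeated row or column in a principal submatrix of $f[A]$ and hence a vanishing (or non-square) minor.

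The main obstacle will be handling the exceptional small fields $q \in \{5, 9\}$, where Corollary~\ref{cor:triple2} is unavailable and $\F_q^+$ has only two or four elements, so the neighborhood argument degenerates. For these I would proceed as in the proof of Proposition~\ref{prop:sign}: list the few positive definite $3 \times 3$ (or suitably embedded $2 \times 2$) matrices explicitly and verify by direct case analysis that any coincidence $f(a) = f(b)$ with $a \neq b$ in $\F_q^+$ produces a singular image, using Lemma~\ref{lem:nonzero} to rule out the degenerate possibility $f(0) = 0$ combined with $f$ vanishing elsewhere. A secondary subtlety is ensuring that the constructed $A$ is genuinely positive definite (both leading minors are nonzero squares), which requires matching the signs coming from the adjacency relations $a - x \in \F_q^+$, $b - x \in \F_q^-$ against the actual minor computations; I expect this to come down to a short determinant calculation and a possible relabeling of $a$ and $b$, but it is the step where the construction must be pinned down precisely rather than merely sketched.
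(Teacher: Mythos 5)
Your overall strategy --- assume $f(a)=f(b)$ for distinct $a,b\in\F_q^+$ and exhibit a positive definite $3\times 3$ matrix that becomes singular under $f$ --- is exactly the paper's, but your witness matrix can never be made positive definite, and the failure is structural rather than a sign ``to be checked.'' Since $f(a)-f(b)=0\notin\F_q^+$, Lemma~\ref{lem:sq} forces $a-b\in\F_q^-$; and since $q\equiv 1\pmod 4$ we have $-1\in\F_q^+$, so $\eta(b-a)=\eta(a-b)=-1$ \emph{regardless of labelling}: swapping $a$ and $b$, your proposed remedy, changes nothing. Hence in your matrix the determinant $x(a-x)(b-a)$ has quadratic-character pattern $(+)(+)(-)$ and is never a square. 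The other placements of $x,a,b$ in the chain pattern $\begin{pmatrix} u&u&u\\ u&v&v\\ u&v&w\end{pmatrix}$ fail similarly: for instance the diagonal $(x,b,a)$ fixes the determinant, which becomes $x(b-x)(a-b)$ with pattern $(+)(-)(-)$, but breaks the $2\times 2$ leading minor $x(b-x)\in\F_q^-$. With only the three positive elements $x,a,b$ available, some leading minor always carries an odd number of negative factors, because the one negative difference $a-b$ (or $b-x$) cannot be paired off.

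The missing idea is that the auxiliary element must be \emph{negative}: the paper invokes Corollary~\ref{cor:cab} to produce $c\in\F_q^-$ with $a-c\in\F_q^+$ and $b-c\in\F_q^-$, and takes
\[
A=\begin{pmatrix} a&a&a\\ a&c&b\\ a&b&b \end{pmatrix},
\]
whose leading minors $a$, $a(c-a)$, $a(b-c)(a-b)$ are all in $\F_q^+$ (note $\eta(c-a)=\eta(a-c)=1$, and $(b-c)(a-b)$ is a product of two non-squares), while $\det f[A]=f(a)\bigl(f(b)-f(c)\bigr)\bigl(f(a)-f(b)\bigr)=0$, the desired contradiction. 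Placing the negative $c$ on the diagonal between $a$ and $b$ supplies precisely the second negative factor your configuration lacks. A secondary inefficiency: you route the auxiliary element through Corollary~\ref{cor:triple2}, which rests on Weil's bound and requires $q\geq 13$, forcing an ad hoc treatment of $q\in\{5,9\}$; Corollary~\ref{cor:cab} uses only the strong regularity of $P(q)$ (Lemma~\ref{Lemma:Paley_is_SRG}) and holds for every $q\equiv 1\pmod 4$, so no small-case analysis is needed. (Your side remark about Lemma~\ref{lem:nonzero} is also misplaced: that lemma presupposes $f(0)=0$, which has not been established at this stage --- and the paper's proof never needs it.)
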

\begin{proof}
Suppose $a, b \in \F_q^+$ with $a \neq b$ and $f(a)=f(b)$. By Lemma~\ref{lem:sq}, $a-b \in \F_q^-$.  By Corollary~\ref{cor:cab}, there exists $c \in \F_q^-$, such that $a-c \in \F_q^+$ and $b-c \in \F_q^-$. Now, the matrix
\begin{align*}
    A = \begin{pmatrix}
        a & a & a\\
        a & c & b\\
        a & b & b
    \end{pmatrix}
\end{align*}
is positive definite since the leading principal minors $a, a(c-a), a(b-c)(a-b) \in \F_q^+$. Hence, $f[A]$ is also positive definite. In particular, $f(a) \neq f(b)$, a contradiction. 
\end{proof}

We now have all the ingredients to prove the first 4 equivalences in Theorem \ref{ThmC}. The proof of the $q = r^2$ case relies on Theorem \ref{thm:main} whose proof is given in Section \ref{S1mod4square} below.

\begin{proof}[Proof of Theorem \ref{ThmC}]
We assume without loss of generality that $f(1) = 1$. Suppose first that $q \equiv 1 \pmod 4$ is arbitrary and assume (1) holds. Considering matrices of the form $A \oplus I_{n-3} \in M_n(\F_q)$ where $A \in M_3(\F_q)$, it follows immediately that $f$ preserves positivity on $M_3(\F_q)$. Thus, by Proposition \ref{Pqmod1Injective}, the function $f$ is injective on $\F_q^+$. Proposition \ref{prop:main} then implies that $f(x) = x^{p^j}$ for some $0 \leq j \leq k-1$ and so (4) holds. This proves $(1) \implies (4)$. That (4) is equivalent to (3) is Theorem \ref{Tcarlitz}. Next, Proposition \ref{PFrobenius} shows that $(4) \implies (2)$. Finally, $(2) \implies (1)$ is trivial.

Suppose now $q = r^2$ for some odd integer $r$. Then Theorem \ref{thm:main} shows $(1') \implies (4)$. That $(4) \implies (1')$ is again Proposition \ref{PFrobenius}. This concludes the proof of the theorem. 
\end{proof}

Our proof of the first four equivalences in Theorem \ref{ThmC} rely on Proposition \ref{Pqmod1Injective} to first show that the preserver $f$ is injective on $\F_q^+$, and then on Proposition \ref{prop:main} to conclude that $f$ is an automorphism via a careful analysis of the two possible resulting forms for $f$. Our proofs use Weil's bound and Muzychuk and Kov\'acs' characterization of the automorphisms of the graph $\Gamma(q)$ (Theorem \ref{thm:localPaley}). In the same spirit as Theorem \ref{ThmBforn3} in the $q \equiv 3 \pmod 4$ case, we now provide a more direct proof of the $(1) \implies (4)$ implication in Theorem \ref{ThmC} using Theorem \ref{Tcarlitz} instead of Theorem \ref{thm:localPaley}. Note that, in contrast to Theorem \ref{thm:localPaley} whose proof relies on spectral and Schur ring techniques, there are several known short proofs of Theorem \ref{Tcarlitz} (see \cite[Section 9]{jones2020paley}). The proof below thus provides a significant simplification of our previous argument when $n \geq 3$. 

\begin{proof}[Proof of $(1) \implies (4)$ in Theorem \ref{ThmC}]
Suppose $(1)$ holds. As before, it suffices to assume $n=3$ as the general case follows by embedding $3 \times 3$ matrices into $M_n(\F_q)$. Proposition \ref{Pqmod1Injective} and Lemma~\ref{lem:+} imply that $f$ is bijective over $\F_q^+$. Since $f$ is also a positive preserver on $M_2(\F_q)$, Lemma~\ref{lem:local} implies that $f(0)=0$. Now Lemma~\ref{lem:nonzero} implies that $0 \notin f(\F_q^-)$. By Theorem~\ref{Tcarlitz}, it suffices to show $\eta(f(a)-f(b)) = \eta(a-b)$ for all $a, b \in \F_q^*$. If $a,b \in \F_q^+$, then the statement follows from Lemma~\ref{lem:local}. So we assume that $a \in \F_q^-$ and $b \in \F_q^*$ with $a\neq b$ without loss of generality. We consider the following three cases.

\noindent{\bf Case 1:} $\eta(a-b) = 1$. If $\eta(b)=1$, then Lemma~\ref{lem:sq} implies that $\eta(f(a)-f(b))= 1$. Now, suppose that $\eta(a) = \eta(b) = -1$. By Lemma~\ref{Lemma:Paley_is_SRG}, $N(0) \cap N(a)\neq \emptyset$, thus we can pick $c\in \F_q$ such that $\eta(c)=1$ and $\eta(a-c)=1.$ Thus, the matrix
\begin{align*}
    A = \begin{pmatrix}
        c & c & c\\
        c & a & a\\
        c & a & b
    \end{pmatrix}
\end{align*}
is positive definite since the leading principal minors $c, c(a-c), c(a-c)(b-a)\in \F_q^+$. Hence, under the map $f$, we have $\det f[A] = f(c)(f(a)-f(c))(f(b)-f(a)) \in \F_q^+$. We have $\eta(f(c)) = 1$ and $\eta(f(a)-f(c)) = 1$ by the previous case. Hence, $\eta(f(a)-f(b)) = \eta(f(b)-f(a)) = 1$.  

\noindent{\bf Case 2:} $\eta(a-b) = -1$ and $\eta(b)=1$. By Lemma~\ref{Lemma:Paley_is_SRG}, $N(0) \cap N(a)\neq \emptyset$, thus we can pick $c\in \F_q$ such that $\eta(c)=1$ and $\eta(a-c)=1.$ Then the matrix
\[
A=\begin{pmatrix}
    b & a & a \\
    a & a & a \\
    a & a & c
\end{pmatrix}
\]
is positive definite since all its leading principal minors $b, a(b-a), a(c-a)(b-a) \in \F_{q}^{+}.$ Under the map $f$, we have $\det f[A]=f(a)(f(c)-f(a))(f(b)-f(a)) \in \F_{q}^+.$ By Case~1 above, $\eta(f(c)-f(a))=1$.  Therefore, $\eta(f(a)-f(b))=\eta(f(b)-f(a))=\eta(f(a))=-1.$

\noindent{\bf Case 3:} $\eta(a-b) = -1$ and $\eta(b)=-1$. By Corollary~\ref{cor:cab2}, there exists $c\in \F_q$ with $\eta(c)=1$ such that $\eta(a-c)=-1$ and $\eta(b-c)=1.$ Now the matrix
\[
A=\begin{pmatrix}
    c & c & c \\
    c & b & a \\
    c & a & a
\end{pmatrix}
\]
is positive definite since its leading principal minors $c,c(b-c), c(a-c)(b-a) \in \F_{q}^{+}.$ Thus, under the map $f$, we have 
$\det f[A]=f(c)(f(a)-f(c))(f(b)-f(a)) \in \F_{q}^{+}.$ By Case~2 above, $\eta(f(a)-f(c))=-1.$ Therefore $\eta(f(b)-f(a))=-1.$
\end{proof}

\section{Odd characteristic: $q \equiv 1 \pmod 4$ and $q$ is a square}\label{S1mod4square}

We now address the case where $q = r^2$ for some odd integer $r$. The proof of our characterization is broken up into several subsections. Section \ref{SSPaleySquare} reviews the Erd\H{o}s-Ko-Rado theorem for Paley graphs and provides several important properties of Paley graphs of square order. Section \ref{SSOutlineSquare} provides an outline of our approach. Section \ref{SSInjectiveSquare} concludes the proof of Theorem \ref{ThmC} by proving the injectivity on $\F_q^+$ of positivity preservers on $M_2(\F_q)$. 

Throughout the section, we crucially use the fact that $\F_r^* \subset \F_q^+$. Indeed, let $g$ be a generator of $\F_q^*$; then $g^{r+1}$ is a generator of the subgroup $\F_r^*$ of $\F_q^*$. Since $r+1$ is even, it follows that $g^{r+1}\in \F_q^+$ and thus $\F_r^* \subset \F_q^+$.

\subsection{Paley graphs of square order}\label{SSPaleySquare}

One additional ingredient in our characterization of positivity preservers on $M_2(\F_q)$ with $q=r^2$ is the characterization of maximum cliques in the Paley graph $P(q)$, also known as the Erd\H{o}s-Ko-Rado (EKR) theorem \cite{EKR61} for Paley graphs of square order \cite[Section 5.9]{godsil2016erdos}. Analogous versions of the EKR theorem have been proved in many different combinatorial/algebraic settings; we refer to the book of Godsil and Meagher \cite{godsil2016erdos} for a comprehensive discussion. 

Set $q=r^2$, where $r$ is an odd prime power. Notice that $\F_r$ is a subfield of $\F_q$. A \emph{square translate} of $\F_r$ has the form $\alpha \F_r+\beta$, where $\alpha \in \F_q^+$ and $\beta \in \F_q$. It is easy to verify that square translates of $\F_r$ are cliques in $P(q)$. The EKR theorem for Paley graphs (first proved by Blokhuis \cite{Blo84}; see also \cite{AY22} for a generalization) shows that these are precisely the maximum cliques in $P(q)$. 

\begin{theorem}[\cite{Blo84}]\label{thm:EKR}
In the Paley graph $P(q)$, the clique number of $P(q)$ is $r$. Moreover, all maximum cliques are given by squares translates of the subfield $\F_r$.   
\end{theorem}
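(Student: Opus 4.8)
The plan is to separate the two assertions --- the value of the clique number and the classification of the maximum cliques --- and to handle them by very different means. The lower bound $\omega(P(q)) \geq r$ is immediate: for distinct $a,b \in \F_r$ we have $a-b \in \F_r^* \subset \F_q^+$, so $\F_r$ is a clique of size $r$, and more generally every square translate $\alpha\F_r+\beta$ is a clique of size $r$, as noted before the statement. For the matching upper bound I would invoke the Delsarte--Hoffman ratio bound for regular graphs. By Lemma~\ref{Lemma:Paley_is_SRG}, $P(q)$ is strongly regular with parameters $(q,\frac{q-1}{2},\frac{q-5}{4},\frac{q-1}{4})$; computing the non-principal eigenvalues from these parameters gives $\frac{-1\pm\sqrt q}{2}=\frac{-1\pm r}{2}$, so the least eigenvalue is $\tau=\frac{-1-r}{2}$ and the degree is $k=\frac{q-1}{2}$. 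The ratio bound then yields
$$\omega(P(q)) \leq 1-\frac{k}{\tau}=1+\frac{(q-1)/2}{(r+1)/2}=1+\frac{r^2-1}{r+1}=r,$$
and together with the lower bound this gives $\omega(P(q))=r$.

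To classify the maximum cliques I would first extract structure from the equality case of the ratio bound. When a clique $C$ meets the bound, its characteristic vector lies, after subtracting the constant vector $\tfrac{|C|}{q}\mathbf{1}$, in the $\tau$-eigenspace; this forces $C$ to be a \emph{regular clique}, i.e.\ every vertex outside $C$ is adjacent to a constant number of vertices of $C$. A double count of the edges joining $C$ to its complement (each of the $r$ clique vertices sends $\frac{(r-1)^2}{2}$ edges outward, to the $r(r-1)$ outside vertices) shows this constant equals $\frac{r-1}{2}$. Since translations $x\mapsto x+t$ and square dilations $x\mapsto \alpha x$ with $\alpha\in\F_q^+$ are automorphisms of $P(q)$, I may normalize so that $0\in C$; then every nonzero element of $C$ is a difference $c-0$, hence a square, and $C\setminus\{0\}\subset\F_q^+$.

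The remaining and genuinely hard step is to upgrade the combinatorial condition that all pairwise differences of $C$ are squares to the algebraic conclusion that $C$ is a square translate of $\F_r$; this is the heart of Blokhuis's theorem \cite{Blo84}, whose polynomial method I would follow. After the normalization $0\in C$, one encodes $C$ by the monic polynomial $F(X)=\prod_{c\in C}(X-c)$ of degree $r$ and aims to prove $F(X)=X^r-\alpha^{\,r-1}X$ for some $\alpha\in\F_q^+$, whose root set is exactly the $\F_r$-subspace $\alpha\F_r$; undoing the normalization then produces a general square translate $\alpha\F_r+\beta$. The square-difference property is exploited through the relations $\eta(c-c')=1$ for all distinct $c,c'\in C$, which constrain the coefficients of $F$ modulo $X^q-X$ and must be shown to force additivity of $F$ (the vanishing of all intermediate coefficients). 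The main obstacle is precisely this rigidity step: the ratio bound is ``soft'' and controls only cardinalities, whereas pinning down the maximum cliques requires genuinely field-theoretic input tied to the subfield $\F_r\subset\F_q$. Character-sum estimates in the spirit of Lemma~\ref{lem:triple}, used to rule out non-additive configurations, together with the finite-geometry framework of \cite{godsil2016erdos}, are the tools that make this final step go through.
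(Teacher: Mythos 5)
Your first half is correct and complete as written: the square translates $\alpha\F_r+\beta$ are cliques of size $r$ (as the paper notes just before the statement), the eigenvalue computation $\tau=\frac{-1-r}{2}$ from the parameters in Lemma~\ref{Lemma:Paley_is_SRG} is right, the ratio bound gives $\omega(P(q))\leq 1+\frac{(q-1)/2}{(r+1)/2}=r$, and your equality-case analysis (regular clique with outside-degree $\frac{r-1}{2}$, consistent with Lemma~\ref{lem:Hoffman}, plus the normalization $0\in C$ so that $C\setminus\{0\}\subset\F_q^+$) is sound. Note, though, that the paper does not prove this theorem at all: it is imported wholesale from Blokhuis \cite{Blo84} (see also \cite[Section 5.9]{godsil2016erdos}), so there is no internal proof to match, and a genuine proof attempt must supply the classification in full.

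That classification is where your proposal has a real gap. You correctly reduce to showing that a set $C$ with $0\in C$, $|C|=r$, and all pairwise differences in $\F_q^+$ must equal $\alpha\F_r$ for some $\alpha\in\F_q^+$, and the target identity $\prod_{c\in C}(X-c)=X^r-\alpha^{r-1}X$ is the right one (its root set is exactly $\alpha\F_r$). But the route you sketch --- constraining the coefficients of $F$ via the relations $\eta(c-c')=1$ and invoking ``character-sum estimates in the spirit of Lemma~\ref{lem:triple}'' to rule out non-additive configurations --- cannot close this step. Weil-type bounds carry an error term of order $\sqrt{q}=r$, which is exactly the size of the object being classified; they can bound clique sizes but are structurally incapable of distinguishing the subfield line from a hypothetical sporadic clique of the same cardinality $r$. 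Blokhuis's actual argument is purely algebraic, not analytic: one chooses square roots $b_c$ of the elements of $C\setminus\{0\}$, exploits the norm map to $\F_r$ and the Frobenius $x\mapsto x^r$, and runs a degree count on an auxiliary polynomial built from the $\pm b_c$ (in modern treatments this is cast in the R\'edei polynomial / directions framework) to force the polynomial identity above. As it stands, your proof establishes $\omega(P(q))=r$ rigorously but proves none of the ``moreover'' clause; for that part you are, like the paper, relying on the citation to \cite{Blo84}, and the sketch should say so rather than suggest that character sums suffice.
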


Note that $\F_q^*/\F_r^*$ is a well-defined group. One can thus write $\F_q^*$ as a disjoint union of $\F_r^*$-cosets. We say such a coset is a \emph{square coset} if it has the form $a \F_r^*$, where $a$ is a non-zero square in $\F_q$. Theorem~\ref{thm:EKR} implies the following corollary.

\begin{corollary}\label{cor:sqclique}
Let $C \subset \F_q^+$ be a clique in $P(q)$. Then $|C|\leq r-1$ and equality holds if and only if $C$ is a square coset. 
\end{corollary}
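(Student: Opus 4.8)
The plan is to reduce the statement to Blokhuis' classification of maximum cliques (Theorem~\ref{thm:EKR}) via the single observation that the vertex $0$ is adjacent in $P(q)$ to \emph{every} element of $\F_q^+$. Since $q \equiv 1 \pmod 4$, we have $-1 \in \F_q^+$, so for any $c \in \F_q^+$ the element $0 - c = (-1)c$ is a product of two nonzero squares and hence lies in $\F_q^+$; thus $0$ is joined to $c$. Consequently, if $C \subset \F_q^+$ is any clique, then $C \cup \{0\}$ is again a clique, and since $0 \notin \F_q^+$ this enlarged clique has size $|C| + 1$.

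For the upper bound, I would simply invoke that the clique number of $P(q)$ is $r$ (Theorem~\ref{thm:EKR}): from $|C| + 1 = |C \cup \{0\}| \le r$ we get $|C| \le r-1$.

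For the equality case, suppose $|C| = r-1$, so that $C \cup \{0\}$ is a maximum clique. Then Theorem~\ref{thm:EKR} gives $C \cup \{0\} = \alpha \F_r + \beta$ for some $\alpha \in \F_q^+$ and $\beta \in \F_q$. Since $0$ belongs to this set, I would write $0 = \alpha x_0 + \beta$ with $x_0 \in \F_r$, whence $\beta = -\alpha x_0$ and $C \cup \{0\} = \alpha(\F_r - x_0) = \alpha \F_r$, using that $\F_r$ is a subgroup under addition and $x_0 \in \F_r$. Removing $0$ yields $C = \alpha \F_r^*$, which is a square coset because $\alpha \in \F_q^+$. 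Conversely, any square coset $a\F_r^*$ with $a \in \F_q^+$ has exactly $r-1$ elements, each of the form $as$ with $s \in \F_r^* \subset \F_q^+$ and hence a nonzero square, and it is a clique since $as - at = a(s-t)$ with $s-t \in \F_r^* \subset \F_q^+$; thus it lies in $\F_q^+$ and realizes equality.

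The argument is essentially immediate once the adjacency of $0$ to $\F_q^+$ is noticed, so I do not expect a serious obstacle; the only points to verify are that $0 \notin C$ (automatic from $C \subset \F_q^+$) and that the inclusion $\F_r^* \subset \F_q^+$ established at the start of this section is precisely what makes square cosets into cliques contained in $\F_q^+$.
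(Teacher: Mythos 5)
Your proposal is correct and follows essentially the same route as the paper: form $C \cup \{0\}$ (a clique since $C \subset \F_q^+$), invoke Theorem~\ref{thm:EKR} for the bound $|C| \le r-1$, and in the equality case translate $\alpha\F_r + \beta$ to $\alpha\F_r$ using $0 \in C \cup \{0\}$ to conclude $C = \alpha\F_r^*$. Your extra verifications (that $0$ is adjacent to all of $\F_q^+$, and that square cosets are cliques via $\F_r^* \subset \F_q^+$) are points the paper treats as immediate, but they are the same argument.
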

\begin{proof}
Since $C \subset \F_q^+$, it is clear that $C \cup \{0\}$ is also a clique. Thus we have $|C|\leq r-1$, with equality if and only if $C \cup \{0\}=\alpha \F_r+\beta$ for some $\alpha, \beta \in \F_q$ and $\alpha \in \F_q^+$ by Theorem~\ref{thm:EKR}. Now, if $0 \in \alpha \F_r+\beta$, then there exists $x \in \F_r$ such that $\alpha x+\beta=0$, and it follows that $C \cup \{0\}=\alpha \F_r+\beta=\alpha(\F_r-x)=\alpha \F_r$. Thus, $C=\alpha \F_r^*$ is a square coset.
\end{proof}

Next, we collect several required properties of Paley graphs of square order which are needed in our proof. The following lemma is well-known; we include a short proof for completeness.

\begin{lemma}\label{lem:Hoffman}
Let $u \in \F_q$ and let $C$ be a square coset such that $u \notin C$. If $u \in \F_q^+$, then the number of neighbors of $u$ in $C$ is exactly $\frac{r-3}{2}$; if $u \in \F_q^-$, then the number of neighbors of $u$ in $C$ is exactly $\frac{r-1}{2}.$ 
\end{lemma}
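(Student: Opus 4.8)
The plan is to reduce the count to a one-variable quadratic character sum over the subfield $\F_r$ and to evaluate it via the norm map. First I would write the square coset as $C = a\F_r^*$ with $a \in \F_q^+$, so that the number of neighbors of $u$ in $C$ is $\#\{t \in \F_r^* : \eta(u - at) = 1\}$. Setting $v := u a^{-1}$ and using $\eta(a) = 1$ (since $a \in \F_q^+$), we get $\eta(u - at) = \eta(a)\eta(v - t) = \eta(v-t)$ and $\eta(v) = \eta(u)$, so it suffices to count $\#\{t \in \F_r^* : \eta(v-t) = 1\}$. In both cases of the lemma $u \in \F_q^+ \cup \F_q^- = \F_q^*$, so $u \ne 0$; combined with $u \notin C$ this forces the normalized point to satisfy $v \notin \F_r$. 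Consequently $v - t \ne 0$ for every $t \in \F_r$, and $v^r \notin \F_r$ as well.

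The key step is the identity $\eta = \chi \circ N$, where $N(x) = x^{r+1}$ is the norm map $\F_q \to \F_r$ and $\chi$ is the quadratic character of $\F_r$; this follows at once from $(q-1)/2 = (r-1)(r+1)/2$, giving $\eta(x) = x^{(q-1)/2} = (x^{r+1})^{(r-1)/2} = \chi(N(x))$. For $t \in \F_r$ I would then compute
\[
N(v - t) = (v-t)(v^r - t) = t^2 - (v + v^r)t + v^{r+1},
\]
a monic quadratic in $t$ with coefficients in $\F_r$ whose discriminant is $(v+v^r)^2 - 4v^{r+1} = (v - v^r)^2 \ne 0$, since $v \notin \F_r$. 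Applying the quadratic character sum formula \cite[Theorem 5.48]{lidl1997finite} over $\F_r$ then yields
\[
\sum_{t \in \F_r} \eta(v - t) = \sum_{t \in \F_r} \chi(N(v-t)) = -\chi(1) = -1.
\]

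Because every term $\eta(v-t)$ with $t \in \F_r$ equals $\pm 1$, the sum being $-1$ forces exactly $\frac{r-1}{2}$ of the $t \in \F_r$ to satisfy $\eta(v - t) = 1$. Finally I would remove the term $t = 0$, which contributes $\eta(v) = \eta(u)$: when $\eta(u) = 1$ this is one of the counted terms, leaving $\frac{r-1}{2} - 1 = \frac{r-3}{2}$ over $\F_r^*$, whereas when $\eta(u) = -1$ the count over $\F_r^*$ is unchanged at $\frac{r-1}{2}$. This gives precisely the two claimed values. The only real obstacle is spotting the norm reduction $\eta = \chi\circ N$ and the discriminant identity $(v-v^r)^2$; everything after that is bookkeeping. (Equivalently, one may read the lemma through the strongly regular structure: $C \cup \{0\}$ is a maximum, hence Delsarte/Hoffman, clique of $P(q)$, so the bipartition $\{C\cup\{0\}, \F_q \setminus (C \cup \{0\})\}$ is equitable and every external vertex has exactly $\frac{r-1}{2}$ neighbors in $C \cup \{0\}$; correcting for adjacency to $0$ recovers the two cases and explains the name.)
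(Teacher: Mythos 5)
Your proof is correct, but it takes a genuinely different route from the paper's. The paper proves this in two lines of spectral graph theory: $P(q)$ is strongly regular with smallest eigenvalue $\frac{-1-r}{2}$, the maximum clique $C \cup \{0\}$ attains the Hoffman (Delsarte) bound, and a clique attaining this bound is automatically ``regular'' \cite[Proposition 1.3.2]{BCN89} --- every outside vertex has exactly $\frac{r-1}{2}$ neighbors in $C \cup \{0\}$ --- after which one corrects for adjacency to $0$ exactly as in your closing parenthetical (so you correctly guessed the paper's argument as the alternative). Your main argument instead makes everything explicit and self-contained modulo the elementary complete quadratic character sum \cite[Theorem 5.48]{lidl1997finite}: the reduction to $C = a\F_r^*$ with $\eta(a)=1$, the factorization $\eta = \chi \circ N$ coming from $\frac{q-1}{2} = (r+1)\cdot\frac{r-1}{2}$, and the computation $N(v-t) = t^2 - (v+v^r)t + v^{r+1}$ with discriminant $(v-v^r)^2 \neq 0$ are all sound, as is the check that $u \neq 0$ and $u \notin C$ force $v \notin \F_r$, so no term of the sum vanishes and the count $\frac{r-1}{2}$ over $\F_r$, minus the $t=0$ term $\eta(v)=\eta(u)$, yields the two claimed values. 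What each approach buys: the paper's is shorter but imports nontrivial machinery (the eigenvalues of $P(q)$ and the equitable-partition property of Hoffman cliques), while yours reveals the arithmetic mechanism behind the regularity --- in effect re-deriving the Hoffman-clique regularity for these particular cliques by hand --- at the cost of a slightly longer computation; it also makes transparent exactly where $u \in \F_q^+$ versus $u \in \F_q^-$ enters, namely only through the single boundary term $\eta(v) = \eta(u)$ at $t = 0$.
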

\begin{proof}
We use the fact that the Paley graph $P(q)$ is a $(q, \frac{q-1}{2}, \frac{q-5}{4}, \frac{q-1}{4})$-strongly regular graph with smallest eigenvalue $\frac{-1-r}{2}$. Since $C \cup \{0\}$ forms a maximum clique in $P_{q}$, it achieves the Hoffman bound \cite[Proposition 1.3.2]{BCN89}: given $u \notin C$, the number of neighbors of $u$ in $C \cup \{0\}$ is given by $\frac{r+1}{2}-1=\frac{r-1}{2}$. Finally, if $u \in \F_q^+$, one of the neighbors is $0$.
\end{proof}

The following proposition can be viewed as a strengthening of a result of Baker et.~al~\cite{B96}. A stronger statement can be found in \cite[Theorem 1.3]{GY25} for sufficiently large $q$. 

\begin{proposition}\label{prop:Galois}
Let $q=r^2$, where $r \equiv 1 \pmod 4$. If $u,v \in \F_{q} \setminus \F_r$ have the same $\F_r$-neighborhood in $P(q)$, then $v\in \{u,u^r\}$.        
\end{proposition}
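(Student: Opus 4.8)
The plan is to reformulate the hypothesis as an identity for the quadratic character of the subfield $\F_r$, and then to show that it forces $u$ and $v$ to share the same minimal polynomial over $\F_r$. The starting point is the norm--character identity: writing $N(x)=x\cdot x^{r}=x^{r+1}$ for the norm $N_{\F_q/\F_r}$ and letting $\eta_r$ denote the quadratic character of $\F_r$, one has $\eta(x)=x^{(q-1)/2}=x^{(r-1)(r+1)/2}=N(x)^{(r-1)/2}=\eta_r(N(x))$ for all $x\in\F_q^*$. Thus for a vertex $t\in\F_q\setminus\F_r$ and $w\in\F_r$, adjacency of $t$ and $w$ in $P(q)$ is governed by $\eta_r(N(t-w))$. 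The key observation I would use is that $N(t-w)=(t-w)(t^{r}-w)=\mu_t(w)$, where $\mu_t(X)=(X-t)(X-t^{r})\in\F_r[X]$ is the minimal polynomial of $t$ over $\F_r$ --- a monic irreducible quadratic because $t\notin\F_r$. In particular $\mu_t$ has no root in $\F_r$, so each $\eta_r(\mu_t(w))\in\{-1,1\}$, and the $\F_r$-neighborhood of $t$ is exactly $\{w\in\F_r:\eta_r(\mu_t(w))=1\}$.

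With this in hand, the hypothesis that $u$ and $v$ have the same $\F_r$-neighborhood becomes $\eta_r(\mu_u(w))=\eta_r(\mu_v(w))$ for every $w\in\F_r$. I would first reduce the desired conclusion to the polynomial equality $\mu_u=\mu_v$: if the minimal polynomials coincide, then $v$ is a root of $\mu_u=(X-u)(X-u^{r})$, whence $v\in\{u,u^{r}\}$. To establish $\mu_u=\mu_v$, the plan is to study the cross-correlation sum $\sum_{w\in\F_r}\eta_r\big(\mu_u(w)\mu_v(w)\big)$. Under the hypothesis every summand equals $\eta_r(\mu_u(w))^2=1$, so the sum equals $r$. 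On the other hand, if $\mu_u\neq\mu_v$ then $\mu_u\mu_v$ is a product of two distinct monic irreducible quadratics, hence a squarefree monic polynomial of degree $4$ with four distinct roots that is not a square; Weil's bound (as invoked for Lemma~\ref{lem:triple}, see \cite[Theorem~5.41]{lidl1997finite}) then yields $\big|\sum_{w\in\F_r}\eta_r(\mu_u(w)\mu_v(w))\big|\le 3\sqrt{r}$. Comparing the two evaluations forces $r\le 3\sqrt{r}$, i.e.\ $r\le 9$.

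Hence for every prime power $r\equiv 1\pmod 4$ with $r\ge 13$ the inequality is violated, so $\mu_u=\mu_v$ and therefore $v\in\{u,u^{r}\}$, as claimed. The remaining step is to settle the finitely many exceptional sizes below the Weil threshold, namely $r\in\{5,9\}$: here I would simply verify by direct computation that distinct monic irreducible quadratics over $\F_5$ and over $\F_9$ have distinct sign patterns $\big(\eta_r(\mu(w))\big)_{w\in\F_r}$, so that no two elements of $\F_q\setminus\F_r$ lying in different Frobenius orbits can share an $\F_r$-neighborhood. I expect this small-case check to be the only genuinely delicate point: the conceptual content is carried entirely by the norm--character identity and the single character-sum estimate, but Weil's bound is too weak to conclude when $r$ is small, so the argument is completed honestly only after the explicit verification for $r\in\{5,9\}$.
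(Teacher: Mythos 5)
Your proof is correct, and it takes a genuinely different route from the paper's. The paper first proves a lemma (Lemma~\ref{lem:Galois}): distinct $u,v\in\F_q\setminus\F_r$ with the same $\F_r$-neighborhood satisfy $u+v\in\F_r$. This is shown by exploiting invariance of the neighborhood $L(u)\subseteq\F_r$ under the affine map over $\F_r$ carrying $u$ to $v$: a translation by $v-u\in\F_r^*$ would force $L(u)$ to be a union of additive $\F_p$-cosets, giving $p\mid|L(u)|=\frac{r-1}{2}$, impossible; otherwise a dilation by some $t\in\F_r^*\setminus\{1\}$ about a point of $\F_r$ forces the multiplicative order of $t$ to divide both $\frac{r-3}{2}$ and $r-1$, hence $t=-1$ and $u+v\in\F_r$. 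Since the Frobenius gives $L(w)=L(w^r)$ for all $w$, applying the lemma to the pairs $(u,v)$, $(u,v^r)$, $(v,v^r)$ yields $2u=(u+v)+(u+v^r)-(v^r+v)\in\F_r$, a contradiction; this argument is uniform in $r$, uses no character sums, and has no exceptional small cases. Your route instead encodes the $\F_r$-neighborhood as the sign pattern $w\mapsto\eta_r(\mu_t(w))$, where $\eta_r$ is the quadratic character of $\F_r$, via the correct identity $\eta(t-w)=\eta_r(N(t-w))=\eta_r(\mu_t(w))$; your reduction is moreover an exact equivalence, since every monic irreducible quadratic over $\F_r$ equals $\mu_u$ for some $u\in\F_q\setminus\F_r$ and $\mu_u=\mu_v$ iff $v\in\{u,u^r\}$. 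The Weil step is valid: $\mu_u\mu_v$ is squarefree of degree $4$ (two distinct monic irreducible quadratics), hence not a square times a constant, so $\bigl|\sum_{w\in\F_r}\eta_r(\mu_u(w)\mu_v(w))\bigr|\le 3\sqrt{r}$, while the hypothesis forces the sum to equal $r$ (no zero terms, as neither quadratic has a root in $\F_r$), giving $\mu_u=\mu_v$ once $r\ge 13$. What your approach buys: it is conceptually shorter, quantitative (distinct sign patterns have correlation at most $3\sqrt r$), and works verbatim for $r\equiv 3\pmod 4$; what it costs is the exceptional range $r\in\{5,9\}$, which you assert but do not carry out. That finite check is in fact true --- the ten monic irreducible quadratics over $\F_5$ have ten pairwise distinct sign patterns (verifiable by hand), and the $36$ over $\F_9$ can be checked by computer, in the same spirit as the verification the paper already invokes in Lemma~\ref{lem:triple}(1) --- but your argument is complete only once that verification is actually performed, whereas the paper's proof needs no such case analysis.
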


We begin by proving the following lemma. For convenience, for each $u \in \F_q \setminus \F_r$, we use $L(u)$ to denote the set of neighbors of $u$ in $P(q)$ that lie in $\F_r$.

\begin{lemma}\label{lem:Galois}
Let $q=r^2$, where $r \equiv 1 \pmod 4$. If $u,v \in \F_{q} \setminus \F_r$ are distinct and have the same $\F_r$-neighborhood in $P(q)$, then $u+v \in \F_r$.   
\end{lemma}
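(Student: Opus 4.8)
The plan is to reduce this statement about the Paley graph $P(q)$ to an identity between quadratic characters of the subfield $\F_r$, and then to invoke Weil's bound. First I would record that, since $q=r^2\equiv 1\pmod 4$, the graph $P(q)$ is undirected and $-1\in\F_q^+$, so for $w\in\F_r$ (where $u-w\neq 0$ because $u\notin\F_r$) one has $w\in L(u)$ iff $\eta(u-w)=1$. Hence the hypothesis $L(u)=L(v)$ is equivalent to $\eta(u-w)=\eta(v-w)$ for all $w\in\F_r$. The crucial device is the norm--character identity: writing $N(x)=x^{r+1}=x\cdot x^r$ for the norm map $\F_q^*\to\F_r^*$ and $\eta_r$ for the quadratic character of $\F_r$, one has $\eta(x)=\eta_r(N(x))$, since $\eta_r(N(x))=N(x)^{(r-1)/2}=x^{(r+1)(r-1)/2}=x^{(q-1)/2}=\eta(x)$. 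For $w\in\F_r$ we have $N(u-w)=(u-w)(u^r-w)=g_u(w)$, where $g_u(x):=(x-u)(x-u^r)\in\F_r[x]$ is the minimal polynomial of $u$ (monic, and irreducible over $\F_r$ as $u\notin\F_r$). Thus $\eta(u-w)=\eta_r(g_u(w))$, and the hypothesis becomes $\eta_r(g_u(w))=\eta_r(g_v(w))$ for every $w\in\F_r$; equivalently, $g_u(w)g_v(w)$ is a nonzero square in $\F_r$ for all $w\in\F_r$.

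From here I would split into two cases. If $g_u=g_v$, then $u$ and $v$ are the two roots of a single irreducible quadratic, i.e. $\{u,u^r\}=\{v,v^r\}$; since $v\neq u$ this forces $v=u^r$, whence $u+v=u+u^r\in\F_r$, as desired. If instead $g_u\neq g_v$, then two distinct monic irreducible quadratics over $\F_r$ are coprime and share no root, so $P:=g_ug_v$ is squarefree of degree $4$ and in particular is not of the form $c\,h(x)^2$. Applying Weil's bound (\cite[Theorem 5.41]{lidl1997finite}) yields $\bigl|\sum_{w\in\F_r}\eta_r(P(w))\bigr|\le (4-1)\sqrt{r}=3\sqrt{r}$. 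On the other hand, the reformulated hypothesis says every summand equals $1$, so the sum equals $r$. Combining, $r\le 3\sqrt{r}$, i.e. $r\le 9$.

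The main obstacle is thus not the generic case but the finitely many small fields left uncovered by Weil's bound: among prime powers $r\equiv 1\pmod 4$, only $r\in\{5,9\}$ satisfy $r\le 9$, and for these I would verify the conclusion by a direct finite computation over $\F_{25}$ and $\F_{81}$. For every $r\ge 13$ the inequality $r\le 9$ is absurd, so $g_u=g_v$ and the first case applies. I would also note that this argument in fact delivers the stronger conclusion $v=u^r$ whenever $r>9$, so the stated conclusion $u+v\in\F_r$ follows a fortiori; this is consistent with the elementary observation that Frobenius preserves $\eta$ (indeed $\eta(x^r)=\eta(x)^r=\eta(x)$ since $r$ is odd and $\eta(x)\in\{\pm1\}$), whence $L(u)=L(u^r)$ always holds. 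The key conceptual point, which I would emphasize, is that the norm map converts an intractable condition over $\F_q$ into a single one-variable character sum over $\F_r$, where Weil's estimate rules out any ``spurious'' second irreducible quadratic sharing the sign pattern of $g_u$ on $\F_r$.
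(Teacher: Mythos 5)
Your proof is correct, but it takes a genuinely different route from the paper's. The paper argues elementarily: it splits on whether $u-v\in\F_r^*$, and in each case shows that the common $\F_r$-neighborhood $L(u)$ would have to be a union of cosets --- additive $(v-u)\F_p$-cosets in the first case, forcing $p\mid\frac{r-1}{2}$, which is impossible; multiplicative $\langle t\rangle$-cosets in the second, where $t(u-a)=v-a$ and $a$ is the point where the line through $u,v$ meets $\F_r$, forcing $|\langle t\rangle|=2$, hence $t=-1$ and $u+v=2a\in\F_r$. That argument is uniform in $r$ and needs no character-sum estimates. You instead push the whole problem down to the subfield via the norm: $\eta(u-w)=\eta_r(N(u-w))=\eta_r(g_u(w))$ with $g_u$ the minimal polynomial of $u$ over $\F_r$, so the hypothesis becomes $\eta_r(g_u(w)g_v(w))=1$ for all $w\in\F_r$, and Weil's bound (\cite[Theorem 5.41]{lidl1997finite}) applied to the squarefree quartic $g_ug_v$ forces $g_u=g_v$, i.e.\ $v=u^r$, once $r>9$. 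Each step checks out: $N(u-w)=(u-w)(u^r-w)=g_u(w)$ for $w\in\F_r$; distinct monic irreducible quadratics are coprime and separable, so $g_ug_v$ has four distinct roots and is not a constant times a square; the sum of $\eta_r(g_u(w)g_v(w))$ over $\F_r$ equals $r$ under the hypothesis; and $r\leq 3\sqrt{r}$ leaves only $r\in\{5,9\}$ among prime powers congruent to $1$ modulo $4$. Your approach in fact buys more than the lemma claims: for $r\geq 13$ it directly yields the conclusion $v\in\{u,u^r\}$ of Proposition~\ref{prop:Galois}, so it would let one bypass the paper's subsequent three-fold application of the lemma in that proposition's proof (and it is in the spirit of the stronger result cited there). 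The price is twofold: you invoke Weil where the paper stays elementary, and the cases $r\in\{5,9\}$ (i.e.\ $\F_{25}$ and $\F_{81}$) are only promised, not performed; since these are finite checks over a few thousand pairs, and the paper itself relies on analogous computer verification (e.g.\ in Lemma~\ref{lem:triple}), this is an acceptable loose end rather than a gap, but the computation must actually be carried out for the proof to be complete.
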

\begin{proof}
First, assume that $u-v \in \F_r^*$. Let $x \in L(u)$.  Then $u-x=v-(x+(v-u)) \in \F_q^+$, and thus $x+(v-u) \in L(v)=L(u)$. Repeating the same argument, we must have $x+2(v-u), x+3(v-u), \cdots \in L(v).$ Therefore, for each $x \in \F_r$, we have $x \in L(u)$ if and only if $x+(v-u)\F_p \subset L(u)$. We conclude that $L(u)$ must be a union of additive $(v-u)\F_p$-cosets of $\F_r$. In particular, $|L(u)|$ is a multiple of $p$, that is, $p \mid \frac{r-1}{2}$, which is impossible.

Next, assume that $u-v \notin \F_r^*$. Then there exist $a \in \F_r$ and $t \in \F_r^* \setminus \{1\}$ such that $t(u-a)=v-a$. Indeed, we can identify $\F_q$ as an affine plane over $\F_r$, and the line passing through $u$ and $v$ intersects $\F_r$ at $a$.  Let $u'=u-a$ and $v'=v-a$, then $v'=tu'$. Note that $L(u')=L(u)-a=L(v)-a=L(v')$. Let $x \in L(u') \setminus \{0\}$. Then $x-u' \in \F_q^+$ and thus $tx-v'=t(x-u') \in \F_q^+$, which implies that $tx \in L(v')=L(u')$.  It follows that $t^jx \in L(u')$ for any positive integer $j$. Let $H$ be the subgroup of $\F_r^*$ generated by $t$, with $|H|=m$. Then the above argument shows that the $H$-coset containing $x$ is contained in $L(u')$. Thus, $L(u') \setminus \{0\}$ can be written a union of $H$-cosets in $\F_r^*$. In particular, $|L(u)|=|L(u')|=\frac{r-1}{2} \equiv 1 \pmod m$, that is, $m \mid \frac{r-3}{2}$. On the other hand, clearly $m \mid (r-1)$. It follows that  $m \mid 2$ and so $m=1$ or $m=2$. On the other hand, since $t \neq 1$, we have $m \geq 2$. Thus $m=2$, that is, $t=-1$ and we conclude $u+v=2a \in \F_r$, as claimed.   
\end{proof}

Now we use Lemma~\ref{lem:Galois} to prove Proposition~\ref{prop:Galois}.
\begin{proof}[Proof of Proposition~\ref{prop:Galois}]
Assume otherwise that $v \notin \{u, u^r\}$. Then Lemma~\ref{lem:Galois} implies that $u+v \in \F_r$. On the other hand, note that for each $x \in \F_r$, $u-x \in \F_q^+$ holds if and only if $u^r-x=(u-x)^r \in \F_q^+$. Thus, $L(u)=L(u^r)$, and similarly $L(v)=L(v^r)$. 

Then from $L(v^r)=L(v)=L(u)$ and $v^r \notin \{u,v\}$, Lemma~\ref{lem:Galois} implies that $u+v^r \in \F_r$ and $v^r+v \in \F_r$. We then conclude that $2u=(u+v)+(u+v^r)-(v^r+v) \in \F_r$, violating the assumption that $u \notin \F_r$.   
\end{proof}

We also need the following lemma concerning a geometric construction of a maximal clique or an independent set in the Paley graph $P(q)$, due to Goryainov et.~al \cite{GKSV18}.
\begin{lemma}[{\cite[Theorem 1]{GKSV18}}]\label{lem:oval}
Let $q=r^2$. Let $\Delta$ be an element in $\F_q^*$ with order $\frac{r+1}{2}$.  
\begin{enumerate}
    \item If $r \equiv 1 \pmod 4$, then $\{1, \Delta, \Delta^2, \ldots, \Delta^{\frac{r-1}{2}}\}$ is a maximal independent set in $P(q)$.
    \item If $r \equiv 3 \pmod 4$, then $\{1, \Delta, \Delta^2, \ldots, \Delta^{\frac{r-1}{2}}\} \cup \{0\}$ is a maximal clique in $P(q)$.
\end{enumerate}
\end{lemma}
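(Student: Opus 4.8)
The plan is to recognize the set $S=\{1,\Delta,\dots,\Delta^{(r-1)/2}\}$ as an explicit subgroup of $\F_q^*$, read off its adjacencies in $P(q)$ from the norm map, and then reserve the genuinely hard maximality statements for a finer count. First I would observe that, since $\Delta$ has order $\frac{r+1}{2}$, the set $S$ is exactly the cyclic subgroup $H:=\langle\Delta\rangle$ of order $\frac{r+1}{2}$. This $H$ is the unique index-$2$ subgroup of the norm-one group $U:=\{x\in\F_q^*:x^{r+1}=1\}$ (of order $r+1$); equivalently $H$ is the set of squares of $U$. A quick check with a generator of $\F_q^*$ shows $U\subseteq\F_q^+$, so in particular $H\subseteq\F_q^+$. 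The key computational tool is the identity $\eta(y)=\eta_r(N(y))$ for $y\in\F_q^*$, where $\eta_r$ is the quadratic character of $\F_r$ and $N(y)=y^{1+r}$ is the norm to $\F_r$; this is immediate from $N(y)^{(r-1)/2}=y^{(q-1)/2}$.

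Next I would determine all adjacencies inside $S$. For distinct exponents $j,k$ write $\Delta^{j}-\Delta^{k}=\Delta^{k}(\theta-1)$ with $\theta=\Delta^{j-k}\in H\setminus\{1\}$; since $\Delta^{k}\in\F_q^+$, adjacency of $\Delta^{j},\Delta^{k}$ is governed solely by $\eta(\theta-1)$. Writing $\theta=\phi^{2}$ with $\phi\in U$ (possible as $\theta$ is a square in $U$), the relation $N(x-1)=-(x-1)^2/x$ for $x\in U$ gives $N(\theta-1)=-(\phi-\phi^{-1})^{2}$. Since $\phi-\phi^{-1}$ is fixed by Frobenius only when it vanishes, one has $\phi-\phi^{-1}\notin\F_r$, so $(\phi-\phi^{-1})^{2}$ is a nonsquare in $\F_r$. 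Hence $\eta(\theta-1)=\eta_r(-1)\cdot(-1)=-\eta_r(-1)$, which equals $-1$ when $r\equiv1\pmod4$ and $+1$ when $r\equiv3\pmod4$. Thus $S$ is an independent set in the first case and a clique in the second; and when $r\equiv3\pmod4$ every element of $H\subseteq\F_q^+$ is adjacent to $0$, so $S\cup\{0\}$ is a clique. This settles the edge-structure halves of (1) and (2).

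The main obstacle is \emph{maximality}: one must show that no vertex outside $S$ (resp.\ outside $S\cup\{0\}$) can be added. The plan is to fix an external vertex $u\neq0$ and count its neighbors in $H$. Using $N(u-h)=N(u)+1-\mathrm{Tr}(uh^{-1})$, where $\mathrm{Tr}(x)=x+x^{r}$, the neighbor count reduces to the character sum $\sum_{w\in uH}\eta_r\big((N(u)+1)-\mathrm{Tr}(w)\big)$ taken over the coset $uH$, which is one of the two halves of the norm-circle $\{w:N(w)=N(u)\}$. Interpreting $\F_q$ as the affine plane $\F_r^{2}$, this circle is an anisotropic conic and $\mathrm{Tr}$ is a nonzero $\F_r$-linear functional whose level sets are parallel lines meeting the conic in at most two points, while conjugation $w\mapsto w^{r}$ preserves both the circle and $\mathrm{Tr}$ and pairs up points of equal trace.

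The crux is then to show that $(N(u)+1)-\mathrm{Tr}(w)$ cannot be a nonsquare for \emph{every} $w\in uH$ (which would give $u$ a neighbor in $H$, the requirement for maximal independence when $r\equiv1\pmod4$), and cannot be a nonzero square for \emph{every} $w\in uH$ (which would give a non-neighbor, the requirement for maximal cliqueness when $r\equiv3\pmod4$). This incidence-geometric analysis of half-conics, together with the conjugation symmetry, is exactly the delicate content of \cite{GKSV18} and is where the real work lies; I would additionally verify the finitely many small values of $r$ by direct computation. Assembling the adjacency dichotomy of the second paragraph with these maximality facts yields statements (1) and (2).
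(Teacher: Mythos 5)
Your proposal does not have a counterpart in the paper to compare against: the paper imports this lemma verbatim from Goryainov et al.\ \cite{GKSV18} and supplies no internal proof. Judged on its own, the first half of your argument is correct and complete. The identification $S=\langle\Delta\rangle$ as the unique index-$2$ subgroup (the squares) of the norm-one group $U=\{x:x^{r+1}=1\}$, the inclusion $U\subseteq\F_q^+$ (since $U=\langle g^{r-1}\rangle$ with $r-1$ even), the identity $\eta(y)=\eta_r(N(y))$ for the quadratic character $\eta_r$ of $\F_r$, and the computation $N(\theta-1)=-(\phi-\phi^{-1})^2$ with $\phi^2=\theta$, where $(\phi-\phi^{-1})^r=-(\phi-\phi^{-1})\neq 0$ forces $(\phi-\phi^{-1})^2$ to be a nonsquare of $\F_r$, correctly yield $\eta(\theta-1)=-\eta_r(-1)$ and hence the clique/independent-set dichotomy according to $r\bmod 4$, with $0$ adjoined in the clique case since $H\subseteq\F_q^+$.

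The genuine gap is maximality, and you acknowledge it yourself: your third and fourth paragraphs are a plan that defers "the real work" back to \cite{GKSV18}, so the proposal is a reduction of the hard half to the very reference being proved, not a proof. This cannot be waved away: the sets in question have sizes $\frac{r+1}{2}$ and $\frac{r+3}{2}$, strictly below the clique and independence numbers $r$ of $P(q)$ (Theorem~\ref{thm:EKR}), so no Hoffman-bound or size argument closes the gap, and the maximality of the independent set in part (1) is genuinely used in Section~\ref{S1mod4square} (the final proposition derives its contradiction by adjoining $w/x^{1/m}$ to the maximal independent set $I$). Your outlined counting scheme also has a wrinkle worth flagging: the conjugation $w\mapsto w^r$ stabilizes the half-circle $uH$ only when $u\in\F_q^+$, since $(uH)^r=u^{r-1}\cdot uH$ and $u^{r-1}\in H$ if and only if $\eta(u)=(u^{r-1})^{(r+1)/2}=1$; for $u\in\F_q^-$ conjugation swaps the two $H$-cosets of $uU$, so the pairing argument as stated does not apply uniformly. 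To make the attempt self-contained you would need to carry out the incidence analysis of lines (trace level sets) against the anisotropic conic $\{w:N(w)=N(u)\}$, showing the relevant sign pattern cannot be constant on a half-conic --- precisely the content of \cite{GKSV18} that both you and the paper cite.
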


\subsection{Outline of the proof}\label{SSOutlineSquare}

In this whole section, we assume $f:\F_q \to \F_q$ is a positivity preserver over $M_2(\F_q)$. Note that if $f$ is a positivity preserver, then for any $s \in \F_q^+$, the map $sf$ is also 
a positivity preserver. We therefore also assume without loss of generality that $f(1)=1$.

\begin{corollary}\label{cor:sqcoset}
The function $f$ maps a square coset to a square coset.
\end{corollary}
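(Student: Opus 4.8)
The plan is to show that $f$ carries any clique of $P(q)$ that is contained in $\F_q^+$ to another such clique of the \emph{same} cardinality, and then to invoke the extremal characterization of maximum-size cliques inside $\F_q^+$ given by Corollary~\ref{cor:sqclique}. First I would record that a square coset $C = a\F_r^*$ (with $a \in \F_q^+$) is indeed a clique of $P(q)$ contained in $\F_q^+$ of size $r-1$: the set $C \cup \{0\} = a\F_r$ is a square translate of $\F_r$, hence a maximum clique by Theorem~\ref{thm:EKR}, and $C \subseteq \F_q^+$ because $a \in \F_q^+$ together with $\F_r^* \subset \F_q^+$.

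The core step is then to transport this clique through $f$. Let $x, y \in C$ be distinct. Since $q \equiv 1 \pmod 4$ we have $-1 \in \F_q^+$, so $\eta(x-y) = \eta(y-x)$ and adjacency in $P(q)$ is symmetric; as $C$ is a clique this gives $x - y \in \F_q^+$. Because $y \in \F_q^+$, Lemma~\ref{lem:sq} yields $f(x) - f(y) \in \F_q^+$. In particular $f(x) \neq f(y)$, so $f$ is injective on $C$, and moreover $f(x)$ and $f(y)$ are adjacent in $P(q)$. Combining this with Lemma~\ref{lem:+}, which guarantees $f(C) \subseteq \F_q^+$, shows that $f(C)$ is a clique of $P(q)$ contained in $\F_q^+$ with $|f(C)| = |C| = r-1$. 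Corollary~\ref{cor:sqclique} then forces $f(C)$ to be a square coset, which is exactly the claim.

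The convenient feature of this argument is that injectivity of $f$ on $C$ is not a separate hurdle: it falls out of Lemma~\ref{lem:sq}, since the images of two adjacent squares differ by a \emph{nonzero} square. Thus the only substantive ingredient beyond the two elementary preserver lemmas is the EKR-type rigidity packaged in Corollary~\ref{cor:sqclique}. The one point I would be careful about is that Lemma~\ref{lem:sq} is stated for an ordered pair $(a,b)$ with $a-b \in \F_q^+$; here I need it for whichever orientation of $(x,y)$ is convenient, and this is legitimate precisely because $-1$ is a square when $q \equiv 1 \pmod 4$, so both $x-y$ and $y-x$ lie in $\F_q^+$ simultaneously.
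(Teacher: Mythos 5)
Your proof is correct and takes essentially the same route as the paper: both arguments use Lemma~\ref{lem:+} and Lemma~\ref{lem:sq} to show $f(C)$ is a clique in $P(q)$ contained in $\F_q^+$ of the same size $r-1$, and then invoke the rigidity statement of Corollary~\ref{cor:sqclique} to conclude $f(C)$ is a square coset. Your extra care in spelling out the injectivity of $f$ on $C$ (which the paper leaves implicit in ``of the same size'') and the symmetry of adjacency via $-1 \in \F_q^+$ is sound and fills in exactly the details the paper elides.
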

\begin{proof}
Let $C$ be a square coset. Then $C \subset \F_q^+$ and $C$ is a clique in $P(q)$. Lemma~\ref{lem:+} and Lemma~\ref{lem:sq} imply that $f(C) \subset \F_q^+$ and $f(C)$ is a clique in $P(q)$ of the same size. Corollary~\ref{cor:sqclique} then implies that $f(C)$ has to be a square coset.
\end{proof}

Since $f(1)=1$, $f$ maps the square coset $\F_r^*$ to itself.

\begin{corollary}\label{cor:f(0)=0}
We have $f(0)=0$.    
\end{corollary}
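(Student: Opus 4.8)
The plan is to argue by contradiction, assuming $c := f(0) \neq 0$, and to reach a contradiction purely from the square-coset structure together with the Hoffman-type count of Lemma~\ref{lem:Hoffman}. The guiding principle is that I must \emph{not} use injectivity of $f$ on all of $\F_q^+$ — that is exactly the hard statement this section is building toward in Subsection~\ref{SSInjectiveSquare} — so the only surjectivity input I allow myself is that $f$ maps the square coset $\F_r^*$ bijectively onto itself, which is already available from Corollary~\ref{cor:sqcoset} together with $f(1)=1$.

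First I would extract a single clean constraint on $c$. For each $x \in \F_r^*$ consider
\[
A = \begin{pmatrix} x & 1 \\ 1 & 0 \end{pmatrix}.
\]
Since $x \in \F_r^* \subset \F_q^+$ and $\det A = -1 \in \F_q^+$ (because $q=r^2 \equiv 1 \pmod 4$), the matrix $A$ is positive definite. Applying $f$ and using $f(1)=1$, positive definiteness of $f[A]$ forces $f(x)\,c - 1 \in \F_q^+$. As $x$ ranges over $\F_r^*$, the value $f(x)$ ranges over all of $\F_r^*$, so I obtain the key relation
\[
y c - 1 \in \F_q^+ \qquad \text{for all } y \in \F_r^*.
\]

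Next I would convert this into a neighborhood count in $P(q)$ and finish with Lemma~\ref{lem:Hoffman}. Multiplying by $c^{-1}$ and using multiplicativity of $\eta$ gives $\eta(y - c^{-1}) = \eta(c^{-1})\eta(yc-1) = \eta(c)$ for every $y \in \F_r^*$. In particular $c^{-1} \notin \F_r^*$ (otherwise $y = c^{-1}$ would force $0 \in \F_q^+$), and $\eta(c)$ is a single fixed sign: if $c \in \F_q^+$ then $c^{-1}$ is adjacent in $P(q)$ to all $r-1$ elements of $\F_r^*$, while if $c \in \F_q^-$ then $c^{-1}$ is adjacent to none of them. But $\F_r^*$ is a square coset and $c^{-1} \notin \F_r^*$, so Lemma~\ref{lem:Hoffman} dictates that the number of neighbors of $c^{-1}$ in $\F_r^*$ is $\frac{r-3}{2}$ when $c^{-1} \in \F_q^+$ and $\frac{r-1}{2}$ when $c^{-1} \in \F_q^-$. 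Since $\frac{r-3}{2} \neq r-1$ and $\frac{r-1}{2} \neq 0$ for every odd $r \geq 3$, both cases are impossible, and we conclude $c = f(0) = 0$.

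The hard part is the second step rather than any computation: because full injectivity on $\F_q^+$ is off-limits, the constraint $f(x)c-1 \in \F_q^+$ has to be exploited using only the single orbit $\F_r^*$. The device that makes this work is multiplying through by $c^{-1}$, which collapses the two sign cases ($c$ a square versus a non-square) into one uniform ``all neighbors / no neighbors'' dichotomy and reduces everything to a single invocation of the Hoffman bound in Lemma~\ref{lem:Hoffman}. Once that reduction is in place, checking that the forced extreme counts $r-1$ and $0$ never coincide with $\frac{r-3}{2}$ and $\frac{r-1}{2}$ is immediate.
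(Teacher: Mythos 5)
Your proof is correct and follows essentially the same route as the paper: the same test matrices $\begin{pmatrix} x & 1 \\ 1 & 0 \end{pmatrix}$ with $x \in \F_r^*$, the same bijectivity of $f$ on the square coset $\F_r^*$, and the same finishing appeal to Lemma~\ref{lem:Hoffman}. The only (cosmetic) difference is that you normalize by $c^{-1}$ and count neighbors of $f(0)^{-1}$ in $\F_r^*$ by contradiction, whereas the paper normalizes by $x^{-1}$ and counts neighbors of $f(0)$ directly, concluding $f(0) \in \F_r$ and then $f(0)=0$.
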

\begin{proof}
Let $x \in \F_r^*,$ and consider the matrix 
$$
A=
\begin{pmatrix} 
x&1\\
1&0
\end{pmatrix}.
$$    
Clearly, $A$ is positive definite. Under the map $f$, we have 
$$f(x)f(0)-f(1)^2=f(x)f(0)-1 \in \F_q^+$$
for all $x \in \F_r^*$. Using Corollary \ref{cor:sqcoset} and our $f(1)=1$ assumption, it follows that $f$ maps $\F_r^*$ bijectively into itself. This implies that
$xf(0)-1 \in \F_q^+$ for all $x \in \F_r^*$, and thus $f(0)-x \in \F_q^+$ for all $x \in \F_r^*$. 
In particular, the number of neighbors of $f(0)$ in $\F_r$ is at least $r-1$, and so we must have $f(0) \in \F_r$ by Lemma~\ref{lem:Hoffman}. Since $f(0)-x \in \F_q^+$ for all $x \in \F_r^*$, this forces $f(0)=0$.  
\end{proof}

\begin{proposition}\label{prop:power}
Let $\alpha \in \F_q^+$. There exist a positive integer $m=m(\alpha)$ such that $\gcd(m, r-1)=1$ and $f(\alpha x)=\beta x^m$ for all $x \in \F_r$, where $\beta=f(\alpha) \in \F_q^+$.
\end{proposition}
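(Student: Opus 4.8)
The goal is to show that $f$ restricted to any square-translate $\alpha\F_r$ of the subfield $\F_r$ acts as a monomial map, after identifying $\F_r$ with the indexing variable $x$. The plan is to use the clique structure established in Corollary~\ref{cor:sqcoset}: since $\alpha\F_r^*$ is a square coset, $f(\alpha\F_r^*)$ is again a square coset, say $\beta\F_r^*$ with $\beta=f(\alpha)\in\F_q^+$. Thus the map $x\mapsto \beta^{-1}f(\alpha x)$ sends $\F_r^*$ bijectively into $\F_r^*$, and (using $f(0)=0$ from Corollary~\ref{cor:f(0)=0} together with the fact that $\alpha\cdot 0 = 0$) it fixes $0$ as well. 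So $g(x):=\beta^{-1}f(\alpha x)$ is a well-defined bijection of $\F_r$ onto itself.

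First I would reduce the claim to showing that $g$ is multiplicative on $\F_r^*$, i.e. $g(xy)=g(x)g(y)$ for all $x,y\in\F_r^*$: a multiplicative bijection $\F_r^*\to\F_r^*$ is necessarily a power map $x\mapsto x^m$ where $\gcd(m,r-1)=1$ (an automorphism of the cyclic group $\F_r^*$ of order $r-1$), and combined with $g(0)=0$ this gives $f(\alpha x)=\beta x^m$ on all of $\F_r$, which is exactly the assertion. The heart of the matter is therefore to extract a multiplicativity relation from the positivity-preserving hypothesis. The natural source of such a relation is a $2\times 2$ positive definite matrix whose determinant encodes a product: for $u,v\in\F_r^*$ with $uv$ a perfect square in $\F_r$, a matrix of the form
\[
A=\begin{pmatrix} \alpha u & \alpha\sqrt{uv}\,z\\ \alpha\sqrt{uv}\,z & \alpha v\end{pmatrix}
\]
is positive definite precisely when $\det A=\alpha^2 uv(1-z^2)\in\F_q^+$, i.e. when $1-z^2\in\F_q^+$ (recall $\alpha^2\in\F_q^+$ and $uv\in\F_q^+$ since $\F_r^*\subset\F_q^+$). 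Applying $f$ and tracking when $\det f[A]=f(\alpha u)f(\alpha v)-f(\alpha\sqrt{uv}\,z)^2$ is or is not a square should, as in the even-characteristic argument of Theorem~\ref{Tchar2}, force a relation tying $f(\alpha u)f(\alpha v)$ to $f(\alpha\sqrt{uv})^2$; this is the analogue of the identity $f(\sqrt{xy})^2=f(x)f(y)$ obtained there. I would run the same bijectivity-of-squaring argument inside the square coset to pin down $g(\sqrt{uv})^2=g(u)g(v)$, and then bootstrap to full multiplicativity.

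The main obstacle I anticipate is twofold. Over $\F_r$ with $r\equiv 1\pmod 4$, the squaring map is not a bijection, so I cannot directly invert it as was done in characteristic $2$; I would need to work only with products $uv$ that are squares in $\F_r$ and then argue that the resulting relations on $g$ propagate across cosets of $(\F_r^*)^2$ to yield multiplicativity everywhere. Second, and more delicate, is that the entries $\alpha\sqrt{uv}\,z$ must themselves be controlled: the off-diagonal entries range over elements of $\F_q$, not merely $\alpha\F_r$, so the values $f(\alpha\sqrt{uv}\,z)$ are a priori not governed by the coset structure. The key realization I would exploit is that when $z\in\F_r$ the entire matrix has entries in $\alpha\F_r\cup(\alpha\F_r\cdot\F_r)$, keeping everything within controlled cosets, and that varying $z$ over $\F_r$ (where $1-z^2$ takes both square and non-square values) provides enough equations. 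Thus I would restrict $z$ to $\F_r$ throughout, reducing the problem to an identity among values of $g$ on $\F_r$, where the counting/character estimates already developed (for instance the distribution statements underlying Lemma~\ref{lem:triple}) can close the argument and rule out any non-multiplicative behavior.
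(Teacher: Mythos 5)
Your overall framing is correct and close in spirit to the paper's proof: restrict $f$ to the square coset $\alpha\F_r^*$, use Corollaries~\ref{cor:sqcoset} and~\ref{cor:f(0)=0} to get a bijection $g(x):=\beta^{-1}f(\alpha x)$ of $\F_r^*$ fixing $0$, and extract a functional equation from $2\times 2$ matrices with all entries in $\alpha\F_r$. But the central pinning step has a genuine gap. You fix the diagonal entries $u,v$ and vary the off-diagonal parameter $z$, imitating Theorem~\ref{Tchar2}; in characteristic $2$ that works because squaring is a bijection, while here it is $2$-to-$1$, and your proposed workaround (restricting to $uv\in(\F_r^*)^2$) does not repair the count. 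Concretely, with $w^2=uv$, positivity preservation yields only the \emph{inequations} $g(u)g(v)\neq g(wz)^2$ for $z\in\F_r\setminus\{0,\pm1\}$ (the singular cases $z=\pm1$ give no information at all, since $f$ is assumed to preserve positivity, not non-positivity). As $s=wz$ runs over $\F_r^*\setminus\{\pm w\}$ and $g$ is a bijection of $\F_r^*$, the excluded set $\{g(s)^2\}$ is all squares of $\F_r^*$ unless $g(-w)=-g(w)$, and even in that favorable case you only learn that $g(u)g(v)$ is either a non-square of $\F_r$ or equals $g(w)^2$; you have no a priori reason that $g(u)g(v)$ is a square, so the desired relation $g(w)^2=g(u)g(v)$ is not established. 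Moreover your claim that ``$1-z^2$ takes both square and non-square values'' as $z$ ranges over $\F_r$ is false: since $\F_r^*\subset\F_q^+$, the determinant $\alpha^2uv(1-z^2)$ is a square in $\F_q$ whenever it is nonzero --- this is exactly why positivity inside the coset degenerates to nonsingularity, and why only one-sided constraints are available. For the same reason, the character-sum machinery behind Lemma~\ref{lem:triple} is neither needed nor helpful here.

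The paper's proof orients the counting the other way, which closes the gap: fix the off-diagonal entry $\alpha g^i$ (with $g$ a primitive root of $\F_r$) and one diagonal entry $\alpha a$, and vary the \emph{other diagonal entry} $\alpha b$ over the coset. The matrix is positive definite for every $b\neq g^{2i}/a$, so $\tilde{f}(a)\tilde{f}(b)\neq \tilde{f}(g^i)^2$ for all such $b$; since $b\mapsto \tilde{f}(a)\tilde{f}(b)$ is a bijection of $\F_r^*$, the unique $b$ achieving the value $\tilde{f}(g^i)^2$ must be $b=g^{2i}/a$, giving $\tilde{f}(g^{2i}/a)=\tilde{f}(g^i)^2/\tilde{f}(a)$ unconditionally. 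An induction along powers of $g$ then yields $\tilde{f}(g^j)=\tilde{f}(g)^j$, hence $\tilde{f}(x)=x^m$ with $\tilde{f}(g)=g^m$ a primitive root and $\gcd(m,r-1)=1$. Note that this induction also supplies the ``bootstrap to full multiplicativity'' that you left open: your square-relation alone does not immediately give $g(xy)=g(x)g(y)$ when $xy$ is a non-square of $\F_r$, and the primitive-root induction is the clean route around that.
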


\begin{proof}
Let $\beta=f(\alpha)$ so that $f$ maps $\alpha \F_r^*$ to $\beta \F_r^*$. Define $\Tilde{f}(x)=f(\alpha x)/\beta$. Note that $\Tilde{f}(1)=1$ and $\Tilde{f}$ is bijective on $\F_r^*$.

Let $g$ be a primitive root of $\F_r$. Let $i$ be a positive integer.  
Consider the matrix 
$$
A=
\begin{pmatrix} 
a\alpha &g^i \alpha\\
g^i \alpha&b \alpha
\end{pmatrix}
$$    
with $a,b \in \F_r^*$. Note that $(ab-g^{2i})\alpha^2 \in \alpha^2 \F_r$, so if $ab\neq g^{2i}$, then $(ab-g^{2i})\alpha^2 \in \alpha^2\F_r^* \subset \F_q^+$ and the matrix $A$ is positive definite. Thus, if $ab \neq g^{2i}$, then $f[A]$ is also positive definite and thus 
$$
f(a\alpha) f(b \alpha) \neq f(g^i \alpha)^2,
$$
equivalently,
$$\Tilde{f}(a)\Tilde{f}(b) \neq \Tilde{f}(g^i)^2.
$$
Note that $\Tilde{f}$ is bijective on $\F_r^*$, thus, if $ab=g^{2i}$, we must have $\Tilde{f}(a)\Tilde{f}(b)=\Tilde{f}(g^i)^2$. We have thus proved that
\begin{equation}\label{eq:fq}
\Tilde{f}\bigg(\frac{g^{2i}}{a}\bigg)=\frac{\Tilde{f}(g^i)^2}{\Tilde{f}(a)}    
\end{equation}
for all $a \in \F_r^*$ and positive integers $i$. 

Next we use induction to prove $\Tilde{f}(g^j)=\Tilde{f}(g)^j$ for all $j$. 
\begin{itemize}
    \item Clearly the statement is true for $j=0,1$.
    \item By setting $a=1$ and $i=1$ in equation~\eqref{eq:fq}, we obtain that $\Tilde{f}(g^2)=\Tilde{f}(g)^2$. 
    \item If $j=2\ell+1$ is odd, set $i=\ell+1$ and $a=g$ in equation~\eqref{eq:fq}, we obtain that 
    $$\Tilde{f}(g^j)=\frac{\Tilde{f}(g^{\ell+1})^2}{\Tilde{f}(g)}=\Tilde{f}(g)^{2\ell+1}=\Tilde{f}(g)^j.
    $$
    \item If $j=2\ell$ is even, set $i=\ell$ and $a=1$ in equation~\eqref{eq:fq}, we obtain that 
    $\Tilde{f}(g^j)=\Tilde{f}(g^\ell)^{2}=\Tilde{f}(g)^j.$
\end{itemize}

Note that $\Tilde{f}(g)=h$ must be also a primitive root of $\F_r$. Say $h=g^m$; then $\gcd(m,r-1)=1$. For each $j$, we have $\Tilde{f}(g^j)=h^j=g^{mj}=(g^j)^m$, that is, $f(\alpha g^j)=\beta (g^j)^m$. This finishes the proof.
\end{proof}

The following proposition is key to determine the preservers in the $q=r^2$ case. Its proof is technical and is broken down into several propositions in Section \ref{SSInjectiveSquare}.

\begin{proposition}\label{prop:bij+}
The function $f$ maps different square cosets to different square cosets. Equivalently, $f$ is injective on $\F_q^+$. 
\end{proposition}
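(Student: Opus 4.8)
The plan is to argue by contradiction, proving the contrapositive of the stated equivalence. Since Proposition~\ref{prop:power} shows that $f$ is a bijective monomial on each square coset and $\F_q^+$ is the disjoint union of the square cosets, $f$ is injective on $\F_q^+$ exactly when it sends distinct square cosets to distinct square cosets; so I assume two distinct square cosets $C_1=\alpha_1\F_r^*$ and $C_2=\alpha_2\F_r^*$ satisfy $f(C_1)=f(C_2)=D$ and seek a contradiction. Edge preservation (Lemma~\ref{lem:sq}) forces $f|_{C_1}$ and $f|_{C_2}$ to be adjacency-preserving bijections onto $D$, so the induced bijection $\phi:=(f|_{C_2})^{-1}\circ f|_{C_1}\colon C_1\to C_2$ satisfies $f\circ\phi=f|_{C_1}$; and since $f$ cannot collapse an edge (an adjacency would give $f(\omega)\neq f(\phi(\omega))$), every $\omega\in C_1$ has $\omega-\phi(\omega)\in\F_q^-$.

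Next I would transfer the collision to $\F_r$. Writing $f(x)=x^{m_0}$ on $\F_r$ (valid as $f(1)=1$), edge preservation shows that for $\omega\in\F_q^+\setminus\F_r$ with $f(\omega)\notin\F_r$ the map $x\mapsto x^{m_0}$ carries the $\F_r$-neighbourhood of $\omega$ into that of $f(\omega)$, and the common count $\tfrac{r-3}{2}$ from Lemma~\ref{lem:Hoffman} upgrades this to a bijection. Hence a colliding pair $\omega\in C_1$, $\omega'=\phi(\omega)\in C_2$ has the \emph{same} $\F_r$-neighbourhood. For $r\equiv 1\pmod 4$, Proposition~\ref{prop:Galois} then forces $\omega'=\omega^r$; running over all collisions, $C_2=\alpha_1^r\F_r^*$ is the Frobenius-conjugate coset and $f(\omega)=f(\omega^r)$ for all $\omega\in C_1$.

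The contradiction comes from a single symmetric $2\times2$ matrix. Fix $\omega\in C_1$, set $v:=f(\omega)=f(\omega^r)$, and for $c\in\F_r^*$ take $A=\begin{pmatrix}\omega & c\\ c & \omega^r\end{pmatrix}$. Here $\det A=N(\omega)-c^2$ with $N(\omega)=\omega^{r+1}\in\F_r^*$, so $A$ is positive definite whenever $c^2\neq N(\omega)$, while $f[A]=\begin{pmatrix}v & c^{m_0}\\ c^{m_0} & v\end{pmatrix}$ has $\det f[A]=v^2-c^{2m_0}$. It therefore suffices to find a nonzero square $s=c^{2m_0}$ of $\F_r$ with $v^2-s\in\F_q^-$. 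Using $\eta(z)=\eta_r(N(z))$ with $N(z)=z^{1+r}$ (here $\eta_r$ is the quadratic character of $\F_r$), the line sum $\sum_{s\in\F_r}\eta(v^2-s)$ becomes $\sum_{s\in\F_r}\eta_r\!\big(s^2-\operatorname{Tr}(v^2)\,s+N(v^2)\big)$, whose discriminant $(v^2-v^{2r})^2$ is nonzero because $v^2\notin\F_r$; by the standard quadratic evaluation this equals $-1$. Restricting to square $s$ via $\tfrac12(1+\eta_r(s))$ introduces a cubic sum $\sum_{s}\eta_r\!\big(s(s^2-\operatorname{Tr}(v^2)s+N(v^2))\big)$ with squarefree argument, bounded by $2\sqrt r$ via Weil's bound. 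Thus the number of admissible $s$ is $\tfrac{r-1}{4}+O(\sqrt r)>0$ for large $r$, producing the desired $c$ and hence a positive definite $A$ with non-positive-definite image; the finitely many small $r$ are checked directly.

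The hard part will be the case $r\equiv 3\pmod 4$, where the matrix argument is identical once the reduction $\omega'=\omega^r$ is in hand, but Proposition~\ref{prop:Galois} is available only for $r\equiv1\pmod4$. I expect this to be the main obstacle and would instead invoke Lemma~\ref{lem:oval}(2): its maximal clique $\{1,\Delta,\dots,\Delta^{(r-1)/2}\}\cup\{0\}$ supplies the geometric rigidity needed to pin down the elements sharing an $\F_r$-neighbourhood, recovering $\omega'=\omega^r$ (or directly exhibiting the offending matrix). Remaining care goes into the degenerate configurations in which one of $C_1,C_2,D$ equals the base coset $\F_r^*$ — precisely when $v^2\in\F_r$ and the norm computation collapses — which should be handled by a separate elementary argument exploiting that $f$ fixes $\F_r^*$ setwise with $f(1)=1$.
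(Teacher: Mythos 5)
Your argument for the non-degenerate collisions when $r \equiv 1 \pmod 4$ is essentially sound and in fact parallels the paper's own mechanism (Proposition~\ref{prop:b} there also upgrades the Lemma~\ref{lem:Hoffman} counts to equality of $\F_r$-neighborhoods and then invokes Proposition~\ref{prop:Galois}); your norm-form/Weil-bound matrix construction is a plausible substitute in that regime. But the case you defer at the end is not a peripheral degeneracy --- it is the whole theorem. Run the paper's $2\times 2$ tests across triples of cosets $C_i, C_j, C_{2j-i}$ (matrices $\bigl(\begin{smallmatrix} g^{2i}x & g^{2j}z\\ g^{2j}z & g^{4j-2i}y\end{smallmatrix}\bigr)$): they force $m_i=m_{2j-i}$ and $\beta_i\beta_{2j-i}=\beta_j^2$, hence $\beta_i=\beta_1^i$, and since $\frac{r+1}{2}$ is odd when $r\equiv 1\pmod 4$, all $m_i$ are equal. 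Consequently $f(C_i)=f(C_j)$ iff $\beta_1^{\,i-j}\in\F_r^*$, so \emph{any} failure of injectivity propagates: either some coset with $\beta_1\notin\F_r^*$ collides (your non-degenerate case, which your argument kills), or $\beta_1\in\F_r^*$ and then $f(\F_q^+)=\F_r^*$ wholesale. In that surviving scenario every collision has $v\in\F_r^*$, the discriminant $(v^2-v^{2r})^2$ vanishes, $\eta(v^2-s)=+1$ for every $s\neq v^2$, and \emph{no} matrix with entries in $\F_q^+$ can produce a contradiction. The paper's actual escape is to control $f$ on $\F_q^-$: Propositions~\ref{prop:b} and~\ref{prop:Fr} show $f(\F_q)=\F_r$, and a final argument using the maximal independent set of Lemma~\ref{lem:oval}(1) and the matrices $\bigl(\begin{smallmatrix}\Delta^i x & y^{1/m}\\ y^{1/m} & w\end{smallmatrix}\bigr)$ forces $f(\F_q^-)=\{0\}$, contradicting Lemma~\ref{lem:nonzero}. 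Nothing about $f$ on $\F_q^-$ is constrained at the point where you wave at ``a separate elementary argument exploiting that $f$ fixes $\F_r^*$ setwise,'' so that sentence hides the hardest third of the proof.

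The second gap is the case $r \equiv 3 \pmod 4$, which you explicitly concede. The paper never recovers $\omega'=\omega^r$ there; it argues multiplicatively instead: with $t$ the order of $\beta=\beta_1$ in $\F_q^*/\F_r^*$, injectivity on $\F_q^+$ amounts to $t=\frac{r+1}{2}$. If $t<\frac{r+1}{4}$, the element $\widetilde{\Delta}=g^{2t}\beta^{-t/m}$ has order greater than $2$ dividing $\frac{r+1}{2}$, satisfies $f(\widetilde{\Delta}^i)=1$ on even powers, and Lemma~\ref{lem:sq} then yields $1-\widetilde{\Delta}^2\in\F_q^-$, contradicting the maximal clique of Lemma~\ref{lem:oval}(2); the remaining value $t=\frac{r+1}{4}$ is excluded because $\beta^{(r+1)/2}=g^{(r+1)m}$ is a generator of $\F_r^*$, so it cannot be a square there. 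Your proposal to have Lemma~\ref{lem:oval}(2) ``supply the geometric rigidity needed to pin down the elements sharing an $\F_r$-neighbourhood'' is a hope, not an argument --- and note your degeneracy classification also breaks for $r\equiv 3\pmod 4$: there exist $v\in\F_q^+$ with $v^r=-v$, so $v^2\in\F_r$ without any of $C_1,C_2,D$ equalling $\F_r^*$, which defeats your matrix construction even in configurations you counted as non-degenerate. So the proposal correctly reconstructs one segment of the paper's proof but leaves both structurally essential cases unproved.
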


Combining Proposition~\ref{prop:main} and Proposition~\ref{prop:bij+}, we obtain the following theorem:
\begin{theorem}\label{thm:main}
If $f$ is a positivity preserver over $M_2(\F_q)$, where $q=p^{k} \equiv 1 \pmod 4$ is a square, then there are $a \in \F_q^+$ and $0\leq j\leq k-1$, such that
$f(x)=ax^{p^j}$ for all $x \in \F_q$.
\end{theorem}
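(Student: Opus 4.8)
The plan is to reduce the statement to the already-normalized setting and then feed it into the two structural propositions established immediately before the theorem. First I would observe that, by Lemma~\ref{lem:+}, a positivity preserver satisfies $f(\F_q^+)\subseteq \F_q^+$, so in particular $a:=f(1)\in \F_q^+$ is a nonzero square. Replacing $f$ by $a^{-1}f$ is harmless: for any $s\in \F_q^+$ and any positive definite $A\in M_2(\F_q)$, the two leading principal minors of $(sf)[A]=s\cdot f[A]$ are $s$ and $s^{2}$ times those of $f[A]$, and since $s$ and $s^{2}$ are nonzero squares, $(sf)[A]$ is again positive definite. Thus $g:=a^{-1}f$ is a positivity preserver over $M_2(\F_q)$ with $g(1)=1$, and it suffices to determine $g$.

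Second, I would invoke Proposition~\ref{prop:bij+}, which asserts precisely that a positivity preserver over $M_2(\F_q)$ with $q=r^2$ is injective on $\F_q^+$; in particular $g$ is injective on $\F_q^+$. Third, with $g$ a positivity preserver over $M_2(\F_q)$ satisfying $g(1)=1$ and injective on $\F_q^+$, the hypotheses of Proposition~\ref{prop:main} are met verbatim, and that proposition yields an index $0\le j\le k-1$ with $g(x)=x^{p^{j}}$ for all $x\in \F_q$. Unwinding the normalization gives $f(x)=a\,g(x)=a\,x^{p^{j}}$ with $a\in \F_q^+$, which is exactly the claimed conclusion.

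The genuinely hard part is none of the above bookkeeping but rather the input Proposition~\ref{prop:bij+}: establishing that a two-dimensional preserver does not collapse two distinct square cosets onto a single one, i.e.\ that it is injective on $\F_q^+$. This is where the special structure of square-order Paley graphs must enter---the Erd\H{o}s--Ko--Rado description of maximum cliques (Theorem~\ref{thm:EKR} and Corollary~\ref{cor:sqclique}), the rigidity of $\F_r$-neighborhoods (Proposition~\ref{prop:Galois}), and the monomial behaviour of $f$ on each square coset (Proposition~\ref{prop:power})---and its proof is deferred to Section~\ref{SSInjectiveSquare}. I expect the crux to be precisely this injectivity statement, packaged into Proposition~\ref{prop:bij+}, whereas the present theorem is then a clean two-line synthesis of Propositions~\ref{prop:main} and~\ref{prop:bij+}.
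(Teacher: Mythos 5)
Your proposal is correct and takes essentially the same route as the paper: the paper normalizes to $f(1)=1$ at the start of Section~\ref{SSOutlineSquare} (noting that $sf$ is a preserver for any $s\in\F_q^+$) and then obtains Theorem~\ref{thm:main} exactly as the combination of Proposition~\ref{prop:bij+} (injectivity on $\F_q^+$) and Proposition~\ref{prop:main}. Your identification of Proposition~\ref{prop:bij+} as the genuinely hard input, proved via the EKR structure of maximum cliques and the coset-monomial analysis of Section~\ref{SSInjectiveSquare}, matches the paper's structure precisely.
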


\subsection{Proof of Proposition~\ref{prop:bij+}}\label{SSInjectiveSquare}
Let $g$ be a generator of $\F_q^*$. Then clearly the square cosets are given by $C_i=g^{2i} \F_r^*$ with $i=0,1, \ldots, \frac{r-1}{2}$. Identify $C_i$ with $C_{i+\frac{r+1}{2}}$. By Proposition~\ref{prop:power}, for each $i$, we can find $\beta_i \in \F_q^+$ and an integer $1\leq m_i<r-1$ with $\gcd(m_i,r-1)= 1$ such that we have $f(g^{2i}x)=\beta_ix^{m_i}$ for all $x \in \F_r^*$. Note that $m_{i+\frac{r+1}{2}}=m_i$ and $\beta_{i+\frac{r+1}{2}}=\beta_i g^{(r+1)m_i}$.

Let $i,j \geq 0$ be fixed and consider the square cosets $C_i, C_j, C_{2j-i}$. Suppose $x,z \in \F_r^*,$ and let
$$
A=
\begin{pmatrix} 
g^{2i}x&g^{2j}z\\
g^{2j}z& g^{4j-2i}y
\end{pmatrix},
$$    
where $y \in \F_r^*$. Note that $A$ is positive definite unless $y=z^2/x$. Thus, under the map $f$, we have $\det(f[A])=f(g^{2i}x)f(g^{4j-2i}y)-f(g^{2j}z)^2 \in \F_q^+$ for all $y \in \F_r^*\setminus \{z^2/x\}$.  We claim that $f(g^{2i}x)f(g^{4j-2i}\F_r^*)=f(g^{2j}z)^2\F_r^*$. Suppose otherwise, then $f(g^{2j}z)^2$ is not in the square coset $f(g^{2i}x)f(g^{4j-2i}\F_r^*)$. Lemma~\ref{lem:Hoffman} thus implies that the number of $y \in \F_r^*$ such that $f(g^{2i}x)f(g^{4j-2i}y)-f(g^{2j}z)^2 \in \F_q^+$ is $\frac{r-3}{2}<r-2,$ a contradiction. Therefore, we have $f(g^{2i}x)f(g^{4j-2i}\F_r^*)=f(g^{2j}z)^2\F_r^*$. Therefore, when $y=z^2/x$, we must have 

\begin{equation}\label{eq:inverse}
f(g^{2i}x)f(g^{4j-2i}z^2/x)=f(g^{2j}z)^2.
\end{equation}
Equation~\eqref{eq:inverse} implies that for all $x,z\in \F_r^*,$
\begin{equation}\label{eq:xz}
\beta_i x^{m_i} \beta_{2j-i} (z^2/x)^{m_{2j-i}}=\beta_{j}^2z^{2m_{j}}.
\end{equation}
Setting $z=1$ in equation~\eqref{eq:xz}, we obtain
$$
\beta_i \beta_{2j-i}  x^{m_i-m_{2j-i}}=\beta_{j}^2
$$
for all $x \in \F_r^*$, which implies that $m_i=m_{2j-i}$ and $\beta_i \beta_{2j-i}=\beta_{j}^2$. In particular, we have $m_0=m_2=\cdots$ and $m_1=m_3=\cdots$. Since $\beta_0=1$, inductively we have $\beta_i=\beta_1^i$. From now on, we set $\beta := \beta_1$ and $m:=m_0$.

Next we consider two cases, according to the value of $r$ modulo 4.

\subsubsection{The case $r \equiv 3 \pmod 4$}
In this case $\frac{r+1}{2}$ is even.

Let $t$ be the smallest positive integer such that $\beta^t \in \F_r^*$. Note that $\beta^{\frac{r+1}{2}}=\beta_{\frac{r+1}{2}}=g^{(r+1)m} \in \F_r^*$, so $t \mid \frac{r+1}{2}$. Also, note that $f(C_0), f(C_1), \ldots, f(C_{t-1})$ are different square cosets, and $f(C_0)=f(C_t)$. We need to show that $t=\frac{r+1}{2}$.

Assume that $t<\frac{r+1}{4}$ so that $\frac{r+1}{2t}>2$. Let $\beta^{t/m}:=(\beta^t)^{1/m}$. Note that this is well-defined since $\gcd(m,r-1) = 1$. Let $\Delta$ be an element in $\F_q^*$ with order $\frac{r+1}{2}$. Set $\widetilde{\Delta}=g^{2t} \beta^{-t/m}$. Note that for each $1\leq j<\frac{r+1}{2t}$, we have $\widetilde{\Delta}^{j}\neq 1$ since $g^{2tj} \notin \F_r$. On the other hand, since $\beta^{\frac{r+1}{2}}=g^{(r+1)m}$, we have $\beta^{t\frac{r+1}{2}}=g^{t(r+1)m}$. Since $\gcd(m, r-1)=1$, we have $g^{t(r+1)}=((\beta^{t})^{1/m})^{\frac{r+1}{2}}$. This shows that ${\widetilde{\Delta}}^{\frac{r+1}{2}}=1$ and the order of $\widetilde{\Delta}$ in $\F_q^*$ is at least $\frac{r+1}{2t}>2$. In particular, there exists an integer $1\leq \ell<\frac{r+1}{2}$ such that $\widetilde{\Delta}^2=\Delta^\ell$. Recall that for each $0 \leq i < \frac{r+1}{2t}$ with $i$ even, we have $m=m_i$ and thus $f(\widetilde{\Delta}^i)=f(g^{2ti}\beta^{-it/m})=\beta^{ti} (\beta^{-it/m})^m=1$. In particular, Lemma~\ref{lem:sq} implies that $1-\Delta^{\ell}=1-\widetilde{\Delta}^2 \in \F_q^-$, contradicting Lemma~\ref{lem:oval} (2).

Thus, in the following discussion, we can assume that $t \geq \frac{r+1}{4}$. Since $t \mid \frac{r+1}{2}$, we have $t=\frac{r+1}{4}$ or $t=\frac{r+1}{2}$. In the latter case, we are done. So we assume $t=\frac{r+1}{4}$. However, if $t=\frac{r+1}{4}$, then $\beta^{\frac{r+1}{2}}$ is a square in $\F_r^*$. On the other hand, since $g$ is a generator of $\F_q^*$, we have $g^{r+1}$ as a generator of $\F_r^*$. Since $\gcd(m, r-1)=1$, $g^{(r+1)m}$ remains a generator of $\F_r^*$. This contradicts $\beta^\frac{r+1}{2}=g^{(r+1)m}$.

\subsubsection{The case $r \equiv 1 \pmod 4$}

If $r \equiv 1 \pmod 4$, then $\frac{r+1}{2}$ is odd. Since $m_0=m_{\frac{r+1}{2}}$, this implies that $m_0=m_1=m_2=\cdots$.  Thus, we have $f(g^{2i}x)=\beta^ix^m$ for all $i$ and all $x \in \F_r^*$. In particular, it follows that $f$ is \emph{not} injective on $\F_q^+$ if and only if $f(C_i)=\F_r^*$ for a square coset $C_i$ other than $C_0$. Note that $f(C_i)=\F_r^*$ implies that $\beta^i \in \F_r^*$. Also, since $\gcd(m, r-1)=1$, we can find an integer $\ell$ such that $m\ell \equiv 1 \pmod {r-1}$, so that $\beta^{i/m}:=(\beta^i)^{1/m}=(\beta^i)^{\ell} \in \F_r^*$ is well-defined.

Recall that our goal is to show that $f$ is injective on $\F_{q}^{+}$. Suppose $f$ is not injective on $\F_{q}^{+}$, i.e., $f(C_i)=\F_r^*$ for some square coset $C_i$ other than $C_0$. Under this assumption, we obtain a contradiction via the next three propositions.

\begin{proposition}\label{prop:b}
Assume that $f(C_i)=\F_r^*$ for some square coset $C_i$ other than $C_0$. If $b \in \F_q^*$ such that $f(b)\not \in \F_r$, then $b^{r-1}=\beta^{i/m}g^{-2i}$. In particular, there are at most $r-1$ many $b's$ in $\F_q^*$ with $f(b) \not \in \F_r$.
\end{proposition}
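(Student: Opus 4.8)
The plan is to probe the a priori unknown value $f(b)$ against the square coset $C_i$, whose image is completely understood: since $f(C_i)=\F_r^*$ we have $f(g^{2i}x)=\beta^i x^m$ with $\beta^i\in\F_r^*$ for all $x\in\F_r^*$. For $x,y\in\F_r^*$ I would introduce the test matrices
\[
A(x,y)=\begin{pmatrix} g^{2i}x & b \\ b & g^{2i}y \end{pmatrix},
\]
so that $A(x,y)$ is positive definite exactly when $g^{4i}xy-b^2\in\F_q^+$, while $f[A(x,y)]$ is positive definite exactly when $\beta^{2i}(xy)^m-f(b)^2\in\F_q^+$. As $y$ runs over $\F_r^*$ with $x$ fixed, the entry $g^{4i}xy$ runs over the square coset $C_{2i}$ and $\beta^{2i}(xy)^m$ runs over all of $\F_r^*=C_0$. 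Preservation of positivity then translates, via Lemma~\ref{lem:Hoffman} applied to the cliques $C_{2i}$ and $C_0$, into a comparison between the neighbors of $b^2$ in $C_{2i}$ and the neighbors of $f(b)^2$ in $\F_r^*$.

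First I would show $b^2\in C_{2i}$. If $b^2\notin C_{2i}$, then $g^{4i}xy\ne b^2$ always, so by Lemma~\ref{lem:Hoffman} there are exactly $\frac{r-3}{2}$ positive-definite matrices $A(x,y)$ for each fixed $x$; preservation forces the corresponding $\frac{r-3}{2}$ products $\beta^{2i}(xy)^m$ to be neighbors of $f(b)^2$, and when $f(b)^2\notin\F_r^*$ this is an equality of neighborhoods that must hold simultaneously for every $x\in\F_r^*$ --- a configuration incompatible with the rigidity of $P(q)$ (Corollary~\ref{cor:triple2}, together with Proposition~\ref{prop:Galois} governing coincident $\F_r$-neighborhoods). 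Once $b^2\in C_{2i}$, set $\lambda=b^2g^{-4i}\in\F_r^*$. Because $\F_r^*$ is a clique, $A(x,y)$ is positive definite for every $x,y$ with $xy\ne\lambda$, so preservation shows $f(b)^2$ is adjacent in $P(q)$ to every element of $\F_r^*$ except possibly $\beta^{2i}\lambda^m$; the Hoffman bound forces $f(b)^2\in\F_r^*$, and comparing its unique non-neighbor inside the clique $\F_r^*$ with the exceptional element yields the exact identity $f(b)^2=\beta^{2i}\lambda^m=\beta^{2i}(b^2g^{-4i})^m$, in direct analogy with how the singular case of \eqref{eq:inverse} was extracted earlier.

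It remains to solve this identity for $b^{r-1}$. Since $f(b)^2\in\F_r^*$ and $\beta^{2i}\in\F_r^*$ are fixed by the Frobenius automorphism $\sigma\colon x\mapsto x^r$ (Theorem~\ref{Tauto}), applying $\sigma$ to $f(b)^2=\beta^{2i}b^{2m}g^{-4im}$ and dividing by the original relation eliminates $f(b)$; combining the resulting equation with the fact that $b^{r-1}$ lies in the order-$(r+1)$ subgroup of $(r-1)$-th powers, and using $\gcd(m,r-1)=1$ to extract the well-defined root $\beta^{i/m}$, pins $b^{r-1}$ to the single value $\beta^{i/m}g^{-2i}$. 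Finally, the map $b\mapsto b^{r-1}$ on $\F_q^*$ has kernel exactly $\F_r^*$, so each of its fibers is one coset of $\F_r^*$ of size $r-1$; hence every $b$ with $f(b)\notin\F_r$ lies in this single fiber, giving at most $r-1$ of them. I expect the genuine obstacle to be the coset-forcing step: the Hoffman bound alone yields only an inequality, so ruling out $b^2\notin C_{2i}$ (and, within that, the residual sub-case $f(b)^2\in\F_r^*$) demands the finer Paley-graph rigidity results rather than mere cardinality counts, and the subsequent arithmetic that upgrades a relation determined only up to roots of unity into the exact value $\beta^{i/m}g^{-2i}$ requires careful bookkeeping of the exponents modulo $r^2-1$.
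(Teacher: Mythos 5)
There is a genuine gap, and it sits exactly where you predicted trouble: the coset-forcing step. The claim $b^2\in C_{2i}$ is not merely hard to prove --- it is false, and the proposition's own conclusion refutes it. If $b^{r-1}=\beta^{i/m}g^{-2i}$, then $(b^2)^{r-1}=\beta^{2i/m}g^{-4i}$, and $b^2\in C_{2i}=g^{4i}\F_r^*$ would force $\beta^{2i/m}=g^{4ir}$; but $g^{4ir}\in\F_r^*$ holds iff $(r+1)\mid 4ir$, i.e.\ iff $(r+1)\mid 4i$, which is impossible for $1\le i\le\frac{r-1}{2}$ in this subsection since $r\equiv 1\pmod 4$ gives $r+1\equiv 2\pmod 4$ while $4i\equiv 0\pmod 4$ and $4i<2(r+1)$. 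So the configuration you hoped to exclude ($b^2\notin C_{2i}$ with $f(b)^2\notin\F_r$) is in fact the generic --- indeed the only --- one, and no rigidity contradiction exists there: running the neighborhood comparison on your test matrices $A(x,y)$ and applying Proposition~\ref{prop:Galois} to $b^2$ yields only $(b^2)^r=\beta^{2i/m}g^{-4i}\,b^2$, i.e.\ the value of $b^{r-1}$ up to sign. This exposes the structural defect of the whole plan: your matrices see $b$ and $f(b)$ only through $b^2$ and $f(b)^2$, so no bookkeeping can recover the exact identity. Even granting your step-2 relation $f(b)^2=\beta^{2i}(b^2g^{-4i})^m$, the Frobenius elimination gives only $(b^{r-1})^{2m}=g^{4im(r-1)}$, pinning $b^{r-1}$ up to a $\gcd(2m,r+1)$-th root of unity (note $\gcd(m,r-1)=1$ says nothing about $\gcd(m,r+1)$). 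You also leave unhandled the degenerate subcases $b^2\in\F_r^*$ or $f(b)^2\in\F_r^*$, which genuinely occur since elements outside $\F_r$ can square into $\F_r^-$; there $f(b)^2$ sits inside the clique $\F_r^*\cup\{0\}$, is adjacent to all its other elements, and the Hoffman count forces nothing.

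The paper's proof avoids all of this by never squaring. For $x\in\F_r^*\subset\F_q^+$, Lemma~\ref{lem:sq} (using the positive entry $x$) gives $x-b\in\F_q^+\Rightarrow f(x)-f(b)\in\F_q^+$, and Lemma~\ref{lem:Hoffman}, applied separately in the cases $b\in\F_q^+$ and $b\in\F_q^-$, upgrades this one-way implication to an equivalence by matching neighbor counts ($\frac{r-3}{2}$ resp.\ $\frac{r-1}{2}$); the same holds with $\F_r^*$ replaced by $C_i$. The hypothesis $f(C_i)=\F_r^*$ then supplies the key coincidence $f(x)=x^m=\beta^i(\beta^{-i/m}x)^m=f(g^{2i}\beta^{-i/m}x)$, which splices the two equivalences into the statement that $b$ and $\beta^{i/m}g^{-2i}b$ have identical $\F_r$-neighborhoods in $P(q)$; since these two elements are distinct, Proposition~\ref{prop:Galois} gives exactly $b^r=\beta^{i/m}g^{-2i}b$, with no sign or root-of-unity ambiguity. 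Your final fiber count --- that $b\mapsto b^{r-1}$ has kernel $\F_r^*$, so the solutions form at most one coset of size $r-1$ --- is correct and matches the paper, but the route to the exact value of $b^{r-1}$ needs the paper's unsquared argument.
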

\begin{proof}
For each $x \in \F_r^*$, by Lemma~\ref{lem:sq}, $x-b \in \F_q^+$ implies that $f(x)-f(b)\in \F_q^+$. Since $f(b)\notin \F_r$, it follows that $b \notin \F_r^*$ (as $f$ maps $\F_r^*$ to itself). We consider two cases:
\begin{itemize}
    \item If $b \in \F_q^+$, then we know that $f(b) \in \F_q^+$. Lemma~\ref{lem:Hoffman} implies that the number of neighbors of $b$ in $\F_r^*$ is $\frac{r-3}{2}$, and so is the number of neighbors of $f(b)$ in $\F_r^*$. Thus, for $x \in \F_r^*$, we have $x-b \in \F_q^+$ if and only if $f(x)-f(b)\in \F_q^+$.
    \item If $b \in \F_q^-$, then Lemma~\ref{lem:Hoffman} implies that the number of neighbors of $b$ in $\F_r^*$ is $\frac{r-1}{2}$. It follows that the number of neighbors of $f(b)$ in $\F_r^*$ is at least $\frac{r-1}{2}$. Thus, Lemma~\ref{lem:Hoffman} implies that $f(b)\in \F_q^-$ and the number of neighbors of $f(b)$ in $\F_r^*$ is exactly $\frac{r-1}{2}$. Therefore, for $x \in \F_r^*$, we have $x-b \in \F_q^+$ if and only if $f(x)-f(b)\in \F_q^+$.
\end{itemize}
In both cases, for $x \in \F_r^*$, we have $x-b \in \F_q^+$ if and only if $f(x)-f(b)\in \F_q^+$. By a similar argument, for $x \in \F_r^*$, we have $g^{2i}x-b \in \F_q^+$ if and only if $f(g^{2i}x)-f(b)\in \F_q^+$. 

Let $x \in \F_r^*,$ and recall that $f(x)=x^m$. We have
\begin{align*}
x-b \in \F_q^+ \iff x^m-f(b)\in \F_q^+ & \iff \beta^i(x/\beta^{i/m})^m-f(b)\in \F_q^+\\
\iff f(g^{2i}x/\beta^{i/m})-f(b) \in \F_q^+ & \iff g^{2i}x/\beta^{i/m}-b \in \F_q^+ \iff x-\beta^{i/m}g^{-2i}b \in \F_q^+.
\end{align*}
Therefore $b$ and $\beta^{i/m}g^{-2i}b$ share the same neighborhood in $\F_r$. Since $g^{2i}\notin \F_r$ and $\beta^{i/m} \in \F_r$, it follows $\beta^{i/m}g^{-2i}b \neq b$. Proposition~\ref{prop:Galois} implies that
$b^r=\beta^{i/m}g^{-2i}b$, i.e., $b^{r-1}=\beta^{i/m}g^{-2i}.$
\end{proof}

\begin{proposition}\label{prop:Fr}
We have $f(\F_q)=\F_r$.    
\end{proposition}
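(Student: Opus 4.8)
The plan is to show that there are no \emph{bad} elements, i.e.\ no $b \in \F_q^*$ with $f(b) \notin \F_r$. Since $f(0)=0$ and $f$ restricts to the bijection $x \mapsto x^m$ of $\F_r^*$ (Proposition~\ref{prop:power} with $\alpha=1$, using $\gcd(m,r-1)=1$), one already has $f(\F_r)=\F_r$; hence ruling out bad elements gives $f(\F_q) \subseteq \F_r$ and therefore $f(\F_q)=\F_r$. The one structural fact I will lean on is Proposition~\ref{prop:b}, whose content is that \emph{all} bad elements must share a single value of $b^{r-1}$.

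First I would reduce to the case $\beta \in \F_r^*$. Put $M=\frac{r+1}{2}$ and let $t$ be the order of $\beta$ in the quotient group $\F_q^*/\F_r^*$. From $\beta^{M}=g^{(r+1)m}\in \F_r^*$ we get $t \mid M$, and since $r \equiv 1 \pmod 4$ makes $M$ odd, $t$ is odd. Suppose for contradiction that $\beta \notin \F_r^*$, so $t \geq 3$; then neither $\beta$ nor $\beta^2$ lies in $\F_r^*$. Consequently $f(C_1)=\beta\F_r^*$ and $f(C_2)=\beta^2\F_r^*$ are square cosets disjoint from $\F_r$, so every element of $C_1 \cup C_2$ is bad. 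But each $b \in C_1=g^2\F_r^*$ satisfies $b^{r-1}=g^{2(r-1)}$, while each $b \in C_2=g^4\F_r^*$ satisfies $b^{r-1}=g^{4(r-1)}$, and these two values differ because $g^{2(r-1)}\neq 1$. This contradicts Proposition~\ref{prop:b}, so $\beta \in \F_r^*$.

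With $\beta \in \F_r^*$ in hand, $f(C_i)=\beta^i\F_r^*=\F_r^*$ for every $i$, so the hypothesis of Proposition~\ref{prop:b} is met by \emph{every} index $i \not\equiv 0 \pmod M$. Since $M \geq 3$, I apply it with $i=1$ and $i=2$: a bad $b$ would then have to satisfy both $b^{r-1}=\beta^{1/m}g^{-2}$ and $b^{r-1}=\beta^{2/m}g^{-4}$. Equating these forces $g^{2}=\beta^{1/m}\in\F_r^*$, which is impossible since $g^2 \notin \F_r^*$ (as $(r+1)\nmid 2$). Hence no bad element exists, so $f(\F_q^*)\subseteq\F_r$; together with $f(0)=0$ and $f(\F_r)=\F_r$ this yields $f(\F_q)=\F_r$.

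The crux is the first reduction: Proposition~\ref{prop:b} only bites once one produces \emph{two} distinct bad square cosets with distinct $(r-1)$-th powers, and manufacturing them relies on $M$ being odd — exactly the place where the hypothesis $r\equiv 1 \pmod 4$ enters. Were $M$ even, the order $t$ could equal $2$, leaving only a single bad coset, and this step would have to be supplemented by a different argument.
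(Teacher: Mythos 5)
Your proof is correct and follows essentially the same route as the paper: both stages run through Proposition~\ref{prop:b}, and your second stage (applying it with $i=1$ and $i=2$ to force $g^{2}=\beta^{1/m}\in\F_r^*$, contradicting that $g$ generates $\F_q^*$) is identical to the paper's. The only local deviation is the reduction to $\beta\in\F_r^*$: the paper observes that $f(C_1)\neq \F_r^*$ would make $C_1$ and $C_{1+i}$ two full cosets of bad elements, i.e.\ at least $2(r-1)$ of them against the cap of $r-1$ in Proposition~\ref{prop:b}, whereas you exhibit $C_1,C_2$ as bad cosets with distinct $(r-1)$-st powers using oddness of the order of $\beta$ in $\F_q^*/\F_r^*$ --- a valid but slightly longer path to the same contradiction (and the paper's counting version does not need $\tfrac{r+1}{2}$ to be odd at that step).
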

\begin{proof}
First we show that $f(\F_q^+)=\F_r^*$.  Recall that we have $f(C_i)=f(C_0)$ for a square coset $C_i$ different from $C_0$. If $f(C_1)\neq f(C_0)$, then we have $f(C_1)=f(C_{1+i}) \neq \F_r^*$, and thus there are at least $2(r-1)$ many $b$'s in $\F_q^*$ with $f(b)\notin \F_r$, contradicting Proposition~\ref{prop:b}. Thus, we must have $f(C_1)=f(C_0)$. It follows that $\beta \in \F_r^*$ and thus $f(g^{2j}x)=\beta^j x^m \in \F_r^*$ for all $j$ and $x \in \F_r^*$. In particular, $f(\F_q^+)=\F_r^*$.

Next we show that $f(\F_q)=\F_r$. Recall that by Corollary \ref{cor:f(0)=0}, we have $f(0) = 0$. Thus, $\F_r \subset f(\F_q)$. Suppose now that there exists $b \in \F_q^*$ with $f(b) \notin \F_r$. Since $f(\F_q^+)=\F_r^*$, Proposition~\ref{prop:b} applies to each square coset $C_i \neq C_0$, and thus $b^{r-1}=\beta^{1/m}g^{-2}=\beta^{2/m}g^{-4}$. Since $\beta \in \F_r^*$, it follows that $g^2 \in \F_r^*$, violating the assumption that $g$ is a generator of $\F_q^*$. Therefore, $f$ maps $\F_q$ to $\F_r$.   
\end{proof}

Recall from Corollary~\ref{cor:f(0)=0} that $f(0)=0$. To finish the proof, it suffices to show the following proposition, since it contradicts Lemma~\ref{lem:nonzero}.
\begin{proposition}
We have $f(\F_q^-)= \{0\}$.    
\end{proposition}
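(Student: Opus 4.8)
The plan is to argue by contradiction. Recall that by Proposition~\ref{prop:Fr} (under the standing non-injectivity assumption) we have $f(\F_q)=\F_r$, $f(\F_q^+)=\F_r^*$, and the explicit formula $f(g^{2j}x)=\beta^j x^m$ with $\beta\in\F_r^*$, $x\in\F_r^*$, while $f(0)=0$ by Corollary~\ref{cor:f(0)=0}. So suppose, for contradiction, that $f(y)\neq 0$ for some $y\in\F_q^-$; since $f(\F_q)=\F_r$ this forces $f(y)=c$ for some $c\in\F_r^*$. I would then study the fiber $S:=f^{-1}(c)\cap\F_q^+$ and show it is a \emph{maximal} independent set of $P(q)$ to which $y$ may be adjoined, contradicting maximality.

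First I would record two adjacency facts, both immediate from Lemma~\ref{lem:sq}. If $z_1,z_2\in S$ are distinct, then both lie in $\F_q^+$ with $f(z_1)=f(z_2)=c$; were $z_1-z_2\in\F_q^+$, Lemma~\ref{lem:sq} would give $f(z_1)-f(z_2)\in\F_q^+$, absurd since this difference is $0$. Hence no two elements of $S$ are adjacent, so $S$ is independent in $P(q)$. The same argument applied to a pair $(z,y)$ with $z\in S\subseteq\F_q^+$ (so Lemma~\ref{lem:sq} applies with the $\F_q^+$-entry being $z$) shows $z-y\notin\F_q^+$; as $z\neq y$ this gives $z-y\in\F_q^-$, so $y$ is non-adjacent to every element of $S$. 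Since $y\in\F_q^-$ while $S\subseteq\F_q^+$, we have $y\notin S$, and therefore $S\cup\{y\}$ is an independent set strictly larger than $S$.

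The crux is to show that $S$ is \emph{already} a maximal independent set. Here I would use the formula for $f$ on $\F_q^+$: writing $z=g^{2j}x$ with $x\in\F_r^*$, the equation $f(z)=\beta^j x^m=c$ has in each square coset $C_j$ the unique solution $z_j^*=\gamma\,\xi^{\,j}$, where $\gamma:=c^{1/m}\in\F_r^*$, $\delta:=\beta^{1/m}\in\F_r^*$, and $\xi:=g^2\delta^{-1}$ (the $m$-th roots being well defined as $\gcd(m,r-1)=1$). Thus $S=\gamma\{\xi^0,\xi^1,\dots,\xi^{(r-1)/2}\}$. I would then check that $\xi$ has order exactly $\tfrac{r+1}{2}$: the $\tfrac{r+1}{2}$ elements $z_j^*$ lie in distinct square cosets, hence are distinct, giving order at least $\tfrac{r+1}{2}$; conversely, the setup relation $\beta^{(r+1)/2}=g^{(r+1)m}$ yields $\delta^{(r+1)/2}=g^{r+1}$ (take $m$-th roots in $\F_r^*$), whence $\xi^{(r+1)/2}=g^{r+1}\delta^{-(r+1)/2}=1$. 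Therefore $\langle\xi\rangle=\{1,\xi,\dots,\xi^{(r-1)/2}\}$, which is a maximal independent set of $P(q)$ by Lemma~\ref{lem:oval}(1) (recall $r\equiv 1\pmod 4$). Since multiplication by $\gamma\in\F_q^+$ is an automorphism of $P(q)$ (it preserves $\eta$ of differences), the translate $S=\gamma\langle\xi\rangle$ is again a maximal independent set.

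Combining the two parts, $S\cup\{y\}$ is an independent set properly containing the maximal independent set $S$, a contradiction. Hence $f(y)=0$ for every $y\in\F_q^-$, that is, $f(\F_q^-)=\{0\}$. I expect the middle step to be the main obstacle: identifying the fiber $S$ with a square-multiple of the order-$\tfrac{r+1}{2}$ subgroup so that Lemma~\ref{lem:oval} applies. The order computation for $\xi$ is where the specific arithmetic of the $q=r^2$ setting enters, and it is precisely what upgrades ``$S$ is independent'' to ``$S$ is \emph{maximal} independent,'' which is the property needed to close the argument.
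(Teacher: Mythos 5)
Your proof is correct, and its skeleton coincides with the paper's: your $\xi=g^2\beta^{-1/m}$ is literally the paper's $\Delta$, your order computation (the powers $\xi^j$ for $0\leq j\leq \tfrac{r-1}{2}$ are distinct because they lie in distinct square cosets, while $\xi^{(r+1)/2}=1$ follows from $\beta^{(r+1)/2}=g^{(r+1)m}$ after taking $m$-th roots in $\F_r^*$) is the same, and both arguments close by extending the maximal independent set supplied by Lemma~\ref{lem:oval}(1) to reach a contradiction; indeed your configuration $S\cup\{y\}$ with $S=\gamma\langle\xi\rangle$, $\gamma=c^{1/m}$, is exactly the paper's $\langle\Delta\rangle\cup\{w/x^{1/m}\}$ rescaled by $\gamma\in\F_r^*\subset\F_q^+$. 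Where you genuinely diverge is the verification that the extra vertex is non-adjacent to the independent set. The paper does this by an ad hoc matrix test: for each $i$ it forms $A$ with diagonal entries $\Delta^i x$ and $w$ and off-diagonal entry $x^{(m+1)/2m}$ (which requires noting that $m$ is odd), computes that $f[A]$ is singular, and concludes $\det A\notin\F_q^+$. You instead observe that $y$ and all of $S$ lie in a single fiber $f^{-1}(c)$, so that Lemma~\ref{lem:sq} immediately forbids adjacency between any two fiber elements as long as one of them is in $\F_q^+$. Your route is a modest but real simplification: it eliminates the clever-but-opaque choice of off-diagonal entry and the parity of $m$, and it explains conceptually why the set $\gamma\langle\Delta\rangle$ appears at all --- it is precisely the fiber of $c$ inside $\F_q^+$, computed coset-by-coset from $f(g^{2j}x)=\beta^j x^m$ using $\gcd(m,r-1)=1$. (Your separate check that $S$ is independent is redundant once $S=\gamma\langle\xi\rangle$ is identified, since Lemma~\ref{lem:oval}(1) plus invariance of $P(q)$ under multiplication by squares already gives maximal independence, but it is harmless.) Both proofs lean equally on Proposition~\ref{prop:Fr} (giving $f(\F_q)=\F_r$ and $\beta\in\F_r^*$, so that $c=f(y)\in\F_r^*$ whenever $f(y)\neq 0$) and, as in the paper, the conclusion $f(\F_q^-)=\{0\}$ then contradicts Lemma~\ref{lem:nonzero} via Corollary~\ref{cor:f(0)=0}.
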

\begin{proof}
By Proposition~\ref{prop:Fr}, $f$ maps $\F_q$ to $\F_r$, and in particular $\beta \in \F_r^*$. Let $\Delta=g^2 \beta^{-1/m}\in \F_q^+$. Note that for each $1\leq j<\frac{r+1}{2}$, we have $\Delta^{j}\neq 1$ since $g^{2j} \notin \F_r$. On the other hand, since $\beta^{\frac{r+1}{2}}=\beta_{\frac{r+1}{2}}=g^{(r+1)m}$, and $\gcd(m, r-1)=1$, we have $g^{r+1}=(\beta^{1/m})^{\frac{r+1}{2}}$. This shows that $\Delta \in \F_q^*$ has order $\frac{r+1}{2}$. Lemma~\ref{lem:oval} (1) then implies that $I=\{1, \Delta, \Delta^2, \ldots, \Delta^{\frac{r-1}{2}}\} \subset \F_q^+$ forms a maximal independent set of size $\frac{r+1}{2}$ in $P(q)$.

Suppose there exists $w \in \F_q^-$ such that $f(w)=x \in \F_r^*$. Recall that $\gcd(m,r-1)$=1, in particular, $m$ is odd. Let $y=x^{(m+1)/2} \in \F_r^*$. Let $0 \leq i \leq \frac{r-1}{2}$ and consider the matrix
$$
A=
\begin{pmatrix} 
\Delta^ix&y^{1/m}\\
y^{1/m}& w
\end{pmatrix}.
$$    
Note that $f(\Delta^i x)=f(g^{2i} (\beta^{-i/m}x))=\beta^i (\beta^{-i/m}x)^m=x^m$ and $f(y^{1/m})=y=x^{(m+1)/2}$. Thus
$$
f[A]=
\begin{pmatrix} 
x^m&x^{(m+1)/2}\\
x^{(m+1)/2}& x
\end{pmatrix}
$$    
is not positive definite. Since $\Delta^ix \in \F_q^+$, this implies that $$\det(A)=\Delta^ixw-y^{2/m}=\Delta^i xw-x^{(m+1)/m}\notin \F_q^+,$$ and thus $w/x^{1/m}-\Delta^{-i} \notin \F_q^+$. Since $w \in \F_q^-$, we have $w/x^{1/m} \in \F_q^-$. It follows that $w/x^{1/m}-\Delta^i \in \F_q^-$ for all $0 \leq i \leq \frac{r-1}{2}$, which means that we can extend the maximal independent set $I \subset \F_q^+$ by adding a new element $w/x^{1/m} \in \F_q^-$, a contradiction. We have thus shown that $f(\F_q^-)=0$.     
\end{proof}

\section{Other approach: monomials via Lucas' theorem}\label{Smisc}
Throughout the section, we assume $q  = p^k\equiv 3 \pmod 4$. Recall that our proof of Theorem \ref{ThmB} relied on several lemmas and Weil's bound on character sums. Theorem \ref{ThmB} implies that the only power functions $f(x) = x^n$ that preserve positivity on $M_2(\F_q)$ are the field automorphisms $f(x) = x^{p^\ell}$ for some $0 \leq \ell \leq k-1$. We now provide an alternate proof for this fact using elementary number theory, which is of independent interest. The proof relies on Lucas' Theorem \cite{lucas1878theorie}, which we now recall.

For $a\in \{1,2,\ldots,q-1\}$, we denote the representation of $a$ in base $p$ by $a := (a_{k-1},\ldots,a_1,a_0)_p$, i.e., $a = a_{k-1}p^{k-1}+\ldots+a_1p+a_0$ where $0\leq a_i \leq p-1$ for each $0 \leq i \leq k-1$. The following classical result of Lucas provides an effective way to evaluate binomial coefficients modulo a prime. 
\begin{theorem}[Lucas \cite{lucas1878theorie}] \label{Lucas_Theorem}
    Let $a,b \in \{1,2,\ldots,q-1\}$. Then 
    $
        \binom{a}{b} \equiv \prod_{i=0}^{k-1}\binom{a_i}{b_i} \pmod{p},
    $
    where $a = (a_{k-1},\ldots,a_1,a_0)_p$ and $b = (b_{k-1},\ldots,b_1,b_0)_p$. 
\end{theorem}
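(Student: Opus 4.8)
The plan is to prove this classical identity by working in the polynomial ring $\F_p[x]$ and computing the coefficients of a single power of $1+x$ in two different ways. The key algebraic input is the Frobenius identity $(x+y)^{p} = x^{p}+y^{p}$, valid in characteristic $p$ (as recorded immediately after Theorem~\ref{Tauto}); iterating it gives $(1+x)^{p^i} \equiv 1+x^{p^i} \pmod{p}$ in $\F_p[x]$ for every $i \geq 0$. This is what converts the additive base-$p$ structure of the exponent $a$ into a multiplicative factorization of $(1+x)^a$.

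First I would write $a = \sum_{i=0}^{k-1} a_i p^i$ and factor $(1+x)^a = \prod_{i=0}^{k-1} \bigl((1+x)^{p^i}\bigr)^{a_i}$. Applying the Frobenius identity to each factor and then the ordinary binomial theorem yields, as a genuine identity of polynomials in $\F_p[x]$,
\[
(1+x)^a \equiv \prod_{i=0}^{k-1} (1+x^{p^i})^{a_i} = \prod_{i=0}^{k-1}\left(\sum_{c_i=0}^{a_i}\binom{a_i}{c_i}\, x^{c_i p^i}\right).
\]
The left-hand side, expanded directly, has $\binom{a}{b}$ as the coefficient of $x^b$, so the whole theorem reduces to identifying the coefficient of $x^b$ on the right.

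The main work is then the bookkeeping step: on the right a monomial $x^b$ arises by choosing, from the $i$-th factor, an exponent $c_i p^i$ with $0 \le c_i \le a_i$, subject to $\sum_i c_i p^i = b$, and its contribution is $\prod_i \binom{a_i}{c_i}$. The crucial point — and the step I expect to require the most care — is that since each chosen digit satisfies $0 \le c_i \le a_i \le p-1$, any such tuple $(c_0,\dots,c_{k-1})$ is automatically a valid base-$p$ expansion of $b$; by uniqueness of base-$p$ representations it must coincide with $(b_0,\dots,b_{k-1})$. Hence there is \emph{exactly one} admissible tuple, giving coefficient $\prod_{i=0}^{k-1}\binom{a_i}{b_i}$. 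This also gracefully covers the case where some $b_i > a_i$: then no admissible tuple exists and the coefficient is $0$, matching the convention $\binom{a_i}{b_i}=0$. Comparing coefficients of $x^b$ in the polynomial identity above yields $\binom{a}{b} \equiv \prod_{i=0}^{k-1}\binom{a_i}{b_i} \pmod{p}$, as claimed. Apart from the uniqueness-of-digits matching, every step is a direct application of the Frobenius identity and the binomial theorem.
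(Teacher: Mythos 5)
Your proof is correct and complete: the factorization $(1+x)^a \equiv \prod_i (1+x^{p^i})^{a_i}$ in $\F_p[x]$ via iterated Frobenius, followed by the uniqueness-of-base-$p$-digits argument to isolate the coefficient of $x^b$ (including the degenerate case $b_i > a_i$, where both sides vanish), is the standard generating-function proof of Lucas' theorem. The paper itself gives no proof --- it cites the result directly from Lucas' 1878 memoir --- so there is no internal argument to compare against; your write-up simply supplies, correctly, the classical proof that the paper treats as known background.
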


\begin{lemma}\label{intermediate_lemma}
    Let $n\in \{1,2,\ldots,q-1\}$ such that $\gcd(n,q-1) = 1$ and $n\neq p^i$ for any $i=0,1,\ldots,k-1$. Then there exists a positive integer $r = r_{k-1}p^{k-1}+\ldots+r_1p+r_0$, where $0\leq r_i\leq \frac{p-1}{2}$ for all $0\leq i\leq k-1$, and such that if $s \in \{1,\dots, q-1\}$ and $s \equiv nr\pmod{q-1}$, then $\frac{q-1}{2}<s<q-1$. 
\end{lemma}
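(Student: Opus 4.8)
The plan is to recast the statement entirely in terms of the base-$p$ digit structure and the action of multiplication by $p$. Write $N = q-1 = p^k-1$, $H = \tfrac{q-1}{2}$, and $m = \tfrac{p-1}{2}$, and let $R \subseteq \Z/N\Z$ be the set of residues all of whose base-$p$ digits lie in $\{0,1,\dots,m\}$. Since $H = (m,\dots,m)_p$ is the all-$m$ string, the elements of $R$ are exactly the integers in $[0,H]$ with digits $\le m$, so $R$ is precisely the set of admissible exponents $r$ in the statement (together with $r=0$). The conclusion to be proved is thus: there is a nonzero $r \in R$ with $nr \bmod N \in (H, N)$. I would prove that this holds \emph{if and only if} $nR \ne R$ as subsets of $\Z/N\Z$, and then show that $n \ne p^i$ forces $nR \ne R$.

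The equivalence rests on the fact that multiplication by $p$ modulo $N$ cyclically rotates the base-$p$ digit string (because $p^k \equiv 1$), and in particular maps $R$ bijectively to itself. First, if some nonzero $r \in R$ already satisfies $nr \bmod N > H$, then since every element of $R$ is $\le H$ we get $nr \bmod N \notin R$, so $nR \ne R$. Conversely, suppose $nR \ne R$; as $\gcd(n,N)=1$ makes $x \mapsto nx$ a bijection, there is a nonzero $r \in R$ with $s := nr \bmod N \notin R$, i.e.\ $s$ has some digit $\ge m+1$. Choosing $j$ so that $p^{j}s$ brings that digit to the leading position, the residue $p^{j}s = n(p^{j}r) \bmod N$ has leading digit $\ge m+1$ and is therefore $> H$ (one checks $(m+1)p^{k-1} > H$ directly), while $p^{j}r \in R$ is again nonzero. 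This produces the desired $r$, giving the lemma. Hence it suffices to prove the multiplier-type statement: if $\gcd(n,N)=1$ and $nR = R$, then $n \equiv p^i \pmod N$ for some $i$ (which, being excluded by hypothesis, yields $nR \ne R$).

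I expect this last step to be the main obstacle, since $R \cap (\Z/N\Z)^\times$ is generally much larger than $\langle p\rangle$, so membership $n \in R$ alone (forced by $n = n\cdot 1 \in nR = R$) is far from enough. My approach would be to study the stabilizer $G = \{n : nR = R\}$, a subgroup of $(\Z/N\Z)^\times$ containing $\langle p\rangle$, and show $G = \langle p\rangle$. As a first reduction, multiplication by any $n \in G$ must preserve the multiset of digit sums $\{\sigma(r) \bmod (p-1) : r \in R\}$; comparing this multiset with its image under $x \mapsto nx \bmod (p-1)$ pins down $n \equiv 1 \pmod{p-1}$. For the full rigidity I would exploit that $R$ is a digit-product set, which gives the factorization $\sum_{r \in R} \zeta^{ar} = \prod_{i=0}^{k-1}\bigl(\sum_{d=0}^{m}\zeta^{a p^{i} d}\bigr)$ with $\zeta = e^{2\pi i/N}$; the identity $nR = R$ forces this character sum to be invariant under $a \mapsto an$ for all $a$, and I would argue that invariance of this explicit product forces $n \in \langle p\rangle$. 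It is worth noting the Kummer/Lucas reformulation $r \in R \iff \binom{2r}{r} \not\equiv 0 \pmod p$ (no carries in $r+r$), which both ties the argument to Theorem \ref{Lucas_Theorem} and gives a convenient criterion for testing membership in $R$ when verifying the finitely many small cases that the character-sum estimate may leave open.
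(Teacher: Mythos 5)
Your reduction in the first two paragraphs is correct and clean: with $N=q-1$, $H=\frac{q-1}{2}$, $m=\frac{p-1}{2}$ and $R$ the set of residues with all base-$p$ digits at most $m$, multiplication by $p$ indeed rotates digit strings (so $pR=R$), a leading digit $\geq m+1$ forces $s\geq(m+1)p^{k-1}>H$, and $s\neq q-1$ is automatic from $\gcd(n,N)=1$ and $1\leq r\leq H$. But this reduction converts the lemma into the statement that the stabilizer $G=\{n\in(\Z/N\Z)^\times: nR=R\}$ equals $\langle p\rangle$, and — as you yourself observe — this is where the difficulty lies. The problem is that this stabilizer statement is not merely the main obstacle: by your own equivalence it is \emph{equivalent} to the lemma, so the proposal reformulates the lemma without closing it. Neither of your two sketched tools is carried out. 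The digit-sum step ("comparing this multiset with its image under $x\mapsto nx \bmod (p-1)$ pins down $n\equiv 1 \pmod{p-1}$") is asserted, not proved, and it is not clear the multiset of residues of $R$ modulo $p-1$ is rigid enough to exclude, say, $n\equiv -1$; the character-sum step ("I would argue that invariance of this explicit product forces $n\in\langle p\rangle$") names a factorization but supplies no mechanism, no estimate, and no delineation of the "finitely many small cases." So there is a genuine gap precisely at the heart of the lemma.

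The paper avoids the stabilizer theorem entirely with an explicit construction that you could graft onto your framework. Writing $n=(n_{k-1},\ldots,n_0)_p$, let $t=\max_i n_i$ and let $j$ be the largest index with $n_j=t$. If $t>1$, take $r$ to be the single-digit number $r_j p^{k-1-j}$ with $r_j=\lfloor\frac{p-1}{2t}\rfloor+1$: then $1\leq r_j\leq\frac{p-1}{2}$, every product $n_ir_j\leq p-1$ (so multiplying $n$ by $r$ produces \emph{no carries}), and $s\equiv nr$ is just the digit string $(n_ir_j)$ rotated so that $n_jr_j>\frac{p-1}{2}$ sits in the leading position — exactly the "leading digit $\geq m+1$" situation your rotation argument exploits, but produced directly rather than extracted from a hypothetical failure of $nR=R$. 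If $t=1$, then $n\neq p^i$ gives two digits equal to $1$, and $r$ with two digits $\frac{p-1}{2}$ at matching positions yields $s$ with digits in $\{0,\frac{p-1}{2},p-1\}$ and leading digit $p-1$; in both cases $s\neq q-1$ follows from $\gcd(n,q-1)=1$. The carry-free choice of a one- or two-digit multiplier is the idea your proposal is missing; with it, the stabilizer machinery (and the unproven character-sum rigidity) becomes unnecessary.
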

\begin{proof} Note that $\frac{q-1}{2} = \left(\frac{p-1}{2},\ldots,\frac{p-1}{2},\frac{p-1}{2}\right)_p$. Let $n = (n_{k-1},\ldots,n_1,n_0)_p$ and $t=\max\{n_i:0\leq i \leq k-1\}$. Denote by $j$ the largest integer such that $n_j = t$. Let us consider the following two cases.

\noindent{\bf Case 1: $t>1$.} Consider $r_j = \left\lfloor\frac{p-1}{2t} \right\rfloor + 1$ and $r = r_jp^{k-1-j}$. Then we obtain 
\begin{align*}
nr &= \left(\sum_{i=0}^{k-1}n_ip^i\right)r_jp^{k-1-j}\equiv \sum_{i=0}^{j}n_ir_jp^{k-1-(j-i)}+\sum_{i=j+1}^{k-1}n_ir_jp^{i-(j+1)} \\
& \equiv \sum_{\ell=0}^{k-j-2}n_{\ell+j+1}r_jp^{\ell}+\sum_{i=0}^{j}n_ir_jp^{k-1-(j-i)}  \pmod {q-1}.
\end{align*}
Let $s=(n_jr_j,n_{j-1}r_j,\ldots,n_0r_j,n_{k-1}r_j,n_{k-2}r_j\ldots,n_{j+1}r_j)_p$. Then we have $s \in \{1,\dots, q-1\}$ and $s \equiv nr\pmod{q-1}$. Note that $1\leq r_j \leq \frac{p-1}{2}$, $n_jr_j > \frac{p-1}{2}$, and $0\leq n_ir_j \leq p-1$ for all $i = 0,1,\ldots,k-1$. Also, $s \ne q-1$ since $\gcd(n,q-1) = 1$. It follows that $q-1 > s>\frac{q-1}{2}$.

\noindent{\bf Case 2: $t=1$.} Now assume $t=1$. Then $n_{i} \in \{0,1\}$ for all $i=0,1,\ldots,k-1$. Since $n\neq p^i$ for any $i=0,1,\ldots,k-1$, there exist two distinct integers, say $j$ and $\ell$, such that $n_{j} = n_{\ell} = 1$. Let $r_j = r_{\ell} = \frac{p-1}{2}$ and let $r = r_jp^{k-1-j} + r_{\ell} p^{k-1-\ell}$. By a similar calculation as in the previous case, if $s = (s_{k-1},\ldots,s_{1},s_0)_p$ with $s \equiv nr \pmod{q-1}$, then $s_{k-1} = p-1$ and $s_{i} \in \{0,\frac{p-1}{2},p-1\}$ for all $i=0,1,\ldots,k-1$. Since $\gcd(n,q-1) = 1$, $s \ne q-1$ and it follows that $q-1 > s > \frac{q-1}{2}$. 
\end{proof}

Let $g(x) = \sum_{i=0}^{m}a_ix^i$ be a polynomial of degree $m$ in $\F_q[x]$. Suppose $r(x)$ is the remainder obtained from $g(x)$ when dividing it by $x^q-x$. Then $g$ has degree at most $q-1$ and $g(x) \equiv r(x)\pmod{x^q-x}$. We may avoid long division when dividing a polynomial by $x^q-x$ since $x^q = x$ for all $x\in \F_q$. More precisely, $r(x) = a_0+\sum_{i=1}^{m}a_ix^{m\pmod{q-1}}$ with the convention that $m \pmod{q-1}$ is the unique integer $m'$ such that $1\leq m' \leq q-1$ and $m' \equiv m \pmod {q-1}$. 

\begin{corollary} \label{Key_Lemma}
      Let $n\in \{1,2,\ldots,q-1\}$ such that $\gcd(n,q-1) = 1$. Define $g(x) = (x^n-1)^{\frac{q-1}{2}}$ and $h(x) = (x-1)^{\frac{q-1}{2}}$. Then $g(c) =h(c)$ for all $c\in \F_q$ if and only if $n = p^i$ for some $i\in \{0,1,\ldots,k-1\}$.  
\end{corollary}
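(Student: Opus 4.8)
The plan is to first reinterpret both functions through the quadratic character, then dispatch the two directions separately: the forward direction by a one-line Frobenius computation, and the converse by its contrapositive using Lemma~\ref{intermediate_lemma} together with Lucas' theorem. The key observation is that for every $y \in \F_q$ one has $y^{(q-1)/2} = \eta(y)$ (with $\eta(0)=0$), so that $h(c) = \eta(c-1)$ and $g(c) = \eta(c^n-1)$ for all $c \in \F_q$. Since $\gcd(n,q-1)=1$ makes $c \mapsto c^n$ a bijection of $\F_q$ fixing $0$ and $1$, we get $g(c)=0 \iff c=1 \iff h(c)=0$, and hence $g = h$ as functions if and only if $\eta(c^n-1)=\eta(c-1)$ for all $c$.

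For the direction $n = p^i \Rightarrow g = h$, I would invoke Theorem~\ref{Tauto} to write $c^{p^i}-1 = (c-1)^{p^i}$, so that
\[
g(c) = \eta\big((c-1)^{p^i}\big) = \eta(c-1)^{p^i} = \eta(c-1) = h(c),
\]
using that $\eta$ takes values in $\{-1,0,1\}$ and $p^i$ is odd. This settles the easy half.

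For the converse I would prove the contrapositive: assuming $n \ne p^i$ for every $i$, I produce a coefficient at which the reduced forms of $g$ and $h$ disagree. By Lemma~\ref{Lidl_Lemma}, $g$ and $h$ agree as functions if and only if $g \equiv h \pmod{x^q-x}$, and since the unique degree-$\le q-1$ representatives are obtained by reducing exponents modulo $q-1$, it suffices to show these representatives differ as polynomials. Expanding by the binomial theorem gives
\[
h(x) = \sum_{j=0}^{(q-1)/2} \binom{(q-1)/2}{j}(-1)^{(q-1)/2 - j} x^{j},
\qquad
g(x) = \sum_{j=0}^{(q-1)/2} \binom{(q-1)/2}{j}(-1)^{(q-1)/2 - j} x^{nj}.
\]
Now apply Lemma~\ref{intermediate_lemma} to obtain $r = \sum_i r_i p^i$ with $0 \le r_i \le \frac{p-1}{2}$, and $s \equiv nr \pmod{q-1}$ with $\frac{q-1}{2} < s < q-1$. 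Then $r \le \sum_i \frac{p-1}{2}p^i = \frac{q-1}{2}$, so $r$ lies in the summation range; and since $\gcd(n,q-1)=1$ the map $j \mapsto nj \bmod (q-1)$ is injective on $\{0,\dots,(q-1)/2\}$, so $j=r$ is the \emph{unique} index producing the exponent $x^s$ in the reduction of $g$. Hence the coefficient of $x^s$ in the reduced form of $g$ is exactly $\binom{(q-1)/2}{r}(-1)^{(q-1)/2 - r}$, while in $h$ it is $0$ because $\deg h = \frac{q-1}{2} < s$.

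It then remains to verify this single coefficient is nonzero in $\F_q$. Since $\frac{q-1}{2} = \big(\tfrac{p-1}{2},\ldots,\tfrac{p-1}{2}\big)_p$, Lucas' theorem (Theorem~\ref{Lucas_Theorem}) yields $\binom{(q-1)/2}{r} \equiv \prod_{i=0}^{k-1}\binom{(p-1)/2}{r_i} \pmod p$, and every factor is nonzero modulo $p$ precisely because $0 \le r_i \le \frac{p-1}{2}$. Thus the reduced polynomials differ, so $g \ne h$ as functions, completing the contrapositive. The main obstacle of the argument—already overcome by Lemma~\ref{intermediate_lemma}—is to locate an exponent $s$ lying strictly beyond $\deg h$ that is hit by a single surviving binomial coefficient; the digit bound $r_i \le \frac{p-1}{2}$ is engineered to be exactly the condition guaranteeing that the Lucas product does not vanish.
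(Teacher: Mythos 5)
Your proof is correct and follows essentially the same route as the paper's: the Frobenius identity $c^{p^i}-1=(c-1)^{p^i}$ for the easy direction, and for the converse the binomial expansion of $g$ reduced modulo $x^q-x$, with Lemma~\ref{intermediate_lemma} supplying the index $r$ and Lucas' theorem (via the digit bound $r_i \le \frac{p-1}{2}$ against the base-$p$ digits of $\frac{q-1}{2}$) showing the coefficient of $x^s$ is nonzero, concluding via Lemma~\ref{Lidl_Lemma}. Your explicit checks that $r \le \frac{q-1}{2}$ and that $j \mapsto nj \bmod (q-1)$ produces the exponent $s$ exactly once only spell out a no-cancellation point the paper leaves implicit.
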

\begin{proof}
    Suppose $n = p^i$ for some $i\in \{0,1,\ldots,k-1\}$. Then for any $c\in \F_q$ we have
    \begin{align*}
        g(c) = (c^n-1)^{\frac{q-1}{2}} = (c^{p^i}-1)^{\frac{q-1}{2}} = (c-1)^{p^i \cdot \frac{q-1}{2}} = h(c)^{p^{i}}. 
    \end{align*}
    So $g(c) = h(c)$ for all $c\in \F_q$ since $g(c),h(c) \in \{-1,0,1\}$ and $p$ is odd. Conversely, suppose $n \neq p^i$ for any $i\in \{0,1,\ldots,k-1\}$. Note that $\deg(h(x)) = \frac{q-1}{2}$. On the other hand, we have
    \begin{align*}
        g(x) &= (x^n-1)^{\frac{q-1}{2}}
        = \sum_{r=0}^{\frac{q-1}{2}}(-1)^{\frac{q-1}{2}-r}\binom{\frac{q-1}{2}}{r}x^{nr}\\
        &\equiv -1 + \sum_{r=1}^{\frac{q-1}{2}}\left\{(-1)^{\frac{q-1}{2}-r}\binom{\frac{q-1}{2}}{r} \pmod{p}\right\}x^{nr\pmod{q-1}} \pmod{x^q-x}.
    \end{align*}

By Lucas's theorem (Theorem \ref{Lucas_Theorem}) and  Lemma \ref{intermediate_lemma} we must have $\deg\left(g(x)\pmod{x^q-x}\right) > \frac{q-1}{2}$. Thus $g(x) \not \equiv h(x) \pmod{x^q-1}$. The result now follows from Lemma \ref{Lidl_Lemma}. 
\end{proof}

We now directly examine the properties of power functions that preserve positivity on $M_2(\F_q)$. 

\begin{lemma}\label{even_monomials}
    Let $f(x) = x^n$ for some $n \in \{1,2,\ldots,q-1\}$. If $n$ is even, then $f$ does not preserve positivity on $M_2(\F_q)$. 
\end{lemma}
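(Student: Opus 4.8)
The plan is to exploit the parity of $n$: when $n$ is even, $x^n$ is forced to land in the squares, which is incompatible with the sign behaviour that Lemma~\ref{lbijective} imposes on any positivity preserver. Concretely, write $n = 2m$. For every $x \in \F_q^*$ we then have $f(x) = x^{2m} = (x^m)^2$ with $x^m \ne 0$, so $f(x) \in \F_q^+$; that is, $f(\F_q^*) \subseteq \F_q^+$, and in particular $f(\F_q^-) \subseteq \F_q^+$. On the other hand, Lemma~\ref{lbijective} shows that a positivity preserver on $M_2(\F_q)$ with $q \equiv 3 \pmod 4$ must be a bijection of $\F_q$ carrying $\F_q^-$ into $\F_q^-$. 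Since $\F_q^- \ne \emptyset$ (as $q \ge 3$), these two facts are contradictory, so $f(x) = x^n$ with $n$ even cannot preserve positivity. Equivalently, the contradiction can be phrased through non-injectivity: $f(1) = 1 = (-1)^n = f(-1)$ while $1 \ne -1$ in odd characteristic, contradicting the bijectivity of $f$ from Lemma~\ref{lbijective}.

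The one point that needs care --- and the reason the argument must run through the behaviour on $\F_q^-$ rather than on $\F_q^+$ --- is that $x \mapsto x^n$ may well be injective on $\F_q^+$ even when $n$ is even. Indeed $\F_q^+$ is cyclic of odd order $\frac{q-1}{2}$ (here $q \equiv 3 \pmod 4$), so $x \mapsto x^n$ is a bijection of $\F_q^+$ precisely when $\gcd(n, \frac{q-1}{2}) = 1$, which can certainly hold for even $n$ (e.g. $q = 7$, $n = 2$). Thus Lemma~\ref{L:Charq}(1) alone does not detect the failure, and the decisive input is the sign-preservation on the negative elements supplied by Lemma~\ref{lbijective}.

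If one instead wishes to keep this section entirely free of Weil's bound (on which Lemma~\ref{lbijective} ultimately rests), the plan would be to exhibit an explicit counterexample matrix. Writing $d = \gcd(n, q-1) \ge 2$ (which is even), every $\zeta$ in the group $\mu_d$ of $d$-th roots of unity satisfies $\zeta^n = 1$, so the matrix $\begin{pmatrix} 1 & 1 \\ 1 & \zeta \end{pmatrix}$, which is positive definite whenever $\zeta - 1 \in \F_q^+$, is sent by $f$ to the singular all-ones matrix. A short pairing argument using $\eta(\zeta^{-1} - 1) = -\eta(\zeta - 1)$ for $\zeta \in \mu_d \cap \F_q^+$ produces such a $\zeta$ whenever $d > 2$, and when $d = 2$ with $2 \in \F_q^-$ one simply takes $\zeta = -1$. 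The genuinely delicate residual case is $d = 2$ together with $2 \in \F_q^+$: here no root-of-unity trick applies, and one must instead locate a square $s$ with $(s+1)^n - s^n \notin \F_q^+$, using the always-positive-definite family $\begin{pmatrix} 1 & b \\ b & b^2 + 1 \end{pmatrix}$ with $s = b^2$. Proving the existence of such an $s$ for every admissible even $n$ without character-sum estimates is the main obstacle on this Weil-free route, and it is precisely what makes the short appeal to Lemma~\ref{lbijective} the cleaner option.
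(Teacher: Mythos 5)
Your argument is correct and non-circular, but it takes a genuinely different route from the paper's. The paper keeps the proof inside the elementary toolkit that Section~\ref{Smisc} is advertising: by Lemma~\ref{L:Charq}, $f$ restricts to a bijection of $\F_q^+$ and $f(0)=0$; since $n$ is even, $f(-x)=f(x)$, so $f$ is exactly $2$-to-$1$ on $\F_q^*$, whence the set $\{f(z+1):z\in \F_q^+\}\subseteq \F_q^+$ has size at least $\lceil \tfrac{1}{2}\cdot\tfrac{q-1}{2}\rceil=\tfrac{q+1}{4}$; comparing with the count $|\F_q^+\cap(1+\F_q^+)|=\tfrac{q-3}{4}$ from Lemma~\ref{Ldoublesquare} yields some $z\in\F_q^+$ with $f(z+1)-1\notin\F_q^+$, and the matrix $\left(\begin{smallmatrix} 1 & 1 \\ 1 & z+1 \end{smallmatrix}\right)$ is an explicit positive definite witness mapped to a non-positive-definite matrix. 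That argument uses only the elementary complete character-sum identity behind Lemma~\ref{Ldoublesquare}, never Weil's bound, which is the whole point of the section; your route buys brevity by importing Lemma~\ref{lbijective} and reading off the contradiction $f(\F_q^-)\subseteq\F_q^+$ versus $f(\F_q^-)\subseteq\F_q^-$ (or $f(1)=f(-1)$ against injectivity). Two refinements to your discussion: (i) your worry that Lemma~\ref{lbijective} ``ultimately rests'' on Weil's bound is only half right --- Weil enters (via Lemma~\ref{lem:triple}) only in Step~2 of its proof, the injectivity on $\F_q^-$, whereas the containment $f(\F_q^-)\subseteq\F_q^-$ of Step~1, which is all your first contradiction needs, is proved with Lemma~\ref{Ldoublesquare} alone; so even your short route can be made Weil-free by citing just that step (your second phrasing through bijectivity, by contrast, does invoke the Weil-dependent half, though it too can be replaced by the sign observation $f(1)\in\F_q^+$, $f(-1)\in\F_q^-$). (ii) Your Weil-free backup sketch is sound for $d>2$ and for $d=2$ with $2\in\F_q^-$, but the residual case you flag ($d=2$, $2\in\F_q^+$) is exactly where the paper's $2$-to-$1$ counting combined with Lemma~\ref{Ldoublesquare} steps in; no estimate beyond that elementary complete sum is needed, so the ``main obstacle'' you identify is already resolved by the paper's counting device rather than by any character-sum bound of Weil type.
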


\begin{proof} Suppose $n$ is even and $f(x)=x^n$ preserves positivity on $M_2(\F_q)$. Then Lemma~\ref{L:Charq} implies that $f(x)$ must be bijective on $\F_q^+$ onto itself and $f(0) = 0$. Since $f(x)$ is even, $f$ maps $\F_q^-$ bijectively onto $\F_q^+$, and thus $f$ restricted to $\F_q^*$ is a $2$-to-$1$ map. It follows that $\{f(z+1): z \in \F_q^+\} \subset \F_q^+$ has size at least $\lceil|\F_q^+|/2 \rceil=\frac{q+1}{4}$. From $|\F_q^+ \cap (-1+\F_q^+)|= \frac{q-3}{4}$ (Lemma~\ref{Ldoublesquare}), there exists $z \in \F_q^+$ such that $f(z+1)-1 \notin \F_q^+$. For such $z$, the matrix $A = \begin{pmatrix}
    1 & 1\\
    1 & z+1
\end{pmatrix}$ is positive definite but $f[A] = \begin{pmatrix}
    1 & 1\\
    1 & f(z+1)
\end{pmatrix}$ is not, a contradiction.\end{proof}

\begin{lemma}\label{imp_cri_for_mono}
    Let $f(x) = x^n$ for some $n \in \{1,2,\ldots,q-1\}$. If $f$ preserves positivity on $M_2(\F_q)$, then $\gcd(n,q-1) = 1$ and $\eta(a-1)=\eta(a^n-1)$ for all $a \in \F_q$.
\end{lemma}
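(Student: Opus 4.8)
The plan is to prove the two conclusions in order, first extracting $\gcd(n,q-1)=1$ from bijectivity of $f$, and then establishing the character identity from a single $2\times 2$ positivity test combined with a counting argument. Since $f$ preserves positivity on $M_2(\F_q)$ and $q\equiv 3\pmod 4$, Lemma~\ref{lbijective} guarantees that $f$ is a bijection of $\F_q$. As $f(x)=x^n$ is a monomial, Theorem~\ref{Tperm}(2) immediately yields $\gcd(n,q-1)=1$; in particular the map $\phi(x):=x^n$ is a \emph{permutation} of $\F_q$, a fact I will use crucially below.

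Next I would prove the one-sided implication $\eta(a-1)=1\implies \eta(a^n-1)=1$ using the matrix $A=\left(\begin{smallmatrix}1&1\\1&a\end{smallmatrix}\right)$. Its leading principal minors are $1$ and $a-1$, so $A$ is positive definite exactly when $\eta(a-1)=1$. Because $f(1)=1$, applying $f$ entrywise gives $f[A]=\left(\begin{smallmatrix}1&1\\1&a^n\end{smallmatrix}\right)$, whose nontrivial leading principal minor is $a^n-1$; since $f$ preserves positivity, $f[A]$ is positive definite, forcing $\eta(a^n-1)=1$.

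To upgrade this to the full equality $\eta(a^n-1)=\eta(a-1)$ for every $a$, I would run a counting argument based on bijectivity. Partition $\F_q=\{1\}\sqcup S\sqcup N$, where $S=\{a:\eta(a-1)=1\}=1+\F_q^+$ and $N=\{a:\eta(a-1)=-1\}$, each of size $\tfrac{q-1}{2}$ (here I use that $q\equiv 3\pmod 4$, so $-1\in\F_q^-$, via Proposition~\ref{Pdecomp}). The previous paragraph shows $\phi(S)\subseteq S$, and since $\phi$ is injective this forces $\phi(S)=S$. As $\phi(1)=1$, it follows that $\phi$ maps the complement $N$ bijectively onto itself, i.e.\ $\eta(a-1)=-1\implies \eta(a^n-1)=-1$. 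The case $a=1$ is immediate, since $a^n-1=0\iff a=1$ (again because $\phi$ is a bijection) makes both sides equal to $0$. Combining the three cases gives $\eta(a^n-1)=\eta(a-1)$ for all $a\in\F_q$.

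The main obstacle is precisely the passage from the easy one-directional implication to the full character identity: a positivity test only ever detects \emph{squares}, so the $\eta=-1$ case cannot be read off from any single matrix. The resolution is to leverage that $x\mapsto x^n$ is a genuine permutation of $\F_q$, which forces the images of $\{1\}$, $S$, and $N$ to be exactly these same sets and thereby pins down the $\eta=-1$ values for free.
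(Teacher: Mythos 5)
Your proof is correct, but it resolves the hard direction by a genuinely different mechanism than the paper. The paper first rules out even exponents (Lemma~\ref{even_monomials}, via a counting argument with Lemma~\ref{Ldoublesquare}), combines oddness of $n$ with Lemma~\ref{L:Charq} to get $\gcd(n,q-1)=1$, and then proves the implication $\eta(a-1)=-1 \implies \eta(a^n-1)=-1$ by explicit matrix constructions: it splits into the cases $a \in \F_q^+$ and $a \in \F_q^-$, tests matrices with off-diagonal entries $\sqrt{a}$ or $\sqrt{-a}$ (using that $q \equiv 3 \pmod 4$ gives canonical square roots), and in the hardest subcase reduces to $b = a^2$ via the earlier cases. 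You instead observe that the single matrix $\left(\begin{smallmatrix}1&1\\1&a\end{smallmatrix}\right)$ gives $\phi(S)\subseteq S$ for $S = 1+\F_q^+$ and $\phi(x)=x^n$, and then let injectivity of $\phi$ on the finite set $\F_q$ do all the remaining work: $\phi(S)=S$, $\phi(1)=1$, hence $\phi(N)=N$ for the complement. This is cleaner and shorter than the paper's three-case analysis, and it avoids square-root constructions entirely; note that your counting remarks ($|S|=|N|=\frac{q-1}{2}$ and $-1\in\F_q^-$ via Proposition~\ref{Pdecomp}) are actually not needed, since injectivity plus finiteness already forces $\phi(S)=S$. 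The one trade-off concerns your first step: you obtain global bijectivity from Lemma~\ref{lbijective}, whose proof invokes Lemma~\ref{lem:triple} and hence Weil's bound, whereas the stated purpose of this section is an elementary alternative avoiding Weil's bound. This does not make your argument circular (Lemma~\ref{lbijective} is proved independently, earlier in the paper), but if you want to preserve the elementary character you should instead derive $\gcd(n,q-1)=1$ as the paper does, from Lemma~\ref{even_monomials} together with Lemma~\ref{L:Charq} and Theorem~\ref{Tperm}(2) --- note that Lemma~\ref{L:Charq} alone does not suffice, since for $q\equiv 3\pmod 4$ an even monomial such as $x^2$ is already bijective on $\F_q^+$.
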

\begin{proof} By Lemma \ref{even_monomials}, $n$ is odd. Lemma \ref{L:Charq} then implies that $f(x) = x^n$ is a bijective map, and thus we must have $\gcd(n,q-1) = 1$ by Theorem \ref{Tperm}(2). For the sake of contradiction, assume that there is $a \in \F_q$ such that $\eta(a-1)\neq \eta(a^n-1)$. Clearly, $a \neq 0$. We consider the following three cases:
\begin{description}
\item[Case 1] $\eta(a-1)=1$ and $\eta(a^n-1)=-1$. Consider the matrix $A = \begin{pmatrix}
1 & 1\\
1 & a
\end{pmatrix}$. 
Then $A$ is positive definite, but $f[A] = \begin{pmatrix}
1 & 1\\
1 & a^n
\end{pmatrix}$
is not, a contradiction. 
\item[Case 2] $a \in \F_q^+$, $\eta(a-1)=-1$, and $\eta(a^n-1)=1$. Then $\sqrt{a}$ exists and consider the matrix $A = \begin{pmatrix}
1 & \sqrt{a}\\
\sqrt{a} & 1
\end{pmatrix}$. Then $A$ is positive definite, but $f[A] = \begin{pmatrix}
1 & (\sqrt{a})^n\\
(\sqrt{a})^n & 1
\end{pmatrix}$ is not. 

\item[Case 3] $a \in \F_q^-$, $\eta(a-1)=-1$, and $\eta(a^n-1)=1$. In this case $\sqrt{-a}$ is well-defined. Clearly $a \ne -1$, and we now consider $a+1 \in \F_q^*$. Suppose $a+1 \in \F_q^+$. Consider the matrix $A = \begin{pmatrix}
1 & \sqrt{-a}\\
\sqrt{-a} & 1
\end{pmatrix}$. 
Then $A$ is positive definite and therefore so is $f[A] = \begin{pmatrix}
1 & (\sqrt{-a})^n\\
(\sqrt{-a})^n & 1
\end{pmatrix}$.
Thus, $\det f[A] = a^n+1 \in \F_q^+$ is positive. Now, $a^2-1 = (a-1)(a+1) \in \F_q^-$ and $(a^2)^n-1 = (a^n-1)(a^n+1) \in \F_q^+$. Taking $b=a^2$, we have $b \in \F_q^+$, $b-1 \in \F_q^-$ and $b^n -1 \in \F_q^+$. By Case 2 above applied to $b$, we conclude that $f$ does not preserve positivity. Finally, suppose $a+1 \in \F_q^-$. Consider the matrix $A = \begin{pmatrix}
\sqrt{-a} & 1\\
1 & \sqrt{-a}
\end{pmatrix}$. 
Then $A$ is positive definite and so is $f[A] = \begin{pmatrix}
(\sqrt{-a})^n & 1\\
1& (\sqrt{-a})^n
\end{pmatrix}$. Thus, $\det f[A] = -(a^n+1) \in \F_q^+$. Hence, $a^2-1 = (a-1)(a+1)\in \F_q^+$ and $(a^2)^n-1 = (a^n-1)(a^n+1) \in \F_q^-$. Applying Case 1 above to $b=a^2$, we conclude that $f$ does not preserve positivity on $M_2(\F_q)$. \qedhere
\end{description}
\end{proof}

Finally, we obtain the desired result.
\begin{theorem}\label{Odd_char_odd_monomial}
  Let $n \in \{1,2,\ldots,q-1\}$. Then $f(x) = x^n$ preserves positivity on $M_2(\F_q)$ if and only if $n = p^i$ for some $i\in \{0,1,\ldots,k-1\}$. 
\end{theorem}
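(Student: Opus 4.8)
The plan is to prove Theorem~\ref{Odd_char_odd_monomial} by combining the reverse direction (already handled by Proposition~\ref{PFrobenius}) with a forward direction that chains together the number-theoretic and character-sum machinery built up in this section. First I would dispense with the easy direction: if $n = p^i$ then $f(x) = x^{p^i}$ is a field automorphism, so by Proposition~\ref{PFrobenius} it preserves positivity on $M_2(\F_q)$. The whole content is thus in the forward implication, and here the key observation is that the condition ``$f(x) = x^n$ preserves positivity on $M_2(\F_q)$'' has been distilled, via Lemma~\ref{imp_cri_for_mono}, into two purely arithmetic facts: $\gcd(n, q-1) = 1$, and $\eta(a-1) = \eta(a^n - 1)$ for all $a \in \F_q$.

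The crux of the argument is to translate this quadratic-character identity into a polynomial identity and then apply the base-$p$ digit analysis. First I would rewrite $\eta(a-1) = \eta(a^n-1)$ for all $a \in \F_q$ using the fact that $\eta(x) = x^{(q-1)/2}$ for $x \ne 0$ (Equation~\eqref{EquadChar}): the identity $\eta(a^n-1) = \eta(a-1)$ is equivalent to saying that the two polynomials $g(x) = (x^n-1)^{(q-1)/2}$ and $h(x) = (x-1)^{(q-1)/2}$ agree as functions on all of $\F_q$. One must be slightly careful at the points where $a-1$ or $a^n-1$ vanishes, but since $\gcd(n,q-1)=1$ the map $a \mapsto a^n$ is a bijection fixing $1$, so $a^n = 1 \iff a = 1$, and both $g$ and $h$ vanish exactly at $a=1$; at all other points both sides equal $\pm 1$ and the character identity gives exactly $g(a) = h(a)$. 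Having established $g(c) = h(c)$ for all $c \in \F_q$, Corollary~\ref{Key_Lemma} applies directly: it states precisely that this functional equality forces $n = p^i$ for some $i \in \{0, 1, \ldots, k-1\}$. This closes the forward direction.

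To summarize the logical flow I would carry out: (i) invoke Proposition~\ref{PFrobenius} for the ``if'' direction; (ii) assume $f(x) = x^n$ preserves positivity and apply Lemma~\ref{imp_cri_for_mono} to extract $\gcd(n,q-1) = 1$ together with $\eta(a-1) = \eta(a^n-1)$ for all $a$; (iii) recast the character condition as the statement $g \equiv h$ pointwise on $\F_q$, using the coprimality of $n$ with $q-1$ to control the zero sets; and (iv) feed this into Corollary~\ref{Key_Lemma} to conclude $n = p^i$. The main obstacle, and the step deserving the most care, is step~(iii): verifying that the pointwise equality $\eta(a-1) = \eta(a^n-1)$ genuinely corresponds to the polynomial-function identity $g(c)=h(c)$ of Corollary~\ref{Key_Lemma}, including the coincidence of the vanishing loci and the fact that both $g(c)$ and $h(c)$ take values in $\{-1,0,1\}$ so that sign agreement is the same as value agreement. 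Once this bridge is correctly built, the heavy lifting has already been done upstream by Lemmas~\ref{intermediate_lemma} and~\ref{imp_cri_for_mono} and by the Lucas-theorem computation inside Corollary~\ref{Key_Lemma}, so the proof itself becomes a short assembly of these pieces.
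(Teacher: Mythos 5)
Your proposal is correct and follows the paper's proof essentially step for step: Proposition~\ref{PFrobenius} for the ``if'' direction, then Lemma~\ref{imp_cri_for_mono} to extract $\gcd(n,q-1)=1$ and $\eta(a-1)=\eta(a^n-1)$, and finally Corollary~\ref{Key_Lemma} applied to $g(x)=(x^n-1)^{\frac{q-1}{2}}$ and $h(x)=(x-1)^{\frac{q-1}{2}}$. The only difference is your extra caution about vanishing loci in step~(iii), which is harmless but unnecessary, since $\eta(x)=x^{\frac{q-1}{2}}$ holds for \emph{all} $x \in \F_q$ (including $x=0$), so the pointwise identity $g(c)=h(c)$ is immediate from the character identity.
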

\begin{proof} Suppose $n= p^i$ for some $i\in \{0,1,\ldots,k-1\}$. Then by Proposition \ref{PFrobenius}, $f(x) = x^n$ preserves positivity on $M_2(\F_q)$. Conversely, suppose $f(x) = x^n$ preserves positivity on $M_2(\F_q)$. Lemma \ref{imp_cri_for_mono} implies that $\gcd(n,q-1) = 1$ and $\eta(a-1)=\eta(a^n-1)$ for all $a \in \F_q$. Now, consider the following two functions
\begin{align*}
    g(x) = (x^n-1)^{\frac{q-1}{2}} = \eta(x^n-1), \quad 
    h(x) = (x-1)^{\frac{q-1}{2}} = \eta(x-1).
\end{align*}
We have $g(c) = h(c)$ for all $c \in \F_q$. Corollary~\ref{Key_Lemma} then implies the desired conclusion.
\end{proof}

\section{Conclusion}\label{Sconc}
The astute reader will have noticed that one case was not addressed in the paper: the characterization of entrywise preservers on $M_2(\F_q)$ when $q \equiv 1 \pmod 4$ and $q$ is not a square. When $q=r^2$, our proof took advantage of the better understood structure of the cliques in the Paley graph $P(q)$. While the authors were able to gather evidence that the analog of Theorem \ref{ThmB} should hold when $q$ is not a square, our techniques did not allow us to resolve it. We note, however, that the sufficient conditions obtained in Section \ref{SSSufficient} and \ref{SSApplications} still apply to this case. Resolving the general case will be the object of future work:

\begin{question}
If $f$ preserves positivity on $M_2(\F_q)$ where $q \equiv 1 \pmod 4$ is not a square, does $f$ have to be injective?    
\end{question}

Finally, recall that Schoenberg's theorem (Theorem \ref{Tschoenberg}) addresses preservers of both positive semidefiniteness and of positive definiteness for matrices with real entries. While our current work focuses on preservers of positive definiteness over finite fields, it would be interesting to investigate appropriate notions of positive semidefiniteness over $\F_q$, and to work out the associated positive semidefiniteness preservers.

\section*{Acknowledgments}
The authors would like to acknowledge the American Institute of Mathematics (CalTech) for their hospitality and stimulating environment during a workshop on Theory and Applications of Total Positivity in July 2023 where the first three authors met and initial discussions occurred. The authors would also like to thank Apoorva Khare, Felix Lazebnik, and an anonymous referee for their comments on the paper.

D.G.~was partially supported by a Simons Foundation collaboration grant for mathematicians and by NSF grant \#2350067. H.G.~and P.K.V.~acknowledge support from PIMS (Pacific Institute for the Mathematical Sciences) Postdoctoral Fellowships. P.K.V. was additionally supported by a SwarnaJayanti Fellowship from DST and SERB (Govt.~of India), and is moreover thankful to the SPARC travel support (Scheme for Promotion of Academic and Research Collaboration, MHRD, Govt.~of India; PI: Tirthankar Bhattacharyya, Indian Institute of Science), and the University of Plymouth (UK) for hosting his visit during part of the research. C.H.Y.~was partially supported by an NSERC fellowship.

\bibliographystyle{plain}
\bibliography{biblio}

\begin{thebibliography}{10}

\bibitem{AY22}
Shamil Asgarli and Chi~Hoi Yip.
\newblock Van {L}int--{M}ac{W}illiams' conjecture and maximum cliques in
  {C}ayley graphs over finite fields.
\newblock {\em J. Combin. Theory Ser. A}, 192:Paper No. 105667, 23, 2022.

\bibitem{B96}
Ronald~D. Baker, Gary~L. Ebert, Joe Hemmeter, and Andrew Woldar.
\newblock Maximal cliques in the {P}aley graph of square order.
\newblock {\em J. Statist. Plann. Inference}, 56(1):33--38, 1996.

\bibitem{BGKP-fixeddim}
Alexander Belton, Dominique Guillot, Apoorva Khare, and Mihai Putinar.
\newblock Matrix positivity preservers in fixed dimension. {I}.
\newblock {\em Adv. Math.}, 298:325--368, 2016.

\bibitem{BGKP-survey-part-1}
Alexander Belton, Dominique Guillot, Apoorva Khare, and Mihai Putinar.
\newblock A panorama of positivity. {I}: {D}imension free.
\newblock In Alexandru Aleman, H\r{a}kan Hedenmalm, Dmitry Khavinson, and Mihai
  Putinar, editors, {\em Analysis of Operators on Function Spaces, The Serguei
  Shimorin memorial volume}, Trends in Mathematics, pages 117--165. Birkhauser,
  Basel, 2019.

\bibitem{BGKP-survey-part-2}
Alexander Belton, Dominique Guillot, Apoorva Khare, and Mihai Putinar.
\newblock A panorama of positivity. {II}: {F}ixed dimension.
\newblock In J.~Mashreghi G.~Dales, D.~Khavinson, editor, {\em Complex Analysis
  and Spectral Theory: Proceedings of the CRM Workshop held at Laval
  University, QC, May 21–25, 2018}, CRM Proceedings, AMS Contemporary
  Mathematics 743,, pages 109--150. American Mathematical Society, Providence,
  RI, 2020.

\bibitem{BGKP-hankel}
Alexander Belton, Dominique Guillot, Apoorva Khare, and Mihai Putinar.
\newblock Moment-sequence transforms.
\newblock {\em J. Eur. Math. Soc.}, 24(9):3109--3160, 2022.

\bibitem{belton2023negativity}
Alexander Belton, Dominique Guillot, Apoorva Khare, and Mihai Putinar.
\newblock Negativity-preserving transforms of tuples of symmetric matrices.
\newblock {\em arXiv preprint arXiv:2310.18041}, 2023.

\bibitem{BGKP-tn}
Alexander Belton, Dominique Guillot, Apoorva Khare, and Mihai Putinar.
\newblock Totally positive kernels, {P}{\'o}lya frequency functions, and their
  transforms.
\newblock {\em J. d'Analyse. Math.}, 150(1):83--158, 2023.

\bibitem{Blo84}
Aart Blokhuis.
\newblock On subsets of {${\rm GF}(q^2)$} with square differences.
\newblock {\em Nederl. Akad. Wetensch. Indag. Math.}, 46(4):369--372, 1984.

\bibitem{BCN89}
Andries~E. Brouwer, Arjeh~M. Cohen, and Arnold Neumaier.
\newblock {\em Distance-{R}egular {G}raphs}, volume~18 of {\em Ergebnisse der
  Mathematik und ihrer Grenzgebiete (3) [Results in Mathematics and Related
  Areas (3)]}.
\newblock Springer-Verlag, Berlin, 1989.

\bibitem{brouwer2011spectra}
Andries~E. Brouwer and Willem~H. Haemers.
\newblock {\em Spectra of {Graphs}}.
\newblock Springer Science \& Business Media, 2011.

\bibitem{BM22}
Andries~E. Brouwer and William~J. Martin.
\newblock Triple intersection numbers for the {P}aley graphs.
\newblock {\em Finite Fields Appl.}, 80:Paper No. 102010, 4, 2022.

\bibitem{carlitz1960theorem}
Leonard Carlitz.
\newblock A theorem on permutations in a finite field.
\newblock {\em Proc. Amer. Math. Soc.}, 11(3):456--459, 1960.

\bibitem{cooper2022positive}
Joshua Cooper, Erin Hanna, and Hays Whitlatch.
\newblock Positive-definite matrices over finite fields.
\newblock {\em Rocky Mountain J. Math.}, 54(2):423--438, 2024.

\bibitem{EKR61}
Paul Erd\H{o}s, Chao Ko, and Richard Rado.
\newblock Intersection theorems for systems of finite sets.
\newblock {\em Quart. J. Math. Oxford Ser. (2)}, 12:313--320, 1961.

\bibitem{fitzgerald1977fractional}
Carl~H. FitzGerald and Roger~A. Horn.
\newblock On fractional hadamard powers of positive definite matrices.
\newblock {\em J. Math. Anal. Appl.}, 61(3):633--642, 1977.

\bibitem{fitzgerald1995functions}
Carl~H. FitzGerald, Charles~A. Micchelli, and Allan Pinkus.
\newblock Functions that preserve families of positive semidefinite matrices.
\newblock {\em Linear Algebra Appl.}, 221:83--102, 1995.

\bibitem{godsil2016erdos}
Chris Godsil and Karen Meagher.
\newblock {\em {Erd\H{o}s}-{K}o-{R}ado {T}heorems: {A}lgebraic {A}pproaches},
  volume 149 of {\em Cambridge Studies in Advanced Mathematics}.
\newblock Cambridge University Press, Cambridge, 2016.

\bibitem{GKSV18}
Sergey Goryainov, Vladislav~V. Kabanov, Leonid Shalaginov, and Alexandr
  Valyuzhenich.
\newblock On eigenfunctions and maximal cliques of {P}aley graphs of square
  order.
\newblock {\em Finite Fields Appl.}, 52:361--369, 2018.

\bibitem{guillot2015complete}
Dominique Guillot, Apoorva Khare, and Bala Rajaratnam.
\newblock Complete characterization of {H}adamard powers preserving loewner
  positivity, monotonicity, and convexity.
\newblock {\em J. Math. Anal. Appl.}, 425(1):489--507, 2015.

\bibitem{GKR-critG}
Dominique Guillot, Apoorva Khare, and Bala Rajaratnam.
\newblock Critical exponents of graphs.
\newblock {\em J. Combin. Theory Ser. A}, 139:30--58, 2016.

\bibitem{GKR-sparse}
Dominique Guillot, Apoorva Khare, and Bala Rajaratnam.
\newblock Preserving positivity for matrices with sparsity constraints.
\newblock {\em Trans. Amer. Math. Soc.}, 368(12):8929--8953, 2016.

\bibitem{GKR-lowrank}
Dominique Guillot, Apoorva Khare, and Bala Rajaratnam.
\newblock Preserving positivity for rank-constrained matrices.
\newblock {\em Trans. Amer. Math. Soc.}, 369(9):6105--6145, 2017.

\bibitem{guillot2012retaining}
Dominique Guillot and Bala Rajaratnam.
\newblock Retaining positive definiteness in thresholded matrices.
\newblock {\em Linear Algebra Appl.}, 436(11):4143--4160, 2012.

\bibitem{guillot2015functions}
Dominique Guillot and Bala Rajaratnam.
\newblock Functions preserving positive definiteness for sparse matrices.
\newblock {\em Trans. Amer. Math. Soc.}, 367(1):627--649, 2015.

\bibitem{guterman2000some}
Alexander Guterman, Chi-Kwong Li, and Peter {\v{S}}emrl.
\newblock Some general techniques on linear preserver problems.
\newblock {\em Linear algebra and its applications}, 315(1-3):61--81, 2000.

\bibitem{HP}
Brandon Hanson and Giorgis Petridis.
\newblock Refined estimates concerning sumsets contained in the roots of unity.
\newblock {\em Proc. Lond. Math. Soc. (3)}, 122(3):353--358, 2021.

\bibitem{hiai2009monotonicity}
Fumio Hiai.
\newblock Monotonicity for entrywise functions of matrices.
\newblock {\em Linear Algebra Appl.}, 431(8):1125--1146, 2009.

\bibitem{horn1969theory}
Roger~A. Horn.
\newblock The theory of infinitely divisible matrices and kernels.
\newblock {\em Trans. Amer. Math. Soc.}, 136:269--286, 1969.

\bibitem{horn2012matrix}
Roger~A. Horn and Charles~R. Johnson.
\newblock {\em Matrix {Analysis}}.
\newblock Cambridge university press, 2012.

\bibitem{jones2020paley}
Gareth~A. Jones.
\newblock Paley and the {P}aley graphs.
\newblock In {\em Isomorphisms, Symmetry and Computations in Algebraic Graph
  Theory: Pilsen, Czech Republic, October 3--7, 2016}, pages 155--183.
  Springer, 2020.

\bibitem{khare2022matrix}
Apoorva Khare.
\newblock {\em Matrix {Analysis} and {Entrywise} {Positivity} {Preservers}}.
\newblock London Mathematical Society Lecture Note Series, Cambridge University
  Press, 2022.

\bibitem{Khare-Tao}
Apoorva Khare and Terence Tao.
\newblock On the sign patterns of entrywise positivity preservers in fixed
  dimension.
\newblock {\em Amer. J. Math.}, 143(6):1863--1929, 2021.

\bibitem{koblitz1994course}
Neal Koblitz.
\newblock {\em A {Course} in {Number} {Theory} and {Cryptography}}, volume 114.
\newblock Springer Science \& Business Media, 1994.

\bibitem{li2001linear}
Chi-Kwong Li and Stephen Pierce.
\newblock Linear preserver problems.
\newblock {\em The American Mathematical Monthly}, 108(7):591--605, 2001.

\bibitem{lidl1997finite}
Rudolf Lidl and Harald Niederreiter.
\newblock {\em Finite {Fields}}.
\newblock Encyclopedia of Mathematics and its Applications, Cambridge
  university press, second edition, 1997.

\bibitem{lucas1878theorie}
Edouard Lucas.
\newblock Th{\'e}orie des fonctions num{\'e}riques simplement p{\'e}riodiques.
\newblock {\em Amer. J. Math.}, 1(2):289--321, 1878.

\bibitem{GY25}
Greg Martin and Chi~Hoi {Yip}.
\newblock {Distribution of power residues over shifted subfields and maximal
  cliques in generalized Paley graphs}.
\newblock {\em Proc. Amer. Math. Soc.}, 153(1):109--124, 2025.

\bibitem{muzychuk2005solution}
Mikhail Muzychuk and Istv\'an Kov\'acs.
\newblock A solution of a problem of {A}. {E}. {B}rouwer.
\newblock {\em Des. Codes Cryptogr.}, 34(2-3):249--264, 2005.

\bibitem{orel2016preserver}
Marko Orel.
\newblock Preserver problems over finite fields.
\newblock {\em International Journal of Mathematics, Game Theory, and Algebra},
  25(3), 2016.

\bibitem{Rudin-Duke59}
Walter Rudin.
\newblock Positive definite sequences and absolutely monotonic functions.
\newblock {\em Duke Math. J.}, 26:617--622, 1959.

\bibitem{Schoenberg-Duke42}
Isaac~J. Schoenberg.
\newblock Positive definite functions on spheres.
\newblock {\em Duke Math. J.}, 9:96--108, 1942.

\bibitem{Schur1911}
Issai {S}chur.
\newblock {B}emerkungen zur {T}heorie der beschr{\"a}nkten {B}ilinearformen mit
  unendlich vielen {V}er{\"a}nderlichen.
\newblock {\em J. reine angew. Math.}, 140:1--28, 1911.

\bibitem{vishwakarma2023positivity}
Prateek~Kumar Vishwakarma.
\newblock Positivity preservers forbidden to operate on diagonal blocks.
\newblock {\em Trans. Amer. Math. Soc.}, 376:5261--5279, 2023.

\bibitem{Y22}
Chi~Hoi Yip.
\newblock On the clique number of {P}aley graphs of prime power order.
\newblock {\em Finite Fields Appl.}, 77:Paper No. 101930, 16, 2022.

\end{thebibliography}
\end{document}